\numberwithin{equation}{section}
\ifpdf \usepackage[pdftex,pdfstartview=FitH,pdfpagemode=none,colorlinks,bookmarks,linkcolor=blue]{hyperref} \else  \usepackage[hypertex]{hyperref} \fi
\newtheorem{theorem}{Theorem}[section]
\newtheorem{lemma}[theorem]{Lemma}
\newtheorem{corollary}[theorem]{Corollary}
\theoremstyle{definition}\newtheorem{definition}[theorem]{Definition}
\newtheorem{example}[theorem]{Example}
\newtheorem{notation}[theorem]{Notation}
\theoremstyle{definition}
\theoremstyle{definition}
\theoremstyle{definition}\newtheorem{remark}[theorem]{Remark}
\theoremstyle{definition}
\newcommand{\al}{\alpha}
\newcommand{\be}{\beta}
\newcommand{\ga}{\gamma}
\newcommand{\Ga}{\Gamma}
\newcommand{\del}{\delta}
\newcommand{\Del}{\Delta}
\newcommand{\lam}{\lambda}
\newcommand{\Lam}{\Lambda}
\newcommand{\eps}{\epsilon}
\newcommand{\ka}{\kappa}
\newcommand{\sig}{\sigma}
\newcommand{\Sig}{\Sigma}
\newcommand{\om}{\omega}
\newcommand{\Om}{\Omega}
\newcommand{\vphi}{\varphi}
\newcommand{\cB}{\mathcal{B}}
\newcommand{\cC}{\mathcal{C}}
\newcommand{\cE}{\mathcal{E}}
\newcommand{\cF}{\mathcal{F}}
\newcommand{\cG}{\mathcal{G}}
\newcommand{\cH}{\mathcal{H}}
\newcommand{\cK}{\mathcal{K}}
\newcommand{\cL}{\mathcal{L}}
\newcommand{\cO}{\mathcal{O}}
\newcommand{\cP}{\mathcal{P}}
\newcommand{\cQ}{\mathcal{Q}}
\newcommand{\cR}{\mathcal{R}}
\newcommand{\cS}{\mathcal{S}}
\newcommand{\cT}{\mathcal{T}}
\newcommand{\cU}{\mathcal{U}}
\newcommand{\bC}{\mathbb{C}}
\newcommand{\bG}{\mathbb{G}}
\newcommand{\bR}{\mathbb{R}}
\newcommand{\bZ}{\mathbb{Z}}
\newcommand{\bQ}{\mathbb{Q}}
\newcommand{\bF}{\mathbb{F}}
\newcommand{\bN}{\mathbb{N}}
\newcommand{\bH}{\mathbb{H}}
\newcommand{\bT}{\mathbb{T}}
\newcommand{\gog}{\mathfrak{g}}
\newcommand{\gol}{\mathfrak{l}}
\newcommand{\gos}{\mathfrak{s}}
\newcommand{\got}{\mathfrak{t}}
\newcommand{\SL}{\operatorname{SL}}
\newcommand{\PGL}{\operatorname{PGL}}
\newcommand{\GL}{\operatorname{GL}}
\newcommand{\PSL}{\operatorname{PSL}}
\newcommand{\nugauss}{\nu_{\on{Gauss}}}
\newcommand{\onh}{\on{h}}
\newcommand{\qi}{\on{QI}}
\newcommand{\defi}{{\, \stackrel{\textrm{{\tiny def}}}{=}\, }}
\newcommand{\GSS}{G_{S^*}}
\newcommand{\lie}{\operatorname{Lie}}
\newcommand\norm[1]{\left\|#1\right\|}
\newcommand\set[1]{\left\{#1\right\}}
\newcommand\pa[1]{\left(#1\right)}
\newcommand\idist[1]{\langle#1\rangle}
\newcommand\av[1]{\left|#1\right|}
\newcommand\on[1]{\operatorname{#1}}
\newcommand\diag[1]{\operatorname{diag}\left(#1\right)}
\newcommand\smallmat[1]{\pa{\begin{smallmatrix}#1\end{smallmatrix}}}
\newcommand\br[1]{\left[#1\right]}
\newcommand\tbf[1]{\textbf{#1}}
\newcommand\height[1]{\on{ht}(#1)}
\newcommand\mat[1]{\pa{\begin{matrix}#1\end{matrix}}}
\newcommand\enu[1]{\begin{enumerate}#1\end{enumerate}}
\newcommand\eq[2]{\begin{equation}\label{#1}#2\end{equation}}
\newcommand\wt[1]{\widetilde{#1}}
\newcommand{\ra}{\rightarrow}
\newcommand{\hra}{\hookrightarrow}
\newcommand{\onto}{\xymatrix{\ar@{>>}[r]&}}
\newcommand{\da}[4]{\xymatrix{#1 \ar@<.5ex>[r]^{#2} \ar@<-.5ex>[r]_{#3} & #4}}
\newif\ifdraft\drafttrue
\begin{document}
\title{On the evolution of continued fractions in a fixed quadratic field}
\author{Menny Aka}
\author{Uri Shapira}
\begin{abstract}
We prove that the statistics of the period of the continued fraction expansion of certain sequences of quadratic irrationals from a fixed quadratic field approach the `normal' statistics given by the Gauss-Kuzmin measure. 
As a by-product, the growth rate of the period is analyzed and, for example, it is shown that for a fixed integer $k$ and a quadratic irrational $\al$, the length of the period
of the continued fraction expansion of $k^n\al$  equals $c k^n +o(k^{(1-\frac{1}{16})n})$ for some positive constant $c$. This improves results of Cohn, Lagarias, and Grisel, and settles a conjecture of Hickerson.
The results are derived from the main theorem 
of the paper, which establishes an equidistribution result
regarding single periodic geodesics along certain paths in the Hecke graph. The results are effective and give rates of convergence and the main tools are spectral gap (effective decay of matrix coefficients) and  dynamical analysis on $S$-arithmetic homogeneous spaces.
\end{abstract}
\maketitle
\section{Introduction}
\subsection{Continued fractions}
The elementary theory of continued fractions starts by assigning to each real number $x\in[0,1]\smallsetminus\bQ$ an 
infinite sequence of positive integers\footnote{We shall completely ignore the rational numbers, which correspond to finite sequences as well as real numbers outside the unit interval, for which an additional integer  digit $a_0$ is needed.}\footnote{This correspondence is in fact a homeomorphism when $\bN^\bN$ is considered with the product topology.} referred to as the \textit{continued fraction expansion} of $x$ (abbreviated hereafter by c.f.e). Namely, to each number  $x$ 
corresponds a sequence $a_n=a_n(x)$, $n=1,2\dots$ which
is characterized by the requirement $x=\lim_{n\to\infty}\frac{1}{a_1+\frac{1}{\cdots+\frac{1}{a_n}}}$.
 We refer to the  numbers $a_n(x)$ as the digits of the c.f.e of $x$. When $x$ is understood 
we usually write $a_i$ for the $i$'th digit of the c.f.e of $x$.  

Given a number $x$, it is natural to ask for information regarding the statistical properties of its c.f.e; that is, for any finite sequence of natural numbers $w=(w_1,\dots ,w_k)$ (referred to hereafter as a \textit{pattern}) one is interested in the asymptotic 
\textit{frequency of appearance} of the pattern $w$ in the c.f.e of $x$, or in other words in the existence and the 
value of the limit
\begin{equation}\label{frequency}
D(x,w)=\lim_{N}\frac1N\#\set{1\le n \le N : w=(a_{n+1},\dots,a_{n+k})}.
\end{equation}
We claim that for 
Lebesgue almost any $x$ the limit in~\eqref{frequency} exists and equals some explicit integral (depending only on the pattern $w$).

To see this, note that the c.f.e correspondence $x\leftrightarrow \set{a_n(x)}$ fits in the commutative diagram 
\begin{equation}\label{cd}
\xymatrix{\bN^\bN \ar[r]^{\sigma}\ar[d]& \bN^\bN\ar[d] \\ [0,1]\smallsetminus \bQ\ar[r]^{S}&[0,1]\smallsetminus \bQ\;,}
\end{equation}
where 
$S(x)=\{\frac{1}{x}\}=\frac{1}{x}-\lfloor \frac{1}{x}\rfloor$ 
is the so-called Gauss map and $\sigma$ is the shift map $\sig(a_1,a_2,\dots)=(a_2,a_3,\dots)$. It is well known
that $S$ preserves the Gauss-Kuzmin measure on the unit interval which is given by 
\begin{equation}\label{Gauss-Kuzmin}
\nugauss(A)\defi\frac{1}{\log(2)}\int_A \frac{1}{1+x}dx.
\end{equation}
The map $S$ is ergodic with respect to $\nugauss$ which implies by the pointwise ergodic theorem (see for example~\cite[\S2.6,\S9.6]{EW}) that 
for $\nugauss$ (or equivalently Lebesgue) almost any $x$ and any pattern $w=(w_1,\dots ,w_k)$, the frequency $D(x,w)$ defined in~\eqref{frequency} exists. More precisely, if we let
\begin{equation}\label{U w}
I_w=\set{x\in [0,1]\smallsetminus \bQ: w=(a_1(x),\dots,a_k(x))}
\end{equation}
denote the interval consisting of those points for which the c.f.e starts with the pattern $w$,
then the pointwise ergodic theorem tells us that the ergodic averages of the characteristic function of 
$I_w$ converge almost surely to $\nugauss(I_w)$; that is 
\begin{equation}\label{ergodic avg}
\lim_{N\ra \infty} \frac{1}{N}\sum_{i=0}^{N-1} \chi_{I_w}(S^i(x))=\nugauss(I_w) , 
\end{equation}
for $\nugauss$-almost any $x$. As the set of possible patterns is countable we conclude that for Lebesgue almost any $x$~\eqref{ergodic avg} holds for any pattern $w$.  It is 
straightforward to check using the commutation in~\eqref{cd} that the limit in~\eqref{ergodic avg} is equal to the limit in~\eqref{frequency}.
\subsection{Quadratic irrationals}\label{subsection: statement}
Let $$\qi\defi\set{\al\in\bR: [\bQ(\al):\bQ]=2}$$ be the set of real quadratic irrationals.
By Lagrange's Theorem (see for example \cite[\S3.3]{EW}) $\qi$ is characterized as the set of
$x\in\bR$ for which the c.f.e is eventually periodic. For quadratic irrationals (which clearly form a Lebesgue-null set) it is clear that the limit in~\eqref{frequency} always exists and is different from the almost sure value of the frequency.

In this paper we investigate the behavior of 
$D(x,w)$ where $x$ varies in some fixed quadratic field. We make the convention to consider $x\on{mod} 1$ instead of $x$. This influences only the $0$'th digit in the classical 
discussion on continued fractions and does not effect  any statistical property of the c.f.e. As will become clear shortly, our 
approach manages to deal with sequences $\set{x_n}$ whose elements are 
arithmetically related in a way that ``involves only finitely many primes".

Before preparing the grounds for more general statements we state Theorem~\ref{thm:Main Theorem} which demonstrates the flavor of our results regarding continued fractions. 
To the best of our knowledge, all the results in the literature regarding the evolution of the period of the c.f.e of quadratics involve averaging. In this respect, the results we present are of a new kind.
\begin{notation}
Throughout this paper we use the notation $\ll$ in the following manner: Given two quantities $A,B$ depending on some  set of parameters $P$, we denote $A\ll B$ if there exists some absolute constant $c>0$ (independent of any varying parameter) such that $A\le cB$. Given a subset $P'$ of the parameter set $P$, we denote  $A\ll_{P'} B$ if there exists a constant $c_{P'}>0$, depending possibly on the parameters in $P'$, such that $A\le c_{P'} B$.
\end{notation}
We denote  by $\av{I_w}$ the length of the interval $I_w$ defined in~\eqref{U w}. For $\al\in\qi$ we denote by $\av{P_\al}$ the length of the period of the c.f.e of $\al$. 
 The following Theorem follows from Corollary~\ref{effective pattern frequency new} and Theorem~\ref{period cor new} as explained in Remark~\ref{short r.1 new}. 
\begin{theorem}\label{thm:Main Theorem}
Let $\alpha\in\qi$, $k\in\bN$ be given. Then, for any finite pattern of natural numbers $w=(w_1,\dots, w_k)$ we have that $D(k^n\al,w)\to\nugauss(I_w)$ as $n\to\infty$. Moreover, there exists a constant $c_{\al,k}$ such that  for any $n\in\bN$ the following holds
\begin{align}
\label{first eqq1}&\av{D(k^n\al,w)-\nugauss(I_w)}\ll_{\al,k}\av{I_w}^{-1} k^{-\frac{n}{32}};\\
\label{first eqq2} &\av{P_{k^n\al}}=c_{\al,k} k^n+O_{\al,k}(1)k^{(1-\frac{1}{16})n}.
\end{align}
\end{theorem}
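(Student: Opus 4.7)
The plan is to translate the theorem into an effective equidistribution problem for a sequence of closed geodesics on $\SL_2(\bZ)\backslash\SL_2(\bR)$ related by an $n$-step walk in the Hecke graph at level $k$, and then to apply the main equidistribution theorem of the paper (the ``single periodic geodesic along Hecke paths'' result highlighted in the abstract), deducing both \eqref{first eqq1} and \eqref{first eqq2} by pairing the equidistribution against suitable test functions.

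Concretely, I would first use the Bowen--Series style coding to identify $\al\in\qi$ with a closed geodesic $\gamma_\al$ on the modular surface, and to identify the Gauss map $S$ with the first-return map of the geodesic flow to a standard Poincar\'e section. Under this identification, $|P_\al|$ is the number of returns of $\gamma_\al$ to the section and $D(\al,w)$ is the discrete time average of $\chi_{I_w}$ along those returns. Next, I would observe that $\al\mapsto k\al$ is a Hecke move at level $k$, because $\bZ+\bZ k\al$ is a sublattice of $\bZ+\bZ\al$ of index $k$ and the associated orders in $\bQ(\al)$ differ by a conductor $k$. Hence $\gamma_{k^n\al}$ is obtained from $\gamma_\al$ by a distinguished $n$-step walk in the Hecke graph at $k$. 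Passing to the $S$-arithmetic picture with $S=\{\infty\}\cup\{p\text{ prime}:p\mid k\}$, this walk becomes a single orbit of an explicit compact-open subgroup acting on $\SL_2(\bZ[1/k])\backslash\bigl(\SL_2(\bR)\times\prod_{p\mid k}\SL_2(\bQ_p)\bigr)$, which is exactly the setting of the main theorem.

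At this point the main theorem produces equidistribution of the uniform length measure on $\gamma_{k^n\al}$ (lifted to the $S$-arithmetic space) towards Haar measure with a rate of the form $k^{-\theta n}$ coming from spectral gap and decay of matrix coefficients. To extract \eqref{first eqq1}, one pairs this equidistribution with a smooth approximation of the characteristic function of the cell in the Poincar\'e section corresponding to the pattern $w$; the Haar integral recovers $\nugauss(I_w)$, while the Sobolev cost of smoothing $\chi_{I_w}$ at scale $\eta$ contributes a factor $\eta^{-1}$, and optimizing $\eta$ against $k^{-\theta n}$ produces both the $|I_w|^{-1}$ loss and the exponent $1/32$. To extract \eqref{first eqq2}, one pairs instead with the constant function $1$: the main term $c_{\al,k}k^n$ is the length (equivalently, the number of $S$-returns) of the $n$-th Hecke iterate, which grows by the factor $k$ at each step up to a regulator correction from the orders $\bZ[k^n\al]\subset\bZ[\al]$, and the error is the equidistribution remainder without the smoothing loss, hence the better exponent $1/16$.

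The main obstacle will be making the combinatorial-to-dynamical dictionary quantitative. Concretely one must (a) identify the Poincar\'e section so sharply that ``the pattern $w$ occurs at position $j$'' really coincides with ``the $j$-th return lies in $I_w$'', and then control the discrepancy between the discrete return-count average over $|P_{k^n\al}|$ iterations and the continuous geodesic-flow time average to which the main theorem applies; (b) track which connected component of the $S$-arithmetic quotient is visited at each Hecke step, so that the limiting measure is genuinely Haar and the constant $c_{\al,k}$ in \eqref{first eqq2} is actually positive and correctly computed from the regulator of $\bZ[\al]$; and (c) single out the particular $n$-step walk in the Hecke graph corresponding to $k^n\al$ (as opposed to other neighbors that leave $\bQ(\al)$ or correspond to a different order), which is what anchors the purely dynamical statement to the arithmetic sequence $\{k^n\al\}$.
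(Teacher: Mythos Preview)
Your proposal is correct and is essentially the paper's approach: the paper derives Theorem~\ref{thm:Main Theorem} by specializing Corollary~\ref{effective pattern frequency new} and Theorem~\ref{period cor new} to $q=k^n$ with the Kim--Sarnak bound $\delta_0\ge 25/64$ (see Remark~\ref{short r.1 new}), and these rest on the main equidistribution theorem (Theorem~\ref{general case}) together with the quantitative cross-section dictionary (Theorem~\ref{toest}) exactly as you outline. One refinement worth flagging: the length growth $t_{k^n\alpha}\asymp k^n$ that underlies~\eqref{first eqq2} is not extracted from equidistribution but is proved separately by an elementary $p$-adic index computation (Lemmas~\ref{total growth} and~\ref{growth again}); your regulator-of-orders heuristic points to the same fact, but in the paper this step is logically independent of the spectral input.
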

\subsection{Structure of the paper}
The results presented in this paper are split into two; results regarding the distribution of closed geodesics and results regarding continued fractions. The gist of the paper is
concerned with the distribution of certain closed geodesics in (the unit tangent bundle of) the modular surface and the results regarding continued fractions are translations
of our results about geodesics utilizing the connection between the two. Although this connection is considered 
well understood, we  believe that some of the results we present that allow this translation are new and may find further applications
(e.g.\ Theorem~\ref{toest}).

Although the statements of our main results (Theorems~\ref{general case}, \ref{mtcf}, \ref{gpth}) require quite a bit of preparation, some of their consequences are fairly easy to state (e.g.\ Theorem~\ref{thm:Main Theorem}), and will hopefully motivate the reader traversing through the necessary preparations needed for the statements and proofs of the more general results. 

In~\S\ref{motivation and results} we begin fixing the notation, 
state Theorems~\ref{mtcf11}, \ref{period cor new} which deal  with continued fractions, and present some examples and open problems. 
In~\S\ref{preliminaries} we fix further notation. 
In~\S\ref{S hecke} we discuss the notions of \textit{$S$-Hecke graphs} and \textit{generalized branches} which play a key role in the statement of  
the  main Theorem~\ref{general case}.
In~\S\ref{other arguments} we discuss the relationship of Theorem~\ref{general case} to existing results and state Lemma~\ref{total growth}. 
This Lemma explains to some extent the phenomenon behind our results but is only used in the proof of growth  statements such as~\eqref{first eqq2} and is not needed
for the proof of statements such as~\eqref{first eqq1}. In~\S\ref{soft version} we prove  our main result, Theorem~\ref{general case}, where the main tool in the argument is the decay of matrix coefficients. In~\S\ref{proof of total growth} we give an elementary proof of Lemma~\ref{total growth}. In~\S\ref{applications} we prove our main results regarding  continued fractions, Theorems~\ref{mtcf}, \ref{gpth}, and deduce Theorems~\ref{mtcf11}, \ref{period cor new} which are stated in~\S\ref{motivation and results}. 
Theorems~\ref{mtcf}, \ref{gpth} are the translation to the language of continued fractions of Theorem~\ref{general case} and Lemma~\ref{total growth}. The technical tool we develop in order for this translation to carry 
through is Theorem~\ref{toest} which allows us to translate statements with an error term from the world of  closed  geodesics to the continued fractions world. 
Finally, in sections~\S\ref{potoest},\ref{construction of phi} we prove Theorem~\ref{toest} elaborating on the classical connection between the geodesic flow and continued fractions. 
\section{Some results and open problems}\label{motivation and results}
\subsection{Early preliminaries}
\begin{definition}
Given a commutative unital ring $\cR$ we let $$\GL_2(\cR)\defi\set{\smallmat{a&b\\c&d}\in\on{Mat}_{2\times 2}(\cR):ad-bc\in\cR^\times}$$ and $\PGL_2(\cR)=\GL_2(\cR)/Z$, where $Z\defi\set{\smallmat{a&0\\0&a}\in\GL_2(\cR)}$ is the center of $\GL_2(\cR)$.
When $\cR_1\hra \cR_2$, we have a natural embedding of $\PGL_2(\cR_1)\hra\PGL_2(\cR_2)$.
 We usually abuse notation and treat the elements of $\PGL_2(\cR)$ as matrices rather than equivalence classes of matrices.
\end{definition}
Recall that $\PGL_2(\bR)$ acts on the real line by M\"obius transformations; for $g=\smallmat{a&b\\c&d}\in\PGL_2(\bR)$ and $x\in\bR$, $gx\defi \frac{ax+b}{cx+d}$.
Recall the following basic result~\cite[Theorem 2]{SzuszRockett}
\begin{theorem}\label{tail equiv}
For any $x\in\bR$ the orbit of $x$ under $\PGL_2(\bZ)$ is exactly the set of numbers having c.f.e with the same tail as the c.f.e of x. Equivalently, 
$\PGL_2(\bZ)x=\set{y\in\bR:\exists m,n>0\, \forall i\ge0\, a_{n+i}(y)=a_{m+i}(x)}.$
\end{theorem}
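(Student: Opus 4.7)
The plan is to prove the two inclusions of the claimed equality separately, since they call for different tools.

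For the inclusion $\supseteq$, I would rely on the fact that both the Gauss map $S$ and integer translations are realized as M\"obius actions of elements of $\PGL_2(\bZ)$. Specifically, on $\set{x\in[0,1]\setminus\bQ:a_1(x)=a}$ one has $S(x)=\frac{1}{x}-a=\smallmat{-a & 1 \\ 1 & 0}x$, and translation by $n\in\bZ$ is $\smallmat{1 & n \\ 0 & 1}$. Iterating, the point $S^{k-1}(\{x\})$ — whose sequence of digits is $a_k(x),a_{k+1}(x),\ldots$ — lies in $\PGL_2(\bZ)x$ for every $k\ge 1$. Consequently, if the c.f.e of $x$ and $y$ agree in the sense $a_{n+i}(y)=a_{m+i}(x)$ for all $i\ge 0$, then $S^{m-1}(\{x\})=S^{n-1}(\{y\})$ is a single real number lying in $\PGL_2(\bZ)x\cap \PGL_2(\bZ)y$, so these two orbits coincide.

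For the inclusion $\subseteq$, I would argue using a generating set. Take $T=\smallmat{1 & 1 \\ 0 & 1}$, $U=\smallmat{0 & 1 \\ 1 & 0}$, and $J=\smallmat{-1 & 0 \\ 0 & 1}$, which together generate $\PGL_2(\bZ)$ (the products $T$ and $UJ$ generate $\PSL_2(\bZ)$, while $J$ supplies the determinant-$(-1)$ coset). By induction on the length of a word in these generators expressing $g$, it suffices to show that each generator sends $x$ to a number with the same c.f.e tail. For $T$ this is immediate since only the integer part $a_0$ of $x$ is modified; for $U$, if $x\in(0,1)$ has c.f.e $[0;a_1,a_2,\ldots]$ then $Ux=1/x=[a_1;a_2,a_3,\ldots]$, while if $x>1$ has c.f.e $[a_0;a_1,a_2,\ldots]$ then $Ux=[0;a_0,a_1,a_2,\ldots]$, so in either case the tail of the c.f.e is preserved.

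The main obstacle will be the generator $J$ — negation. For this I would do a short case analysis using the identity $-x=-1+(1-x)$, valid for $x\in(0,1)$. If $x$ has c.f.e $[0;a_1,a_2,\ldots]$ with $a_1\ge 2$, a direct computation yields $1-x=[0;1,a_1-1,a_2,a_3,\ldots]$, whereas if $a_1=1$ one obtains $1-x=[0;a_2+1,a_3,a_4,\ldots]$. In both subcases the tail of $1-x$ (and hence of $-x$) eventually agrees digit-by-digit with a tail of $x$, so $J$ preserves the tail. Combining the three generator verifications with the inductive reduction gives $\subseteq$ and completes the proof.
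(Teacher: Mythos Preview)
The paper does not prove this statement itself; it is quoted as a classical result with a citation to \cite[Theorem~2]{SzuszRockett}, so there is no argument in the paper to compare against.

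Your proof is essentially correct. One small gap worth closing: the induction on word length requires each generator to preserve the tail for \emph{every} irrational $x$, but your case analyses for $U$ and $J$ are written only for $x>0$ and for $x\in(0,1)$ respectively. The reductions to these ranges are straightforward --- for $J$, any irrational $x$ differs from some $x'\in(0,1)$ by an integer, and then $-x$ differs from $1-x'$ by an integer, so your identity for $1-x'$ handles the general case; for $U$ with $x<0$, once $J$ is in hand you may write $1/x=-\bigl(1/(-x)\bigr)$ and invoke the positive case --- but they should be made explicit. (Incidentally, $T$ and $U$ already generate $\PGL_2(\bZ)$, since $T$ and $UTU$ give both elementary transvections and hence all of $\SL_2(\bZ)$; so $J$ is redundant as a generator, though keeping it around does make the negative-$x$ case of $U$ cleaner.)
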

As the c.f.e of $\al\in\qi$ is eventually periodic, it follows from~\eqref{cd} that the orbit $\{S^n\alpha\}_{n\in\bN}$ of $\al$ under the Gauss map is eventually periodic.
\begin{definition}\label{defnuiota}
Let $\al\in\qi$. We denote by $P_\al$ the  period of $\al\mod 1$ under the Gauss map; that is, $P_\alpha\defi\{x_1,\ldots,x_\ell\}\subset [0,1]$ where   
for some $n\ge 0$, $S^n(\al\mod 1)=x_1$ and, $S(x_i)=x_{i+1}$ for all $i<\ell$ and $S(x_\ell)=x_1$. 
We denote by $\nu_{\al}$ the normalized counting measure on $[0,1]$ supported on the period $P_\al$. 
\end{definition}
Let $\iota:\bR\to\PGL_2(\bZ)\backslash\bR$ be the quotient map to the `set of orbits'.
By Theorem~\ref{tail equiv}, for any $\al\in\qi$ we have that
\begin{equation}\label{mdch}
\textrm{For any $\be\in\iota(\al)$, $P_\al=P_\be, \nu_\al=\nu_\be$.}
\end{equation}
 We sometimes write $\nu_{\iota(\al)}, P_{\iota(\al)}$ when we wish to stress this fact.
\begin{lemma}\label{l 143}
Let $\al\in\qi$, for any pattern $w$, the frequency of appearance $D(\al,w)$ of the pattern $w$ in the c.f.e of $\al$ equals $\nu_\al(I_\om)=\frac{\av{P_\al\cap I_w}}{\av{P_\al}}$.
\end{lemma}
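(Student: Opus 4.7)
The plan is to reduce the claim to the standard observation that for a quadratic irrational the Gauss-map orbit is eventually periodic, so Cesàro averages along that orbit agree with uniform averages along the period.

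First, I would use the commutative diagram~\eqref{cd} to rephrase the condition $w=(a_{n+1}(\al),\dots,a_{n+k}(\al))$ in dynamical terms. Since $a_{n+i}(\al)=a_i(S^n(\al \mod 1))$, the pattern $w$ appears at position $n+1$ in the c.f.e of $\al$ if and only if $S^n(\al\mod 1)\in I_w$, where $I_w$ is the cylinder set defined in~\eqref{U w}. Thus
\[
D(\al,w)=\lim_{N\to\infty}\frac{1}{N}\sum_{n=1}^{N}\chi_{I_w}\bigl(S^n(\al\mod 1)\bigr),
\]
so the problem reduces to computing an ergodic-type average of $\chi_{I_w}$ along the forward orbit of $\al\mod 1$ under the Gauss map.

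Next, I would exploit the eventual periodicity furnished by Lagrange's theorem together with Definition~\ref{defnuiota}: there exists $n_0\ge 0$ such that for all $n\ge n_0$ the point $S^n(\al\mod 1)$ lies in $P_\al=\{x_1,\dots,x_\ell\}$, and the map $i\mapsto S(x_i)$ cycles through $P_\al$. Splitting the sum above into the pre-periodic range $1\le n< n_0$ and the periodic tail, the pre-periodic part contributes at most $n_0/N\to 0$. For the periodic tail, on any block of $\ell$ consecutive indices the function $\chi_{I_w}\circ S^n$ takes each value $\chi_{I_w}(x_j)$ exactly once, so a block of length $\ell$ contributes $\sum_{j=1}^{\ell}\chi_{I_w}(x_j)=|P_\al\cap I_w|$. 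Taking $N\to\infty$ yields
\[
D(\al,w)=\frac{|P_\al\cap I_w|}{\ell}=\frac{|P_\al\cap I_w|}{|P_\al|},
\]
which is exactly $\nu_\al(I_w)$ by the definition of the normalized counting measure on $P_\al$ in Definition~\ref{defnuiota}.

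There is essentially no obstacle here; the only minor point worth tracking is the mod-$1$ convention (the paper explicitly identifies $\al$ with $\al\mod 1$, which affects only $a_0$ and not any $a_n$ with $n\ge 1$), and the remark that incomplete trailing blocks of length less than $\ell$ in the final periodic sum contribute only $O(\ell/N)$ and so vanish in the limit. Everything else is bookkeeping around the diagram~\eqref{cd} and the definition of $\nu_\al$.
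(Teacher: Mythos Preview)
Your proof is correct and follows essentially the same approach as the paper's: both reduce the frequency $D(\al,w)$ to the indicator of $I_w$ along the Gauss-map orbit via the commutative diagram~\eqref{cd}, and then use eventual periodicity to identify the Ces\`aro average with the uniform average over $P_\al$. The paper's version is terser (it simply identifies $P_\al$ with the cyclic shifts of the period and says the result follows from~\eqref{frequency} and~\eqref{U w}), while you spell out the block-averaging and the vanishing of the pre-periodic and incomplete-block contributions; the content is the same.
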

\begin{proof}
Let $(a_1,\dots a_k)$ be the period of the c.f.e of $\al$. By~\eqref{cd}, $P_\al=\set{x_1,\dots x_k}$ where $x_i\in[0,1]$ is the number whose c.f.e is given by the infinite concatenation
of   the pattern $(a_i,\dots a_k,a_1,\dots a_{i-1})$. The statement of the Lemma now follows easily from~\eqref{frequency} and~\eqref{U w}.
\end{proof}
 \begin{definition}
Given a finite set of primes $S$ we denote by  $\cO_S\defi\bZ\br{p^{-1}:p\in S}$ the ring of $S$-\textit{integers}. We denote by $\cO_S^\times\defi\set{\pm\prod_{p\in S}p^{n_p}:n_p\in\bZ} $ the group of $S$-units; 
that is, the group of invertible elements in $\cO_S$. 
\end{definition}
There is a natural embedding $\cO_S^\times\hra\PGL_2(\cO_S)$ given by $q\mapsto \diag{q,1}$. We denote by $\ga_q=\diag{q,1}$ the image of $q$ under
this embedding. Note that $q\al=\ga_q\al$.
\begin{definition}
For $\ga\in\PGL_2(\bQ)$ let $\smallmat{a&b\\c&d}\in \on{Mat}_{2\times 2}(\bZ)$ be the unique representative of $\ga$ with co-prime entries. 
We define the \textit{height} of $\ga$ to be $\on{ht}(\ga)\defi\av{\det \smallmat{a&b\\c&d}}.$
Given a rational number $q= \pm\prod_1^\ell p_i^{e_i}$, where the $p_i$'s are distinct primes and $e_i\in\bZ$,   
we define the \textit{height} of $q$ to be
$\on{ht}(q)\defi\height{\ga_q}=\prod_1^\ell p_i^{\av{e_i}}.$
\end{definition}
For $\ga\in\PGL_2(\bQ)$, $\height{\ga}$ measures how far $\ga$ is from $\PGL_2(\bZ)$. As the $\PGL_2(\bZ)$ action does not change the period, it is natural to expect that a statement 
regarding the evolution of $\nu_{\ga\al}$ will depend on $\height{\ga}$. This is indeed the case as will be seen shortly.
\subsection{Results}\label{results 11}
Our results are concerned with the convergence  $\nu_{\ga\al}\to\nu_{\on{Gauss}}$ and the growth of the length of the period $\av{P_{\ga\al}}$ as $\height{\ga}\to\infty$ and $\ga\in\PGL_2(\cO_S)$ for a fixed finite set of primes $S$ and $\al\in\qi$. 
We give estimates
on error terms and so refer to our results as \textit{effective}. In these estimates there appears an exponent $\frac{25}{64}\le \del_0\le \frac{1}{2}$ whose exact value is not known (although according to the Ramanujan conjecture $\del_0=\frac{1}{2}$). The bigger it is the stronger the statements are and the best known lower bound
for it to this date is $\del_0\ge \frac{25}{64}$; a bound given by Kim and Sarnak in the appendix of~\cite{KimSarnak}\footnote{This parameter relates to the representation theory of $\GL_2$
(see~\S\ref{effective mixing}).}.

Naturally, our results involve comparison of integrals with respect to measures which are mutually singular, and in order to make sense of an error term we need to restrict our attention to integrals of functions with some controlled behavior. This is usually done by looking at smooth functions and considering Sobolev norms. We choose to work with the more primitive notion of Lipschitz functions.
\begin{definition}
Let $(X,d)$ be a metric space. For any $\ka>0$ we denote by $\on{Lip}_\ka(X)$ the space of Lipschitz functions $f:X\to\bC$ with $\ka$ as a Lipschitz constant. We sometimes refer to such functions as $\ka$-Lipschitz.
\end{definition}
The following Theorem is deduced from Theorem~\ref{mtcf} in~\S\ref{applications}.
\begin{theorem}\label{mtcf11}
Let $S$ be a finite set of primes, $\al\in \qi$. 
\begin{enumerate}
\item\label{part1mt}  If $\set{q_n}\subset\cO_S^\times$ is a sequence such that $\height{q_n}\to\infty$ then $\nu_{q_n\al}\to\nu_{\on{Gauss}}.$ More precisely, given
$\eps>0$, $q\in\cO_S^\times$,
and $f\in\on{Lip}_\ka([0,1])$ the following estimate holds
\begin{equation}\label{eqmtcf1}
\av{\int_0^1fd\nu_{\on{Gauss}}-\int_0^1fd\nu_{q\al}}\ll_{\al,S,\eps}\max\set{\norm{f}_\infty,\ka}\on{ht}(q)^{-\frac{\del_0}{6}+\eps}.
\end{equation}
\item\label{part2mt}  Assume that all the primes in $S$ do not split in the extension $\bQ(\al)$ of $\bQ$.
Then, if $\set{\ga_n}\subset\PGL_2(\cO_S)$ is a sequence such that $\height{\ga_n}\to\infty$ then $\nu_{\ga_n\al}\to\nu_{\on{Gauss}}$.  More precisely, given $\eps>0$, $\ga\in\PGL_2(\cO_S)$,
and $f\in\on{Lip}_\ka([0,1])$ the following estimate holds
\begin{equation}\label{eqmtcf2}
\av{\int_0^1fd\nu_{\on{Gauss}}-\int_0^1fd\nu_{\ga\al}}\ll_{\iota(\al),S,\eps}\max\set{\norm{f}_\infty,\ka}\on{ht}(\ga)^{-\frac{\del_0}{6}+\eps},
\end{equation}
\item\label{part3mt} If one of the primes in $S$ splits in the extension $\bQ(\al)$ of $\bQ$, then there exists sequences $q_n\in\cO_S^\times, \ga_n\in\PGL_2(\bZ))$ with $\height{q_n}\to\infty$
such that $\nu_{q_n\ga_n\al}$ does not converge to $\nugauss$ and in particular, the implicit constant in~\eqref{eqmtcf1} cannot be taken to be uniform on the orbit 
$\iota(\al)$ in contrast with~\eqref{eqmtcf2}\footnote{In fact, it is possible to choose $q_n$ so that $\nu_{q_n\ga_n\al}$ is a constant sequence.}.
\end{enumerate}
\end{theorem}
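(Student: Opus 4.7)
The plan is to deduce Theorem~\ref{mtcf11} from the geometric equidistribution result (Theorem~\ref{mtcf}, itself a continued-fraction reformulation of the main Theorem~\ref{general case} on $S$-Hecke graphs) together with the translation tool (Theorem~\ref{toest}) that links closed geodesics on the modular surface to periodic orbits of the Gauss map. The first step is to realize $\nu_{\ga\al}$ as the pushforward, under the standard boundary map, of the arc-length probability measure $\mu_{\ga\al}$ on the closed geodesic $\cG_{\ga\al}$ in $\PGL_2(\bZ)\backslash\PGL_2(\bR)$. The stabilizer $T_\al\subset \PGL_2$ of $\al$ is a $\bQ$-anisotropic torus splitting over $\bQ(\al)$, and the assignment $\ga\mapsto\cG_{\ga\al}$ identifies the orbit $\PGL_2(\cO_S)\cdot\al$ with a set of vertices in the $S$-Hecke graph at Hecke-distance proportional to $\log\height{\ga}$ from $\cG_\al$.

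For parts~(\ref{part1mt}) and~(\ref{part2mt}) I would apply Theorem~\ref{mtcf} to the path emanating from $\cG_\al$ in the direction determined by $\ga$. In part~(\ref{part1mt}) the elements $\ga_q=\diag{q,1}$ move $\cG_\al$ along the purely diagonal direction and never degenerate. In part~(\ref{part2mt}), the hypothesis that no prime of $S$ splits in $\bQ(\al)$ ensures that $T_\al$ is anisotropic at every $p\in S$, so $T_\al(\cO_S)/T_\al(\bZ)$ is finite and the generalized branches in the $S$-Hecke graph emanating from $\cG_\al$ are non-degenerate in the sense required by Theorem~\ref{general case}. The theorem then yields equidistribution of $\mu_{\ga\al}$ towards Haar measure with a polynomial rate governed by the spectral exponent $\del_0$; pushing this estimate through Theorem~\ref{toest} produces the bounds~\eqref{eqmtcf1} and~\eqref{eqmtcf2}, with the factor $\tfrac{1}{6}$ in the exponent reflecting the Sobolev-to-Lipschitz comparison and the cusp truncation built into the boundary construction of~\S\ref{construction of phi}.

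For part~(\ref{part3mt}), when some $p\in S$ splits in $\bQ(\al)$ the torus $T_\al$ splits over $\bQ_p$, so $T_\al(\cO_S)/T_\al(\bZ)$ has positive rank and there exist elements $\tau_n\in\PGL_2(\cO_S)$ with $\height{\tau_n}\to\infty$ and $\tau_n\al=\al$. Rearranging such $\tau_n$ into the form $q_n\ga_n$ with $q_n\in\cO_S^\times$ and $\ga_n\in\PGL_2(\bZ)$, at the cost of replacing $\tau_n$ by a $\PGL_2(\bZ)$-translate so that $q_n\ga_n\al$ stays inside a single $\PGL_2(\bZ)$-orbit, yields via~\eqref{mdch} a constant sequence $\nu_{q_n\ga_n\al}\ne\nugauss$, while $\height{q_n}\to\infty$. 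This simultaneously establishes failure of convergence and shows the implied constant in~\eqref{eqmtcf1} cannot be made uniform over $\iota(\al)$.

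The chief obstacle I anticipate lies in the translation step from the geometric equidistribution to the continued-fraction statement, rather than in the Hecke-graph input itself: controlling how a Lipschitz test function on $[0,1]$ lifts, through the map $\varphi$ of~\S\ref{construction of phi}, to a test function on $\PGL_2(\bZ)\backslash\PGL_2(\bR)$ with a Sobolev bound compatible with the cusp excursions of $\cG_{\ga\al}$, while preserving the exponent $\del_0/6$, is the substantive analytic content of Theorem~\ref{toest}. A secondary technical point is the explicit rearrangement in part~(\ref{part3mt}); one has to verify that strong approximation at the split prime $p$ permits the decomposition $\tau_n = q_n\ga_n \cdot (\text{correction})$ in which the correction can be absorbed without altering the $\PGL_2(\bZ)$-orbit of $q_n\ga_n\al$.
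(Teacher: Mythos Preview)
Your approach to parts~\eqref{part1mt} and~\eqref{part2mt} is essentially the paper's, but with a structural redundancy: Theorem~\ref{mtcf} already has $\nu_{\ga\al}$ (not $\mu_{\ga\al}$) in its conclusion, so Theorem~\ref{toest} has been absorbed into its proof and need not be invoked again.  The paper's deduction of~\eqref{eqmtcf1} from Theorem~\ref{mtcf}\eqref{p.t.1} hinges on one point you leave vague: the elements $\ga_q=\diag{q,1}$ do not all lie on a single branch, but on a \emph{finite} family of rational generalized branches indexed by the set $\Om=\{(\om_p)_{p\in S}:\om_p\in\{\smallmat{1&0\\0&1},\smallmat{0&1\\1&0}\}\}$ (the choice for each $p$ depending on the sign of the $p$-adic valuation of $q$).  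It is this finiteness, together with Theorem~\ref{mtcf}\eqref{p.t.4}, that makes the implicit constant in~\eqref{eqmtcf1} independent of $q$.  For part~\eqref{part2mt} the paper simply cites Theorem~\ref{mtcf}\eqref{p.t.2}, then observes that replacing $\ga$ by $\ga\ga_0$ with $\ga_0\in\Ga_\infty$ does not alter the right-hand side, whence the constant depends only on $\iota(\al)$.

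Your route to part~\eqref{part3mt} is genuinely different from the paper's and somewhat more direct.  The paper appeals to Theorem~\ref{mtcf}\eqref{p.t.3}, producing $\del_n\in\Ga_S$ along a degenerate branch with $\av{P_{\del_n\al}}$ bounded, and only then decomposes $\del_n=\ga_n'\diag{q_n,1}\ga_n$.  You instead use the $S$-unit theorem for the torus $T_\al$: if $p\in S$ splits in $\bQ(\al)$ then $\on{rank}\,T_\al(\cO_S)>\on{rank}\,T_\al(\bZ)$, so there exist $\tau_n\in T_\al(\cO_S)$ with $\height{\tau_n}\to\infty$ and $\tau_n\al=\al$.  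Writing $\tau_n=\ga_n'\diag{q_n,1}\ga_n$ (as in Remark~\ref{r.250}(2)) gives $\nu_{q_n\ga_n\al}=\nu_{\tau_n\al}=\nu_\al$ by~\eqref{mdch}, a constant sequence with $\height{q_n}=\height{\tau_n}\to\infty$.  This delivers the footnoted strengthening directly, whereas the paper obtains only non-convergence and leaves the constant-sequence claim as a remark.  No ``strong approximation correction'' is needed here: the decomposition $\tau_n=\ga_n'\diag{q_n,1}\ga_n$ is exact, and~\eqref{mdch} handles the left factor $\ga_n'$.
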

\begin{remark}\label{r.250}
\begin{enumerate}
\item Under the assumption that no prime in $S$  splits in $\bQ(\al)$, \eqref{eqmtcf1} follows from~\eqref{eqmtcf2} by choosing  $\ga=\ga_q$.   
\item It is an exercise to show that any $\ga\in\PGL_2(\cO_S)$ can be written as a product $\ga=\ga_1\ga_q\ga_2$, where $\ga_i\in\PGL_2(\bZ)$ 
and $q\in\cO_S^\times$ (see the proof of Corollary~\ref{sphere again new}). It now follows from~\eqref{mdch} that $\nu_{\ga\al}=\nu_{q\be}$ where $\be=\ga_2\al$, 
and so although it seems more restrictive at first glance, instead of studying 
the evolution of $\nu_{\ga\al}$ as $\ga\in\PGL_2(\cO_S)$, it is enough to consider the evolution of $\nu_{q\be}$ as $q\in\cO_S^\times$, $\be\in\iota(\al)$.
\end{enumerate}
\end{remark}

When we use Theorem~\ref{mtcf11} to try and estimate the frequency of a pattern in the period of the c.f.e of $\ga\al$ we obtain the following 
\begin{corollary}\label{effective pattern frequency new}
Let $S$ be a finite set of primes and $\al\in\qi$. For any finite pattern $w=(w_1\dots w_k)$ of digits, and any $q\in\cO_S$
\begin{equation}\label{hopefully last new}
\av{D(q\al,w)-\nu_{\on{Gauss}}(I_w)}\ll_{\al,S,\eps}\av{I_w}^{-1}\on{ht}(q)^{-\frac{\del_0}{12}+\eps}.
\end{equation}
Moreover, if all the primes in $S$ do not split in the extension $\bQ(\al)$ of $\bQ$, then for any $\ga\in\PGL_2(\cO_S)$
\begin{equation}\label{hopefully last  new ns}
\av{D(\ga\al,w)-\nu_{\on{Gauss}}(I_w)}\ll_{\iota(\al),S,\eps}\av{I_w}^{-1}\on{ht}(\ga)^{-\frac{\del_0}{12}+\eps}.
\end{equation}
\end{corollary}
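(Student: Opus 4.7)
The plan is to translate the pattern-frequency statement into an integral against the measure $\nu_{q\al}$ by means of Lemma~\ref{l 143}, and then invoke Theorem~\ref{mtcf11} after smoothing $\chi_{I_w}$ into a Lipschitz test function. Lemma~\ref{l 143} gives $D(q\al,w)=\nu_{q\al}(I_w)$, so the target estimate is equivalent to a bound on $|\nu_{q\al}(I_w)-\nu_{\on{Gauss}}(I_w)|$. The main technical obstacle is that $\chi_{I_w}$ is not Lipschitz, so Theorem~\ref{mtcf11} cannot be applied to it directly; the remedy is a standard approximation/optimization argument that converts a Lipschitz equidistribution rate into one for indicator functions.

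Concretely, I would fix a scale $\eta\in(0,|I_w|/2]$ and construct $(1/\eta)$-Lipschitz ``trapezoid'' functions $f_\eta^\pm\colon[0,1]\to[0,1]$ satisfying $f_\eta^-\le \chi_{I_w}\le f_\eta^+$ and $f_\eta^+=f_\eta^-$ outside an $\eta$-neighborhood $N_\eta$ of $\partial I_w$. Since the Gauss--Kuzmin density is bounded above by $1/\log 2$ on $[0,1]$ and the Lebesgue measure of $N_\eta$ is at most $4\eta$, one has $\int(f_\eta^+-f_\eta^-)\,d\nu_{\on{Gauss}}\ll \eta$. Applying Theorem~\ref{mtcf11}\,(\ref{part1mt}) to each $f_\eta^\pm$, which has sup-norm at most $1$ and Lipschitz constant $1/\eta$, and then squeezing $\nu_{q\al}(I_w)$ and $\nu_{\on{Gauss}}(I_w)$ between the resulting integrals, yields
\[ |\nu_{q\al}(I_w)-\nu_{\on{Gauss}}(I_w)|\ll_{\al,S,\eps}\eta+\eta^{-1}\on{ht}(q)^{-\frac{\del_0}{6}+\eps}. \]

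The final step is to optimize in $\eta$. The unconstrained optimum $\eta^*\asymp\on{ht}(q)^{-\frac{\del_0}{12}+\eps/2}$ gives an $\on{ht}(q)^{-\frac{\del_0}{12}+\eps}$ bound, which is stronger than what is claimed; since $|I_w|^{-1}\ge 1$, this proves~\eqref{hopefully last new} whenever $\eta^*\le|I_w|/2$. In the complementary regime $|I_w|<2\eta^*$ one has $|I_w|^{-1}\on{ht}(q)^{-\frac{\del_0}{12}+\eps}\gg \on{ht}(q)^{\eps/2}\gg 1$, so the right-hand side of~\eqref{hopefully last new} absorbs the trivial bound $|D(q\al,w)-\nu_{\on{Gauss}}(I_w)|\le 1$. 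The second inequality~\eqref{hopefully last  new ns} is obtained in identical fashion, using Theorem~\ref{mtcf11}\,(\ref{part2mt}) in place of~(\ref{part1mt}); this only replaces the dependence of the implicit constant on $\al$ by dependence on $\iota(\al)$, consistently with the $\PGL_2(\bZ)$-invariance recorded in~\eqref{mdch}.
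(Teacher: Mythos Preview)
Your proposal is correct and follows exactly the approach the paper intends: the paper's own proof is a two-sentence sketch that invokes Lemma~\ref{l 143} to identify $D(\be,w)$ with $\nu_\be(I_w)$, remarks that $\chi_{I_w}$ must be replaced by a Lipschitz approximation before applying Theorem~\ref{mtcf11}, notes that this halves the exponent, and explicitly leaves the details to the reader. You have supplied precisely those details---the trapezoid squeeze, the optimization in $\eta$, and the trivial-bound argument in the regime where $|I_w|$ is too small for the optimal $\eta$ to be admissible---so there is nothing to add.
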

\begin{proof}
By Lemma~\ref{l 143},  $D(\be,w)=\nu_{\iota(\be)}(I_w)$ for any $\be\in\qi$.
The exponent in~\eqref{eqmtcf1} (resp.\ \eqref{eqmtcf2}) is cut in half in~\eqref{hopefully last new} (resp.\ \eqref{hopefully last  new ns}) as a result of the fact that $\chi_{I_w}$ is not Lipschitz and one needs to use an approximation of it 
in order to apply Theorem~\ref{mtcf11}. We leave the details to the reader.
\end{proof}
 Theorem~\ref{mtcf11} raises a natural question: Is it true that $\nu_{q_n\al}\to\nu_{\on{Gauss}}$ for any sequence of rationals $q_n$ with $\on{ht}(q_n)\to\infty$? 
  The following example which was essentially communicated to us by A. Ubis shows that the answer is negative and so the assumption that $q_n\in\cO_S$ for a fixed finite set of primes $S$ is crucial.
\begin{example}
Let $D$ be a fundamental discriminant such that the negative Pell equation $x^2-Dy^2=-1$ has an integer solution (see~\cite{LagariasComputationalComplexity},\cite{EtienneAnnals} for example). A solution $x=k_1,y=n_1$ to the equation corresponds to a unit $\eps_1\defi k_1+n_1\sqrt{D}$ in the ring $\bZ(\sqrt{D})$ of norm $-1$ and in turn, the odd powers $\eps_1^j=k_j+n_j\sqrt{D}$  give rise
 to infinitely many further solutions
of the negative Pell equation. For odd $j$ let $\al_j$ solve the equation $x=2k_j+\frac{1}{x}$. That is, the c.f.e of $\al_j$ is purely periodic with period of length $1$ of digit 
$2k_j$ (note that here we abuse the notation introduced above and we do record the 0 digit). Solving for $x$ in the above equation we see that $\al_j$ could be chosen to be $k_j+\sqrt{k_j^2+1}$. As 
$(k_j,n_j)$ solve the negative Pell equation for $D$ we get $\al_j=k_j+n_j\sqrt{D}$ which shows that the measures $\nu_{n_j\sqrt{D}}$ are not converging to the Gauss-Kuzmin measure and in fact are atomic measures supported on single points.   
\end{example}
In~\S\ref{applications} we prove Theorem~\ref{gpth} which discusses the  length of the period of the c.f.e of $\ga\al$, where 
$\al\in\qi$ is fixed and $\ga\in\PGL_2(\cO_S)$ varies. We state below Theorem~\ref{period cor new} which is an adaptation
of Theorem~\ref{gpth} that uses only the terminology presented so far. 
It is deduced from Theorem~\ref{gpth} in~\S\ref{applications}.
Theorem~\ref{period cor new} solves a conjecture of Hickerson~\cite{Hickerson} and 
strengthens~\cite[Theorem 3]{Cohn}. 
Note that  by the argument in Lemma~\ref{l 143}, for any $\al\in\qi$, $\av{P_\al}$ is the \textit{length of the period of the c.f.e of $\al$}. 

In the Appendix of~\cite{LagariasComputationalComplexity}, using the methods of Dirichlet~\cite{Dirichlet}, Lagarias shows 
that, under some restrictive assumptions on $\al$,
one has  that for any integer $k$ there exists a constant $C$ for which, $C\frac{k^n}{n}<\av{P_{k^n\al}}$. 
Under some restrictive assumptions on $\al$, Grisel~\cite{Grisel} proved a stronger estimate of the form $C_1k^n\le \av{P_{k^n\al}}\le C_2 k^n$.
The following Theorem strengthens these results in several respects. 
\begin{theorem}\label{period cor new}
Let $S$ be a finite set of primes. There exists  a positive function $c(\al,\ga)$ on the set $\qi\times \PGL_2(\cO_S)$ satisfying the following: For any $\al\in\qi$,
\begin{enumerate}
\item 
\enu{
\item\label{gpthintro1} 
For any  $\eps>0$, and $q\in\cO_S^\times$
\begin{equation}\label{eqgpthintro1}
\av{P_{q\al}}=c(\al,\ga_q)\height{\ga_q} +O_{\al,S,\eps}(1)\height{\ga_q}^{1-\frac{\del_0}{6}+\eps}.
\end{equation}
Moreover, if all the primes in $S$ do not split in the quadratic field $\bQ(\al)$, then for any $\eps>0$, $\ga\in\PGL_2(\cO_S)$
\begin{equation}\label{eqgpthintro2}
\av{P_{\ga\al}}=c(\al,\ga)\height{\ga} +O_{\iota(\al),S,\eps}(1)\height{\ga}^{1-\frac{\del_0}{6}+\eps}.
\end{equation}
\item\label{gpthintro4}  The function $c$ attains only finitely many values on $\cO_S^\times$; that is, \\
$\av{\set{c(\al,\ga_q):q\in\cO_S^\times}}<\infty$.
\item\label{gpthintro1c} If $q_n=\ell_1^{(n)}/\ell_2^{(n)}$, where $\ell_i^{(n)}\in \cO_S^\times\cap\bN$  satisfies $\ell_i^{(n)}|\ell_i^{(n+1)}$ for $i=1,2$, then $c(\al,\ga_{q_n})$ stabilizes.
}
\item\label{gpthintro3} $\sup\set{c(\al,\ga):\ga\in\PGL_2(\cO_S)}\ll_{\iota(\al),S} 1$.
\item\label{gpthintro5} All the primes in $S$ do not split in the quadratic field $\bQ(\al)$ if and only if 
\begin{equation}\label{eqgpthintro3}
\inf\set{ c(\al,\ga):\ga\in \PGL_2(\cO_S)}>0.
\end{equation}
\end{enumerate}
\end{theorem}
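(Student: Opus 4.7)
The plan is to deduce Theorem~\ref{period cor new} by pairing the equidistribution estimate of Theorem~\ref{mtcf11} with a ``total mass'' formula for a weighted sum over the period. Equidistribution controls the \emph{normalized} sum $\frac{1}{\av{P_{\ga\al}}}\sum_{x\in P_{\ga\al}}\phi(x)$; if one can independently evaluate the unnormalized sum to leading order in $\height{\ga}$, then solving for $\av{P_{\ga\al}}$ produces the desired asymptotic, and the qualitative properties of $c(\al,\ga)$ inherit from those of the leading coefficient of the unnormalized sum.

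Concretely, I would fix a positive Lipschitz test function $\phi\colon[0,1]\to\bR$ with $\int\phi\,d\nugauss\ne 0$; a smoothed truncation of $-\log x$, whose integral against $\nugauss$ is L\'evy's constant, is a natural choice. Theorem~\ref{mtcf11} then provides
\[
\sum_{x\in P_{\ga\al}}\phi(x)=\av{P_{\ga\al}}\int\phi\,d\nugauss+O\pa{\av{P_{\ga\al}}\height{\ga}^{-\frac{\del_0}{6}+\eps}}.
\]
The missing ingredient is a total-growth identity
\[
\sum_{x\in P_{\ga\al}}\phi(x)=c_\phi(\al,\ga)\height{\ga}+O\pa{\height{\ga}^{1-\frac{\del_0}{6}+\eps}}
\]
for a positive function $c_\phi$ on $\qi\times\PGL_2(\cO_S)$ (a form of the forthcoming Lemma~\ref{total growth}). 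Dividing the two estimates and inverting yields $\av{P_{\ga\al}}=c(\al,\ga)\height{\ga}+O(\height{\ga}^{1-\del_0/6+\eps})$ with $c(\al,\ga):=c_\phi(\al,\ga)/\int\phi\,d\nugauss$, establishing~\eqref{eqgpthintro1}--\eqref{eqgpthintro2}. The remaining structural claims then translate from $c_\phi$ to $c$: for~\ref{gpthintro4} and~\ref{gpthintro1c}, I expect $c_\phi(\al,\ga_q)$ to depend, modulo the $\PGL_2(\bZ)$-equivalence preserving the period that is recalled in Remark~\ref{r.250}, only on the sign pattern of the exponents in $q=\pm\prod_{p\in S}p^{n_p}$; this invariant is finitely-valued and stabilises along any divisibility-compatible sequence. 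Part~\ref{gpthintro3} reduces to a uniform upper bound on the ``geodesic length per unit height''. For the dichotomy~\ref{gpthintro5}, the forward direction is an $S$-arithmetic compactness argument: when every prime of $S$ is inert or ramified in $\bQ(\al)$ the torus stabilising $\al$ is anisotropic at each $p\in S$, forcing the Hecke walk to spend a uniformly positive amount of time per step; the converse is already witnessed by part~\ref{part3mt} of Theorem~\ref{mtcf11} together with the Pell-equation example, which produce sequences with $\av{P}$ bounded while $\height{\ga}\to\infty$, forcing $c\to 0$.

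The main obstacle is the total-growth estimate itself. Its proof requires lifting the periodic geodesic associated to $\al$ to the $S$-arithmetic homogeneous space and analysing the closed geodesic traced out after the Hecke correspondence by $\ga$ is applied; one must show that this geodesic has length proportional to $\height{\ga}$, with proportionality constant matching $c_\phi$ and a secondary term compatible with the spectral-gap rate that drives Theorem~\ref{mtcf11}. A soft ergodic argument will not suffice here, as the error term has to be effective. Finally, the translation between the discrete sum $\sum_{x\in P_{\ga\al}}\phi(x)$ over continued-fraction digits and a geometric integral along the closed geodesic is expected to go through the yet-to-be-stated Theorem~\ref{toest}, which is why working with a Lipschitz rather than merely measurable $\phi$ is convenient: the Lipschitz norm of $\phi$ is the only analytic information about $\phi$ that appears in the bound from Theorem~\ref{mtcf11}, and it controls the error produced when $\phi$ is pushed through the coding map.
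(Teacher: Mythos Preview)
Your overall architecture---equidistribution plus an independent total-growth input---is right, and your intuition about the structural parts matches the paper: \ref{gpthintro4} via the finite set of sign patterns (this is exactly the set $\Om$ of $2^{\av{S}}$ branches in the proof of Theorem~\ref{mtcf11}), \ref{gpthintro1c} via divisibility, and \ref{gpthintro5} via compactness of the anisotropic torus all reduce to Lemma~\ref{total growth}. The mismatch is in how you package the total-growth input.

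Lemma~\ref{total growth} does not give a formula for $\sum_{x\in P_{\ga\al}}\phi(x)$ with a Lipschitz $\phi$ on $[0,1]$. It gives an \emph{exact} identity for the geodesic length, $t_{x_\onh}=c_\cL(\onh)\,\onh$, proved by an elementary $p$-adic computation (Lemma~\ref{growth again}) with no error term whatsoever. The only $\phi$ for which $\sum_{x\in P}\phi(x)$ equals a quantity you can evaluate independently of the period is the return-time function to the cross-section, which is essentially $-2\log$ and is not Lipschitz; if you truncate it, bounding the tail $\sum(r-\phi)(x)$ effectively already requires the cross-section analysis that constitutes the proof of Theorem~\ref{toest}. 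So the displayed ``total-growth identity'' you posit is not a form of Lemma~\ref{total growth}; it is Lemma~\ref{total growth} together with Theorem~\ref{toest}, and once you have the latter the detour through a test function $\phi$ is unnecessary.

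The paper proceeds directly. It sets $c(\al,\ga)=\frac{1}{j_\al c_0}\,t_{\ga\al}/\height{\ga}$; the structural parts then follow immediately from Lemma~\ref{total growth}. For the asymptotic \ref{gpthintro1}, part~\eqref{toest12} of Theorem~\ref{toest} gives $\bigl|\av{P_\al}/t_\al-1/(j_\al c_0)\bigr|\ll_\eps T^{-1/3+\eps}$, where $T^{-1}$ is the equidistribution error for $\mu_\al$ on $X_\infty$ supplied by Theorem~\ref{general case}; all the truncation difficulties (the unbounded cross-section indicator, the cusp) are handled once inside that proof via the cut-off $\vphi_{\rho,M}$ of Lemma~\ref{t.l}. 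Multiplying by the exact relation $t_{\ga\al}=j_\al c_0\,c(\al,\ga)\height{\ga}$ then yields~\eqref{eqgpthintro1}--\eqref{eqgpthintro2}.
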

As an immediate corollary we have for example the following
\begin{corollary}
For any $\al\in\qi$ and any positive integer $k$,
$\lim_n \frac{\av{P_{k^n\al}}}{k^n}$ exists and is a positive real number. 
\end{corollary}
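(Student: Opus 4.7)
The plan is to apply Theorem~\ref{period cor new} to the sequence $q_n = k^n$. First I would take $S$ to be the (finite) set of primes dividing $k$, so that $k \in \cO_S^\times \cap \bN$ and hence $k^n \in \cO_S^\times$ for every $n$. Since $k^n$ is a positive integer whose prime factorization involves only non-negative exponents, the definition of height gives $\height{\ga_{k^n}} = k^n$.

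With this setup I invoke part~\ref{gpthintro1} of Theorem~\ref{period cor new} with $q = k^n$ to obtain
$$\av{P_{k^n\al}} = c(\al,\ga_{k^n})\, k^n + O_{\al,S,\eps}(1)\, k^{n(1-\frac{\del_0}{6}+\eps)},$$
and dividing by $k^n$ yields
$$\frac{\av{P_{k^n\al}}}{k^n} = c(\al,\ga_{k^n}) + O_{\al,S,\eps}(1)\, k^{-n(\frac{\del_0}{6}-\eps)}.$$
Choosing any $\eps < \del_0/6$ makes the error term tend to $0$ as $n\to\infty$, so it only remains to show that the main term $c(\al,\ga_{k^n})$ has a limit.

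For this I would apply part~\ref{gpthintro1c} of the theorem with $\ell_1^{(n)} = k^n \in \cO_S^\times \cap \bN$ and $\ell_2^{(n)} = 1$; the divisibility hypothesis $\ell_i^{(n)}\mid\ell_i^{(n+1)}$ holds trivially in both cases, so the theorem guarantees that the sequence $c(\al,\ga_{k^n})$ stabilizes to some value $c$. Positivity is then immediate from the assertion in Theorem~\ref{period cor new} that $c(\cdot,\cdot)$ is a positive function on $\qi \times \PGL_2(\cO_S)$. Combining the two displays gives $\lim_n \av{P_{k^n\al}}/k^n = c > 0$.

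I do not foresee a substantive obstacle: the corollary is essentially a direct packaging of parts~\ref{gpthintro1} and~\ref{gpthintro1c}. The one point that requires a moment's thought is that one cannot argue naively that $c(\al,\ga_{k^n})$ is independent of $n$ — a priori the leading constant may depend on $\ga$ — and it is precisely part~\ref{gpthintro1c}, applied along the divisibility chain $k\mid k^2 \mid k^3 \mid \cdots$, that secures the existence of the limit.
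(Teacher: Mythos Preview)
Your proof is correct and is exactly the argument the paper intends: the corollary is stated as an immediate consequence of Theorem~\ref{period cor new}, and you have unpacked precisely the relevant parts~\eqref{gpthintro1} and~\eqref{gpthintro1c} together with the positivity of $c$.
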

\begin{remark}\label{short r.1 new}
The first part of Theorem~\ref{thm:Main Theorem} is obtained from Corollary~\ref{effective pattern frequency new} by taking $q=k^n$, the Kim-Sarnak exponent $\del_0=\frac{25}{64}$, and choosing $\eps = \frac{1}{768}$ so that 
$-\frac{\del_0}{12}+\eps=-\frac{1}{32}$. The second part of Theorem~\ref{thm:Main Theorem} is obtained similarly from Theorem~\ref{period cor new}\eqref{gpthintro1}.
\end{remark}

\subsection{References to existing results}\label{refs}
Although the question of the evolution of the c.f.e along arithmetically defined sequences in a fixed quadratic field is extremely natural, we did not find too many relevant 
papers to cite. Some earlier works studying
the statistics of the period `in average' (and also not in a fixed field), were initiated by Arnold (see~\cite{Arnold1},\cite{Arnold2},\cite{Lerner} and the references therein).
See also~\cite{Pollicott}. Other works, mostly related to the length of the period,
which the reader might find related, may be found for example in many of the papers of Golubeva (such as~\cite{Golubeva}) and in~\cite{Grisel},\cite{BugeaudFlorian}\cite{MendesFranceProblems},\cite{CorvajaZannier},\cite{Cohn},\cite{Hickerson},\cite{Matthews}. Standing out in
this context is the recent paper of McMullen which provides examples of sequences of quadratic irrationals in a fixed quadratic field with uniformly bounded c.f.e digits~\cite{McMullenUniformlyBounded}. 
We suspect that it should be very interesting to compare in detail how McMullen's results fit together with the results of the present paper. 

As for results regarding periodic geodesics the situation is completely different and we will not attempt to summarize the relevant results that appeared in the literature.
We  comment though, that as will be explained in~\S\ref{other arguments}, our main Theorem~\ref{general case} is closely related to the work of Benoist and Oh~\cite{BenoistOhGafa}. 
\subsection{Some open problems}\label{open problems}
We list below a few questions which emerge from our discussion and remain unsolved. Each of the problems below have a corresponding problem stated in terms of
periodic geodesics on the modular surface.
\begin{enumerate}
\item Give satisfactory sufficient conditions on a sequence of rationals $q_n$ to ensure that for a quadratic irrational $\al$, the sequence of measures $\nu_{q_n\al}$ equidistribute
to the Gauss-Kuzmin measure $\nu$. It might be interesting to replace the 
quantifiers
and allow the conditions to depend on $\al$.
\item Is it true that for a quadratic irrational $\al$ which is not a unit in the ring of integers of $\bQ(\al)$, 
the sequence of measures $\nu_{\al^n}$ always equidistribute to the Gauss-Kuzmin measure along the subsequence of $n$'s for which
$\al^n$ is irrational (see~\cite{CorvajaZannier}). Note that our results deal with the case $\al=\sqrt{d}$.
\item Let $p_n$ be an enumeration of the primes. Are there any quadratic irrationals $\al$ for which $\nu_{p_n\al}$ equidistribute to the Gauss-Kuzmin measure.
\item Is it true that for any quadratic irrational $\al$ there exist a sequence of distinct primes $p_n$ so that $\nu_{p_n\al}$ equidistribute to the Gauss-Kuzmin measure.
\end{enumerate}
\subsection{Acknowledgments}
We benefited from numerous stimulating conversations with Manfred Einsiedler, Elon Lindenstraus and various other people which we try to name below. We would like to express our gratitude and appreciation to them for the generosity 
in which they exchange valuable ideas and create an `open source' healthy environment in the community doing Homogeneous Dynamics. Thanks are also due to 
Yann Bugeaud, \'Etienne Fouvry, Tsachik Gelander, Alex Gorodnik, Chen Meiri, Philippe Michel, Shahar Mozes, Hee Oh, Peter Sarnak, Adrian Ubis, Akshay Venkatesh, and Barak Weiss. 

Both Authors enjoyed the warm hospitality of the Centre Interfacultaire Bernoulli and the GANT semester held there.

M.A acknowledges the support of the Advanced research Grant 226135 from the European Research Council, the ISEF foundation, the Ilan and Asaf Ramon memorial foundation, and the Hoffman Leadership and Responsibility fellowship program at the Hebrew University of Jerusalem.

U.S acknowledges the  support of the Advanced research Grant 228304 from the European Research Council.

%
%
%
%
%
%
%
%
\section{Preliminaries}\label{preliminaries} 
In this section we fix the notation that we will use in~\S\ref{S hecke}--\S\ref{applications}. In~\S\ref{potoest},\ref{construction of phi} our notation will slightly vary as will be explained at the beginning of~\S\ref{potoest}.

For a prime $p$ we let $\bQ_p$ denote the field of  p-adic numbers and let $\bZ_p$ be the ring of $p$-adic integers. We sometimes denote $\bQ_\infty = \bR$. 
Let $\textbf{P}\defi\set{p\in\bN:p\textrm{ is a prime}}$. Given $S\subset \tbf{P}$ we denote $S^*=S\cup\set{\infty}$. The set $\tbf{P}^*$ will be referred to as the set of 
\textit{places} of $\bQ$ -- the primes being the \textit{finite places}. 

Let $S\subset \textbf{P}^*$ be given. Throughout $S_f=S\smallsetminus\set{\infty}$. We denote by $\bQ_S,\bZ_S$  
the product rings $\prod_{v\in S}\bQ_v,\prod_{v\in S}\bZ_v$ respectively (the latter makes sense only when $\infty\notin S$). Let $\bG$ denote  the algebraic group 
$\PGL_2$. We denote $G_S=\bG(\bQ_S)$. We  denote
an element $g\in G_S$ by a sequence $g=(g_v)_{v\in S}$ where $g_v$ is a $2\times 2$ matrix over $\bQ_v$. Keep in mind  the slight abuse of notation arising from the fact that $g_v$ is in fact an equivalence class of matrices. If $\infty\in S$ we usually abbreviate  
and write $g=(g_\infty, g_f)$ where $g_f$ denotes the tuple of the components  corresponding to the finite places in $S$. 
The identity elements in the various groups are 
denoted by $e$ with the corresponding subscript. Thus for example $e_S=(e_v)_{v\in S}$ and if $\infty\in S$, $e_S=(e_\infty,e_f)$. 

Hereafter $S\subset \tbf{P}$. We may view the group $\Ga_S\defi\bG(\cO_S)$ as a subgroup of $G_{S^*}=G_\infty\times\prod_{v\in S}G_v$ (embedded diagonally). When $S=\varnothing$, $\cO_S=\bZ$ and we 
denote $\Ga_S=\bG(\bZ)$ by $\Ga_\infty$. 
It is well known that $\Ga_S$ is a lattice in $G_{S^*}$.
We set  $$X_S\defi\Ga_S\backslash \GSS,\; X_\infty\defi\Ga_\infty\backslash G_\infty.$$ 
The gist of our discussion will be concerned with these homogeneous spaces.
We denote by $m_S$ (resp.\ $m_\infty$) the $\GSS$-invariant (resp.\ $G_\infty$-invariant) probability measure on $X_S$ (resp.\ $X_\infty$). 
The \textit{real quotient} $X_\infty$
is a \textit{factor} of $X_S$ in a natural way: 
Let $K_\infty\defi \on{PO}_2(\bR)$ denote the maximal compact subgroup of $G_\infty$. For 
a finite place $p\in\textbf{P}$ we let $K_p=\bG(\bZ_p)$. We then let $K_S\defi\prod_{v\in S} K_v$. 
As we will explain shortly, the double coset space $X_S/K_S=\Ga_S\backslash \GSS/K_S$ is naturally
identified with $X_\infty$. We denote by $\pi: X_S\to X_\infty$ the natural projection. This identification relies on two facts (i) $G_S=\Ga_S K_S$, and (ii) $\Ga_\infty=\Ga_S\cap K_S$. Relying on these facts the identification is as follows: Given a double coset
$\Ga_S(g_\infty,g_f)K_S$, by (i) we may assume without loss of generlity that $g_f=e_f$ and identify this double coset with $\Ga_\infty g_\infty\in X_\infty$. The reader will easily check that (ii) implies that this map is indeed well defined and bijective. We leave the verification of conditions (i),(ii) to the reader ((ii) is straightforward and an argument similar to that giving (i) may be extracted from 
 the proof of Lemma~\ref{ergodicity 10} for example). 
\begin{remark}\label{12.9.r.1}
In practice, given $x=\Ga_S(g_\infty,g_f)\in X_S$ with representative $(g_\infty,g_f)$ such that $g_f\in K_S$, the projection $\pi(x)$ is $\Ga_\infty g_\infty$. 
 In other words, $\pi^{-1}(\Ga_\infty g_\infty)=\set{\Ga_S(g_\infty, g_f):g_f\in K_S}$. Another useful observation to keep in mind here is that two points $x_1=\Ga_S(g_\infty, g_f),x_2=\Ga_S(g_\infty,h_f)$ are in the same fiber 
(that is $\pi(x_1)=\pi(x_2)$) if and only if the quotient $g_f^{-1} h_f$ belongs to $K_S$.
\end{remark}
The group $\GSS$ (and all its subgroups) act on $X_S$ by right translation. In particular, if $T\subset S^*$, we may view $G_T$ (and its subgroups) as a subgroup of $\GSS$ and thus it acts on $X_S$.
Note that $\pi:X_S\to X_\infty$ intertwines the $G_\infty$-actions. Of particular interest to us will be the action of the real diagonal group  $A_\infty\defi\set{\diag{e^t,1}:t\in\bR}$, the elements of which
we often write as $a_\infty(t)=\diag{e^t,1}$.

We say that an orbit $xL$ of a closed subgroup $L<\GSS$ through a point $x\in X_S$ is \textit{periodic} if it supports an $L$-invariant probability measure. Such a measure is unique and we refer to it as the Haar measure on the periodic orbit. Compact orbits are always periodic. Given a measure $\mu$ on $X_S$ and $g\in \GSS$ we let $g_*\mu$ denote the pushed forward measure by right translation by $g$. 
This notation is a bit awkward as $(gh)_*\mu=h_*(g_*\mu)$. This will not bother us as we will only use commutative subgroups to push measures.

For $v\in\tbf{P}^*$, the Lie algebra of $G_v$ will be denoted by $\gog_v$ and is naturally identified with the space of traceless $2\times 2$ matrices over $\bQ_v$. Similarly to the notation introduced above we will denote by $\gog_{S^*}=\gog_\infty\oplus_{v\in S}\gog_v$ the Lie algebra of $\GSS$. A basic fact that we will 
use is that if $S$ is finite and $L<G_S$ is a closed subgroup then $L$ contains an open product subgroup $\prod_{v\in S} L_v$ which allows us to speak of the Lie algebra of $L$ which will be denoted 
$\lie{(L)}$. The exponential map $\exp_v:\gog_v\to G_v$ is defined for any place $v$ by the usual power series and in fact, is only well defined for finite places on a certain neighborhood of $0$. 
We denote its inverse by $\log_v$ (it is defined on a small enough neighborhood of $e_v$) and use the obvious notation $\exp_S, \exp_{S^*}, \log_S,\log_{S^*}$ to denote the corresponding product maps from the corresponding product domains
in $\gog_S, \gog_{S^*}, G_S, \GSS$ respectively. 

Given 
an element $g\in \GSS$ and an element $u$ (either of $\GSS$ or of $\gog_{S^*}$), we denote  $$u^g\defi g^{-1}u g.$$ If $g$ is semisimple we denote by
 $(\gog_{S^*})_g^{\on{ws}}$ the 
\textit{weak stable} subalgebra of $\gog_{S^*}$. It is defined as the direct sum of the eigenspaces (of the operator $u\mapsto u^g$) of modulus $\le 1$ or equivalently
\begin{align*}
(\gog_{S^*})_g^{\on{ws}}&=\set{u\in \gog_{S^*}: \set{u^{(g^n)}}_{n>0}\textrm{ is bounded in }\gog_{S^*}}.
\end{align*}

For each place $v$ we equip $G_v,\gog_v$ with metrics in the following way: For $v=\infty$ we start with an inner product on $\gog_\infty$ which is right $K_\infty$-invariant and use left translation to make it into a left invariant Riemanian metric on $G_\infty$ which is also right $K_\infty$-invariant. This Riemannian metric induces left $G_\infty$-invariant, bi-$K_\infty$-invariant metric on $G_\infty$.
For a finite place $v$, we start with a bi-$K_v$-invariant metric $\on{d}_{K_v}$ on $K_v$ (such that $K_v$ 
equals the closed unit ball around $e_v$) and make it into a left invariant metric on $G_v$ (which is also right $K_v$-invariant) by setting $\on{d}_{G_v}(g_1,g_2)=2$ if $g_1^{-1}g_2\notin K_v$ and 
$\on{d}_{G_v}(g_1,g_2)=\on{d}_{K_v}(g_1^{-1}g_2,e_v)$ otherwise. On the Lie algebra $\gog_v$ we take the metric given by $\on{d}_{\gog_v}(u,w)=\max\set{\av{u_{ij}-w_{ij}}_v: 1\le i,j\le 2}$ where the indices $i,j$ stand for the entries of the corresponding matrix. 
We usually denote the distance from $0\in \gog_v$ by $\norm{u}\defi\on{d}_{\gog_v}(u,0)$ and refer to it as the \textit{norm} of $u$. 
We define the metrics $\on{d}_{\GSS},\on{d}_{\gog_{S^*}}$ on $\GSS,\gog_{S^*}$ respectively by taking the maximum of the metrics defined above over the places in $S^*$. 
The metric $\on{d}_{\GSS}$ induces a right-$K_\infty\times K_S$-invariant metric on $X_S$ by setting $\on{d}_{X_S}(\Ga_Sg_1,\Ga_Sg_2)=\inf_{\ga\in \Ga_S}\on{d}_{\GSS}(\ga g_1,g_2)$.

In a metric space $(X,\on{d}_X)$ we denote $B_r^X(x)$ the open ball of radius $r$ around $x$. In case the space is a group, we denote by $B_r^X$ the corresponding ball around the trivial element.

\section{The $S$-Hecke graph and the main theorem}\label{S hecke}
Throughout this section we use the notation introduced in~\S\ref{preliminaries}. We fix a finite set of finite places $S\subset\tbf{P}$. 
The space $X_\infty$ can be thought of as  the moduli space of equivalence classes of 2-dimensional
lattices in the plane $\bR^2$ up to homothety. We  will refer below to a point $x\in X_\infty$ as a class; here, the class  $\Ga_\infty g$ is composed of the lattice spanned by the rows of the matrix $g$ (which is well defined up to scaling) and all its homotheties.

Our first aim is to state Theorem~\ref{general case}. Briefly, we will fix a class
$x$ with periodic $A_\infty$-orbit and consider a class $x'$ on the $S$-Hecke graph (soon to be defined) through $x$ and  prove an effective equidistribution statement regarding the periodic orbit  $x'A_\infty$ as $x'$ drifts
away from $x$ in the graph. 
\subsection{Hecke friends} Given a class $x\in X_\infty$, we say that a class $x'$ is a \textit{Hecke friend} of $x$ if one can choose lattices $\Lam_x\in x,\Lam_{x'}\in x'$ 
such that $\Lam_{x'}<\Lam_{x}$. After fixing the lattice $\Lam_x$ there is a unique choice of $\Lam_{x'}\in x'$ such that $\Lam_{x'}<\Lam_x$ is \textit{primitive}; that is, such that the  index $[\Lam_x:\Lam_{x'}]$ is minimal. 
We denote this minimal index by $\on{ind}(x,x')$.  
We say that $x'$ is an $S$-\textit{Hecke friend} of $x$ if  $\on{ind}(x,x')\in\cO_S^\times$. It is elementary to check that the $S$-Hecke friendship relation is an equivalence relation and that 
furthermore, if $x,x'$ are Hecke friends then $\on{ind}(x,x')=\on{ind}(x',x)$.
\subsection{The graph}\label{TDI}
For a class $x\in X_\infty$ we define
\begin{equation}\label{Hecke graph}
\cG_S(x)=\set{x'\in X_\infty:x,x'\textrm{ are $S$-Hecke friends}}
\end{equation} 
The set $\cG_S(x)$ has the structure of a graph\footnote{When $S$ contains only one
prime, this is the well known $p$-\textit{Hecke tree} through $x$. In general, this graph is the product of the various $p$-Hecke trees for $p\in S$.}: We join $x_1,x_2\in\cG_S(x)$ with an edge if there exists $\Lam_i\in x_i$ such that $\Lam_1$  is a sublattice of $\Lam_2$ of index $p$
for some $p\in S$ (note that as $p$ is prime this forces $\Lam_1$ to be a primitive sublattice of $\Lam_2$). In this case we declare the length of this edge to be $\log(p)$. This induces a distance function on the graph which we denote $\on{d}_{\cG}(\cdot,\cdot)$ for which $\on{d}_\cG(x_1,x_2)=\log(\on{ind}(x_1,x_2))$. 
We will refer to $x$ as the \textit{root} of $\cG_S(x)$ and call $\cG_S(x)$ the \textit{$S$-Hecke graph through $x$}. 
Note that $\set{\on{ind}(x,x'): x'\in\cG_S(x)}=\cO_S^\times\cap \bN.$ 
We refer to numbers in $\cO_S^\times\cap \bN$ as  \textit{admissible radii} and denote for $\onh\in\cO_S^\times\cap\bN$ by
$\cS_{\on{h}}(x)\defi\set{x'\in\cG_S(x):\on{ind}(x,x')=\on{h}}$ the \textit{sphere} of radius $\on{h}$ around the root $x$.

\subsection{The sphere} 
For  $q\in\cO_S^\times$ let us define 
\begin{equation}\label{algebraic relation of lifts'}
a_f(q)\defi\pa{e_\infty,\smallmat{1&0\\0&q},\dots,\smallmat{1&0\\0&q}}\in G_S.
\end{equation}
Given $x\in X_\infty$ we wish to have a convenient algebraic description of the classes on the sphere
$\cS_{\on{h}}(x)$ for admissible radii $\on{h}$. We obtain this description using the extension $\pi:X_S\to X_\infty$ in the following way: Lemma~\ref{get the sphere} below shows that 
the various points on $\cS_{\on{h}}(x)$ are obtained by choosing a lift $y\in\pi^{-1}(x)$ of $x$,  and projecting $ya_f(\on{h})$ via $\pi$ back to $X_\infty$.
\begin{lemma}\label{get the sphere}
For $x=\Ga_\infty g\in X_\infty$ and $\onh$ an admissible radius we have 
\begin{align}\label{sphere}
\cS_{\on{h}}(x)&=\pi\pa{\set{\Ga_S(g,\ga):\ga\in\Ga_\infty}a_f(\on{h})}\\
\nonumber &=\pi\pa{\pi^{-1}(x)a_f(\on{h})}.
\end{align}
\end{lemma}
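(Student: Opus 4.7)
My plan is to prove the two equalities in~\eqref{sphere} separately: first the right-hand equality relating the two descriptions of the relevant subset of $X_S$ modulo $\pi$, and then the outer equality identifying the resulting subset of $X_\infty$ with the sphere $\cS_{\onh}(x)$.

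For the equality $\pi\bigl(\{\Gamma_S(g,\gamma):\gamma\in\Gamma_\infty\}a_f(\onh)\bigr)=\pi\bigl(\pi^{-1}(x)a_f(\onh)\bigr)$, my approach is to invoke strong approximation. By Remark~\ref{12.9.r.1}, $\pi^{-1}(x)=\{\Gamma_S(g,k):k\in K_S\}$, so it suffices to show that every $k\in K_S$ may be replaced by some $\gamma\in\Gamma_\infty$ without altering the image of $\Gamma_S(g,k)a_f(\onh)$ under $\pi$. Remark~\ref{12.9.r.1} also tells us that two points $\Gamma_S(g,g_{1,f})$ and $\Gamma_S(g,g_{2,f})$ (with the same real component $g$) share a $\pi$-fiber iff $g_{1,f}^{-1}g_{2,f}\in K_S$ — here I would note that a $\Gamma_S$-element fixing $g\in G_\infty$ from the left must be trivial, so no nontrivial relations arise from the $\Gamma_S$-action. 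Applying this to $g_{1,f}=k\,a_f(\onh)$ and $g_{2,f}=\gamma\,a_f(\onh)$ yields the open condition $\gamma^{-1}k\in a_f(\onh)K_S a_f(\onh)^{-1}\subset G_S$. Density of $\Gamma_\infty$ in $K_S=\prod_{p\in S}\PGL_2(\bZ_p)$ (which I would obtain from the surjection $\SL_2(\bZ)\twoheadrightarrow\SL_2(\bZ/N\bZ)$ for all $N$, combined with the surjection $\SL_2\to\PGL_2$) then produces such a $\gamma$.

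For the first equality my plan is an explicit calculation. Since $\gamma\in\Gamma_\infty\subset\Gamma_S$ embeds diagonally, left-multiplying by $\gamma^{-1}\in\Gamma_S$ gives $\Gamma_S(g,\gamma\,a_f(\onh))=\Gamma_S(\gamma^{-1}g,a_f(\onh))$. The assumption $\onh\in\cO_S^\times$ places $\diag{\onh,1}$ in $\Gamma_S$, and the projective identity $\diag{\onh,1}\diag{1,\onh}=\diag{\onh,\onh}=e$ shows that left-multiplying by $\diag{\onh,1}\in\Gamma_S$ produces the representative $(\diag{\onh,1}\gamma^{-1}g,e_S)$. Hence
\[
\pi\bigl(\Gamma_S(g,\gamma)a_f(\onh)\bigr)=\Gamma_\infty\cdot\diag{\onh,1}\gamma^{-1}g\in X_\infty.
\]
As $\gamma$ ranges over $\Gamma_\infty$, these classes are parametrized by the cosets $\Gamma_\infty\backslash\bigl(\Gamma_\infty\diag{\onh,1}\Gamma_\infty\bigr)$, which by classical Hecke theory are in canonical bijection with primitive (i.e. cyclic-quotient) sublattices of $\bZ^2$ of index $\onh$. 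Multiplying by $g$ on the right transports this bijection to primitive sublattices of index $\onh$ in the lattice $\Lambda_x=g\bZ^2$, which by the paper's definition of $\on{ind}$ (the \emph{minimal} index in the homothety class) is precisely $\cS_{\onh}(x)$.

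I expect the main obstacle to be the identification of ``minimal-index sublattice'' (as appearing in the definition of $\on{ind}$) with ``sublattice with cyclic quotient''. This requires checking that a sublattice $\Lambda'<\Lambda_x$ with non-cyclic quotient of elementary divisors $(d_1,d_2)$ ($d_1\mid d_2$, $d_1>1$) admits a scaling $\tfrac{1}{d_1}\Lambda'$ lying in the same homothety class but sitting inside $\Lambda_x$ with strictly smaller index, so that only cyclic quotients realize the minimum. The other routine verification is the classical enumeration that $\#(\Gamma_\infty\backslash\Gamma_\infty\diag{\onh,1}\Gamma_\infty)=\onh\prod_{p\mid \onh}(1+p^{-1})$, matching the count of primitive sublattices; this is done by a Smith-normal-form/Hermite-normal-form argument for composite $\onh$.
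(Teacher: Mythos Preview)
Your treatment of the first equality matches the paper's: both establish $\pi\bigl(\Ga_S(g,\ga)a_f(\onh)\bigr)=\Ga_\infty\diag{\onh,1}\ga^{-1}g$ by left-multiplying inside $\Ga_S$ by $\ga^{-1}$ and then by $\diag{\onh,1}$, and both identify the resulting Hecke orbit $\{\Ga_\infty\diag{\onh,1}\ga g:\ga\in\Ga_\infty\}$ with $\cS_{\onh}(x)$ via elementary divisors (your ``minimal index $\Leftrightarrow$ cyclic quotient'' remark is exactly the paper's observation that primitivity forces the smaller elementary divisor to equal $1$). The double-coset count you mention at the end is not needed.

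There is, however, a genuine gap in your argument for the second equality. You claim that $\Ga_\infty=\PGL_2(\bZ)$ is dense in $K_S=\prod_{p\in S}\PGL_2(\bZ_p)$, justified by strong approximation for $\SL_2$ together with ``the surjection $\SL_2\to\PGL_2$''. But $\SL_2(\bZ_p)\to\PGL_2(\bZ_p)$ is \emph{not} surjective on points: its cokernel is $\bZ_p^\times/(\bZ_p^\times)^2$, and the determinants $\pm 1$ available from $\GL_2(\bZ)$ do not cover this quotient whenever $-1$ is a square in $\bZ_p^\times$ (for instance $p\equiv 1\pmod 4$, and also $p=2$). For such primes $\Ga_\infty$ is genuinely \emph{not} dense in $K_p$, so your open-neighborhood argument fails as written. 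The paper repairs this by decomposing $\om_p=\theta_p\cdot\diag{1,\det\om_p}$ with $\theta_p\in\SL_2(\bZ_p)$: strong approximation for $\SL_2$ produces $\ga\in\SL_2(\bZ)\subset\Ga_\infty$ with $\ga^{-1}\theta_p$ in an arbitrarily deep congruence subgroup, hence in $a_f(\onh)K_Sa_f(\onh)^{-1}$; the leftover diagonal factor $\diag{1,\det\om_p}$ commutes with $a_f(\onh)$ and therefore lies in $a_f(\onh)K_Sa_f(\onh)^{-1}$ automatically. The key observation you are missing is that the target subgroup $a_f(\onh)K_Sa_f(\onh)^{-1}$ contains the full diagonal torus of $K_S$, which absorbs precisely the determinant obstruction to density.
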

\begin{proof}
Recall that the elementary divisors theorem attaches to any pair of lattices $\Lam_1<\Lam_2$ in the plane, a pair of integers $d_1,d_2$  which are characterized by the following two properties: (1) the divisibility $ d_2|d_1$ holds, (2)
there exists a basis $v_1,v_2$ of $\Lam_2$ such that $d_1v_1,d_2v_2$ forms a basis of $\Lam_1$. Note that $\Lam_1$ is a primitive sublattice of $\Lam_2$ if and only if the second divisor satisfies
$d_2=1$. We conclude from here that given a class $x=\Ga_\infty g$, then a class $x'$ lies on the sphere $\cS_{\onh}(x)$ if and only if there exists a lattice $\Lam_{x'}\in x'$ which is a sublattice
of the lattice $\Lam_x\in x$ spanned by the rows of $g$ such that the elementary divisors are $d_1=\onh, d_2=1$. In other words we have the equality
\begin{equation}\label{sphere again}
\cS_{\onh}(x)=\set{\Ga_\infty\diag{\onh,1}\ga g:\ga\in\Ga_\infty}.
\end{equation}
The following identity is crucial for us. It shows how the lattice $\Ga_S$ causes the desired interaction between the real and $p$-adic components in the extension $X_S$ of $X_\infty$:
\begin{align}\label{heart''}
\Ga_\infty\diag{\onh,1}\ga g&=\pi\pa{\Ga_S(\diag{\onh,1}\ga g,e_f)}\\ 
\nonumber &=\pi\pa{\Ga_S\underbrace{\ga^{-1}\diag{1,\onh}}_{\in \Ga_S}(\diag{\onh,1}\ga g,e_f)} = \pi\pa{\Ga_S(g,\ga^{-1})a_f(\onh)}.
\end{align}
From equations~\eqref{sphere again},\eqref{heart''} we immediately conclude that $$\cS_{\onh}=\pi\pa{\set{\Ga_S(g,\ga):\ga\in\Ga_\infty}a_f(\onh)},$$ which is the first equality in~\eqref{sphere}.
Using the first equality, the second equality follows once we show
that for any given $\omega\in K_S$ there exist $\gamma\in\Gamma_{\infty}$
such that $\pi(\Gamma_{S}(g,\gamma)a_{f}(\on h))=\pi(\Gamma_{S}(g,\omega)a_{f}(\on h))$.
A short calculation using Remark~\ref{12.9.r.1} shows that this happens precisely
when 
\begin{equation}
\gamma^{-1}\omega\in a_{f}(\on h)K_Sa_{f}(\on h)^{-1}\text{.}\label{eq:nts}
\end{equation}
Thus, let $\omega=(\omega_{p})_{p\in S_f}\in K_S$ be given and write $\omega_{p}=\theta_{p}\cdot \diag{1,\det(\omega_{p})},$
with $\theta_{p}\in \SL_{2}(\mathbb{Z}_{p}).$ 
Let $U_n^p< \SL_2(\bZ_p)$ be the subgroup consisting of elements congruent to the identity modulo $p^n$.
By the strong approximation
Theorem for $\SL_2$ (see\cite[\S 7.4]{PR94}), for any $n\in\mathbb{N}$ there exist $\gamma_{n}\in\Gamma_{\infty}$
such that for all $p\in S_{f}$
\[
\gamma_{n}^{-1}\theta_{p}\in U_{n}^{p}.
\]
Note that there exist $N=N(\on h)\in\mathbb{N}$
such that for all $n>N$ we have that the image of $\prod_{p\in S}U_{n}^{p}$ in $G_S$ lies in $a_{f}(\on h)K_Sa_{f}(\on h)^{-1}$. As $a_{f}(\on h)K_Sa_{f}(\on h)^{-1}$ is a group
that contains $\pa{\diag{1,\det(\omega_{p})}}_{p\in S_f}$ we conclude that 
$\prod_{p\in S_f}U_{n}^{p}\cdot \diag{1,\det(\omega_{p})}\subset a_{f}(\on h)K_Sa_{f}(\on h)^{-1}$.
Therefore any $\gamma_{n}$ with $n>N$ will satisfy equation (\ref{eq:nts}). This concludes the proof of the Lemma.

\end{proof}
\begin{corollary}\label{sphere again new}
For $x=\Ga_\infty g\in X_\infty$ and $\onh$ an admissible radius
another description of the sphere is given by
$$\cS_{\onh}=\set{\Ga_\infty\ga g: \ga\in\Ga_S,\height{\ga}=\onh}.$$
\end{corollary}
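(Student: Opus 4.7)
The plan is to begin from the intermediate description
$$\cS_{\onh}(x) = \set{\Ga_\infty \diag{\onh,1} \ga g : \ga \in \Ga_\infty}$$
that appears as equation~\eqref{sphere again} inside the proof of Lemma~\ref{get the sphere}, and establish its equivalence with the description claimed in the corollary by verifying the two inclusions.

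The inclusion of the set displayed above into the one claimed in the corollary is the easy direction. For $\ga\in\Ga_\infty$, the product $\diag{\onh,1}\ga$ manifestly lies in $\Ga_S$ since $\onh\in\cO_S^\times\cap\bN$. To see that its height equals $\onh$, I would write $\ga=\smallmat{a&b\\c&d}$ with $ad-bc=\pm1$ and check that the integer representative $\smallmat{\onh a & \onh b \\ c & d}$ is already primitive, since any common prime factor of its entries would have to divide $\gcd(c,d)$, which divides $ad-bc=\pm 1$; the determinant then gives $\height{\diag{\onh,1}\ga}=\onh$.

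The substantive direction uses Smith normal form over $\bZ$. Given $\ga\in\Ga_S$ with $\height{\ga}=\onh$, let $M$ be the coprime integer representative of $\ga$, so $\det M=\pm\onh$. Smith normal form yields $M=\delta_1\diag{d_1,d_2}\delta_2$ with $\delta_i\in\GL_2(\bZ)$, $d_1\mid d_2$, and $d_1d_2=\onh$. The coprimality of the entries of $M$ forces $d_1=\gcd(\text{entries of }M)=1$, so $d_2=\onh$. Conjugating by the Weyl element $w=\smallmat{0&1\\1&0}\in\GL_2(\bZ)$ converts $\diag{1,\onh}$ into $\diag{\onh,1}$, producing $M=(\delta_1 w^{-1})\diag{\onh,1}(w\delta_2)$ with both outer factors in $\Ga_\infty$. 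Passing to the $\Ga_\infty$-coset then expresses $\Ga_\infty\ga g$ as $\Ga_\infty\diag{\onh,1}(w\delta_2)g$, which lies in the set $\cS_{\onh}(x)$ as described in~\eqref{sphere again}.

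I do not anticipate any genuine obstacle: the key point is that the height condition $\height{\ga}=\onh$ matches exactly with the product of the invariant factors, while the coprimality of entries of $M$ automatically forces the smaller invariant factor to be $1$, so only the swap between $\diag{1,\onh}$ and $\diag{\onh,1}$ remains, which is absorbed into the Weyl element. This is exactly the decomposition $\ga=\ga_1\ga_q\ga_2$ promised in Remark~\ref{r.250}(2).
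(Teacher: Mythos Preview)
Your proof is correct and follows essentially the same route as the paper's. The paper's proof simply asserts that any $\ga\in\Ga_S$ can be written as $\ga_1\diag{\onh,1}\ga_2$ with $\ga_i\in\Ga_\infty$ ``similarly to the proof of Lemma~\ref{get the sphere}'' (i.e.\ via the elementary divisors theorem), and then reads off both the height and the coset from this decomposition; you have spelled out exactly that Smith normal form argument, including the primitivity check that forces $d_1=1$.
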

\begin{proof}
Similarly to the proof of Lemma~\ref{get the sphere} one can show that any element $\ga\in\Ga_S$ can be written as a product $\ga=\ga_1\diag{\on{h},1}\ga_2$, where $\ga_i\in\Ga_\infty$ and 
$\onh\in\cO_S^\times \cap\bN$. This implies first that $\height{\ga}=\onh$ and moreover, together with~\eqref{sphere again} we obtain that 
$\cS_{\onh}=\set{\Ga_\infty\ga g: \ga\in\Ga_S,\height{\ga}=\onh}$ as desired.
\end{proof}
\begin{definition}\label{branches}
Let $x\in X_\infty$ be given. Let $g_x\in G_\infty$ be a choice of a representative for $x$ so that $x=\Ga_\infty g_x$. For any choice $\om\in K_S$ we define the \textit{generalized branch} 
$\cL_{g_x,\om}\subset \cG_S(x)$ to be the set 
\begin{equation}\label{g.branches}
\cL_{g_x,\om}=\pi\pa{\set{\Ga_S(g_x,\om)a_f(\on{h}):\on{h}\textrm{ is an admissible radius}}}.
\end{equation}
When $\om$ is a rational element (i.e.\ for any $p\in S$ the $p$'th component  $\om_p$ of $\om$ satisfies $\om_p\in K_p\cap\PGL_2(\bQ)$) we call the generalized branch $\cL_{g_x,\om}$ a \textit{rational generalized branch}.
\end{definition}
The reader should think of the generalized branches as prescribed ways to go to infinity in the graph $\cG_S(x)$. 
When $S$ is composed of a single prime the generalized branches are exactly the branches on the Hecke tree that start from the root $x$. 
\begin{remark}\label{j.27}
We wish point out a few things regarding the definition of generalized branches and fix some notation that will be used in the sequel. Let $x=\Ga_\infty g_x\in X_\infty$ be given.
\begin{enumerate}
\item\label{j.27.1} For any $\om\in K_S$ and any admissible radius $\onh$ we denote $y_{\om,\onh}=\Ga_S(g_x,\om)a_f(\onh)\in X_S$, $x_{\om,\onh}=\pi(y_{\om,\onh})\in X_\infty$. 
With this notation the generalized branch $\cL_{g_x,\om}$ intersects the sphere $\cS_{\on{h}}(x)$ in a single point, namely
\begin{equation}\label{sphere and branch}
\set{x_{\om,\onh}}=\cL_{g_x,\om}\cap\cS_{\on{h}}(x).
\end{equation} 
When the generalized branch is fixed (that is when $\om$ is fixed) we sometimes denote $x_{\onh}=x_{\om,\onh}$.
We stress here the dependency on the representative $g_x$ of $x$. Note that we do not recall this dependency in the notation $x_{\om,\onh},y_{\om,\onh}$.
\item Two generalized branches $\cL_{g_x,\om_1},\cL_{g_x,\om_2}$ intersect  the sphere 
$\cS_{\on{h}}(x)$ at the same point, that is, $x_{\om_1,\onh}=x_{\om_2,\onh}$, if and only if the points $y_{\om_i,\onh}$ lie in the same fiber of $\pi$.
This is in turn equivalent to saying that the conjugation $(\om_2^{-1}\om_1)^{a_f(\on{h})}$ lies in $K_S$ (see Remark~\ref{12.9.r.1}). 
This happens if and only if the lower left coordinate of each of the components of $\om_2^{-1}\om_1$ is divisible by $\on{h}$ in the corresponding 
ring $\bZ_p$. In particular, it follows that it is divisible by any integer that divides $\on{h}$ which means by the same reasoning, that the two branches intersect all the 
spheres $\cS_{\on{h}'}$ at the same points, for any choice of admissible radius $\on{h}' $ dividing $\on{h}$. Moreover, it follows from here that given 
$\om_1,\om_2\in K_S$, the two generalized branches $\cL_{g_x,\om_i}$ are identical if and only if the quotient $\om_2^{-1}\om_1$ is an upper triangular element of $K_S$.
\item\label{j.27.4} From the above it follows that the collection of generalized branches may be identified with the quotient $K_S/B$, where $B<K_S$ denotes the group of upper
triangular elements (this identification depends of course on the choice of the representative $g_x$). 
\item  If we replace $g_x$ by another representative $\ga g_x$ for $\ga\in\Ga_\infty$, then it readily follows that for any $\om\in K_S$, 
$\cL_{\ga g_x,\om}=\cL_{g_x,\ga^{-1}\om}$. In particular, the notion of rationality of a generalized branch is well
defined.
\end{enumerate}
\end{remark}

\subsection{Periodic $A_\infty$-orbits}
\begin{definition}\label{def per}
Let $x\in X_\infty$ be a class with a periodic $A_\infty$-orbit and let $g_x\in G_\infty$ be a representative, so that $x=\Ga_\infty g_x$. We denote by
\begin{enumerate}
\item  $t_x$ the \textit{length of the period}, i.e.\ the minimal positive $t$ for which $xa_\infty(t)=x$; 
\item  $\mu_x$  the unique $A_\infty$-invariant probability measure supported on $xA_\infty$;
\item $\ga_x$ the unique element of $\Ga_\infty$ solving the equation $\ga_x^{-1}g_x=g_xa(t_x)$;
\item $\bF_x$ the quadratic extension of $\bQ$ that is generated by the eigenvalues of $\ga_x$.
\end{enumerate}
Note that $\ga_x$ depends on the choice of the representative $g_x$ and thus is only well defined up to conjugation in $\Ga_\infty$. The quadratic field $\bF_x$ on
the other hand, only depends on this conjugacy class and so is well defined.
\end{definition}

Let $x\in X_\infty$ be a class with a periodic $A_\infty$-orbit. It is straightforward to argue that any $x'\in \cG_S(x)$ has a periodic orbit as well. We are interested in understanding 
the way the orbit $x'A_\infty$ is distributed
in $X_\infty$ as $\on{d}_\cG(x,x')$ goes to $\infty$. 
\begin{remark}\label{r.j30.1}
It turns out that the answer to this question has to do with the question of whether or not the primes $p\in S$ split in the quadratic extension $\bF_x$ of $\bQ$. 
Let $\ga\in\Ga_\infty$ be a matrix such that the roots of its characteristic polynomial generate $\bF_x$. Recall that a prime $p$ splits in $\bF_x$ if and only if $\ga$ is 
diagonalizable
over $\bQ_p$. A short exercise in linear algebra shows that $\ga\in\Ga_\infty$ is diagonalizable over $\bQ_p$ if and only if it can be triangulized over $\bZ_p$.
\end{remark}
\begin{definition}\label{split}
Let $x$ be a class with a periodic $A_\infty$-orbit, $g_x$  a representative so that $x=\Ga_\infty g_x$, and $\om\in K_S$.
\begin{enumerate}
\item We say that the generalized branch $\cL_{g_x,\om}$ is \textit{degenerate} (for $S$) if there exists $p\in S_f$ such that $\om_p^{-1}\ga_x^n \om_p$ is upper triangular for some positive integer $n$ (here $\om_p$ 
is the $p$'th component of $\om\in K_S$).\footnote{This is equivalent to saying that the Lie algebra of the closure of the group generated by $\ga_x^\om$ in $G_p$ is upper triangular.}
\item We say that the class $x$ is \textit{split} (for $S$) if there exists $p\in S$ which splits over $\bF_x$. By Remark~\ref{r.j30.1},  this is equivalent to the existence of  a degenerate generalized 
branch.
\end{enumerate}
\end{definition}
 We are now ready to state our main theorem.
\begin{theorem}\label{general case}
Let $x=\Ga_\infty g_x\in X_\infty$ be such that $xA_\infty$ is periodic.
\begin{enumerate}
\item\label{g.c.1} 
 Let $\cL=\cL_{g_x,\om}$ be a non-degenerate generalized branch of the graph $\cG_S(x)$ and
  $\on{h}$  an admissible radius, then for any $\vphi_0\in\on{Lip}_\ka(X_\infty)\cap L^2(X_\infty,m_\infty)$
and any $\eps>0$ the following holds
\begin{equation}\label{on the graph}
\av{\int_{X_\infty}\vphi_0 d\mu_{x_{\onh}}-\int_{X_\infty}\vphi_0 dm_\infty}\ll_{x,S,\cL,\eps}\max\set{\norm{\vphi_0}_2,\ka} \on{h}^{-\frac{\del_0}{2}+\eps}.
\end{equation}
\item\label{g.c.2} If $x$ is non-split (i.e.\ all generalized branches are non-degenerate), the implicit constant in~\eqref{on the graph} may be chosen to be independent of  the generalized branch and we have uniform rate of equidistribution along the full graph. 
\item\label{g.c.3} If $x$ is split and  $\cL_{g_x,\om}$ is a degenerate  generalized branch, then there is a sequence of admissible radii $\on{h}_n\to\infty$ 
such that for  the sequence of classes $x_{\onh_n}$, the lengths $t_{x_{\onh_n}}$ of the orbits $x_{\onh_n}A_\infty$ are bounded and in particular, the orbits do not equidistribute. 
\item\label{g.c.1'} Rational generalized branches are always non-degenerate and so~\eqref{on the graph} holds automatically. 
\item\label{g.c.5} Nonetheless, in case $x$ is split,  
the implicit constants in~\eqref{on the graph} cannot be taken to be uniform for the rational generalized branches.
\end{enumerate}
\end{theorem}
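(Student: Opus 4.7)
Plan: My plan is to lift the equidistribution question on $X_\infty$ to the $S$-arithmetic space $X_S$, where $a_f(\onh)$ acts nontrivially, and then combine a thickening argument with effective decay of matrix coefficients.

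First, I would translate the problem from $X_\infty$ to $X_S$. Set $y_\om \defi \Ga_S(g_x,\om) \in X_S$, so that $\pi(y_\om)=x$. Since $a_f(\onh)$ commutes with $A_\infty$, the orbit $y_\om a_f(\onh)A_\infty$ is periodic and its Haar measure equals $\nu_\onh \defi (a_f(\onh))_*\mu_{y_\om}$; combined with Lemma~\ref{get the sphere}, this gives $\pi_*\nu_\onh = \mu_{x_{\onh}}$. Lifting $\vphi_0$ to the $K_S$-invariant function $\tilde\vphi_0 \defi \vphi_0\circ\pi$, which inherits the same $L^2$-norm and is $\ka$-Lipschitz with respect to $\on{d}_{X_S}$, it suffices to bound
\[
\left|\int_{X_S} \tilde\vphi_0 \,d\nu_\onh - \int_{X_S} \tilde\vphi_0 \,dm_S\right| \ll_{x,S,\cL,\eps} \max\set{\norm{\tilde\vphi_0}_2, \ka}\, \onh^{-\del_0/2+\eps}.
\]

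Second, I would thicken $\nu_\onh$ along a one-parameter unipotent subgroup $U<G_{S_f}$ transverse to the periodic orbit, producing an averaged measure $\nu_\onh^{(\eta)}$ supported in an $\eta$-flow box around $y_\om a_f(\onh) A_\infty$. Lipschitz smoothness of $\tilde\vphi_0$ bounds the thickening error by $O(\ka\eta)$. On the other hand, $\nu_\onh^{(\eta)}-m_S$ has an $L^2_0(m_S)$ density whose norm grows like a power of $\eta^{-1}$; effective decay of matrix coefficients for spherical representations of $G_{S^*}$ on $L^2_0(X_S)$, with exponential rate $\onh^{-\del_0+\eps}$ coming from the Kim--Sarnak bound, then yields
\[
\left|\int \tilde\vphi_0 \,d\nu_\onh^{(\eta)} - \int \tilde\vphi_0 \,dm_S\right| \ll_{x,\eps} \norm{\tilde\vphi_0}_2 \, \eta^{-c}\, \onh^{-\del_0+\eps}
\]
for a dimensional exponent $c$. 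Balancing the two errors by an optimal choice of $\eta$ delivers the rate claimed in \eqref{on the graph}.

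Third, I would address the remaining parts. Non-degeneracy of $\cL_{g_x,\om}$ is precisely what makes the thickened measure genuinely approach $m_S$ rather than remain trapped in a proper subspace: if $\om_p^{-1}\ga_x^n \om_p$ is upper triangular at some $p\in S$, the closure of $y_\om A_\infty \cdot a_f(\onh)^{\bZ}$ in $X_S$ lies in a proper homogeneous subvariety where the spectral gap argument degenerates. For \eqref{g.c.3}, I would exploit this directly: a degenerate $\om$ supplies, via the upper-triangular conjugate, an element of $\Ga_S$ that normalizes a subsequence of translated orbits, producing admissible radii $\onh_n\to\infty$ with uniformly bounded periods $t_{x_{\onh_n}}$. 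For \eqref{g.c.2}, I would verify by inspection that, in the non-split setting, the implicit constants in stage two depend only on $t_x$ and the injectivity radius at $x$, not on $\om$. For \eqref{g.c.1'}, if $\om$ is rational then any upper-triangularity of $\om_p^{-1}\ga_x^n\om_p$ over $\bZ_p$ would also hold over $\bQ$, forcing $\ga_x^n$ to have eigenvalues in $\bQ$, which contradicts $\bF_x$ being a quadratic extension. For \eqref{g.c.5}, rational $\om_n$ that approximate a fixed degenerate $\om$ in the $p$-adic topology can be constructed, along which the uniform constants must blow up.

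The main obstacle is stage two: obtaining the sharp exponent $\del_0/2$ rather than a weaker $\del_0/c$ requires careful balancing of the thickening dimension against the $L^2$-norm of the density and the spherical spectral gap, in the spirit of the techniques of Benoist--Oh mentioned in~\S\ref{refs}. A secondary concern is the explicit construction of the $\Ga_S$-stabilizer in \eqref{g.c.3}, which will require translating the upper-triangularity condition into a concrete matrix identity that conjugates small-length orbits to small-length orbits.
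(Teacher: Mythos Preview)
Your overall architecture---lift to $X_S$, thicken the orbit, apply effective mixing, optimize the thickening parameter---matches the paper's. But two steps are misstated in a way that would block the argument.

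First, the orbit $y_\om A_\infty\subset X_S$ is \emph{not} periodic. From $\ga_x g_x a_\infty(t_x)=g_x$ one computes $y_\om a_\infty(\ell t_x)=y_\om(e_\infty,\om^{-1}\ga_x^{\ell}\om)$, and since $\ga_x$ has infinite order in each $G_p$ this never returns to $y_\om$. The correct object is the \emph{closure} $\overline{y_\om A_\infty}=y_\om L_\om$, where $L_\om=A_\infty\times H_\om$ and $H_\om=\overline{\langle\om^{-1}\ga_x\om\rangle}_{G_S}$ is a compact abelian group. The lift of $\mu_{x_{\onh}}$ is the Haar measure $\eta_{\om,\onh}$ on the compact $L_{\om,\onh}$-orbit $y_\om L_\om a_f(\onh)$; this is the content of Lemmas~\ref{lifted orbit closure} and~\ref{eta is a lift}. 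Without identifying $H_\om$ you cannot set up the tube or see why non-degeneracy matters.

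Second, thickening ``along a one-parameter unipotent subgroup $U<G_{S_f}$'' is both the wrong direction and the wrong shape. The paper thickens $y_\om L_\om$ by a tube $\cT_\om^\del=y_\om L_\om\exp_{S^*}(U^\del)$ where $U^\del=B_\del^{V_\infty}\times\log_S(B)$: a ball of radius $\del$ in the $2$-dimensional real transversal $V_\infty$ together with a \emph{fixed} compact open subgroup $B$ of upper-triangular elements in $K_S$. Non-degeneracy of the branch is exactly what guarantees that this $V$ complements $\lie(L_\om)$. The $p$-adic width does not shrink under $a_f(\onh)$ (it lies in the weak-stable algebra), so the only volume dependence is $m_S(\cT_\om^\del)\asymp\del^2$, giving $\norm{w_2}_2\ll\del^{-1}$. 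Then the mixing estimate yields $(**)\ll\norm{\vphi}_2\,\del^{-1}\onh^{-\del_0+\eps}$, the Lipschitz error is $(*)\ll\ka\del$, and the choice $\del\asymp\onh^{-(\del_0-\eps)/2}$ gives precisely $\onh^{-\del_0/2+\eps}$. Your undetermined ``dimensional exponent $c$'' is in fact $c=1$, and this is what you flagged as the main obstacle. A generic one-dimensional $p$-adic thickening would make $c$ depend on $|S|$ and destroy the exponent.

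For part~\eqref{g.c.5}, you do not need to approximate a degenerate $\om$ by rationals. By Lemma~\ref{get the sphere} every point of $\cS_{\onh}(x)$ lies on some rational branch, so the bad sequence $x_{\onh_n}$ produced in~\eqref{g.c.3} already witnesses the failure of uniformity over rational branches. Your argument for~\eqref{g.c.1'} is correct and coincides with the paper's.
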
 

\section{Relations to other arguments}\label{other arguments}
Before turning to the proof of Theorem~\ref{general case} we wish to make some comments that will clarify its relation to arguments giving equidistribution of collections of periodic orbits.
The result of Benoist and Oh~\cite[Theorem 1.1]{BenoistOhGafa} imply that given a class $x$ with a periodic $A_\infty$-orbit, then the collection of orbits $\set{x' A_\infty:x'\in \cS_{\on{h}}(x)}$ (counted without multiplicities) is becoming equidistributed as $\on{h}\to\infty$.

Ignoring the effectivity of Theorem~\ref{general case} and just interpreting it as saying that $\mu_{x'}\to m_\infty$ as $x'$ drifts away from the root $x$ along a non-degenerate generalized branch, it seems tempting to think that it is  considerably stronger than the result of Benoist and Oh, as it deals with
the equidistribution of  single orbits as opposed to the equidistribution of the full collection. We will show in~\S\ref{total growth section} below that this (non-effective) equidistribution  in fact follows quite 
elementarily from the work of Benoist and Oh. Nonetheless, the argument we give for Theorem~\ref{general case} is independent of~\cite{BenoistOhGafa} and 
as far as we know the effective statements in Theorem~\ref{general case} do not follow easily from known results.
\subsection{Total vs.\ individual growth}\label{total growth section}
Let $x\in X_\infty$ be a class with a periodic $A_\infty$-orbit and consider the union of the periodic orbits $x'A_\infty$ for $x'\in\cS_{\on{h}}(x)$ (where $\on{h}$ is an admissible radius). We denote the total
length of this union by $\textbf{t}_x(\on{h})$; that is, $\textbf{t}_x(\on{h})\defi\sum t_{x'}$ where the sum is taken over a set of representatives of the classes on the sphere giving rise to different orbits. The following Lemma shows that the growth rate of the length of individual periodic orbits along a non-degenerate generalized branch is the same as 
the growth rate of the total length. Although we only use this Lemma 
 in the course of the proofs regarding the growth rate of the periods (cf.\ Theorems~\ref{period cor new} and~\ref{gpth}) 
it explains a phenomenon that to some extent stands behind all of our results.  
Its proof is given in~\S\ref{proof of total growth}. 

\begin{lemma}\label{total growth}
Let $x\in X_\infty$ be a class with a periodic $A_\infty$-orbit. For any generalized branch $\cL$ of $\cG_S(x)$, let 
$c_\cL(\on{h})\defi t_{x_{\onh}}/\onh$, where $x_{\on{h}}$ is the class in $\cS_{\on{h}}\cap\cL$.
\begin{enumerate}
\item\label{t.g.1} The total length $\textbf{t}_x(\on{h})$ satisfies  $\on{h}\ll_{x,S}\textbf{t}_x(\on{h})\ll_{x,S} \on{h}$.
\item\label{t.g.2} If $\cL$ is a non-degenerate  generalized branch then
$c_{\cL}(\on{h})$ attains only finitely many values and moreover, if $\on{h}_n$ is a divisibility sequence of admissible radii 
(that is $\on{h}_n|\on{h}_{n+1}$), then $c_{\cL}(\on{h}_n)$ stabilizes. 
\item\label{t.g.3} The class $x$ is non-split for $S$ if and only if 
\begin{equation}\label{9.9.1}
\inf\set{c_{\cL}(\on{h}):\cL\textrm{ non-degenerate},\;\on{h}\in\cO_S^\times\cap \bN} >0.
\end{equation}
\end{enumerate}
\end{lemma}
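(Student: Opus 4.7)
The plan is to translate the periodicity condition for classes on the Hecke sphere into a concrete arithmetic condition on $\ga_x$, and then analyze the resulting $p$-adic dynamics. As a starting point, I would fix a lift $y_{\om,\onh} = \Ga_S(g_x,\om)a_f(\onh)\in X_S$ of $x_\onh$ and compute, using the defining relation $g_xa_\infty(t_x)=\ga_x^{-1}g_x$, that
\[
y_{\om,\onh}a_\infty(nt_x) = \Ga_S(\ga_x^{-n}g_x,\om)a_f(\onh) = y_{\ga_x^n\om,\onh}.
\]
Invoking Remark~\ref{j.27}(2) and Remark~\ref{12.9.r.1}, the condition $x_\onh a_\infty(nt_x)=x_\onh$ becomes $\ga_x^{n,\om} \in a_f(\onh)K_Sa_f(\onh)^{-1}$, which prime-by-prime says that the lower-left entry of $\om_p^{-1}\ga_x^n\om_p$ is divisible by $p^{n_p}$, where $\onh=\prod p^{n_p}$. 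Consequently $t_{x_\onh} = n(\om,\onh)\cdot t_x$ where $n(\om,\onh)$ is the smallest positive integer satisfying these conditions simultaneously at all $p\in S$. This formula underlies everything that follows.

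For part~\eqref{t.g.1}, I would combine the formula above with a counting argument for the Hecke correspondence. By Corollary~\ref{sphere again new} the sphere $\cS_\onh(x)$ has cardinality $\asymp \onh$ (the number of primitive sublattices of index $\onh$), and each $t_{x'}\ge t_x$, giving the lower bound $\mathbf{t}_x(\onh)\gg_x\onh$. For the upper bound, I would consider the union $\bigcup_{x'\in\cS_\onh}x'A_\infty$ as the projection under $\pi$ of the $A_\infty$-orbit of $\pi^{-1}(x)a_f(\onh) \subset X_S$; since $K_S$ commutes with $A_\infty$ and $[K_S : K_S\cap a_f(\onh)K_Sa_f(\onh)^{-1}]\asymp\onh$, the total length of the projected orbits is bounded by $\onh\cdot t_x$ times an absolute constant.

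For part~\eqref{t.g.2}, for a non-degenerate branch $\cL_{g_x,\om}$ I would study, at each $p\in S$, the action of $\ga_x$ on $\bP^1(\bQ_p)$ via M\"obius transformations. Non-degeneracy means that no power $\ga_x^n$ has a $\bQ_p$-rational eigenvector aligned with $\om_p\cdot\infty$, so the orbit of $\om_p\infty$ is infinite and the first return of $\ga_x^n\om_p\infty$ to the $p^{n_p}$-neighborhood of $\om_p\infty$ occurs at a time $n_p(\om,\onh)$ whose value depends only on finitely many $p$-adic digits of $\om_p$ and on the conjugacy class of $\ga_x$ in $G_p$. Taking the lcm over $p\in S$ and dividing by $\onh$ then yields $c_\cL(\onh) = n(\om,\onh)t_x/\onh$, which takes only finitely many values, and which stabilizes along any divisibility sequence $\onh_n\mid\onh_{n+1}$ because the $p$-adic digits of $\om_p$ determining $n_p$ are eventually seen once $v_p(\onh_n)$ is large enough.

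Part~\eqref{t.g.3} is the heart of the matter. If $x$ is non-split then at each $p\in S$ the element $\ga_x$ is elliptic over $\bQ_p$ (the closure of $\langle\ga_x\rangle$ in $G_p$ is compact), so the orbits on $\bP^1(\bQ_p)$ have return times $\asymp p^{n_p}$ \emph{uniformly in the starting point} $\om_p\infty$; this yields $n(\om,\onh)\asymp \onh$ and hence a uniform positive lower bound for $c_\cL(\onh)$. Conversely, in the split case with splitting prime $p$ and fixed point $\xi\in\bP^1(\bQ_p)$ of $\ga_x$, I would construct $\om$ with $\om_p\infty$ not equal to $\xi$ but congruent to $\xi$ modulo $p^k$ for $k$ arbitrarily large; then choosing $\onh=p^k$ one finds $n(\om,\onh)=1$, making $\cL_{g_x,\om}$ non-degenerate yet forcing $c_\cL(\onh)=t_x/\onh\to 0$. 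The main obstacle I anticipate is the uniform $p$-adic estimate in the non-split case: one must show that the return-time bound is independent of the starting point $\om_p\infty$, which I expect to handle by using compactness of the closure of $\langle\ga_x^{\om_p}\rangle$ together with a uniform lower bound on the displacement $\on{d}_{\gog_p}(\ga_x^n-\text{upper triangular},0)$ for all $n$ in one period.
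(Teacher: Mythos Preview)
Your starting computation is correct: the condition $x_\onh a_\infty(nt_x)=x_\onh$ is indeed equivalent to $(\ga_x^n)^\om\in a_f(\onh)K_Sa_f(\onh)^{-1}$. But the conclusion $t_{x_\onh}=n(\om,\onh)\cdot t_x$ does not follow, and this gap undermines all three parts. What you have shown is only that $t_{x_\onh}$ \emph{divides} $n(\om,\onh)\,t_x$. The period $t_{x_\onh}$ is a priori only an integer multiple of $t_0$, where $e^{t_0/2}$ is the fundamental totally positive unit of $\bF_x$ (Remark~\ref{11.9.r.2}); since $t_x=k_0t_0$ with $k_0\ge1$ possibly strict, there is no reason for $t_{x_\onh}/t_x$ to be an integer. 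The same issue invalidates your lower bound in part~\eqref{t.g.1}: the claim ``each $t_{x'}\ge t_x$'' is false in general, and moreover $\mathbf{t}_x(\onh)$ is a sum over \emph{distinct} orbits rather than over all sphere points, so even the count $|\cS_\onh(x)|\asymp\onh$ would not suffice. The paper in fact derives the lower bound $\onh\ll\mathbf{t}_x(\onh)$ as a consequence of part~\eqref{t.g.2}.

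The paper closes the main gap by passing to the fundamental unit: one introduces the \emph{rational} matrix $\del_x$ with $\del_x g_x=g_xa_\infty(t_0)$, enlarges $S$ to a set $\widetilde{S}$ so that $\del_x\in\Ga_{\widetilde{S}}$, extends $\om$ trivially to $\widetilde{\om}\in K_{\widetilde{S}}$, and then runs exactly your lifting argument with $\del_x$ in place of $\ga_x$. Now one obtains the \emph{exact} identity $t_{x_\onh}=k_\onh(\del_x^{\widetilde{\om}})\cdot t_0$ with $k_\onh(\cdot)$ as in Definition~\ref{12.9.def}, and the problem reduces to showing that $k_\onh(\del)/\onh$ takes finitely many values and stabilizes along divisibility chains for any compact-type $\del$ with no upper-triangular power (Lemma~\ref{growth again}). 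Your $\bP^1(\bQ_p)$ picture is the right intuition for this last step, but the proof in the paper is a concrete $p$-adic computation: reduce to a single prime, replace $\del$ by a power congruent to the identity mod $p^2$, and show inductively via the recursion for the lower-left matrix entry that $k_{p^n}(\del)=p^{n-n_0}$ for all $n\ge n_0$. Your sketch (``depends on finitely many digits of $\om_p$'') speaks to uniformity in $\om$ but does not address the dependence on $\onh$ for a \emph{fixed} branch, which is where the content of part~\eqref{t.g.2} lies.
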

The first two parts of Lemma~\ref{total growth} show that if $\cL$ is a non-degenerate generalized branch, then a single orbit $x_{\on{h}}A_\infty$ through the class $x_{\on{h}}\in \cL\cap\cS_{\on{h}}(x)$ actually occupies a positive proportion (bounded 
below by a constant independent of $\on{h}$) of the 
full collection $\set{x' A_\infty:x'\in \cS_{\on{h}}(x)}$. Relying on~\cite{BenoistOhGafa} we may argue the non-effective 
version of Theorem~\ref{general case} 
(that is, that $\mu_{x_{\on{h}}}\to m_\infty$ as $\on{h}\to\infty$) in the following way: Let $\on{h}_i\to\infty$ be a sequence of 
admissible radii such that $\mu_{{x_{\on{h}_i}}}$ converges to say $\mu_\infty$ (which is an $A_\infty$-invariant measure). We need 
to argue that $\mu_\infty=m_\infty$. Let  $\eta_{\on{h}}$ be the natural $A_\infty$-invariant probability measure supported on the 
collection of periodic orbits 
$\set{x' A_\infty:x'\in \cS_{\on{h}}(x)}$. By~\cite{BenoistOhGafa} $\eta_{\on{h}}\to m_\infty$. By the first two parts of Lemma~\ref{total growth} we can write $\eta_{\on{h}_i}$ as a convex combination of $A_\infty$-invariant probability measures in 
the following way: $\eta_{\on{h}_i}=c'_{\on{h}_i}\mu_{x_{\on{h}_i}}+(1-c'_{\on{h}_i})\nu_{\on{h}_i},$ where 
the constants $c'_{\on{h}_i}$ are bounded below by some constant $c'$ independent of $\on{h}_i$. Taking $i$ to $\infty$ (along an appropriate subsequences if necessary) we deduce that in the limit $m_\infty=c'_\infty\mu_\infty+(1-c'_\infty)\nu_\infty$ for some positive constant $c'_\infty\le 1$. By the ergodicity of $m_\infty$ with  respect to the $A_\infty$-action we deduce that the limit $\mu_\infty$
that appears in the above convex combination with positive weight, must be equal
to $m_\infty$. This establishes the desired convergence. 
\section{Proof of Theorem~\ref{general case}.}\label{soft version}
Throughout this section we fix $x\in X_\infty$ to be a class with a periodic $A_\infty$-orbit and a 
representative $g_x\in G_\infty$ such that $x=\Ga_\infty g_x$. Using
the notation of Definition~\ref{def per}, it follows that there exists $\ga_x\in\Ga_\infty$ such that 
\begin{equation}\label{identity 1}
\ga_x g_x a_\infty(t_x)=g_x.
\end{equation}
We briefly discuss the relations between the various parts of Theorem~\ref{general case}.
As the eigenvectors of $\ga_x$ are irrational (and not roots of unity) it follows that $\ga_x$ (or any of its powers) is not triangulizable over $\bQ$ and so all the rational generalized branches are non-degenerate. 
This establishes part~\eqref{g.c.1'} of the theorem. 
Part~\eqref{g.c.5} of the theorem follows from part~\eqref{g.c.3} because of~\eqref{sphere} 
which shows that any class on the $S$-Hecke graph $\cG_S(x)$ lies on a rational 
generalized branch; the sequence $x_{\onh_n}$ produced by part~\eqref{g.c.3} may be viewed as a sequence of classes lying on (varying) rational generalized branches, showing
that a uniform implicit constant for all rational generalized branches in~\eqref{on the graph} is impossible.

We begin with the necessary preparations for the arguments
yielding parts~\eqref{g.c.1},\eqref{g.c.2}, and~\eqref{g.c.3}. We will see below that part~\eqref{g.c.3} is a simple observation once the stage is set correctly and so
the main bulk of the theorem lies in establishing parts~\eqref{g.c.1} and~\eqref{g.c.2}. 

After fixing $g_x$ we fix a generalized branch in $\cG_S(x)$; that is, we fix an element $\om\in K_S$ and set $\cL_\om=\cL_{g_x,\om}$. Although $\om$ is fixed, the reader should bear in mind that 
at some point we will vary the choice of $\om$ in order to change the generalized branch. 
\subsection{The lift of a closed loop}\label{the lifts of a closed loop} The following construction is fundamental to our argument. 
Let 
 $y_{\om}\in X_S$ be defined by $y_{\om}=\Ga_S(g_x ,\om)$. Consider the orbit $y_{\om} A_\infty\subset X_S$ and note that 
\begin{equation}\label{lifted orbits}
\pi(y_{\om})=x,\;x A_\infty=\pi(y_{\om} A_\infty)=\pi(\overline{y_{\om} A_\infty}),
\end{equation}
where the rightmost equality follows from the fact that $x A_\infty$ is compact and the continuity of the projection $\pi$. 

We now analyze the closure $\overline{y_{\om} A}_\infty$. Each $t\in\bR$ can be written in a unique way in the form $t=s+\ell t_x$ for some 
$s\in[0,t_x)$ and $\ell\in\bZ$. It follows from~\eqref{identity 1} that 
\begin{equation}\label{identity 2}
y_{\om} a_\infty(t)=\Ga_S(g_x a^{\ell}_\infty(t_x)a_\infty(s),\om)=\Ga_S(g_x a_\infty(s),\ga_x^\ell \om)=y_{\om}(a_\infty(s),\om^{-1}\ga_x^\ell \om).
\end{equation} 
If we denote for an element $\ga$ in a group $H$ by $\idist{\ga}_H$ the cyclic group generated by $\ga$ in $H$, then it follows from~\eqref{identity 2} that 
\begin{equation}\label{the lift is an orbit}
y_{\om} A_\infty=y_{\om}(A_\infty \times\idist{\om^{-1}\ga_x \om}_{G_S}).
\end{equation}
Let 
\begin{equation}\label{H beta}
H_{\om}=\om^{-1}\overline{\idist{\ga_x}}_{G_S}\om=\overline{\idist{\om^{-1}\ga_x \om}}_{G_S}.
\end{equation}
Clearly, $H_{\om}$ is a compact subgroup of $K_S$. We let
\begin{equation}\label{L beta}
L_{\om} = A_\infty\times H_{\om}.
\end{equation}
\begin{lemma}\label{lifted orbit closure}
 The orbit $y_{\om} L_{\om}$ is compact and  
\begin{equation}\label{n'th orbit}
\overline{y_{\om} A_\infty}=y_{\om} L_{\om}.
\end{equation}
\end{lemma}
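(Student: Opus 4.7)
The plan is to establish the two containments making up $\overline{y_\om A_\infty}=y_\om L_\om$, after which the compactness claim follows painlessly. The argument rests on three simple observations: that $H_\om$ is compact, that $\overline{y_\om A_\infty}$ is contained in a compact subset of $X_S$, and on the identity~\eqref{identity 2} that relates the $H_\om$-action on $y_\om$ with its $A_\infty$-action.

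First I would observe that $H_\om$ is compact. Since $\ga_x\in\Ga_\infty=\bG(\bZ)$, its diagonal image in $G_S=\prod_{p\in S}G_p$ lies inside $\prod_{p\in S}K_p=K_S$, so $\overline{\idist{\ga_x}}_{G_S}$ is a compact subgroup of $K_S$, and by~\eqref{H beta} the group $H_\om$ is just its $K_S$-conjugate.

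The inclusion $y_\om L_\om\subseteq\overline{y_\om A_\infty}$ is then immediate from~\eqref{the lift is an orbit} together with the definition of $H_\om$ as a closure: any $h\in H_\om$ is a limit $\lim_k(\om^{-1}\ga_x\om)^{n_k}$, so continuity of right translation gives
\[
y_\om(a_\infty(t),h)=\lim_k y_\om(a_\infty(t),(\om^{-1}\ga_x\om)^{n_k})\in\overline{y_\om A_\infty}
\]
for every $t\in\bR$. For the reverse inclusion, I would show that $\overline{y_\om A_\infty}$ is $L_\om$-invariant; since it contains $y_\om$, it then contains $y_\om L_\om$. Invariance under $A_\infty$ is manifest. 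For $H_\om$-invariance, the key input is~\eqref{identity 2} in the specialized form $y_\om(e_\infty,(\om^{-1}\ga_x\om)^\ell)=y_\om a_\infty(\ell t_x)$, which says that the cyclic group $\idist{\om^{-1}\ga_x\om}$ already moves $y_\om$ inside its own $A_\infty$-orbit; approximating an arbitrary $h\in H_\om$ by such powers yields $y_\om\cdot(e_\infty,h)\in\overline{y_\om A_\infty}$. For a general $z=\lim_j y_\om a_\infty(t_j)\in\overline{y_\om A_\infty}$, the identity $zh=\lim_j(y_\om h)a_\infty(t_j)$, which uses that $A_\infty$ and $H_\om$ sit in commuting factors of $\GSS$, together with the already established $A_\infty$-invariance of $\overline{y_\om A_\infty}$, closes the argument.

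Finally, compactness is automatic once the equality is known: $xA_\infty$ is compact by hypothesis, and $\pi$ is the quotient map by the compact group $K_S$, so $\pi^{-1}(xA_\infty)$ is compact; by~\eqref{lifted orbits} the set $\overline{y_\om A_\infty}$ is a closed subset of this compact set, hence itself compact. The most delicate point in this plan is the interchange of the limit with the $A_\infty$-action in the $H_\om$-invariance step, but it goes through cleanly because the two groups commute as subgroups of $\GSS$ and the closure is $A_\infty$-invariant a priori.
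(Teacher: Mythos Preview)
Your argument has a genuine gap: you prove the same inclusion twice and never establish the other one. Your first paragraph correctly gives $y_\om L_\om\subseteq\overline{y_\om A_\infty}$. But then, under the heading ``reverse inclusion'', you set out to show that $\overline{y_\om A_\infty}$ is $L_\om$-invariant and conclude that it contains $y_\om L_\om$ --- which is again the inclusion $y_\om L_\om\subseteq\overline{y_\om A_\infty}$. The inclusion you still owe is $\overline{y_\om A_\infty}\subseteq y_\om L_\om$, and nothing in your write-up addresses it. Your compactness argument at the end shows $\overline{y_\om A_\infty}$ is compact, but that does not help close the gap either.

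There are two easy fixes. The paper's route is direct: take a limit $y=\lim_n y_\om a_\infty(t_n)$, write $t_n=s_n+\ell_n t_x$ with $s_n\in[0,t_x)$, pass to a subsequence so that $s_n\to s$ and $\om^{-1}\ga_x^{\ell_n}\om\to h\in H_\om$ (using compactness of $[0,t_x]$ and of $H_\om$), and apply~\eqref{identity 2} to get $y=y_\om(a_\infty(s),h)\in y_\om L_\om$. Alternatively, you can bypass this by observing that $A_\infty<L_\om$ gives $y_\om A_\infty\subseteq y_\om L_\om$, so it suffices to show $y_\om L_\om$ is closed; and in fact~\eqref{identity 2} shows $y_\om L_\om=\{y_\om(a_\infty(s),h):s\in[0,t_x],\,h\in H_\om\}$ is the continuous image of the compact set $[0,t_x]\times H_\om$, hence compact, hence closed. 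Either way the missing inclusion needs one more line of work using~\eqref{identity 2} beyond what you wrote.
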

\begin{proof}
We first establish~\eqref{n'th orbit}. The inclusion $\supset$ follows readily from~\eqref{the lift is an orbit}. For the reverse inclusion, let $t_n\in\bR$ be such that $y_{\om} a_\infty(t_n)\to_{n\to\infty} y\in \overline{y_{\om} A_\infty}.$ Let $s_n\in[0,t_x),\ell_n\in\bZ$ be as defined before~\eqref{identity 2}; that is $t_n=s_n+\ell_n$. By compactness we may assume without loss of generality (after passing to a subsequence if necessary) that $s_n\to s$ and $\om^{-1}\ga_x^{\ell_n}\om\to h$. We conclude from 
~\eqref{identity 2} that 
\begin{equation}\label{identity 3}
y=\lim y_{\om} a_\infty(t_n)=
\lim y_{\om} (a_\infty(s_n),
\om^{-1}\ga_x^{\ell_n}\om)=
y_{\om}(a_\infty(s),h)\in y_{\om} L_{\om}.
\end{equation} 
The fact that the orbit $y_{\om} L_{\om}$ is compact now follows from the fact that it is a closed set contained in $\pi^{-1}(x A_\infty)$ which is compact by the properness of $\pi$.
\end{proof}
\begin{remark}\label{only a segment}
The above proof actually establishes a bit more: We have shown that in fact, 
\begin{equation}\label{segment}
\overline{y_{\om} A}_\infty=y_{\om}L_{\om}=\set{y_{\om}(a_\infty(t),h):t\in[0,t_x),h\in H_{\om}}.
\end{equation}
\end{remark}
\begin{definition}\label{periodic measures}
Let $\eta_{\om}$ denote the $L_{\om}$-invariant probability measure supported on the compact (and hence periodic) orbit $y_{\om} L_{\om}$. For an admissible radius $\on{h}$ let
$$y_{\om,\on{h}}=y_{\om}a_f(\on{h}), \;L_\om^{a_f(\on{h})}=L_{\om,\on{h}},\;H_{\om,\on{h}}=H_\om^{a_f(\on{h})},$$ and note the identity $y_\om L_\om a_f(\on{h})=y_{\om,\on{h}}L_{\om,\on{h}}=y_{\om,\on{h}}(A_\infty\times H_{\om,\on{h}})$. We denote the unique $L_{\om,\on{h}}$-invariant probability measure supported on the periodic
orbit $y_{\om,\on{h}} L_{\om,\on{h}}$ by $\eta_{\om,\on{h}}$. It follows that $(a_f(\on{h}))_*\eta_\om=\eta_{\om,\on{h}}$. Note that the notation $y_{\om,\onh}$ is consistent with the one introduced in Remark~\ref{j.27}\eqref{j.27.1}.
\end{definition}
\begin{lemma}\label{eta is a lift}
Let $\on{h}$ be an admissible radius and $x_{\om,\onh}\in \cL_\om\cap\cS_{\on{h}}(x)$. Then, 
the pushed orbit $y_{\om}L_{\om}a_f(\on{h})=y_{\om,\on{h}}L_{\om,\on{h}}$ projects to  the periodic orbit $x_{\om,\onh}A_\infty$ and furthermore, the measure $\eta_{\om,\on{h}}$ supported on it 
projects to $\mu_{x_{\om,\onh}}$; i.e.\  $\pi_*\eta_{\om,\on{h}}=\mu_{x_{\om,\onh}}$.
\end{lemma}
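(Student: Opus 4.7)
My plan is in three moves: transport Lemma~\ref{lifted orbit closure} along $a_f(\onh)$; verify periodicity of $x_{\om,\onh}A_\infty$ via finiteness of the Hecke sphere; and then invoke uniqueness of the $A_\infty$-invariant probability measure on a periodic orbit. For the first move, note that right translation by $a_f(\onh)$ is a homeomorphism of $X_S$ and commutes with $A_\infty$ (the two factors live in disjoint places of $S^*$). Combining this with Lemma~\ref{lifted orbit closure} gives
$$\overline{y_{\om,\onh}A_\infty}=\overline{y_\om A_\infty}\cdot a_f(\onh)=y_\om L_\om a_f(\onh)=y_{\om,\onh}L_{\om,\onh},$$
using the definition $L_{\om,\onh}=a_f(\onh)^{-1}L_\om a_f(\onh)$; in particular, the orbit $y_{\om,\onh}L_{\om,\onh}$ is compact.

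For periodicity, I use that $\om^{-1}\ga_x^\ell\om\in H_\om\subset K_S$ together with identity~\eqref{identity 2}, which give $y_\om a_\infty(\ell t_x)=y_\om\cdot(e_\infty,\om^{-1}\ga_x^\ell\om)\in y_\om K_S=\pi^{-1}(x)$ for every $\ell\in\bZ$. Therefore
$$x_{\om,\onh}a_\infty(\ell t_x)=\pi\bigl(y_\om a_\infty(\ell t_x)\cdot a_f(\onh)\bigr)\in\pi\bigl(\pi^{-1}(x)\cdot a_f(\onh)\bigr)=\cS_\onh(x)$$
by Lemma~\ref{get the sphere}. Since the Hecke sphere $\cS_\onh(x)$ is finite, the discrete subset $\{x_{\om,\onh}a_\infty(\ell t_x):\ell\in\bZ\}$ of the orbit $x_{\om,\onh}A_\infty$ is finite, which forces the map $\ell\mapsto x_{\om,\onh}a_\infty(\ell t_x)$ to be periodic in $\ell$. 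Hence $x_{\om,\onh}A_\infty$ is a periodic (closed, compact) $A_\infty$-orbit in $X_\infty$.

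With these in hand, continuity and $A_\infty$-equivariance of $\pi$ yield
$$\pi(y_{\om,\onh}L_{\om,\onh})=\pi(\overline{y_{\om,\onh}A_\infty})\subset\overline{\pi(y_{\om,\onh}A_\infty)}=\overline{x_{\om,\onh}A_\infty}=x_{\om,\onh}A_\infty,$$
while the reverse inclusion $x_{\om,\onh}A_\infty=\pi(y_{\om,\onh}A_\infty)\subset\pi(y_{\om,\onh}L_{\om,\onh})$ is immediate. Finally, $\eta_{\om,\onh}$ is $A_\infty$-invariant (indeed $L_{\om,\onh}$-invariant) and $\pi$ is $A_\infty$-equivariant, so $\pi_*\eta_{\om,\onh}$ is an $A_\infty$-invariant probability measure supported on the periodic orbit $x_{\om,\onh}A_\infty$, and must equal $\mu_{x_{\om,\onh}}$ by the uniqueness clause in Definition~\ref{def per}. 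The only non-formal step in this plan is the a-priori periodicity of $x_{\om,\onh}A_\infty$, and there the essential input is the finiteness of $\cS_\onh(x)$ combined with identity~\eqref{identity 2}.
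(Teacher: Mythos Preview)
Your proof is correct and follows essentially the same route as the paper: transport Lemma~\ref{lifted orbit closure} by the homeomorphism $a_f(\onh)$, use $A_\infty$-equivariance and continuity of $\pi$ to identify the image with $x_{\om,\onh}A_\infty$, and conclude the measure statement by uniqueness of the $A_\infty$-invariant probability on a periodic orbit. The one substantive addition you make is an explicit proof that $x_{\om,\onh}A_\infty$ is periodic, via the finiteness of the Hecke sphere $\cS_\onh(x)$ together with identity~\eqref{identity 2}; the paper simply invokes this compactness, having asserted earlier (just before Definition~\ref{split}) that every $x'\in\cG_S(x)$ has a periodic $A_\infty$-orbit without spelling out the argument. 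Your filling-in of this step is a genuine improvement in self-containedness.
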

Lemma~\ref{eta is a lift} puts us in a desirable situation from the dynamical point of view; instead of studying the orbits $x'A_\infty$ in the space $X_\infty$ as $x'$ drifts away
from $x$ on a generalized branch (the connection between which is not clear apriori), 
we will study the images of the fixed orbit $y_{\om}L_{\om}$ under the action of $a_f(\on{h})$ for admissible radii $\on{h}$, which share a clear algebraic (and geometric) relation. This relation is the reason we needed to introduce the $S$-arithmetic extension $X_S$. 
\begin{proof}
The fact that $x_{\om,\onh}=\pi(y_{\om,\on{h}})$ follows from Definition~\ref{periodic measures} and Remark~\ref{j.27}\eqref{j.27.1}. We have that 
\begin{align}\label{identity 4}
\pi(y_{\om,\on{h}}L_{\om,\on{h}})&=\pi(y_{\om} L_{\om}a_f(\on{h}))=\pi(\overline{y_{\om} A_\infty a_f(\on{h})})\\
\nonumber &=\overline{\pi(y_{\om}  a_f(\on{h})A_\infty)}=\overline{\pi(y_{\om,\on{h}})A_\infty}=x_{\om,\onh} A_\infty, 
 \end{align}
where the  first equality from the left follows from Definition~\ref{periodic measures}, the second, from Lemma~\ref{lifted orbit closure} and that fact that $a_f(\on{h})$ acts on $X_S$ by a homeomorphism, the third,
from the commutation of $A_\infty$ and $a_f(\on{h})$
and from the continuity of $\pi$, the fourth, from the fact that $\pi$ intertwines the $A_\infty$-actions on $X_S,X_\infty$, and finally the fifth equality follows from the fact that the orbit $x_{\om,\onh}A_\infty$ 
is compact. 

As $A_\infty< L_{\om,\on{h}}$, $\eta_{\om,\on{h}}$ is $A_\infty$-invariant. As a consequence, the projection $\pi_*\eta_{\om,\on{h}}$
is an $A_\infty$-invariant probability measure supported on $x_{\om,\onh}A_\infty$. As $\mu_{x_{\om,\onh}}$ is the unique such measure, we conclude that $\pi_*\eta_{\om,\on{h}}=\mu_{x_{\om,\onh}}$ as desired.
\end{proof}
\begin{remark}\label{regarding the length}
It follows from~\eqref{segment} and the definition of $y_{\om,\on{h}},H_{\om,\on{h}}$ that  
$$y_{\om,\on{h}}L_{\om,\on{h}}=\set{y_{\om,\on{h}}(a_\infty(t),h):t\in[0,t_x),h\in H_{\om,\on{h}}}.$$
By~\eqref{identity 4} the following equality follows:
\begin{equation}\label{segment again}
x_{\om,\onh}A_\infty=\pi\pa{\set{y_{\om,\on{h}}(a_\infty(t),h):t\in[0,t_x),h\in H_{\om,\on{h}}}}.
\end{equation}
The meaning of the above equation is that the only reason for the orbit $x_{\om,\onh}A_\infty$ to become long is that the group $H_{\om,\on{h}}$ stretches and `sticks out' of $K_S$. This is illustrated in the following proof.
\end{remark}
\begin{proof}[Proof of part~\eqref{g.c.3} of Theorem~\ref{general case}]
For an admissible radius $\on{h}$ and $p\in S$ denote by $(H_{\om,\on{h}})_p$ the  projection of the group $H_{\om,\on{h}}$ on its $p$-th component. Note that by definition, 
$(H_{\om,\on{h}})_p=\diag{1,\on{h}^{-1}} (H_\om)_p\diag{1,\on{h}}$. 

Assume that the generalized branch $\cL_\om$ is degenerate. It follows that there exists $p\in S$  for which some power of the $p$-th component $(\om^{-1}\ga_x\om)_p$ is upper triangular.
Let $d$ be the minimal positive integer for which $(\om^{-1}\ga_x^d\om)_p$ is upper triangular.  We conclude from~\eqref{H beta} that $(H_{\om})_p$ contains an index $d$ subgroup that consists of upper triangular elements only. Choose $\on{h}_n=p^n$ and note that because of the above $(H_{\om,\onh_n})_p\cap K_p$ is of index at most $d$ in $(H_{\om,\onh_n})_p$. Moreover, note
that as $p$ is a unit in $\bZ_{p'}$ for any prime $p'\ne p$, we have that $(H_{\om,\onh_n})_{p'}<K_{p'}$. It follows that along the chosen sequence $\on{h}_n$ we have that $H_{\om,\on{h}_n}\cap K_S$ has at most index $d$ in $H_{\om,\on{h}_n}$. Let $h_i\in H_{\om,\on{h}_n}, i=1\dots d'$, $d'\le d$, be representatives of the cosets of $H_{\om,\on{h}_n}\cap K_S$ and
denote $y_i=y_{\om,\onh_n}h_i$, $i=1\dots d'$ and $x_i=\pi(y_i)$.
 We can rewrite~\eqref{segment again} as 
\begin{align}\label{segments 10}
\nonumber x_{\om,\onh_n}A_\infty&=\pi\pa{\cup_{i=1}^{d'}\set{y_{\om,\on{h}_n}(a_\infty(t),h_ih):t\in[0,t_x),h\in H_{\om,\on{h}_n}\cap K_S}} \\
\nonumber &=\pi\pa{\cup_{i=1}^{d'}\set{y_i(a_\infty(t),h):t\in[0,t_x),h\in H_{\om,\on{h}_n}\cap K_S}} \\
&=\cup_{i=1}^{d'} \set{x_ia_\infty(t):t\in [0,t_x)},
\end{align}
and so we conclude that  $t_{x_{\om,\onh_n}}\le d't_x$ which finishes the proof.
\end{proof}
In order to finish the proof of Theorem~\ref{general case} we are left to argue parts~\eqref{g.c.1},\eqref{g.c.2}. As said before, these are the main parts of the theorem.
\subsection{Strategy of the proof of Theorem~\ref{general case}\eqref{g.c.1},\eqref{g.c.2}}\label{strategy} In the notation of Lemma~\ref{eta is a lift}, because $\pi_*\eta_{\om,\on{h}}=\mu_{x_{\om,\onh}}$, the validity of
~\eqref{on the graph} is equivalent to saying that given $\vphi\in \on{Lip}_\ka(X_S)\cap L^2(X_S, m_S)$ which is $K_S$-invariant 
(i.e.\ is of the form $\vphi_0\circ \pi$ for $\vphi_0\in \on{Lip}_\ka(X_\infty)\cap L^2(X_\infty, m_\infty)$)  
\begin{equation}\label{short 1}
\av{\int \vphi d\eta_{\om,\on{h}}-\int \vphi dm_S }\ll_{x,S_f,\cL_\om,\eps}\max\set{\ka,\norm{\vphi}_2}\on{h}^{-\frac{\del_0}{2}+\eps}.
\end{equation}
The argument giving this `effective equidistribution' is a combination of an argument which we will refer to as \textit{the mixing trick} and spectral gap (or effective decay of matrix coefficients). 
As far as we know the mixing trick originates  from Margulis' thesis~\cite{MargulisThesis}.
We briefly describe its heuristics: One slightly thickens the initial orbit $y_{\om} L_{\om}$
to an open set $\cT \subset X_S$ in directions which are (weakly) contracted by the action of $a_f(\on{h})$. The set $\cT$ will be called below \textit{a tube around the orbit} $y_{\om} L_{\om}$. Let $m_{\cT}$ denote the normalized restriction of $m_S$ to $\cT$. The pushed measure $(a_f(\on{h}))_*m_{\cT}$ is the normalized restriction of $m_S$ to the pushed tube $\cT a_f(\on{h})$, which is a tube around 
the orbit $y_{\om,\on{h}} L_{\om,\on{h}}$. 
Because the thickening used to construct $\cT$ is taken in directions which are (weakly) contracted by $a_f(\on{h})$, the size of the thickening giving the tube 
$\cT a_f(\on{h})$ is even smaller than the size of the initial thickening.
Hence, there shouldn't be much of a difference between integrating against the measure $\eta_{\om,\on{h}}$ and integrating against $(a_f(\on{h}))_*m_{\cT}$. The fact that the action of $a_f(\on{h})$ is mixing on $(X_S,m_S)$ means that 
the  pushed measure $(a_f(\on{h}))_*m_{\cT}$ is `close' to $m_S$  (here, the effective mixing Theorem~\ref{effective dmc} will allow us to pin down the meaning 
of `close' in a precise way). Combining these things together will give us the desired estimate given in~\eqref{short 1}.

In order to make this strategy into a
rigorous proof we discuss in the next two subsections in detail the construction of tubes and decay of matrix coefficients.  
\subsection{Effective mixing}\label{effective mixing}
Let $\cH=L^2(X_S,m_S)$. Our goal in this section is to prove the following:
\begin{theorem}\label{effective dmc}
Let $\on{h}$ be an admissible radius and $w_1,w_2\in\cH$ be vectors with the following properties: $w_1$ is $K_S$-fixed and $w_2$ is stabilized by a product subgroup $K^*=\prod_{v\in S} K^*_v<K_S$ of index $d$ in $K_S$. Then
 for any $\eps>0$, 
\begin{align}\label{effective dmc inequality}
\av{\idist{w_1,a_f(\on{h})w_2}-\idist{w_1,1}\idist{1,w_2}}\ll_\eps
 & \norm{w_1}\norm{w_2} d^{\frac{1}{2}}\on{h}^{-\del_0+\eps},
\end{align}
\end{theorem}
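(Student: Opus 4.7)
The plan is to split off the constant-function contribution, invoke the spectral decomposition of $L^2_0(X_S,m_S)$ under the $G_{S_f}$-action, and bound matrix coefficients irreducible component by irreducible component using the Kim--Sarnak bound $\delta_0 \geq 25/64$ on the unramified automorphic spectrum of $\PGL_2/\bQ$.

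First I would reduce to the case $\idist{w_i,1}=0$, noting that the expression on the left of \eqref{effective dmc inequality} is a bilinear form which vanishes on the span of the constant function, and that subtracting $\idist{w_i,1}\cdot 1$ from $w_i$ preserves both the $K_S$-invariance of $w_1$ and the $K^*$-invariance of $w_2$, while not increasing their norms. Next I would decompose $\cH^0\defi(\bC\cdot 1)^\perp$ as a direct integral $\int^\oplus (\pi_\lambda,V_\lambda)\,d\mu(\lambda)$ of irreducible unitary $G_{S_f}$-representations (this is legitimate since $a_f(\on h)\in G_{S_f}$). Because $w_1\in\cH^{K_S}$, each component $(w_1)_\lambda$ lies in the (at most one-dimensional) spherical line $V_\lambda^{K_{S_f}}$, and one may write $(w_1)_\lambda=c_1(\lambda)\phi_\lambda$ for a unit spherical vector $\phi_\lambda$, with $\int|c_1|^2 d\mu\le\norm{w_1}^2$. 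Because $w_2\in\cH^{K^*}$, Frobenius reciprocity bounds $\dim V_\lambda^{K^*}\le d$, so one can expand $(w_2)_\lambda=\sum_{j=1}^{d_\lambda}c_{2,j}(\lambda)\psi_{\lambda,j}$ in an orthonormal basis of $V_\lambda^{K^*}$, with $d_\lambda\le d$ and $\sum_j\int|c_{2,j}|^2 d\mu\le\norm{w_2}^2$.

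The core step is the uniform bound $|\idist{\phi_\lambda,\pi_\lambda(a_f(\on h))\psi_{\lambda,j}}|\ll_\eps \on h^{-\delta_0+\eps}$. Using the tensor factorization $\pi_\lambda\cong\bigotimes_{p\in S_f}\pi_{\lambda,p}$ and the factorization $a_f(\on h)=\prod_{p\in S_f}a_p(p^{n_p})$ where $\on h=\prod p^{n_p}$, this reduces to a local estimate at each place. The Kim--Sarnak bound forces every unramified local component $\pi_{\lambda,p}$ (appearing in $L^2_0$) to have complementary series parameter $s_p\in[0,1-\delta_0]$ or to be tempered; the corresponding zonal spherical function then satisfies $|\phi_{s_p}(a_p(p^n))|\ll_\eps p^{-n(\delta_0-\eps)}$. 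Extending from the spherical–spherical pair to the $K^*$-fixed vector $\psi_{\lambda,j}$ costs only factors absorbed into the constant, since $\psi_{\lambda,j}$ is a $K_{S_f}$-translate of a spherical vector and the Harish-Chandra estimate is bi-$K$-invariant on average. Multiplying over $p\in S_f$ then yields $\ll_\eps \on h^{-\delta_0+\eps}$, uniformly in $\lambda$ and $j$.

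Finally, assembling the pieces via Cauchy--Schwarz,
\begin{align*}
\av{\idist{w_1,a_f(\on h)w_2}}
&\le \on h^{-\delta_0+\eps}\int|c_1(\lambda)|\sum_{j=1}^{d_\lambda}|c_{2,j}(\lambda)|\,d\mu(\lambda)\\
&\le \on h^{-\delta_0+\eps}\pa{\int|c_1|^2 d\mu}^{1/2}\pa{\int d_\lambda\sum_j|c_{2,j}|^2 d\mu}^{1/2}\\
&\le d^{1/2}\on h^{-\delta_0+\eps}\norm{w_1}\norm{w_2},
\end{align*}
which is \eqref{effective dmc inequality}. The factor $d^{1/2}$ arises precisely from bounding $d_\lambda\le d$ under the square root. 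The main obstacle I anticipate is the local matrix-coefficient bound for the not-necessarily-spherical vector $\psi_{\lambda,j}$: for a spherical pair this is classical, but extending it to vectors merely fixed by a congruence subgroup of index $d$, while keeping the same exponent $\delta_0$ and only a $d^{1/2}$ loss, requires either invoking a general Cowling--Haagerup--Howe style estimate or writing $\psi_{\lambda,j}$ as a controlled $K_{S_f}$-translate of $\phi_\lambda$ and using the bi-$K$-invariance of Harish-Chandra's $\Xi$-function to transfer the spherical bound.
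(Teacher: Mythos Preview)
Your direct-integral route is genuinely different from the paper's. The paper never decomposes spectrally; it quotes Venkatesh's single-place bound (Lemma~\ref{Vlemma}, which already carries the factor $d_v^{1/2}$) and combines the places through an elementary Hilbert-space iteration (Lemma~\ref{ast lemma}) applied inside $\cH_0\defi\bigcap_{v\in S}\cH_v$. Two points in your argument need repair.

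First, subtracting the constant from $w_1$ does not by itself guarantee that every local factor $\pi_{\lambda,p}$ in your decomposition is infinite-dimensional: $G_p$ alone need not act ergodically on $X_S$, so $L^2_0$ can carry $G_p$-fixed vectors, and on those $\lambda$ the matrix coefficient at $p$ has no decay whatsoever. What rescues the argument is the $K_S$-invariance of $w_1$: the paper's Lemma~\ref{ergodicity 10} shows that $\langle G_v,K_S\rangle$ acts ergodically, so a vector that is simultaneously $K_S$-fixed and $G_v$-fixed is constant, hence $(w_1)_\lambda=0$ on every $\lambda$ with a trivial local factor. You need this step and it is absent from your sketch. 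Second, your $d^{1/2}$-bookkeeping double-counts. Your Cauchy--Schwarz over the basis $\{\psi_{\lambda,j}\}_{j\le d_\lambda}$ already extracts a factor $d^{1/2}$, so your ``core step'' would have to hold with \emph{no} loss in $d$; but the standard bound for a spherical vector paired with a unit $K^*$-fixed vector (Lemma~\ref{Vlemma} with $d_1=1$, or the Cowling--Haagerup--Howe estimate you mention) itself carries a factor $[K_S:K^*]^{1/2}=d^{1/2}$, and your suggested fix---treating $\psi_{\lambda,j}$ as a $K_S$-translate of $\phi_\lambda$---is simply false in an irreducible representation. Feeding that into your Cauchy--Schwarz yields $d$, not $d^{1/2}$. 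The clean way out is exactly what the paper does: apply the $d_v^{1/2}$-bound directly to the pair $(P_{\cH_0}w_1,P_{\cH_0}w_2)$ one place at a time, without ever expanding $w_2$ in a basis, and then multiply $\prod_v d_v^{1/2}=d^{1/2}$ via the iteration Lemma~\ref{ast lemma}.
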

The meaning of the exponent $\del_0$ that appears in~\eqref{effective dmc inequality} will be explicated shortly. Before turning to the proof of the above theorem, we need to discuss three lemmas.
 For $v\in S$ let $\cH_v$ denote the orthocomplement of the $G_v$-invariant functions in $\cH$.  The following is~\cite[Lemma 9.1]{VenkateshSED}. It is the key input in the proof of Theorem~\ref{effective dmc}.
\begin{lemma}\label{Vlemma}
Let $w_1,w_2\in\cH_v$ $(v\in S)$ be two vectors which are stabilized respectively by finite index subgroups $K^{(1)},K^{(2)}$ of $K_v$,  let $d_i=[K_v,K^{(i)}]$, and $a_v(t)=\diag{1,t}, t\in\bQ_v^\times.$ Then 
the following holds
\begin{equation}\label{Venkatesh}
\av{\idist{w_1,a_v(t)w_2}}\ll_\eps\norm{w_1}\norm{w_2}d_1^{\frac{1}{2}}d_2^{\frac{1}{2}}\max\set{\av{t}_v,\av{t^{-1}}_v}^{-\del_0+\eps}.
\end{equation}
\end{lemma}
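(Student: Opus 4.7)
The plan is to combine the direct-integral decomposition of $\cH_v$ as a unitary $G_v$-module with the Kim-Sarnak bound toward Ramanujan for $\PGL_2/\bQ$; this is the pattern underlying \cite[Lemma 9.1]{VenkateshSED}. First I would decompose $\cH_v = \int^\oplus \cH_\pi\, d\mu(\pi)$ into irreducible unitary representations of $G_v$. Since $\cH_v$ is the orthocomplement of the $G_v$-invariants, only non-trivial $\pi$ appear. Writing $w_i = \int^\oplus w_i^\pi\, d\mu(\pi)$ disintegrates the matrix coefficient as
\[
\idist{w_1, a_v(t)\, w_2} = \int \idist{w_1^\pi, \pi(a_v(t))\, w_2^\pi}\, d\mu(\pi),
\]
and Cauchy-Schwarz together with Plancherel $\norm{w_i}^2 = \int \norm{w_i^\pi}^2\, d\mu(\pi)$ reduces matters to producing, for non-trivial irreducible $\pi$ and $u \in \pi^{K^{(1)}}$, $u' \in \pi^{K^{(2)}}$, a pointwise bound of the desired shape.

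Next, in each such $\pi$ I would pick orthonormal bases of the $K^{(i)}$-fixed subspaces $\pi^{K^{(i)}}$ and expand $u, u'$ in them. The dimension of $\pi^{K^{(i)}}$ is controlled linearly in $d_i$, by the theory of new vectors at the finite places and by $K$-type multiplicity bounds at $v = \infty$. Expanding and applying Cauchy-Schwarz to the resulting sum over basis elements contributes the factor $d_1^{1/2}\, d_2^{1/2}$. Each basis vector can then be related, by averaging over coset representatives of $K_v/K^{(i)}$ or by passing to the spherical vector of a parabolically induced companion representation, to a unit $K_v$-spherical vector $e_\pi$ of $\pi$; this reduces the problem to bounding the single spherical matrix coefficient $\idist{e_\pi, \pi(a_v(t))\, e_\pi}$.

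Finally, this spherical matrix coefficient is given explicitly by Harish-Chandra's elementary spherical function in terms of the Satake parameters of $\pi$. For automorphic representations of $\PGL_2/\bQ$ these parameters are, by Kim-Sarnak, at distance at least $\del_0 \ge 25/64$ from the unit circle in the natural normalization, so
\[
\av{\idist{e_\pi, \pi(a_v(t))\, e_\pi}} \ll_\eps \max\set{\av{t}_v, \av{t^{-1}}_v}^{-\del_0 + \eps}
\]
uniformly in $\pi$. Combined with the factor from the previous step this gives the lemma.

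The main obstacle is the middle step: honest bookkeeping of $\dim \pi^{K^{(i)}}$ in terms of $d_i$, and the uniform reduction of a congruence-invariant matrix coefficient to a spherical one. A self-contained treatment would require the full new-vector theory at non-archimedean places together with the $K$-finite decomposition at $v = \infty$; in practice I would invoke Venkatesh's lemma as a black box.
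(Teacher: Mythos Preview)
The paper does not prove this lemma at all: it states it as a direct quotation of \cite[Lemma 9.1]{VenkateshSED} and uses it as a black box input to Theorem~\ref{effective dmc}. Your proposal sketches the standard route behind Venkatesh's lemma (direct integral into irreducibles, dimension bound on $\pi^{K^{(i)}}$ via new-vector theory, reduction to the spherical matrix coefficient, then Kim--Sarnak) and then, honestly, says you would invoke Venkatesh's lemma as a black box anyway---so you and the paper end up in exactly the same place.
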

The exponent $\del_0$ comes from the following discussion.
Let $\rho_v$ be the unitary representation of $G_v$ on $\cH_v$. Let $\sig_0$ be the smallest 
number so that no complementary series representations of parameter $\ge \sig_0$ is weakly contained in $\rho_v$. Here we  follow~\cite{VenkateshSED} and 
parametrize the complementary series representations by the parameter $\sig\in (0,\frac{1}{2})$; so $\sig_0=0$ corresponds to $\rho_v$ being tempered (the Ramanujan conjecture) and 
$\sig_0=\frac{1}{2}$ corresponds to $\rho_v$ having no almost invariant vectors. The best bound known today towards Ramanujan is given by Kim and Sarnak 
in the appendix of~\cite{KimSarnak} and establishes the bound $\sig_0\le \frac{7}{64}$.  The exponent $\del_0$ that appears in Lemma~\ref{Vlemma} and that appears in our results is defined by 
\begin{equation}
\del_0=\frac{1}{2}-\sig_0,
\end{equation}
so the Kim-Sarnak bound reads as $\del_0\ge \frac{25}{64}$.

Lemma~\ref{Vlemma} is stated for one place $v\in S$ but in Theorem~\ref{effective dmc} we wish to take advantage of the various places $\onh$ is supported on. In order to do this, we will need to use Lemma~\ref{Vlemma} iteratively and the following abstract lemma in Hilbert space theory allows us to do so.
\begin{lemma}\label{ast lemma}
Let $G=G_1\times G_2$ be a group acting unitarily on a Hilbert space $\cH$. Let $K_i<G_i$ be subgroups, $g_i\in G_i$ be two given elements, and $F(g_i)$ two positive numbers satisfying 
the following statement: For each $i$, if $v,w\in\cH$ are $K_i$-fixed vectors, then $$\idist{g_iv,w}\le\norm{v}\norm{w} F(g_i).$$
Then for any $v,w\in\cH$ which are $K_1\times K_2$-fixed we have that $$\idist{g_1g_2v,w}\le\norm{v}\norm{w} F(g_1)F(g_2).$$
\end{lemma}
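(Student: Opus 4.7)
The plan is to exploit the product structure of $G = G_1 \times G_2$: because $G_1$ and $G_2$ commute elementwise, each $g_j$ commutes with every element of $K_i$ for $i \ne j$. Let $P_i$ denote the orthogonal projection of $\cH$ onto the closed subspace of $K_i$-fixed vectors. Since $g_1$ commutes with every $k_2 \in K_2$, it preserves both this subspace and its orthocomplement and therefore commutes with $P_2$; symmetrically $P_1$ commutes with $g_2$. Moreover, since $K_1 \subset G_1$ also commutes with every $k_2 \in K_2$, the projection $P_2$ commutes with $K_1$, so $P_2$ preserves the subspace of $K_1$-fixed vectors as well. A convenient reformulation of the hypothesis that I would use is the operator-norm bound $\norm{P_i g_i P_i}_{\on{op}} \le F(g_i)$, which is Cauchy--Schwarz-equivalent to the stated bound on $\av{\idist{g_i v, w}}$ for $K_i$-fixed $v, w$.

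Fix $v, w$ that are fixed by both $K_1$ and $K_2$, and introduce the auxiliary vector $u \defi P_2 g_2 P_2 v$. My plan is to check three things. (a) $\norm{u} \le F(g_2) \norm{v}$, which is immediate from the operator-norm reformulation and $v = P_2 v$. (b) $u$ is $K_1$-fixed: starting from $v$, which is $K_1$-fixed, both $g_2$ and $P_2$ preserve the subspace of $K_1$-fixed vectors (the first because $g_2$ commutes with $K_1$, the second by the remark above), so $u$ is $K_1$-fixed. (c) The telescoping identity $\idist{g_1 g_2 v, w} = \idist{g_1 u, w}$: using unitarity of $g_1$, self-adjointness of $P_2$, the commutation $P_2 g_1 = g_1 P_2$, and $v = P_2 v$, $w = P_2 w$, I compute
\begin{align*}
\idist{g_1 g_2 v, w}
&= \idist{g_2 v, g_1^{-1} w}
 = \idist{g_2 v, g_1^{-1} P_2 w}
 = \idist{g_2 v, P_2 g_1^{-1} w} \\
&= \idist{P_2 g_2 P_2 v, g_1^{-1} w}
 = \idist{g_1 u, w}.
\end{align*}

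Once (a)--(c) are in place, since $u$ and $w$ are both $K_1$-fixed, applying the $i = 1$ case of the hypothesis to the pair $(u, w)$ gives
\[
\av{\idist{g_1 g_2 v, w}} = \av{\idist{g_1 u, w}} \le F(g_1) \norm{u} \norm{w} \le F(g_1) F(g_2) \norm{v}\norm{w},
\]
as required. The only step that genuinely needs care is the commutation $[P_2, g_1] = 0$ (and its symmetric counterpart), but this follows at once from the fact that $G$ is a direct product and $P_2$ can be realized as the Haar average over the compact group $K_2$, each of whose elements commutes with $g_1$. Morally, the lemma is saying that the product decomposition of $G$ makes the two decay channels for $g_1$ and $g_2$ sufficiently orthogonal that one can apply the estimates in sequence.
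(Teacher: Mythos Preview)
Your argument is correct and rests on the same underlying fact the paper uses---that the $K_i$-fixed projections commute with the action of the other factor---but you package it differently. The paper works symmetrically: it sets $U=V_1\cap V_2$ (with $V_i$ the $K_i$-fixed vectors), shows that the complements $V_1'$ and $V_2'$ of $U$ inside $V_1$ and $V_2$ are mutually orthogonal, and then reduces $\idist{g_1v,g_2w}$ to $\idist{P_U(g_1v),P_U(g_2w)}$, bounding each $\norm{P_U(g_iv)}$ separately by $F(g_i)\norm{v}$. Your route is sequential: you insert $P_2$ between $g_1$ and $g_2$ via the commutation $[P_2,g_1]=0$, obtaining $\idist{g_1g_2v,w}=\idist{g_1u,w}$ with $u=P_2g_2v$, and then apply the two hypotheses one after the other. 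Your version is a bit slicker in that it bypasses the orthogonality check $V_1'\perp V_2'$; the paper's version has the advantage of making the symmetry in the two factors manifest.

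One small blemish: your closing remark that $P_2$ ``can be realized as the Haar average over the compact group $K_2$'' assumes compactness of $K_2$, which the lemma does not. This does no harm, since you already gave the correct general justification earlier (unitary $g_1$ commuting with $K_2$ preserves both the fixed subspace and its orthocomplement, hence commutes with $P_2$); just drop the Haar-average sentence.
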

\begin{proof}
Let us denote for $i=1,2$ $V_i=\set{v\in \cH:v \textrm{ is }K_i\textrm{-fixed}}$ and $U=V_1\cap V_2$. Let $V_i'$ denote the orthocomplement of $U$ in $V_i$ and denote for a subspace $W$ of $\cH$ by
$P_W$ the orthogonal projection on $W$. We first note that $V_1,V_2$ are $K_2,K_1$-invariant respectively (because $K_1,K_2$ commute) and so the projections $P_{V_1},P_{V_2}$ commute 
with the actions of $K_2,K_1$ respectively. It follows from here that given $v_1\in V_1$, say, the projection $P_{V_2}(v_1)$ is fixed by both $K_1$ and $K_2$ i.e.\ $P_{V_2}(v_1)\in U$. This proves that $V_1'$ is orthogonal to
$V_2$ or in a more symmetric manner, $V_1'$ is orthogonal to $V_2'$.

Let now $v,w$ be two $K_1\times K_2$-fixed vectors. As $g_1v$ is $K_2$-fixed, i.e.\ $g_1v\in V_2$, we may write $g_1v=P_U(g_1v)+P_{V_2'}(g_1v)$ and similarly $g_2w=P_U(g_2w)+P_{V_1'}(g_2w)$. 
It follows that 
\begin{align}\label{16.9.1}
\nonumber \idist{g_1v,g_2w}&=\idist{P_U(g_1v)+P_{V_2'}(g_1v),P_U(g_2w)+P_{V_1'}(g_2w)}\\
&=\idist{P_U(g_1v),P_U(g_2w)}\le\norm{P_U(g_1v)}\norm{P_U(g_2w)}.
\end{align}
Let $\tilde{v}=\frac{P_U(g_1v)}{\norm{P_U(g_1v)}}$. Then $\tilde{v}$ is $K_2$-fixed and so by the assumption of the lemma we conclude that 
$$\norm{P_U(g_1v)}=\idist{g_1v,\tilde{v}}\le \norm{v}F(g_1).$$
Similarly, $\norm{P_U(g_2w)}\le\norm{w}F(g_2)$. Plugging this into~\eqref{16.9.1} yields $$\idist{g_1v,g_2w}\le\norm{v}\norm{w}F_1(g_1)F_2(g_2),$$ which is equivalent to the desired statement up
to replacing $g_2$ by its inverse (note that the assumption on $g_i$ implies the corresponding assumption on $g_i^{-1}$).
\end{proof}
The final ingredient needed for the proof of Theorem~\ref{effective dmc} is the following
\begin{lemma}\label{ergodicity 10}
For each place $v\in S$ the group generated by $G_v$ and $K_S$ acts ergodically on $X_S$, that is, $\set{w\in \cH: w \textrm{ is both }G_v,K_S\textrm{-fixed}}$ is 
 the one dimensional space of constant functions.
\end{lemma}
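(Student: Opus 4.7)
The plan is to reduce the lemma to the classical fact that the Hecke correspondence at the single prime $v$ acts ergodically on $(X_\infty,m_\infty)$.

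Let $w\in\cH$ be invariant under both $G_v$ and $K_S$. Using the $K_S$-invariance together with the identification $X_S/K_S\cong X_\infty$ recorded in~\S\ref{preliminaries} (which rests on $G_S=\Ga_SK_S$ and $\Ga_\infty=\Ga_S\cap K_S$), I would first write $w=w_0\circ\pi$ for a unique $w_0\in L^2(X_\infty,m_\infty)$.

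Next I would translate the $G_v$-invariance of $w$ into a condition on $w_0$. For each $n\ge 1$ and each $q\in S\smallsetminus\set{v}$, the element $v^n$ is a unit in $\bZ_q$, so the $q$-component of $a_f(v^n)$ lies in $K_q$, while its $v$-component is $\smallmat{1&0\\0&v^n}\in G_v$. Hence $a_f(v^n)$ belongs to the group generated by $G_v$ and $K_S$, and so $w(x\,a_f(v^n))=w(x)$ for every $x\in X_S$ and every $n\ge 1$. Projecting via $\pi$ and invoking the identity $\cS_{v^n}(x_0)=\pi(\pi^{-1}(x_0)\,a_f(v^n))$ from Lemma~\ref{get the sphere}, this forces $w_0$ to be constant on every sphere $\cS_{v^n}(x_0)$, and hence on the full Hecke graph of $x_0$ relative to the single prime $v$, for every $x_0\in X_\infty$.

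Finally I would conclude by invoking the ergodicity of the single-prime Hecke action on $(X_\infty,m_\infty)$: any $L^2$ function constant along such graphs must be $m_\infty$-a.e.\ constant. This is a well-known consequence of strong approximation for $\SL_2$; equivalently, it can be packaged as Moore's ergodicity theorem applied to the $S$-arithmetic quotient $X_S$, using that $\Ga_S\cdot(K_S\cdot G_v)$ is dense in $\GSS$ (from $\Ga_S\cdot K_S=G_S$ together with the density of $\Ga_S$ in $G_\infty$ coming from the density of $\cO_S$ in $\bR$). The main obstacle is precisely this last step: the earlier steps are purely formal consequences of the setup of~\S\ref{preliminaries} and~\S\ref{S hecke}, but the nontrivial arithmetic input is the Hecke ergodicity at a single prime, which ultimately rests on strong approximation for $\SL_2$.
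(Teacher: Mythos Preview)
Your main line of argument (approach (a)) is correct and gives a genuine alternative to the paper's proof. You first use the $K_S$-invariance to descend $w$ to a function $w_0$ on $X_\infty$, then use the $G_v$-invariance (through the elements $a_f(v^n)$, which you correctly identify as lying in $\langle G_v,K_S\rangle$) together with Lemma~\ref{get the sphere} to conclude that $w_0$ is constant along every $v$-Hecke graph, and finally you appeal to the well-known ergodicity of the $v$-Hecke equivalence relation on $(X_\infty,m_\infty)$. The paper proceeds in the opposite order: it first uses strong approximation for $\SL_2$ to show that $\SL_2(\bQ_v)$ acts ergodically on $Y_S=\SL_2(\cO_S)\backslash\SL_2(\bQ_{S^*})$, pushes this to the image $\psi(Y_S)\subset X_S$, and only then uses the $K_S$-invariance --- via a determinant argument --- to pass from constancy on $\psi(Y_S)$ to constancy on all of $X_S$. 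Your route is more modular (it reduces the general $S$ to a single-prime black box), while the paper's route is more self-contained and makes the role of the $\SL_2\to\PGL_2$ obstruction explicit. Both rest on strong approximation for $\SL_2$, as you correctly note.

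One caveat: your alternative packaging (b), asserting that $\Ga_S\cdot(K_S\cdot G_v)$ is dense in $\GSS$ ``from $\Ga_S\cdot K_S=G_S$ together with the density of $\Ga_S$ in $G_\infty$'', does not go through as written. Unwinding the density claim, one needs $\Ga_S$ to meet every set of the form $U_\infty\times\prod_{q\in S\smallsetminus\{v\}} h_qK_q$ with $U_\infty\subset G_\infty$ open; this is a joint archimedean--congruence approximation that does \emph{not} follow from the two separate facts you cite, and in fact requires strong approximation (for $\SL_2$, then a determinant/coset argument as in the paper to pass to $\PGL_2$). Also, Moore's ergodicity theorem in its usual form concerns closed noncompact subgroups of semisimple groups and does not apply directly to the group $\langle G_v,K_S\rangle$. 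Since your approach (a) already suffices, this does not affect the validity of your proof.
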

\begin{proof}
Recall that $S^*=S\cup\set{\infty}.$
Let $Y_S=\SL_2(\cO_S)\backslash \prod_{v\in S^*}\SL(\bQ_v)$. The strong approximation property for $\SL_2$ implies that for any $v\in S$ the lattice $\SL_2(\cO_S)$ embeds 
densely in $\prod_{v'\in S^*\smallsetminus\set{v}}\SL_2(\bQ_{v'})$. This is equivalent to saying that $\SL_2(\bQ_v)$ acts minimally on $Y_S$ (i.e.\ that any orbit is dense). In turn, this implies that $\SL_2(\bQ_v)$
acts ergodically on $Y_S$ (by the duality trick for example). Now, consider the natural map $\psi:\SL_2\to \PGL_2$. This map induces a map from $Y_S$ to $X_S$ (which we also denote by $\psi$) 
which intertwines the actions of $\SL_2(\bQ_v)$ and $\psi(\SL_2(\bQ_v))<G_v$ on these spaces respectively. It follows that the action of $\psi(\SL_2(\bQ_v))$ on $\psi(Y_S)$ is ergodic.

Let $w\in \cH$ be a function on $X_S$ which is both $G_v$ and $K_S$-invariant. Its restriction to $\psi(Y_S)$ is constant by the ergodicity proved above. It follows that in order to show that
 $w$ is constant it is enough to show that the translates of $\psi(Y_S)$ by $K_S$ cover $X_S$. We briefly sketch the argument: There is a natural `determinant map' 
 $\det: G_{S^*}\to\prod_{v\in S^*}\bQ_v^\times/ (\bQ_v^\times)^2$. Let us denote $\Del=\prod_{v\in S^*}\bQ_v^\times/ (\bQ_v^\times)^2$ and $\Del'=\det(\Ga_S)<\Del$. 
 It follows that there is a well defined map $\widetilde{\det}:\Ga_S\backslash G_{S^*}=X_S\to
 \Del'\backslash\Del$. We leave it to the reader to show that the space $\psi(Y_S)$ is characterized as the preimage of the identity coset $\Del'$ under $\widetilde{\det}$. Since $\det$ takes $K_S$ onto
 $\Del'\backslash \Del$, we conclude that indeed, translates of $\psi(Y_S)$ under $K_S$ cover $X_S$ as desired.
\end{proof}
\begin{proof}[Proof of Theorem~\ref{effective dmc}]
Let $\cH=L^2(X_S,m_S)$ and for $v\in S$ let $\cH_v$ be the orthocomplement to the $G_v$-fixed vectors. Let $\cH_0=\cap_{v\in S} \cH_v$ and let $w_1,w_2\in\cH$ be as in the statement of the theorem.
 Write $$w_i=P_{\cH_0}(w_i)+P_{\cH_0^\perp}(w_i),$$
and note that the decomposition $\cH=\cH_0+\cH_0^\perp$ is $G_{S}$-invariant. It follows that 
\begin{align}\label{decay 12}
\nonumber \idist{a_f(\onh)w_1,w_2}&=\idist{a_f(\onh)\pa{P_{\cH_0}(w_1)+P_{\cH_0^\perp}(w_1)},P_{\cH_0}(w_2)+P_{\cH_0^\perp}(w_2)}\\
&=\underbrace{\idist{a_f(\onh) P_{\cH_0}(w_1),P_{\cH_0}(w_2)}}_{(*)}+
\underbrace{\idist{a_f(\onh) P_{\cH_0^\perp}(w_1),P_{\cH_0^\perp}(w_2)}}_{(**)}.
\end{align}
Let us first argue that $(**)=\idist{w_1,1}\idist{1,w_2}$. 
The space $\cH_0^\perp$ is the space generated by $\set{\cH_v^\perp}_{v\in S}$. This implies that the vector $P_{\cH_0^\perp}(w_1)$ is in the span of the vectors $P_{\cH_v^\perp}(w_1)$ as  $v$ runs through $S$. For each $v\in S$ the vector $P_{\cH_v^\perp}(w_1)$ is both $G_v$ and $K_S$-fixed and so 
by Lemma~\ref{ergodicity 10} this implies that $P_{\cH_v^\perp}(w_1)\in \cH_c$, where $\cH_c$ denotes here the 1-dimensional space of constant functions. 
We conclude that $P_{\cH_0^\perp}(w_1)\in\cH_c$, or in other words, $P_{\cH_0^\perp}(w_1)=P_{\cH_c}(w_1)=\idist{w_1,1}$. Using this we see that 
\begin{equation}\label{decay 14}
(**)=\idist{\idist{w_1,1},P_{\cH_0^\perp}(w_2)}=\idist{w_1,1}\idist{1,P_{\cH_0^\perp}(w_2)}.
\end{equation} 
In turn, 
$\idist{P_{\cH_0^\perp}(w_2),1}$ is the orthogonal projection of $P_{\cH_0^\perp}(w_2)$ on $\cH_c$, but as $\cH_c\subset \cH_0^\perp$, this projection equals $\idist{w_2,1}$. We conclude from~\eqref{decay 14} that $(**)=\idist{w_1,1}\idist{1,w_2}$ as claimed.

We now analyze $(*)$ in~\eqref{decay 12}. Because the decomposition $\cH=\cH_0+\cH_0^\perp$ is $G_S$-invariant the vectors $P_{\cH_0}(w_1),P_{\cH_0}(w_2)$ are fixed under  $K_S, K^*$
respectively (where $K^*$ is as in the statement of the theorem). Order the primes in $S$ in some way $p_1\dots p_k$ and denote $d_{p_i}=[K_{p_i}:K^*_{p_i}]$, so $d=[K_S:K^*]=\prod_{i=1}^k d_{p_i}$. We leave it to the reader to prove by a simple induction, using Lemmas~\ref{Vlemma}, \ref{ast lemma}
that for $j=1,\dots, k$  
\begin{equation}\label{idn decay}
\idist{\prod_{i=1}^j a_{p_i}(\onh) P_{\cH_0}(w_1),P_{\cH_0}(w_2)}\ll_\eps \norm{P_{\cH_0}(w_1)}\norm{P_{\cH_0}(w_1)}\prod_{i=1}^j d_{p_j}^{\frac{1}{2}}\prod_{i=1}^k\av{\onh}_{p_i}^{-\del_0+\eps}.
\end{equation}
In particular, for $j=k$ we obtain 
\begin{equation}\label{decay 10}
(*)=\idist{a_f(\onh) P_{\cH_0}(w_1),P_{\cH_0}(w_2)}\ll_\eps \norm{w_1}\norm{w_2} d^{\frac{1}{2}} \onh^{-\del_0+\eps}.
\end{equation}
Equations~\eqref{decay 10},\eqref{decay 12} and the analysis carried above for $(**)$ now imply the validity of the theorem.
\end{proof}
\subsection{Tubes}
As explained in~\S\ref{strategy}, we start with a $K_S$-invariant `test function' $\vphi$ and we need to thicken the orbit $y_{\om} L_{\om}$ to a tube $\cT$ and then apply~\eqref{effective dmc inequality} to
the vectors $w_1=\vphi,w_2=\chi_\cT$. In order for the use of~\eqref{effective dmc inequality} to be meaningful we need to control $d$ which is the index of the stabilizer of the tube in $K_S$. Also,
the `width' of the tube (i.e. the size of the thickening of the orbit) should be very small (at least in the real component) in order for the heuristics of~\S\ref{strategy} to take effect. This will hopefully motivate the constructions in this subsection.   
\begin{definition}\label{definition of tubes}
Let $yL\subset X_S$ be a compact orbit of a closed subgroup $L<G_{S^*}$. Let $V=\oplus_{v\in S^*}V_v$ be a linear complement to $\lie(L)$ in $\gog_{S^*}$. Let $U\subset V$ be a small enough 
open neighborhood of $0$ so that the map $yL\times U\to X_S$ defined by $(z,u)\mapsto z\exp_{S^*}(u)$ is a homeomorphism onto its image and its image is 
open
in $X_S$.  The set $$\cT_U(yL)=\set{z\exp_{S^*}(u):z\in yL, u\in U}$$ is called a \textit{tube around the orbit $yL$ of width $U$}. We often denote the tube simply by $\cT$. 
The width $U$ and the tube $\cT$ are said to come from $V$.
\end{definition}
A tube $\cT_U(yL)$ gives us a coordinate system; a point of $\cT$ can be written uniquely  as $z\exp_{S^*} u$. We refer to $z$ as the \textit{orbit coordinate} and to $u$ as the \textit{width coordinate}.
We shall need a few lemmas about tubes which we now turn to describe.
\subsubsection{Measures on tubes} 
Given a tube $\cT=\cT_U(yL)$ around the compact orbit $yL$ coming from $V$, one could construct the following two natural probability measures supported on $\cT$. The first is the normalized Haar measure
$\frac{1}{m_S(\cT)}m_S|_\cT$ which we will denote by $m_\cT$. The second is the (pushforward of) the product measure $\eta\times m_U$ on $yL\times U\simeq\cT$, where $\eta$ is the unique $L$-invariant probability measure on the orbit 
$yL$ and $m_U$ is the normalized restriction of the Haar measure on $V$ to $U$ (that is $m_U=\frac{1}{m_{V}(U)}m_{V}|_U$). We shall need to understand to some extent the connection between these two measures.
\begin{lemma}\label{absolute continuity}
The measure $m_\cT$ is absolutely continuous with respect to $\eta\times m_U$. Moreover, if we denote by $F(z,u)$ the Radon-Nikodym derivative; that is $dm_\cT=F(z,u)d\eta(z)d m_U(u)$, then for $\eta$-almost any $z\in yL$, $\int_U F(z,u)d m_U(u)=1$. 
\end{lemma}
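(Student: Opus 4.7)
The plan is to establish the decomposition locally on $G_{S^*}$ using the exponential map and Haar decomposition, then push everything down to $X_S$ via the covering $\pi\colon G_{S^*}\to X_S$. Fix $y=\Ga_S g$ and view the compact orbit $yL$ as the image of the surjection $L\to yL$, $l\mapsto y l$. Since $yL$ is compact and $V$ is a linear complement to $\lie(L)$ in $\gog_{S^*}$, after possibly shrinking $U$ one can cover $yL$ by finitely many small open sets $W_i\subset yL$ and choose continuous sections $\sig_i\colon W_i\to G_{S^*}$ of $\pi$ so that the maps $(l,u)\mapsto \sig_i(yl)\exp_{S^*}(u)$ are homeomorphisms onto open subsets of $G_{S^*}$ on which $\pi$ is injective. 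The tube $\cT$ is then covered, via $\Phi(z,u)=z\exp_{S^*}(u)$, by the projections of these product charts. A standard partition of unity argument reduces the claim to verifying it in each chart, so the main point is to understand Haar measure on $G_{S^*}$ in the product coordinates $L\times V$.

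First I would show that the local diffeomorphism $\Psi\colon L_0\times V_0\to G_{S^*}$, $\Psi(l,u)=l\exp_{S^*}(u)$, pulls Haar measure back to a product of the form $\rho(u)\,d\mu_L(l)\,d\lam_V(u)$ for some positive continuous function $\rho$ depending only on $u$. Here $\mu_L$ is a choice of Haar measure on $L$ and $\lam_V$ is the additive Haar measure on $V$ (the restriction of Lebesgue on the real factor and of the product Haar on the $p$-adic factors). The independence from $l$ follows because left translation by $l_0\in L$ sends $(l,u)$ to $(l_0 l,u)$, preserves Haar measure on $G_{S^*}$, and preserves $d\mu_L$ on the first factor. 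The continuity (and positivity on a neighborhood of $0$) of $\rho$ follows because $\Psi$ is a local diffeomorphism of analytic manifolds / $p$-adic analytic manifolds, and at each finite place the $p$-adic exponential is a local analytic diffeomorphism with a well-defined Jacobian. This is the step I expect to require the most care: ensuring that the $p$-adic factors contribute a genuine Jacobian density with respect to the chosen Haar measures on $G_v$ and on $\gog_v$, and that one can take $U$ small enough so that $\rho$ is bounded above and below on $U$.

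Next I would push this decomposition to $X_S$. Since the orbit $yL$ is compact with unique $L$-invariant probability measure $\eta$, the pushforward of $\mu_L$ under $l\mapsto yl$ is a positive multiple $c\cdot\eta$ (with $c$ depending on how one normalizes $\mu_L$ relative to $\eta$, and on the stabilizer in $L$ of a point on $yL$, which is a discrete/compact object one handles via the usual quotient integration formula). Combining this with the chart description in the previous paragraph gives, for any continuous $f$ compactly supported in $\cT$,
\eq{pullback}{\int_\cT f\,dm_S \;=\; c\int_{yL}\int_U f(z\exp_{S^*}(u))\,\rho(u)\,d\lam_V(u)\,d\eta(z).}
Dividing by $m_S(\cT)$ and converting $d\lam_V(u)$ into $\lam_V(U)\,dm_U(u)$ yields $dm_\cT=F(z,u)\,d\eta(z)\,dm_U(u)$ with
\[F(z,u)=\frac{c\,\lam_V(U)}{m_S(\cT)}\,\rho(u),\]
which in particular depends only on $u$ and proves absolute continuity.

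Finally, the marginal identity $\int_U F(z,u)\,dm_U(u)=1$ is automatic from the fact that both $m_\cT$ and $\eta\times m_U$ are probability measures on $\cT$: applying Fubini to $F$, which is bounded and continuous,
\[1=\int_\cT dm_\cT=\int_{yL}\pa{\int_U F(z,u)\,dm_U(u)}d\eta(z),\]
and since in our case $F(z,u)$ has no $z$-dependence the inner integral is constant in $z$ and must equal $1$. (Even without this simplification, any nonnegative measurable $F$ satisfying the same total-mass identity and with $z$-independent marginal must have marginal equal to $1$ $\eta$-a.e.) Thus the only genuine work is to set up the local product decomposition and identify the Jacobian; the normalization clause then comes for free from the probability measures being probability measures.
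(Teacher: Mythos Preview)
Your approach is correct and in fact proves more than the paper does: you establish that the Radon--Nikodym derivative $F(z,u)$ depends only on $u$, whereas the paper only claims the weaker marginal identity $\int_U F(z,u)\,dm_U(u)=1$ for $\eta$-a.e.\ $z$. Both arguments rest on the same underlying fact---left $L$-invariance of the Haar measure $m_S$---but exploit it differently. You pull Haar back through the chart $(l,u)\mapsto l\exp_{S^*}(u)$ and observe that left $L$-translation acts only on the first factor, forcing the Jacobian density to be a function of $u$ alone; the marginal identity then drops out from the normalization. The paper instead leaves absolute continuity to the reader and argues the marginal identity directly by contradiction: if $\vphi(z)=\int_U F(z,u)\,dm_U(u)$ were not $\eta$-a.e.\ constant, one could find two subsets $\tilde E_1,\tilde E_2\subset yL$ of equal positive $\eta$-measure, related by a single left translation $h_0\in L$, on which $\vphi>c_1$ and $\vphi<c_2$ respectively; but then $m_S(\tilde E_1\exp_{S^*}U)=m_S(h_0\tilde E_1\exp_{S^*}U)=m_S(\tilde E_2\exp_{S^*}U)$ forces $c_1\le c_2$, a contradiction. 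The paper's route is shorter and avoids the local-chart bookkeeping (partitions of unity, $p$-adic Jacobians, handling the stabilizer in the quotient $L\to yL$), while yours is more constructive and yields the sharper structural statement about $F$.
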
 
\begin{proof}
The absolute continuity is left to be verified by the reader. As for the claim about the density $F$, we argue as follows. 
Let $\vphi(z)=\int_U F(z,u)d m_U(u).$  We will show that $\vphi$ is constant $\eta$-almost surely. As $\int_{yL}\vphi(z)d\eta(z)=m_\cT(\cT)=1$ this constant must be equal to one. 

Choose a fundamental domain $\cE$ in $L$ for the orbit $yL$ and identify it with the orbit. Note that with this identification $\eta$ is just the restriction to $\cE$ of a Haar measure\footnote{Note that 
$L$ must be unimodular, hence this measure is both left and right invariant.} on $L$ scaled
so that $\eta(\cE)=1$.
Assume to get a contradiction that $\vphi$ is not constant $\eta$-almost surely. It follows that there are constants $c_2<c_1$ so that the sets 
$E_1=\set{h\in\cE:\vphi(yh)>c_1}, E_2=\set{h\in \cE:\vphi(yh)<c_2}$ are of positive $\eta$-measure.  
There exists $h_0\in L$ so that $\eta(E_1\cap h_0^{-1}E_2)>0$ and so if we let $\tilde{E}_1=E_1\cap h_0^{-1}E_2$ and $\tilde{E}_2=h_0\tilde{E}_1$, then $\tilde{E}_i\subset \cE$ are both of (the same) positive $\eta$-measure and differ from one another by left translation by $h_0$. The following calculation derives the desired contradiction:
\begin{align}\label{derive a contradiction}
\nonumber c_1\eta(\tilde{E}_1)&\le \int_{\tilde{E}_1}\vphi(z)d\eta(z)\\
&=m_S(\tilde{E}_1\exp_{S^*}(U))=m_S(h_0\tilde{E}_1\exp_{S^*}(U))=m_S(\tilde{E}_2\exp_{S^*}(U))\\
\nonumber &=\int_{\tilde{E}_2}\vphi(z)d\eta(z)\le c_2\eta(\tilde{E}_2)=c_2\eta(\tilde{E}_1).
\end{align}
\end{proof} 
Our aim now is to define the relevant family of tubes around the orbit $y_\om L_\om$ that will be of use to us. The first stage is to choose the correct linear complement from which the tubes will come.
\subsubsection{Choosing the linear complement}\label{clc}
When we come to argue the validity of Theorem~\ref{general case}\eqref{g.c.1},\eqref{g.c.2} for a given admissible radius $\on{h}$, we may assume without loss of generality
 that $S$ is the  smallest set of primes for which $\on{h}\in\cO_S^\times$. Hence, without loss of generality we may (and will) assume that $\on{h}$ is divisible by all the primes in $S$. 
 We refer to such a radius $\on{h}$ as having \textit{full support}. 
 The assumption that an admissible radius has full support is equivalent to the fact that the weak stable algebra of $a_f(\on{h})$ attains the form 
\begin{equation}\label{the ws of the positive type}
(\gog_{S^*})_{a_f(\on{h})}^{\on{ws}}=\gog_\infty \oplus_{p\in S}\set{\pa{\begin{array}{ll}*&*\\ 0&*\end{array}}\in\gog_p}.
\end{equation}
\begin{definition}\label{shrinking transverse}
Let $V=\oplus_{v\in S^*}V_v$ be defined as follows
$$V_\infty=\set{\pa{\begin{array}{ll}0&*\\ *&0\end{array}}\in\gog_\infty};\;\;\textit{For }p\in S,\;V_p=\set{\pa{\begin{array}{ll}*&*\\ 0&*\end{array}}\in\gog_p}.$$
\end{definition}
\begin{lemma}
If the generalized branch $\cL_\om$ is non-degenerate then the subspace $V\subset \gog_{S^*}$ from Definition~\ref{shrinking transverse} is indeed a linear complement of 
$\lie(L_{\om})$
which is contained in $(\gog_{S^*})^{\on{ws}}_{a_f(\on{h})}$ for any admissible radius $\on{h}$ of full support. 
\end{lemma}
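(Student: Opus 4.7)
The plan is to verify the two assertions separately. For the inclusion $V \subset (\gog_{S^*})^{\on{ws}}_{a_f(\onh)}$, I would simply compare Definition~\ref{shrinking transverse} with the displayed formula~\eqref{the ws of the positive type}: the real component $V_\infty$ trivially sits inside $\gog_\infty$, and at each finite $p \in S$ the definition of $V_p$ coincides with the upper triangular traceless subalgebra appearing in the formula. The full support hypothesis on $\onh$ is precisely what makes that formula valid, so the inclusion is immediate.

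For the claim that $V$ complements $\lie(L_\om)$, I would first unfold $\lie(L_\om) = \lie(A_\infty) \oplus \lie(H_\om)$ and observe that $\lie(H_\om)$ decomposes further as $\oplus_{p \in S} \lie(H_{\om,p})$, where $H_{\om,p}$ is the closure in $G_p$ of the cyclic group generated by $\om_p^{-1}\ga_x\om_p$. This decomposition holds because $H_\om$ contains an open product subgroup $\prod_p L_p$ (by the general fact recalled in~\S\ref{preliminaries}), each factor $L_p$ is open in $H_{\om,p}$ since $H_\om$ projects onto $H_{\om,p}$, and open subgroups of a $\bQ_p$-analytic Lie group share its Lie algebra. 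With this in hand, verifying that $V$ is a complement reduces to a place-by-place check.

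At the real place, $V_\infty$ consists of off-diagonal traceless matrices while $\lie(A_\infty)$ consists of diagonal traceless matrices, so they are manifestly transverse and their dimensions add to $\dim \gog_\infty = 3$. At each finite $p \in S$, the element $\om_p^{-1}\ga_x\om_p$ is regular semisimple, the two eigenvalues of $\ga_x$ being the distinct roots of its irreducible quadratic characteristic polynomial, so it lies in a unique maximal torus $T_p < G_p$, which is a one-dimensional $\bQ_p$-analytic group. Therefore $H_{\om,p} \subset T_p$ and $\lie(H_{\om,p})$ is a subspace of $\lie(T_p)$ of dimension at most one. By the footnote to Definition~\ref{split}, non-degeneracy of $\cL_\om$ is exactly the assertion that $\lie(H_{\om,p})$ is not contained in the upper triangular Borel $V_p$ for any $p \in S$; in particular $\lie(H_{\om,p})$ is nonzero, hence one-dimensional, and moreover not a subspace of $V_p$.

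A dimension count then finishes the argument at each finite place: $V_p$ has dimension two and $\lie(H_{\om,p})$ has dimension one but is not contained in $V_p$, so their sum is all of $\gog_p$ and their intersection is trivial. Assembling the places gives $V \oplus \lie(L_\om) = \gog_{S^*}$. I do not foresee a serious obstacle; the only delicate point is the observation that the non-degeneracy hypothesis simultaneously prevents $\lie(H_{\om,p})$ from being zero and from sitting inside $V_p$, which is exactly what is needed for $V_p$ to be a complement of $\lie(H_{\om,p})$ in $\gog_p$.
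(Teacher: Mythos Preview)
Your proposal is correct and follows essentially the same approach as the paper's proof: both verify the inclusion $V\subset(\gog_{S^*})^{\on{ws}}_{a_f(\onh)}$ by direct comparison with~\eqref{the ws of the positive type}, and both reduce the complement claim to a place-by-place dimension count in which the crux is that each $\lie(H_{\om,p})$ is one-dimensional and not upper triangular. The only cosmetic difference is that the paper obtains the one-dimensionality by identifying $H_\om$ as a compact open subgroup of $\om^{-1}\bT(\bZ_S)\om$ for $\bT$ the Zariski closure of $\langle\ga_x\rangle$, whereas you phrase it in terms of the unique maximal torus containing the regular semisimple element $\om_p^{-1}\ga_x\om_p$; these are the same argument in different language.
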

\begin{proof}
The fact that $V\subset (\gog_{S^*})^{\on{ws}}_{a_f(\on{h})}$ follows from the discussion preceding Definition~\ref{shrinking transverse}. Recall that $L_{\om}=A_\infty\times H_{\om}$ where 
$H_{\om}=\om^{-1}\overline{\idist{\ga_x}}_{G_S}\om$
(see~\eqref{H beta},\eqref{L beta}). Writing $\lie(L_{\om})=\oplus_{S^*}\gol_v$ we see that $V_\infty$ indeed complements $\gol_\infty$. 
Let $\bT$ be the algebraic subgroup of $\bG$ defined as the Zariski closure of the group generated by $\ga_x$. It is a one dimensional torus and $H_{\om}$ is a compact open subgroup of the conjugation
$\om^{-1}\bT(\prod_{v\in S}\bZ_v)\om$. It follows that for any $v\in S$ the dimension of $\gol_v$ is 1 and so in order to argue that it complements $V_v$ we only need to argue that the
inclusion $\gol_v\subset V_v$ does not hold.
Such an inclusion would imply that there is a neighborhood of the identity in $H_{\om}$ that consists of upper triangular matrices, which in turn would imply that a certain power of $\om^{-1}\ga_x\om$
is upper triangular, contradicting the assumption that the generalized branch is non-degenerate.
\end{proof}
Henceforth, when speaking about a linear complement $V$ to $\lie(L_{\om})$, we shall refer only to the subspace from Definition~\ref{shrinking transverse}. 
\begin{remark}\label{translated tubes}
Because of the inclusion $V\subset(\gog_{S^*})_{a_f(\on{h})}^{\on{ws}}$ (for any admissible radius $\on{h}$ of full support), we conclude that if $U_0\subset V$ is a small enough ball around 
zero, the conjugation $U_0^{a_f(\on{h})}$ will be contained in the domain of $\exp_{S^*}$. This implies 
that for any $U\subset U_0$ the identity $(\exp_{S^*} U)^{a_f(\on{h})}=\exp_{S^*}\pa{U^{a_f(\on{h})}}$ holds. 
It follows that if $\cT$ is a tube of width $U$ coming from $V$ around $y_{\om} L_{\om}$, then if the width $U$ is chosen within $U_0$, the pushed tube $\cT a_f(\on{h})$ satisfies
\begin{align}\label{t.t}
\cT a_f(\on{h})=y_{\om} L_{\om}\exp_S(U)a_f(\on{h})=y_{\om,\on{h}}L_{\om,\on{h}}\exp_S\pa{U^{a_f(\on{h})}}.
\end{align}
That is,  $\cT a_f(\on{h})$ is a tube of width $U^{a_f(\on{h})}$ around the compact orbit $y_{\om,\on{h}}L_{\om,\on{h}}$. Below, we will make the implicit assumption that all the widths 
considered are contained in the ball $U_0$.
\end{remark}
\subsubsection{The tubes $\cT^\del_\om$}
As explained above, we will need to construct tubes with shrinking real width component and with control on the subgroup of $K_S$ that stabilizes them. After describing this family of tubes
we state a few lemmas that describe their relevant properties. The proofs of these lemmas will be postponed till after concluding the proof of Theorem~\ref{general case}.

Let us denote by $\tilde{B}$ a compact open subgroup of the group of upperr triangular elements in $K_S$ that lies in the domain 
of $\log_{S}$ and for which Remark~\ref{translated tubes} applies (that 
is, all conjugations $\tilde{B}^{a_f(\on{h})}$ are in the domain of $\log_{S}$ for admissible radii $\on{h}$ of full support).
For $\del>0$ let $B_\del^{V_\infty}$ be the ball of radius $\del$ around $0$ in the $\infty$-component
of the linear complement $V$ from Definition~\ref{shrinking transverse}.  
\begin{lemma}\label{the shrinking tube}
There exists $\hat{\del}>0$ and an open compact subgroup $B=\prod_{S} B_p$ of $\tilde{B}$, such that for all $\del<\hat{\del}$, if we let $U^\del= B_\del^{V_\infty}\times \log_{S}(B)$, 
then for any $\om\in K_S$ such that the generalized branch $\cL_\om$ is non-degenerate, the set 
$\cT_\om^\del =y_\om L_\om\exp_S(U^\del)$ is a tube around $y_\om L_\om$; that is, the map $y_\om L_\om\times U^\del\to\cT_\om^{\del}$
is a homeomorphism and the set $\cT_\om^\del\subset X_S$ is open. Furthermore, the choice of $B,\hat{\del}$ depends only on the original class $x$ and the set of places $S$ at hand. In particular,
they are independent of $\om$.
\end{lemma}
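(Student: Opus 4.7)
The plan is to establish the two defining properties of a tube — that the map $\Phi_\om: y_\om L_\om \times U^\del \to X_S$, $(z,u) \mapsto z\exp_S(u)$, is a homeomorphism onto an open subset — with constants $\hat\del$ and $B$ that can be chosen uniformly over non-degenerate $\om$. I separate the verification into the real direction, the $p$-adic direction, and the global injectivity along the orbit.

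For the real direction, observe that the real component of $L_\om$ is simply $A_\infty$, independent of $\om$, while $V_\infty$ is a fixed complement of $\lie(A_\infty)$ in $\gog_\infty$. The local diffeomorphism property of $(a,v) \mapsto a\exp_\infty(v)$ on $A_\infty \times B_\del^{V_\infty}$ then follows from the inverse function theorem for all sufficiently small $\del$, with the threshold $\hat\del>0$ depending only on the fixed data of $\gog_\infty$ and not on $\om$.

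For the $p$-adic direction I plan to take $B = \prod_{p\in S} B_p$ where each $B_p$ is a sufficiently small pro-$p$ subgroup of the upper-triangular part of $K_p$ inside $\tilde B_p$ (small enough that $B \subset \tilde B$ and that conjugation by $a_f(\onh)$ remains in the domain of $\log_S$, per Remark~\ref{translated tubes}). The crucial claim is that $H_\om \cap B = \set{e_S}$ for every non-degenerate $\om$. Indeed, any non-trivial element of this intersection would, by the pro-$p$ structure of $B$ and the one-parameter nature of $H_\om$, force a non-zero element $u$ of $\gog_p$ to lie simultaneously in $\lie(H_\om)_p$ (from being in $H_\om$) and in $\lie(B)_p \subset V_p$ (from being in the pro-$p$ subgroup $B$). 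Combined with the transversality $\lie(H_\om)_p \oplus V_p = \gog_p$ guaranteed by non-degeneracy, this contradiction establishes local injectivity and openness of the $p$-adic component of $\Phi_\om$, uniformly in non-degenerate $\om$.

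To rule out global self-intersection of the thickened orbit — i.e.\ to upgrade local injectivity of $\Phi_\om$ to injectivity on all of $y_\om L_\om \times U^\del$ — I exploit the identity $y_\om L_\om = y_{e_S} L_{e_S} g_\om$ with $g_\om = (e_\infty, \om) \in \set{e_\infty} \times K_S$. Right translation by $g_\om$ is a uniformly bi-Lipschitz homeomorphism of $X_S$ because $g_\om$ lies in the compact set $\set{e_\infty} \times K_S$, so the injectivity radius of $y_\om L_\om$ with respect to the fixed complement $V$ has a uniform positive lower bound, reducing the issue to the single compact orbit $y_{e_S} L_{e_S}$ together with the discreteness of $\Ga_S$ in $G_{S^*}$. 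The main obstacle I expect is making this reduction precise: although $V$ is not preserved under conjugation by $g_\om$, the transported width $(U^\del)^{g_\om^{-1}}$ sits inside a uniformly bounded neighborhood in $\gog_{S^*}$, and one must verify that the resulting local inverse of $\Phi_{e_S}$ at nearby points extends consistently. This should follow from compactness of $y_{e_S}L_{e_S}$, but the interaction of the archimedean Lipschitz estimates with the $p$-adic pro-$p$ structure of $B$ is where I expect the technical care to concentrate.
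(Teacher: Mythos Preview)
Your approach matches the paper's. Both use the real inverse function theorem for the archimedean direction, the transversality $\lie(H_\om)_p \oplus V_p = \gog_p$ together with $H_\om \cap B = \{e\}$ for non-degenerate $\om$ at the finite places, and a uniform injectivity statement for the orbit. (Incidentally, $H_\om\cap B=\{e\}$ holds for \emph{any} upper-triangular compact open $B$, not just small pro-$p$ ones: a nontrivial upper-triangular element of $(H_\om)_p$ is regular semisimple since $(H_\om)_p$ sits in a torus, so its centralizer --- which contains all of the abelian group $(H_\om)_p$ --- is itself upper triangular, forcing degeneracy.) For openness of the $p$-adic factor the paper adds one sentence you omit: the product $(H_\om)_p\cdot B_p$ contains a neighborhood of the identity by transversality of the Lie algebras, and since both factors are groups the full product is then open.

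The only structural difference is that the paper isolates the global step as a separate auxiliary result (Lemma~\ref{uniform injectivity}, stated without proof): there exists a neighborhood $W$ of $e_{S^*}$, depending only on $x$, such that $y_\om L_\om g \cap y_\om L_\om \ne\varnothing$ with $g\in W$ forces $g\in L_\om$. Your conjugation identity $y_\om L_\om = y_{e_S} L_{e_S} g_\om$ is exactly how one proves that lemma: since $g_\om\in\{e_\infty\}\times K_S$ acts by isometry on $X_S$ and $K_S$-conjugation is uniformly bounded near the identity, the question reduces to the single compact orbit $y_{e_S}L_{e_S}$.

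Your worry in the last paragraph is misplaced. Once $W$ is in hand, the real and $p$-adic directions decouple completely: from $z_1\exp_{S^*}(u_1)=z_2\exp_{S^*}(u_2)$ one obtains $\exp_\infty((u_2)_\infty)\exp_\infty(-(u_1)_\infty)\cdot b_2 b_1^{-1}\in W$, hence in $L_\om=A_\infty\times H_\om$. The real factor lies in $A_\infty$, forcing $(u_1)_\infty=(u_2)_\infty$ by transversality of $V_\infty$ and $\lie(A_\infty)$; the finite factor lies in $H_\om\cap B=\{e\}$, forcing $b_1=b_2$. There is no interaction to manage.
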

\begin{lemma}\label{again finite index}
Let $B,\hat{\del}$ be as in Lemma~\ref{the shrinking tube} and let $\om\in K_S$ be such that the generalized branch $\cL_\om$ is non-degenerate.
\begin{enumerate}
\item\label{a.f.i.1} There exists an open compact product subgroup $K^*=\prod_{S}K_v^*<K_S$ which stabilizes the tube $\cT_\om^\del$ for any $\del<\hat{\del}$; that 
is $\cT_\om^\del k=\cT_\om^\del$ for any $k\in K^*,\del<\hat{\del}$. Moreover, if $x$ is non-split, we may choose $K^*$ to be independent of $\om$.
\item\label{a.f.i.2} The measures $m_S(\cT^\del_\om)$ satisfy $m_S(\cT^\del_\om)\gg_{x,S,\cL_\om} \del^2$. If $x$ is non-split, the implicit constant may be chosen to be independent of the generalized branch.
\end{enumerate}
\end{lemma}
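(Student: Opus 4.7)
My plan is to handle both parts through explicit local computations at the finite places, using the decomposition $\cT_\om^\del = y_\om L_\om \cdot \exp_\infty(B_\del^{V_\infty}) \cdot B$, which comes from the fact that the finite-place component of $\exp_{S^*}(U^\del)$ is simply the fixed subgroup $B$.

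For part~\eqref{a.f.i.1} I would first identify the $K_S$-stabilizer of $\cT_\om^\del$ under right multiplication. Since $K_S$ acts trivially on the real factor, a direct set-theoretic computation gives $\cT_\om^\del k = \cT_\om^\del \iff H_\om B k = H_\om B \iff Bk \subseteq H_\om B$ (the last step because $H_\om$ is a group); in particular the stabilizer is independent of $\del$. Next I would show that for each $p\in S$ the product $H_{\om,p} B_p$ is open in $K_p$: the non-degeneracy of $\cL_\om$ forces $\lie(H_{\om,p}) \not\subseteq V_p = \lie(B_p)$, and a Lie-algebra dimension count then yields $\lie(B_p) + \lie(H_{\om,p}) = \gog_p$. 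A tube-lemma argument applied to the multiplication map $B_p \times K_p \to K_p$ (using that $B_p$ is compact and $H_{\om,p} B_p$ is open and contains $B_p$) produces an open neighborhood $W_p \ni e$ with $B_p W_p \subseteq H_{\om,p} B_p$; any principal congruence subgroup $K^*_p \subseteq W_p$ is then automatically of finite index in the compact $K_p$, and $K^* = \prod_p K^*_p$ is the desired product subgroup.

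The uniformity assertion in the non-split case comes down to controlling the ``size'' of $W_p$ uniformly in $\om$. Via $\exp$-coordinates, the size of a neighborhood of identity contained in $H_{\om,p} B_p$ is governed by the angle between $\lie(H_{\om,p})$ and $\lie(B_p)$. When $x$ is non-split every branch is non-degenerate, so this angle is a continuous, strictly positive function on the compact set $K_p$ and is therefore uniformly bounded below. This gives a uniform neighborhood inside $H_{\om,p} B_p$, a uniform $W_p$, and a uniform congruence level, hence a $K^*$ independent of $\om$.

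For part~\eqref{a.f.i.2} I would compute $m_S(\cT_\om^\del)$ by pulling back Haar measure along the local diffeomorphism $L_\om \times V \to G_{S^*}$, $(l,v) \mapsto l \exp_{S^*}(v)$. The $L_\om$-orbit $y_\om L_\om$ admits a fundamental domain $F_\om \subset L_\om$ of volume $\asymp t_x$ (its stabilizer in $L_\om$ is the cyclic group generated by $(a_\infty(t_x), \om^{-1}\ga_x^{-1}\om)$), and the Jacobian of the product-to-group map at the origin is a positive constant $J_\om$ determined by the transverse decomposition $\gog_{S^*} = \lie(L_\om) \oplus V$. A standard change-of-variables then gives $m_S(\cT_\om^\del) \asymp J_\om \cdot t_x \cdot m_V(U^\del)$ up to fixed Haar normalization constants. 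Here $m_V(U^\del) = m_{V_\infty}(B_\del^{V_\infty}) \cdot m_{V_f}(\log_S B) \gg \del^2$ because $V_\infty$ is two-dimensional and $m_{V_f}(\log_S B)$ is a fixed positive constant, while $J_\om > 0$ holds precisely under non-degeneracy; combining these yields $m_S(\cT_\om^\del) \gg_{x,S,\cL_\om} \del^2$. The main obstacle in both parts is the non-split uniformity, but in both cases it reduces to the same principle: the relevant quantity (size of the neighborhood inside $H_{\om,p} B_p$ in part~\eqref{a.f.i.1}, the Jacobian $J_\om$ in part~\eqref{a.f.i.2}) is a continuous, non-vanishing function of $\om$ on the compact space $K_S$, and hence is uniformly bounded below.
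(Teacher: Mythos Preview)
Your proposal is correct and follows essentially the same approach as the paper's own proof: for part~\eqref{a.f.i.1} both arguments reduce the stabilizer question to the inclusion $Bk\subset H_\om B$, establish openness of $H_\om B$ via the transversality $\lie(H_\om)\oplus V=\gog_S$, and then use compactness of $B$ (your tube-lemma step) to extract a product congruence subgroup $K^*$; for part~\eqref{a.f.i.2} both arguments compute the tube volume as a product of the orbit volume, a transversality Jacobian, and $m_{V_\infty}(B_\del^{V_\infty})\asymp\del^2$, and in both parts the non-split uniformity is obtained by observing that the relevant transversality quantity varies continuously and nowhere vanishes over the compact parameter space $K_S$.
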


\subsection{Concluding the main part of the proof}
\begin{proof}[Proof of parts\eqref{g.c.1},\eqref{g.c.2} of Theorem~\ref{general case}] We follow the strategy presented in~\S\ref{strategy} 
and use freely all the notation introduced so far. Let $\on{h}$ be an admissible radius and assume without loss of generality that it is of full support.
Let $\vphi_0\in\on{Lip}_\ka(X_\infty)\cap  L^2(X_\infty,m_\infty)$. 
We let $\vphi=\vphi_0\circ\pi$ be the lift of $\vphi_0$ to $X_S$.  As $\int_{X_\infty} \vphi_0 dm_\infty=\int_{X_S}\vphi dm_S$,
we see by Lemma~\ref{eta is a lift} that part~\eqref{g.c.1} of the theorem will follow once we prove 
\begin{equation}\label{maxi}
\av{\int_{X_S}\vphi d\eta_{\om,\on{h}}-\int_{X_S}\vphi dm_S}\ll_{x,S,\cL_\om,\eps} \max\set{\norm{\vphi}_2,\ka}\on{h}^{-\frac{\del_0}{2}+\eps}.
\end{equation}
Part~\eqref{g.c.2} will follow once we establish that in the non-split case, the implicit constant in~\eqref{maxi} may be chosen independent of the generalized branch.
Let $V<\gog_{S^*}$ be the linear complement from Definition~\ref{shrinking transverse}. We apply Lemma~\ref{again finite index} and use the notation introduced there to obtain 
a family of tubes $\cT_\om^\del$ around $y_{\om} L_{\om}$ coming from $V$. 

We denote $\cT^\del_{\om,\on{h}}$ the (pushed) tube $\cT^\del_\om a_f(\on{h})$ around the orbit $y_{\om,\on{h}}L_{\om,\on{h}}$, and $m_{\cT^\del_{\om,\on{h}}}$ the normalized restriction
of $m_S$ to $\cT^\del_{\om,\on{h}}$. The width of\footnote{The reader should not confuse the superscript $\del$ with our notation for conjugation.} $\cT^\del_{\om,\on{h}}$ is $U^{\del,\on{h}}=(U^{\del})^{a_f(\on{h})}$, where
$U^\del$ is as in Lemma~\ref{the shrinking tube} (see Remark~\ref{translated tubes}). 
We have
\begin{align}\label{losses 2}
\av{\int_{X_S}\vphi d\eta_{\om,\on{h}} -\int_{X_S}\vphi dm_S}\le & \\
\nonumber & \underbrace{\av{\int_{X_S}\vphi d\eta_{\om,\on{h}}-\int_{X_S}\vphi dm_{\cT_{\om,\on{h}}^\del}}}_{(*)}+
\underbrace{\av{\int_{X_S}\vphi dm_{\cT_{\om,\on{h}}^\del}-\int_{X_S}\vphi dm_S}}_{(**)}.
\end{align}
To estimate $(*)$ we define $\tilde{\vphi}_{\del,\on{h}}:\cT_{\om,\on{h}}^\del\to\bC$ by 
$\tilde{\vphi}_{\del,\on{h}}(z\exp_{S^*} u)=\vphi(z)$ for $z\in y_{\om,\on{h}}L_{\om,\on{h}}, u\in U^{\del,\on{h}}$ and extend it to be zero outside the 
tube $\cT_{\om,\on{h}}^\del$ to obtain a function on $X_S$. 
By Lemma~\ref{absolute 
continuity} it follows that
\begin{equation}\label{using the density 1}
\int_{X_S}\tilde{\vphi}_{\del,\on{h}}dm_{\cT_{\om,\on{h}}^\del}=\int_{y_{\om,\on{h}}L_{\om,\on{h}}}\int_{U^{\del,\on{h}}}\vphi(z)F(z,u)dm_{U^{\del,\on{h}}}(u)d\eta_{\om,\on{h}}(z)=\int_{X_S}\vphi d\eta_{\om,\on{h}}
\end{equation}
We therefore have the following estimate for $(*)$ 
\begin{align}\label{effective loss 1}
\nonumber(*)&=\av{\int_{X_S}\tilde{\vphi}_{\del,\on{h}} dm_{\cT_{\om,\on{h}}^\del}-\int_{X_S}\vphi dm_{\cT_{\om,\on{h}}^\del}}\\
&\le  \max\set{\av{\vphi(y\exp_{S^*}(w))-\vphi(y)}:y\in y_{\om,\on{h}} L_{\om,\on{h}}, w\in U^{\del,\on{h}}}.
\end{align}
Note that if we write $w\in U^{\del,\on{h}}$ as $(w_\infty,w_f)$, then for any $y\in y_{\om,\on{h}}L_{\om,\on{h}}$,  $\vphi(y\exp_{S^*}(w))=\vphi_0(\pi(y)\exp_\infty (w_\infty))$ by the $K_S$-invariance of $\vphi$.
As the maps induced by the actions of elements of the form $\exp_\infty(w_\infty), \norm{w_\infty}<1$ are all Lipschitz with some uniform Lipschitz constant $c_1$,
the distance between $\pi(y)$ and $\pi(y)\exp_\infty(w_\infty)$ is $\le c_1 \del$ and so by the Lipschitz assumption of 
$\vphi_0$ we obtain 
\begin{equation}\label{final first loss}
(*)\le c_1\ka \del.
\end{equation}
We now estimate $(**)$. 
Let $\cH=L^2(X_S,m_S)$ and denote $$w_1=\vphi,\;w_2=\frac{1}{m_S(\cT^\del_\om)}\chi_{\cT^\del_\om}.$$ 
In order to appeal to Theorem~\ref{effective dmc} we observe that $w_1$ is $K_S$-fixed and $w_2$ is $K^*$-fixed, where $K^*$ is as in Lemma~\ref{again finite index}. By Lemma~\ref{again finite index} the index $d=[K_S:K^*]$
depends only on $x,S$, and $\cL_\om$ and in the non-split case 
could be bounded by a number independent of the generalized branch.
 As for the norms,
$\norm{w_1}=\norm{\vphi}$, and for $w_2$ we have $\norm{w_2}=m_S(\cT^\del_\om)^{-\frac{1}{2}}$. By Lemma~\ref{again finite index} we have that 
$m_S(\cT^\del_\om)\gg_{x,S,\cL_\om} \del^2$ and so $\norm{w_2}\ll_{x,S,\cL_\om}\del^{-1}$. Furthermore, in the non-split case, the implicit constant can be taken to be independent of the generalized branch.

It now follows from Theorem~\ref{effective dmc}  that
\begin{align}\label{second loss 2}
(**)&=\av{\idist{\vphi,a_f(\on{h})^{-1}\pa{\frac{1}{m_S(\cT^\del_\om)}\chi_{\cT^\del_\om}}}-\int_{X_S}\vphi dm_S}\\
\nonumber &=\av{\idist{a_f(\on{h})w_1,w_2}-\idist{w_1,1}\idist{1,w_2}}\\
\nonumber &\ll_{x,S,\cL_\om,\eps} \norm{\vphi}_2\del^{-1}\on{h}^{-\del_0+\eps},
\end{align}
and that in the non-split case the implicit constant can be taken independent of the generalized branch.
Combining~\eqref{losses 2},\eqref{final first loss},\eqref{second loss 2}, and choosing $\del=c\on{h}^{\frac{1}{2}(-\del_0+\eps)}$ (the meaning of $c$ will become clear in a moment) we obtain~\eqref{maxi} as desired (with $\eps$ replaced by $\frac{\eps}{2}$). Here the constant $c$ is chosen to protect us from the  possible finitely many $\on{h}$'s for which the inequality $\on{h}^{\frac{1}{2}(-\del_0+\eps)}<\hat{\del}$ does not hold ($\hat{\del}$ as in Lemma~\ref{the shrinking tube}). Note that indeed, the constant $c$ depends only on $\hat{\del},S$, and $\eps$. By Lemma~\ref{the shrinking tube} we see that it actually depends on $x,S$, and $\eps$. This concludes the proof of Theorem~\ref{general case}.
\end{proof}
\subsection{Proofs of Lemmas~\ref{the shrinking tube},\ref{again finite index}}
We shall need the following auxiliary lemma which we leave without proof
\begin{lemma}\label{uniform injectivity}
There exists a neighborhood of the identity $W\subset G_{S^*}$, depending only on the class $x$, such that for any $\om\in K_S$ and any $g\in W$, 
if $y_\om L_\om g \cap y_\om L_\om\ne\varnothing$ then $g\in L_\om$.
\end{lemma}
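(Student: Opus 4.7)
The plan is to reduce the statement to the single orbit $y_{e_S} L^*$, where $L^*\defi A_\infty\times T$ and $T\defi\overline{\idist{\ga_x}}_{G_S}$, and then prove the reduced statement by a compactness-plus-discreteness argument.

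A direct calculation shows that conjugation by $\om\in K_S$ (embedded in $\GSS$ as $(e_\infty,\om)$) acts trivially on $A_\infty$ and sends $H_\om=\om^{-1}T\om$ back to $T$, so $\om L_\om\om^{-1}=L^*$ independently of $\om$; combined with $y_{e_S}\om=y_\om$ this yields $y_\om L_\om=y_{e_S}L^*\om$. Right-multiplying by $\om^{-1}$ (a homeomorphism of $X_S$) then shows that $y_\om L_\om g\cap y_\om L_\om\ne\varnothing$ is equivalent to $y_{e_S}L^*(\om g\om^{-1})\cap y_{e_S}L^*\ne\varnothing$, while $g\in L_\om$ is equivalent to $\om g\om^{-1}\in L^*$. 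Setting $g'\defi\om g\om^{-1}$, the lemma thus reduces to producing a neighborhood $W''$ of the identity in $\GSS$ such that $y_{e_S}L^*g'\cap y_{e_S}L^*\ne\varnothing$ together with $g'\in W''$ forces $g'\in L^*$. To make this reduction uniform in $\om\in K_S$, I will take $W=W_\infty\times W_S$ with $W_S\subset K_S$ a principal congruence subgroup chosen small enough that $W\subset W''$; since such $W_S$ are normal in $K_S$ we have $\om W_S\om^{-1}=W_S$, and since $\om$ acts trivially on the $\infty$-component $\om W\om^{-1}=W$. Thus $g\in W$ indeed yields $g'\in W\subset W''$.

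It remains to treat the single orbit $y_{e_S}L^*$. The stabilizer $\on{Stab}(y_{e_S})\subset\GSS$ is a conjugate of $\Ga_S$, hence discrete, and $\Ga_0^*\defi L^*\cap\on{Stab}(y_{e_S})$ is cocompact in $L^*$ because the orbit $y_{e_S}L^*\cong L^*/\Ga_0^*$ is compact by Lemma~\ref{lifted orbit closure}. Fix a compact fundamental domain $F\subset L^*$ for $\Ga_0^*$. Suppose, toward a contradiction, that no $W''$ works: then there exist $g'_n\to e$ in $\GSS$ with $g'_n\notin L^*$ and pairs $\ell_0^{(n)},\ell_1^{(n)}\in L^*$ with $y_{e_S}\ell_0^{(n)}g'_n=y_{e_S}\ell_1^{(n)}$. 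After replacing each $\ell_i^{(n)}$ by a $\Ga_0^*$-translate, I may assume $\ell_0^{(n)},\ell_1^{(n)}\in F$. The equation rewrites as $s_n\defi\ell_0^{(n)}g'_n(\ell_1^{(n)})^{-1}\in\on{Stab}(y_{e_S})$. Since $F$ is compact and $g'_n\to e$, the sequence $\{s_n\}$ stays in a bounded subset of $\GSS$; by discreteness of $\on{Stab}(y_{e_S})$ it takes only finitely many values, so passing to a subsequence $s_n\equiv s$. Passing to a further subsequence with $\ell_i^{(n)}\to\ell_i^*\in F$, the limit equation reads $s=\ell_0^*(\ell_1^*)^{-1}\in L^*$, so $s\in L^*\cap\on{Stab}(y_{e_S})=\Ga_0^*\subset L^*$. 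But then $g'_n=(\ell_0^{(n)})^{-1}s\,\ell_1^{(n)}\in L^*$, contradicting $g'_n\notin L^*$.

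The main obstacle is the uniformity in $\om\in K_S$; it is dissolved by the conjugation identity $\om L_\om\om^{-1}=L^*$, which exhibits all the orbits $y_\om L_\om$ as right translates of one fixed orbit, together with the fact that $K_S$ admits a neighborhood basis of the identity consisting of normal (principal congruence) subgroups. Once this uniformity is absorbed, the remaining task for a single compact orbit is a routine compactness-plus-discreteness argument of the sort used to exhibit an injectivity radius.
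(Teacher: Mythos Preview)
Your proof is correct. The paper explicitly leaves this lemma without proof, so there is nothing to compare against; your argument fills the gap cleanly.

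A few remarks. The conjugation reduction is the right idea: the identity $\om L_\om\om^{-1}=L^*$ (with $L^*=A_\infty\times\overline{\idist{\ga_x}}_{G_S}$) together with $y_\om=y_{e_S}\om$ shows that all the orbits $y_\om L_\om$ are right $K_S$-translates of a single compact orbit, which immediately explains why the neighborhood $W$ can be taken independent of $\om$. Your use of a product neighborhood $W=W_\infty\times W_S$ with $W_S$ a principal congruence subgroup (hence normal in $K_S$) is exactly what is needed to make the conjugation $g\mapsto\om g\om^{-1}$ preserve $W$. One cosmetic point: since $L^*$ is abelian (closure of a cyclic group times $A_\infty$), your writing $L^*/\Ga_0^*$ rather than $\Ga_0^*\backslash L^*$ is harmless, but in general the orbit of a right action is the left-coset space. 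The compactness-plus-discreteness argument for the single orbit is the standard injectivity-radius argument and goes through as written; the only point worth making explicit is that $L^*$ is closed in $\GSS$ (clear, since $A_\infty$ and $\overline{\idist{\ga_x}}_{G_S}$ are closed in their respective factors), so that the limits $\ell_i^*$ indeed lie in $L^*$.
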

\begin{proof}[Proof of Lemma~\ref{the shrinking tube}]
The first restriction we impose on $\hat{\del}$ is that it will be small enough so that in the real component, the map $(s,u)\mapsto\exp_\infty(s)\cdot\exp_\infty(u)$ from
$B_{\hat{\del}}^{\lie(A_\infty)}\times B_{\hat{\del}}^{V_\infty}\to G_\infty$ is a homeomorphism onto its open image. Choose $B=\prod_{S}B_p$ to be any product compact open subgroup of $\tilde{B}$ and define $U^\del$ as in the statement of the Lemma.
 At this stage we observe that for any $\del<\hat{\del}$ the map
$L_\om\times U^\del\to G_{S^*}$ given by $(g,u)\mapsto g\exp_{S^*}(u)$ has an open image. To see this, note that the image is a product of open sets in each component: 
In the real component
the image equals $A_\infty\cdot\exp_\infty B_\del^{V_\infty}$ which is open by the choice of $\hat{\del}$, while for any finite place $p\in S$, the $p$'th component of the 
image is $(H_\om)_p\cdot B_p$ which is seen to be open in the following way: Because of the fact that $V_p=\lie(B_p)$ is a linear complement to $\lie((H_\om)_p)$, the 
product $(H_\om)_p\cdot B_p$ clearly contains an open neighborhood of the identity in $G_p$. It now follows from the fact that both $(H_\om)_p,B_p$ are groups, that 
their product is actually an open set. 

The above establishes in particular, that the set $\cT_\om^\del=y_\om L_\om\exp_{S^*}(U^\del)\subset X_S$ is open. 
It follows that in order to conclude that $\cT_\om^\del$ is indeed a tube around $y_\om L_\om$, we only need to argue the injectivity of the  map 
$(z,u)\mapsto z\exp_{S^*}(u)$ from $y_\om L_\om\times U^\del$ to $X_S$. We denote this map by $\psi_\om$.

The second condition which we impose on $\hat{\del}$ and on the choice of $B$ is that the product  
$\pa{\exp_\infty(B_{\hat{\del}}^{V_\infty}  )}^2\cdot B^2\subset W$,
where $W$ is as in Lemma~\ref{uniform injectivity}.
Assuming the injectivity of $\psi_\om$ fails, we obtain elements $u_\infty^{(i)}\in B_\del^{V_\infty},b_i\in B$, $i=1,2$ and a non-trivial intersection of the form 
$$y_\om L_\om \exp_\infty(u_\infty^{(1)})b_1\cap y_\om L_\om \exp_\infty(u_\infty^{(2)})b_2.$$
This shows that $y_\om L_\om\cap y_\om L_\om \exp_\infty(u_\infty^{(2)})\exp_{\infty}(-u_\infty^{(1)})b_2b_1^{-1}\ne\varnothing$. 
It now follows from our choice of $\hat{\del}$ and $B$ (by Lemma~\ref{uniform injectivity}) that 
$\exp_\infty(u_\infty^{(2)})\exp_{\infty}(-u_\infty^{(1)})\in A_\infty$ 
and that $b_2b_1^{-1}\in H_\om$.
As $B$ is a group which intersects $H_\om$ trivially (this is our assumption that the generalized branch $\cL_\om$ is nondegenerte), 
we conclude that $b_1=b_2$.
Furthermore, from the fact that $\lie{(A_\infty)}\oplus V_\infty=\gog_\infty$, it is straightforward to deduce that if $\hat{\del}$ is chosen small enough, then the inclusion
$\exp_\infty(u_\infty^{(2)})\exp_{\infty}(-u_\infty^{(1)})\in A_\infty$ implies that $u_\infty^{(1)}=u_\infty^{(2)}$. This establishes the injectivity of $\psi_\om$ as desired. 
\end{proof}
\begin{proof}[Proof of Lemma~\ref{again finite index}]
We first argue the validity of part~\eqref{a.f.i.1}. As pointed out in the proof of Lemma~\ref{the shrinking tube} above, if $\om$ is such that $\cL_\om$ is non-degenerate,
then the set $H_\om\cdot B\subset G_S$ is open (here $B$ is as in Lemma~\ref{the shrinking tube}). 
Moreover, as $B$ is compact, there exist a neighborhood of the identity in $G_S$, and in particular, a compact open subgroup $K^*=\prod_{S} K_p^*$, with 
the property that for any $k\in K^*$ we have $Bk\subset H_\om B$. We now claim that for any tube $\cT_\om^\del$ as in Lemma~\ref{the shrinking tube} we have
$\cT_\om^\del k=\cT_\om^\del$. To argue the inclusion $\subset$ we note the following
$$\cT_\om^\del k=y_\om L_\om\exp_\infty(B_\del^{V_\infty}) Bk\subset y_\om L_\om\exp_\infty(B_\del^{V_\infty})H_\om B=y_\om L_\om\exp_\infty(B_\del^{V_\infty}) B=\cT_\om^\del.$$
The opposite inclusion follows by switching $k$ with $k^{-1}$. 

If $x$ is non-spilt, it is not hard to see that the intersection $\cap_{\om\in K_S} (H_\om\cdot B)$ contains an open neighborhood around $e_S$. It then readily follows that this intersection contains an open neighborhood of $B$. We conclude similarly to the argument presented above that the group $K^*$ may be chosen to work
for all the $\om$'s simultaneously.  

We briefly argue part~\eqref{a.f.i.2} of the lemma. For each relevant $\om$, it is not hard to see that the volume of the tube $m_S(\cT_\om^{\del})$ satisfies
$c_1m_{V_\infty}(B_\del^{V_\infty})\le m_S(\cT_\om^\del)\le c_2 m_{V_\infty}(B_\del^{V_\infty})$, where the constants $c_1,c_2$ are determined by the 
volume of the orbit $y_\om L_\om$ and the position of the linear space $V$, from which the width is coming,  with respect to $\lie(L_\om)$. As the 2-dimensional volume  
$m_{V_\infty}(B_\del^{V_\infty})$ is proportional to $\del^2$, the claim regarding a single $\om$ follows. In the non-split case, as the Lie algebras $\lie(L_\om)$ are 
uniformly transverse to $V$, the constant $c_1$ above can be taken to be uniform for all $\om$ which finishes the proof.
\end{proof}

\section{Proof of Lemma~\ref{total growth}}\label{proof of total growth}
Let $S$ be a finite set of primes. For an element $\del\in G_S$ we denote 
$\Sig_\del\defi\overline{\idist{\del}}_{G_S}$ and we say that $\del$ is of \textit{compact type} if $\Sig_\del$ is a  compact group. 
\begin{definition}\label{12.9.def}
Let $\del\in G_S$ be an element of compact type. Let us denote for any admissible radius $\on{h}$  by $k_{\onh}(\del)$ the minimal positive integer $k$ for which $\del^k$ belongs to the compact open subgroup $a_f(\onh)K_Sa_f(\onh^{-1})$. Equivalently, 
$k_{\onh}(\del)=\br{\Sig_\del:\Sig_\del\cap a_f(\onh)K_Sa_f(\onh)^{-1}}$.
\end{definition}
\begin{lemma}\label{growth again}
Let $\del\in G_S$ be an element of compact type such that no power of $\del$ has an upper triangular component. 
Then, the ratio $e_{\onh}(\del)\defi \frac{k_{\onh}(\del)}{\onh}$ 
attains only finitely many values and furthermore, if $\onh_n$ is a divisibility sequence (i.e.\ $\onh_{n}|\onh_{n+1}$), then the sequence $e_{\onh_n}(\del)$ stabilizes.
\end{lemma}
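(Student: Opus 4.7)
The plan is to reduce to a per-prime computation on the compact $p$-adic Lie group $\Sigma_p := \overline{\idist{\del_p}}_{G_p}$ and then analyze a single index by passing through the $p$-adic logarithm. First I would rewrite $k_\onh(\del)$ as a least common multiple: setting $K_p^{(n)} := \diag{1, p^n} K_p \diag{1, p^n}^{-1}$, the fact that the $p$-component of $a_f(\onh)$ equals $\diag{1, p^{v_p(\onh)}}$ up to a $\bZ_p^\times$-scalar (which stabilizes $K_p$) gives $a_f(\onh) K_S a_f(\onh)^{-1} = \prod_{p \in S} K_p^{(v_p(\onh))}$, so the condition $\del^k \in a_f(\onh) K_S a_f(\onh)^{-1}$ decouples over primes. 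Letting $f_p(n) := [\Sigma_p : \Sigma_p \cap K_p^{(n)}]$, the density of $\idist{\del_p}$ in $\Sigma_p$ identifies $f_p(n)$ with the minimal $k > 0$ such that $\del_p^k \in K_p^{(n)}$, and hence $k_\onh(\del) = \on{lcm}\set{f_p(v_p(\onh)) : p \in S}$.

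Next I would compute $f_p(n)$ for large $n$. The hypothesis forces $\del_p$ to have infinite order in $\PGL_2(\bQ_p)$ for every $p \in S$ (otherwise some power would be the scalar identity, which is upper triangular), so $\Sigma_p$ is a one-dimensional compact abelian $p$-adic Lie subgroup of $G_p$. Transporting the hypothesis through $\exp_p$ on a small pro-$p$ neighborhood of the identity shows moreover that $\lie(\Sigma_p)$ does not lie in the upper-triangular Lie subalgebra, so for a spanning vector $u = \smallmat{a & b \\ c & -a}$ of $\lie(\Sigma_p)$ we have $c \ne 0$. A direct calculation of $\lie(K_p^{(n)}) = \set{\smallmat{\al & \be p^{-n} \\ \ga p^n & -\al} : \al, \be, \ga \in \bZ_p}$ then yields
\[
\lie(\Sigma_p) \cap \lie(K_p^{(n)}) = p^{\max\set{-v_p(a),\, -n - v_p(b),\, n - v_p(c)}} \bZ_p \cdot u,
\]
which equals $p^{n - v_p(c)} \bZ_p \cdot u$ once $n$ exceeds a threshold depending only on $(a,b,c)$. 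Pulling this back through $\log_p$ on a fixed open subgroup $\Sigma_p^\circ < \Sigma_p$ of finite index $M_p$ on which $\log_p$ is a homeomorphism gives
\[
f_p(n) = M_p \cdot p^{n - B_p} \qquad \text{for } n \ge N_p,
\]
where $B_p$ is the integer recording $v_p(c)$ together with the scale of $\log_p$ on $\Sigma_p^\circ$; for $n < N_p$ the function $f_p$ takes only finitely many values.

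Finally I would assemble the contributions over $p \in S$. Comparing $v_q$-valuations on both sides of the lcm formula and plugging in $f_p(n) = M_p p^{n - B_p}$ shows that once every $v_p(\onh)$ exceeds a uniform threshold depending only on $\del$, one has $v_p(k_\onh) = v_p(\onh) + v_p(M_p) - B_p$ for $p \in S$ and $v_q(k_\onh) = \max_{p \in S} v_q(M_p)$ for $q \notin S$, so $e_\onh(\del) = k_\onh(\del)/\onh$ equals the fixed positive rational $\prod_{p \in S} p^{v_p(M_p) - B_p} \cdot \prod_{q \notin S} q^{\max_p v_q(M_p)}$. Outside this large regime, at least one $v_p(\onh)$ is constrained to the finite set $\set{0, 1, \dots, N_p - 1}$, leaving only finitely many possibilities for the tuple $(f_p(v_p(\onh)))_{p \in S}$ and thus finitely many values of $e_\onh(\del)$. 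The divisibility assertion follows at once: along $\onh_n \mid \onh_{n+1}$ each coordinate $v_p(\onh_n)$ is nondecreasing, so it either stabilizes at a value in the finite regime or eventually crosses into the large regime, and in either case the tuple above is eventually constant, forcing $e_{\onh_n}(\del)$ to stabilize.

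The main obstacle is producing the clean formula $f_p(n) = M_p p^{n - B_p}$ of the second step. Conceptually it just records that conjugation by $\diag{1, p^n}$ moves $\Sigma_p$ off $K_p$ at exactly the rate $p$ per step, but pinning this down requires the non-upper-triangularity of $\lie(\Sigma_p)$ (the only content of the hypothesis, used precisely to guarantee $c \ne 0$) together with careful bookkeeping of the finite-index factor $M_p$ coming from the fact that $\log_p$ is only a local isomorphism.
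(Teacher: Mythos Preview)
Your single-prime computation via the $p$-adic logarithm is a clean alternative to the paper's approach. After the same lcm decomposition over $p\in S$, the paper instead replaces $\del_p$ by a power lying in the principal congruence subgroup modulo $p^2$ and then runs an explicit induction on the matrix entries of $\del_p^{\,jp^{n-n_0}}$ to show $k_{p^n}(\del_p)=p^{n-n_0}$ for $n\ge n_0$. Your Lie-theoretic route reaches the same conclusion (that $f_p(n)/p^n$ is eventually constant) more conceptually, correctly isolating the hypothesis as the condition $c\ne 0$ on a generator of $\lie(\Sigma_p)$; the finite-index toll that the paper pays by passing from $\del$ to $\del^\ell$ appears in your argument as the factor $M_p$.

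There is, however, a genuine error in your assembly over primes. The claim ``leaving only finitely many possibilities for the tuple $(f_p(v_p(\onh)))_{p\in S}$'' is false: if $v_p(\onh)$ is bounded for one $p$ but $v_q(\onh)\to\infty$ for another $q\in S$, then $f_q(v_q(\onh))\to\infty$ and the tuple ranges over an infinite set. The same objection applies to the stabilization argument: along a divisibility chain with some $v_p(\onh_n)\to\infty$, the tuple $(f_p(v_p(\onh_n)))_p$ is not eventually constant. The fix is to track the ratios $g_p(n):=f_p(n)/p^n$ instead, which by your own formula take only finitely many values. Writing $k_\onh=\on{lcm}_{p\in S}\bigl(p^{v_p(\onh)}g_p(v_p(\onh))\bigr)$ and computing $v_r(e_\onh)$ prime by prime gives, for $r\in S$,
\[
v_r(e_\onh)=\max\Bigl(v_r\bigl(g_r(v_r(\onh))\bigr),\ \max_{q\ne r}v_r\bigl(g_q(v_q(\onh))\bigr)-v_r(\onh)\Bigr),
\]
and since the first entry lies in a fixed finite set while the second is bounded above by a constant, the maximum lies in a fixed finite interval of integers; the case $r\notin S$ is immediate. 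Stabilization along a divisibility chain then follows because each $g_p(v_p(\onh_n))$ stabilizes and the cross-terms either stabilize or tend to $-\infty$. (The paper simply asserts that the multi-prime statement follows from the single-prime one, so your attempt to spell this out is welcome; it just needs this correction.)
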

\begin{proof}
The strategy is to reduce to the case where $S$ consists of a single prime $p$ and $\del\in K_p$ satisfies a certain congruence 
assumption.  In this case the description of $k_{\onh}(\del)$ becomes explicit and simple.

\noindent\tbf{Step} 1 - \tbf{Reduction to one prime}. 
Let us denote for an admissible radius $\onh$ by $n_p(\onh)$ the integers satisfying $\onh=\prod_{p\in S}p^{n_p(\onh)}$. 
We have the equality
$$a_f(\onh)K_Sa_f(\onh^{-1})=\prod_{p\in S} a_p(p^{n_p(\onh)}) K_p a_p(p^{-n_p(\onh)}),$$ 
and so, if we denote $\del=(\del_p)_{p\in S}$, then  $k_{\onh}(\del)=\on{lcm}\set{k_{p^{n_p(\onh)}}(\del_p):p\in S}$.
From here 
it follows that the statement of the Lemma for a general finite set of primes $S$ 
follows from the corresponding statement for a single prime. 
Therefore, henceforth we assume that $S$ 
consists of a single prime $p$ and our objective is to show that the sequence $k_{p^n}(\del)/p^n$ stabilizes. 
For simplicity we denote $K_{p,n}\defi a_f(p^n)K_pa_f(p^{-n})$.

\noindent\tbf{Step} 2 - \tbf{Replacing $\del$ by a power}.
We wish to prove  that for any $\ell>0$ the statement of the Lemma for $\del$ is equivalent
to  the statement of the Lemma for $\del^\ell$.
We first prove that $\cap_{n}(\Sig_\del\cap K_{p,n})=\set{e}$.
To see this, note that  as 
\eq{Kpn}{
K_{p,n}=\set{\smallmat{a&p^{-n}b\\p^nc&d}:\smallmat{a&b\\c&d}\in K_p},
}
we see that 
$\cap_n K_{p,n}\subset\set{\smallmat{*&*\\ 0&*}}$, and so if the intersection $\cap_{n}(\Sig_\del\cap K_{p,n})$ 
was non-trivial, then it would
imply that $\Sig_\del$ contains a non-trivial upper triangular element, which in turn would imply that its (one-dimensional) 
Lie algebra is upper triangular. This is equivalent to saying that a power of $\del$ is upper triangular, which contradicts our assumption
on $\del$.

We conclude that for any $\ell>0$, as $\Sig_{\del^\ell}<\Sig_\del$ is an open subgroup, for $n$ large enough
we have that $\Sig_\del\cap K_{p,n}=\Sig_{\del^\ell}\cap K_{p,n}$. This implies that for $n$ large enough
\eq{redtopower}{
k_{p^n}(\del)=\br{\Sig_\del:\Sig_{\del^\ell}\cap K_{p,n}}=\br{\Sig_\del:\Sig_{\del^\ell}}\cdot\br{\Sig_{\del^\ell}:\Sig_{\del^\ell}\cap K_{p,n}}=\br{\Sig_\del:\Sig_{\del^\ell}} k_{p^n}(\del^\ell),
}
and so, in particular, the sequence $k_{p^n}(\del)/p^n$ stabilizes if and only if the sequence $k_{p^n}(\del^\ell)/p^n$ does, 
as desired. 

\noindent\tbf{Step} 3 - \tbf{Concluding the proof}. By Step 2 we may assume (by replacing $\del$ by a suitable power of $\del$ if 
necessary), that $\del$ belongs the (open) subgroup of $K_p$ consisting of elements congruent to the 
identity modulo $p^2$ (or said differently, to the kernel of the natural homomorphism from 
$K_p$ onto $\PGL_2(\bZ/p^2\bZ)$). 
Denote $B_n\defi K_p\cap K_{p,n}$.
A direct calculation shows 
$B_n=\set{\smallmat{a&b\\ c&d}\in K_p: \frac{c}{p^n}\in \bZ_p}.$
Under the above assumption we have that $k_{p^n}(\del)$ is the order of (the image of) $\del$ in the finite cyclic 
quotient $\Sig_\del/(\Sig_\del\cap B_n)$. Furthermore, as $B_{n+1}<B_n$, the divisibility relation
$k_{p^n}(\del)|k_{p^{n+1}}(\del)$ holds.
Let $n_0\defi\max\set{n>0:\del\in B_n}$. Our assumption that $\del$ is not upper triangular implies that $n_0$ is well defined.
The proof of the Lemma will be concluded once we establish the following

\noindent\textbf{Claim}: For any $n\ge n_0$ we have $k_{p^n}(\del)=p^{n-n_0}$ and moreover, $\del^{p^{n-n_0}}\in B_n\smallsetminus B_{n+1}$. 

We prove this claim by induction on $n$. For $n=n_0$ the validity of the claim follows from the choice of $n_0$. Let us assume it holds 
for $n$. 
As mentioned above, the divisibility relation
$k_{p^n}(\del)|k_{p^{n+1}}(\del)$ holds. Moreover, from our inductive hypothesis saying that 
$\del^{k_{p^n}(\del)}\notin B_{n+1}$ we know that this divisibility relation is strict. It follows that 
$k_{p^{n+1}}(\del)=j_0p^{n-n_0}$ where $j_0$ is the minimal positive integer $j$ so that $\del^{jp^{n-n_0}}\in B_{n+1}$, or said differently, such that the bottom left coordinate of 
$\del^{jp^{n-n_0}}$ is divisible by $p^{n+1}$ in $\bZ_p$. We will be finished once we show two things:
\begin{enumerate}
\item\label{12ntp1} First, that $j_0=p$ and so $k_{p^{n+1}}(\del)=p^{n+1-n_0}$, 
\item\label{12ntp2} and second, that the bottom left coordinate
of $\del^{p^{n+1-n_0}}$ is not divisible by $p^{n+2}$ and so $\del^{k_{p^{n+1}}(\del)}\in B_{n+1}\smallsetminus B_{n+2}$ which completes the inductive step. 
\end{enumerate}
Consider the sequence $\del^{jp^{n-n_0}}\defi\smallmat{ a_j& b_j\\ c_j& d_j}$, $j=1,2,\dots$ and note the recursive relation 
\begin{equation}\label{12.9.4}
c_{j+1}=c_1 a_j+ c_j d_1.
\end{equation}
We expand $c_1$ to a power series in $\bZ_p$ and use the inductive assumption that $\del^{p^{n-n_0}}\in B_n\smallsetminus B_{n+1}$ and write 
\begin{equation}\label{12.9.5}
c_1= m_1p^{n}+m_2p^{n+1}+up^{n+2},
\end{equation}
where $m_1\in\set{1,2,\dots,p-1}$, $m_2\in \set{0,1,\dots,p-1}$, $u\in\bZ_p$. We claim that for any $1\le j$ we have 
\begin{equation}\label{12.9.6}
c_j=j m_1 p^n+jm_2p^{n+1}+u_jp^{n+2}\textrm{ where }u_j\in\bZ_p.
\end{equation}
The validity of~\eqref{12ntp1},\eqref{12ntp2} follows at once from~\eqref{12.9.6} and the fact that $m_1\in\set{1,2,\dots p-1}$. We prove the validity of~\eqref{12.9.6} by induction on $j$. For $j=1$, this 
is exactly~\eqref{12.9.5}. Now assume it holds for $j$ and write (using the congruence assumption  on $\del$) 
\begin{align*}
&a_j=1+p^2A,\: d_j=1+p^2 D, \; A,D\in\bZ_p.
\end{align*}
Plugging this and~\eqref{12.9.5},\eqref{12.9.6} into the recursive relation~\eqref{12.9.4} we see that indeed $$c_{j+1}=(j+1)m_1p^n+(j+1)m_2 p^{n+1}+p^{n+2}(\dots)$$ as desired. This completes the proof of the Claim and by that concludes the proof of the Lemma.
\end{proof}
\begin{remark}\label{11.9.r.1}
It will be useful later on to note the following: A careful look at the argument giving Lemma~\ref{growth again} shows that for a fixed $\del\in G_S$ of compact type, we have that there exists a positive constant $c$ such that $c\le  e_{\onh}(\del)$ for any admissible radius $\onh$, where $c$ depends only on two things: 
\begin{enumerate}
\item The power $k_0$ which we need to raise $\del$ to so that each component $(\del)_p^{k_0}$ will be in $K_p$ and congruent to the identity mod $p^2$.
\item The maximal admissible radius $\onh=\prod_{p\in S} p^{n_p}$ for which for any $p\in S$, $(\del)_p^{k_0}\in B_{n_p}$ (this $\onh$ measures how close $\del^{k_0}$ is to being upper triangular).
\end{enumerate}
\end{remark}
Before turning to the proof of Lemma~\ref{total growth} we make yet another remark which will be used in the course of its proof.
\begin{remark}\label{11.9.r.2}
Given a class $x\in X_\infty$ with a periodic $A_\infty$-orbit and a representative $\Lam_x\in x$, then the matrix $a_\infty(t_x)=\diag{e^{\frac{t_x}{2}},e^{-\frac{t_x}{2}}}$ stabilizes the lattice $\Lam_x$ (that is, as a subset of $\bR^2$, $\Lam_x=\Lam_x a_\infty(t_x)$). As $\Lam_x$ is a lattice, it follows that $a_\infty(t_x)$ is conjugate 
to an integer matrix and so its eigenvalues $e^{\pm\frac{t_x}{2}}$, are algebraic integers of degree $2$. The quadratic extension $\bF_x$ from Definition~\ref{def per} is the one generated by them. 
As these eigenvalues are positive Galois conjugates whose product is equal to $1$, we conclude furthermore that they belong
to the group of  totally positive units in the ring of integers of $\bF_x$ (i.e.\ units all of whose embeddings into the reals are positive). 
As such,
by Dirichlet's unit theorem, they are integer powers of the fundamental unit of this field. In fact, we shall slightly abuse the classical
terminology and use the term \textit{fundamental unit} to refer
to the unit in the ring of integers which is of absolute value $>1$ and which generates the group of totally positive units. If the fundamental unit  is $\eps=e^{\frac{t_0}{2}}$, then the reader will easily verify that the image $\Lam_x a_\infty(t_0)$ is contained in the $\bQ$-span of $\Lam_x$. This shows that if we write $x=\Ga_\infty g_x$, then there is a rational matrix $\del_x$ which solves $\del_x g_x=g_xa_\infty(t_0)$ and in fact, 
$t_x=kt_0$ where $k$ is the minimal positive integer such that $\del_x^k$ is an integer matrix.
\end{remark}

\begin{proof}[Proof of Lemma~\ref{total growth}]
\eqref{t.g.1}.
A short counting argument shows that the cardinality of the sphere $\cS_{\on{h}}(x)$ is proportional to $\on{h}$ (were the proportionality constant depends on $S$). 
For each $x'$ on the sphere, let $s_{x'}$ be the minimal positive number
such that $x'a_\infty(s)$ returns to the sphere. The total length is then $\textbf{t}_x(\on{h})=\sum_{x'\in\cS_{\on{h}}(x)} s_{x'}$. 
We will  show below that for any $x'\in\cS_{\on{h}}(x)$ $s_{x'}\le t_x$. This will establish the inequality $\textbf{t}_{\on{h}}(x)\ll_{x,S}\on{h}$ which is half of of the statement in part~\eqref{t.g.1} of the 
Lemma. The other half, namely the inequality $\on{h}\ll_{x,S}\textbf{t}_{x}(\on{h})$, actually follows from part~\eqref{t.g.2} of the Lemma.  

Let $x'\in\cS_{\onh}(x)$ be given. By Lemma~\ref{get the sphere} we see that there exists $y\in\pi^{-1}(x)$
 such that $x'=\pi(ya_f(\on{h}))$. As $\pi$ intertwines the $A_\infty$-actions on $X_S,X_\infty$ we see that $x=xa_\infty(t_x)=\pi(ya_\infty(t_x))$ and so if we let 
$\tilde{y}=ya_\infty(t_x)$ then $\tilde{y}\in\pi^{-1}(x)$ and again by Lemma~\ref{get the sphere} we have that $x''=\pi(\tilde{y}a_f(\on{h}))\in\cS_{\on{h}}(x)$. The following calculation then shows that  indeed $s_{x'}\le t_x$ as was claimed:
$$x'a_\infty(t_x)=\pi(ya_f(\on{h})a_\infty(t_x))=\pi(ya_\infty(t_x)a_f(\on{h}))=\pi(\tilde{y}a_f(\on{h}))=x''\in\cS_{\on{h}}(x).$$

\eqref{t.g.2}. Let $\cL=\cL_{g_x,\om}$ be a non-degenerate generalized branch of $\cG_S(x)$ (here $x=\Ga_\infty g_x$ and $\om\in K_S$). 
Let $t_0>0$ be
 such that $e^{\frac{t_0}{2}}$ is the fundamental unit of $\bF_x$ as in Remark~\ref{11.9.r.2}. In the notation of the same remark, 
 let $\del_x$ be the rational matrix satisfying $\del_x g_x=g_xa_\infty(t_0)$. We replace the set of places $S$ by a bigger set
 if necessary $\widetilde{S}$, so that $\del_x\in \Ga_{\widetilde{S}}$. We then consider the bigger graph $\cG_{\widetilde{S}}(x)$ which contains the original graph and we further 
 consider its following generalized branch: Write $\widetilde{S}=S\cup T$ and define $\widetilde{\om}\in K_{\widetilde{S}}$ to be identical to $\om$ in the 
 components  corresponding to the primes in $S$ and equal the identity in the components corresponding to primes in $T$. We then define $\widetilde{\cL}$ to be the 
 generalized branch $\cL_{g_x,\widetilde{\om}}$ of $\cG_{\widetilde{S}}(x)$. Note that because of the way we defined $\widetilde{\om}$, the generalized branch $\widetilde{\cL}$ is non-degenerate as well. 
 
Denote as before by $x_{\on{h}}$ the class in $\widetilde{\cL}\cap\cS_{\on{h}}(x)$. We are interested in analyzing the length $t_{x_{\on{h}}}$ of the orbit 
$x_{\on{h}}A_\infty$. By Remark~\ref{11.9.r.2}, there exists a positive integer $\widehat{k}_{\on{h}}$  satisfying 
\begin{equation}\label{12.9.10}
t_{x_{\on{h}}}=\widehat{k}_{\on{h}} t_0.
\end{equation}
In fact, for later purposes, note that in our discussion
 $x$ and the representative $g_x$ are fixed but we will play with the branch later on, i.e.\ with the choice of $\widetilde{\om}$ (which in our setting is defined by $\om$), and so we should actually record the dependency in $\widetilde{\om}$ in our notation and denote $\widehat{k}_{\on{h}}(\widetilde{\om})$. The function $k_{\onh}(\cdot)$ from Definition~\ref{12.9.def} and $\widehat{k}_{\onh}(\cdot)$ are closely related as will be seen below.

 The number $\widehat{k}_{\on{h}}(\widetilde{\om})$ is by definition the minimal 
 positive integer such that $x_{\on{h}} a_\infty(kt_0)=x_{\on{h}}$ or, if we prefer working in the extension $X_{\widetilde{S}}$, it is  the minimal positive integer so that 
 $\Ga_{\widetilde{S}}(g_x,\widetilde{\om})a_f(\on{h})a_\infty(kt_0)$ returns to the fiber $\pi^{-1}(x_{\on{h}})$. 
 Because of the identity $\del_xg_x=g_xa_\infty(t_0)$ and the fact that $\del_x\in \Ga_{\widetilde{S}}$ we see that 
  $\Ga_{\widetilde{S}}(g_x,\widetilde{\om})a_f(\on{h})a_\infty(kt_0)=\Ga_{\widetilde{S}}(g_x,\del_x^{-k}\widetilde{\om} a_f(\on{h}))$, and so this point lies in the same fiber as $\Ga_{\widetilde{S}}(g_x,\widetilde{\om} a_f(\on{h}))$ (i.e.\ 
  above $x_{\on{h}}$) if and only if the quotient $a_f(\onh^{-1})\widetilde{\om}^{-1}\del_x^{k}\widetilde{\om} a_f(\on{h})$ belongs to $K_{\widetilde{S}}$ 
  (see Remark~\ref{12.9.r.1}). That is, $\widehat{k}_{\on{h}}(\widetilde{\om})$ is the minimal positive integer $k$ for which the $(\widetilde{\om}^{-1}\del_x\widetilde{\om})^k\in a_f(\onh)K_{\widetilde{S}}a_f(\onh^{-1})$.  This establishes the equality
  $$k_{\onh}(\del_x^{\widetilde{\om}})=\widehat{k}_{\onh}(\widetilde{\om}).$$
  The validity of part~\eqref{t.g.2} of the Lemma now follows immediately from Lemma~\ref{growth again} and~\eqref{12.9.10} which together imply $c_{\cL_{g_x,\om}}(\onh)= t_0e_{\onh}(\del_x^{\widetilde{\om}}). $
  
\eqref{t.g.3}. Assume first that $x$ is non-split with respect to $S$. As noted in Remark~\ref{11.9.r.1} the lower bound for the function 
$\onh\mapsto e_{\onh}(\del_x^{\widetilde{\om}})$, which gives us the lower bounds for the functions $c_{\cL_{g_x,\om}}(\onh)$, depends only on two things:
\begin{enumerate}
\item The smallest power $k_0$ for which $\del_x$ belongs to the subgroup
  of $K_{\widetilde{S}}$ consisting of elements congruent to the identity modulo $p^2$ in each component (note that we may ignore the conjugation by $\widetilde{\om}$ as this is a normal subgroup of $K_{\widetilde{S}}$). 
  \item The $p$-adic norms $\av{c_p}_p$, where $c_p$ is the left bottom coordinate of the $p$-component of $(\widetilde{\om}^{-1}\del_x\widetilde{\om})^{k_0}$ where $p\in \widetilde{S}$. 
  \end{enumerate}
  It is clear that $k_0$ depends only on $x$ and the original set of primes $S$ and does not vary with $\om$ (i.e.\ with the generalized branch). Also, for primes $p\in S$, the $p$-adic norm $\av{c_p}_p$ is bounded from below as $\om$ ranges over $K_S$ because $x$ is non split. Finally, for
  the primes $p\in \widetilde{S}\smallsetminus S$, as  the $p$'th component  of $\widetilde{\om}$ equals the identity, the $p$'th component of $(\widetilde{\om}^{-1}\del_x\widetilde{\om})^{k_0}$ is independent of $\om$. We conclude that $$\inf\set{c_{\cL_{g_x,\om}}(\onh):\om\in K_S,\onh\textrm{ is an admissible radius}}>0$$ as desired. We leave it as an exercise to the reader to show that in the split case this infimum equals zero.

\end{proof}

\section{Applications to continued fractions}\label{applications}
In this section we present the necessary terminology and results that will allow us to state and prove our main theorems regarding continued fractions and deduce Theorems~\ref{mtcf11},\ref{period cor new}.
\subsection{The $A_\infty$-orbit attached to $\al\in\qi$} 
\begin{definition}\label{def g alpha}
For $\al\in\qi$, let $\al'$ denote its Galois conjugate and let 
\begin{equation}\label{g alpha}
g_\al\defi\mat{\al&\al'\\
1&1
}\;\textrm{if $\al>\al'$, and }
g_\al\defi\pa{\begin{matrix}
\al&-\al'\\
1&-1\end{matrix}}\;\textrm{otherwise.}
\end{equation}
Furthermore, let $x_\al\defi \Ga_\infty g_\al\in X_\infty$.
\end{definition}
\begin{lemma}\label{compact A-orbit}
Let $\al\in\qi$. Then, the orbit $x_\al A_\infty \subset X_\infty$ 
is periodic.
\end{lemma}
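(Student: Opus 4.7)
The plan is to produce an element $\ga\in\Ga_\infty$ whose eigenvectors in $\bR^2$ are precisely the columns of $g_\al$, and then to translate the resulting eigenvalue equation into the periodicity equation for the $A_\infty$-action.

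First I would construct $\ga$. Let $aX^2+bX+c$, with $a,b,c\in\bZ$ coprime and $a>0$, be the minimal polynomial of $\al$; its discriminant $D=b^2-4ac$ is a positive non-square integer. By the theory of units in real quadratic orders (equivalently, solvability of the Pell equation $u^2-Dv^2=\pm 4$), the order $\bZ\br{a\al}=\bZ+a\al\bZ\subset\bQ(\al)$ admits a non-trivial totally positive unit $\eps$, which, after replacing it by $\eps^2$ if necessary, satisfies $\eps>1>\eps'>0$ where $\eps'$ denotes its Galois conjugate. Multiplication by $\eps$ on the $\bZ$-lattice $\bZ+a\al\bZ$ is represented, in the basis $(1,a\al)$, by an integer matrix $M$; a short check (using $a\al^2=-b\al-c$) produces the explicit integer matrix $M=\smallmat{(u-bv)/2 & -cv\\ av & (u+bv)/2}$ whose image $\ga$ in $\PGL_2(\bZ)=\Ga_\infty$ fixes $\al$ (and hence $\al'$) as a M\"obius transformation. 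A direct verification shows $\ga$ has eigenvectors $(\al,1)^T$ and $(\al',1)^T$ with eigenvalues $\eps$ and $\eps'$ respectively, and determinant $\eps\eps'=1$.

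Next I would read off the periodicity. The two columns of $g_\al$ are, up to a global sign in the second case of Definition~\ref{def g alpha}, the vectors $(\al,1)^T$ and $(\al',1)^T$, so $\ga g_\al=g_\al\diag{\eps,\eps'}$ as $2\times 2$ real matrices; the sign convention in Definition~\ref{def g alpha} is irrelevant at this stage because scaling a column only rescales the corresponding eigenvalue in $\GL_2(\bR)$ but changes nothing in $\PGL_2(\bR)$. Passing to $\PGL_2(\bR)$ and dividing the diagonal matrix by $\eps'>0$ yields $\diag{\eps,\eps'}=\diag{\eps/\eps',1}=a_\infty(t)$ with $t=\log(\eps/\eps')>0$. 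Substituting gives $g_\al a_\infty(t)=\ga g_\al$ in $\PGL_2(\bR)$, and therefore
\[
x_\al a_\infty(t)=\Ga_\infty g_\al a_\infty(t)=\Ga_\infty \ga g_\al=\Ga_\infty g_\al=x_\al,
\]
so the orbit $x_\al A_\infty$ closes up after time $t$ and is periodic.

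There is no real obstacle; the substance of the argument is the classical existence of a non-trivial unit in the order $\bZ\br{a\al}$ and the observation that the matrix $g_\al$ of Definition~\ref{def g alpha} was cooked up precisely to diagonalize the resulting integer matrix. The only mild bookkeeping is to verify that $u-bv$ is even (so that $M$ really has integer entries), which follows at once from the congruence $u^2\equiv Dv^2\equiv b^2v^2\pmod{2}$ implied by $u^2-Dv^2=\pm 4$, and to ensure totally positivity of $\eps$ by possibly squaring, so that $t>0$.
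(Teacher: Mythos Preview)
Your proof is correct and follows essentially the same route as the paper: both arguments produce a totally positive unit $\eps$ in an order of $\bQ(\al)$, observe that the associated integer matrix has $(\al,1)^T$ and $(\al',1)^T$ as eigenvectors, and read off the relation $\ga g_\al = g_\al\,\diag{\eps,\eps'}$, which becomes $x_\al a_\infty(t)=x_\al$ in $X_\infty$. The only difference is cosmetic: the paper phrases the existence of $\ga$ via a unit stabilizing the module $\on{span}_\bZ\{1,\al\}$, whereas you write down the matrix explicitly in terms of a solution $(u,v)$ of $u^2-Dv^2=\pm 4$; your explicit form and the parity check for $(u\pm bv)/2$ are correct.
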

\begin{proof}
Consider the $\bZ$-module $\Lam_\al=\on{span}_\bZ\set{1,\al}$ in the field $\bQ(\al)$. There exists a unit $\om$ in the ring of integers which stabilizes $\Lam_\al$ and furthermore by replacing $\om$ by
$\om^2$ if necessary we may assume that both $\om$ and its Galois conjugate $\om'$ are positive. Note that the diagonal matrix $\diag{\om,\om'}$ is an element of $A_\infty$. Let $\ga=\smallmat{n&m\\k&\ell}\in\GL_2(\bZ)$ be the matrix describing the
passage from the basis $\set{1,\al}$ to the basis $\set{\om,\om\al}$ of $\Lam_\al$. That is 
\begin{equation}\label{looop}
\pa{\begin{array}{ll}n&m\\k&\ell\end{array}}\pa{\begin{array}{ll}\al\\1\end{array}}=\pa{\begin{array}{ll}\om\al\\\om\end{array}}.
\end{equation} 
The reader will easily verify now that~\eqref{looop} implies that $\ga g_\al = g_\al \diag{\om,\om'}$ or in other words that in the space $X_\infty$ the orbit $x_\al A_\infty$ is periodic as desired.
\end{proof}
\begin{definition}\label{def per qi}
Given $\al\in\qi$, in the spirit of Definition~\ref{def per}, we denote $t_\al=t_{x_\al}$, $\mu_\al=\mu_{x_\al}$, and $\ga_\al=\ga_{x_\al}$, where $\ga_{x_\al}$ is defined 
using the representative $g_\al$ of $x_\al$.
\end{definition}

Fix a finite set of primes $S$. For $\al\in\qi$  
 consider the $S$-Hecke graph $\cG_S(x_\al)$ and recall that by Corollary~\ref{sphere again new}, for $\ga\in\Ga_S$ with $\height{\ga}=\onh$ we have that the class $\Ga_\infty\ga g_\al$ lies
 on the sphere $\cS_{\onh}(x_\al)$.
 \begin{definition}\label{defonthebranch}
 Let $\om\in K_S$ 
 \begin{enumerate}
 \item We say that $\ga\in\Ga_S$ \textit{lies on the generalized branch} $\cL_{g_\al,\om}$ if $\Ga_\infty\ga g_\al\in\cL_{g_\al,\om}$ and denote this by
 $\ga\in\br{\om}_{\on{br}}$. As will be explained shortly in Remrak~\ref{r.1629}, the question of whether or not $\ga\in\br{\om}_{\on{br}}$ is indeed independent of $\al$ as suggested by the notation.
  \item Similarly to the notation introduced in Remark~\eqref{j.27}\eqref{j.27.1}, we denote by 
$x_{\al,\om,\onh}$ the class in $\cL_{g_\al,\om}\cap\cS_{\onh}$. 
\end{enumerate}
 \end{definition}
 \begin{remark}\label{r.1629}
  With the above notation, for $\ga\in\Ga_S$ with $\height{\ga}=\onh$ we have that $\ga\in\br{\om}_{\on{br}}$ if and only if $x_{\al,\om,\onh}=\Ga_\infty\ga g_\al$.
 As mentioned in the proof of Corollary~\ref{sphere again new}, an element $\ga\in\Ga_S$ can be written as $\ga=\ga_1\diag{\onh,1}\ga_2$ with $\ga_i\in\Ga_\infty$ and $\onh=\height{\ga}$. It follows that for $\om\in K_S$ we have that $\ga\in\br{\om}_{\on{br}}$ if and only if $\pi(\Ga_S(g_\al,\ga^{-1})=\pi(\Ga_S(g_\al,\om)a_f(\onh))$. The 
 latter happens, by Remark~\ref{12.9.r.1}, exactly when $\ga\om a_f(\onh)$ belongs to $K_S$, or equivalently, when the lower left coordinate of $\ga_2$ is divisible by $\onh$ in each $\bZ_p$ for $p\in S$. 
 \end{remark}
 The following Lemma relates the orbit $\set{\ga\al:\ga\in\Ga_S}\subset\bR$ 
and the periodic $A_\infty$-orbits through points of $\cG_S(x_\al)$.
 \begin{lemma}\label{lemconnection}
 Let $S$ be a finite set of primes, $\al\in\qi$, $\om\in K_S$, and $\ga\in\br{\om}_{\on{br}}$, 
 with $\height{\ga}=\onh$. Denote by $\theta\in \Ga_\infty$ the element $\theta=\diag{1,-1}$. Then, one of the following two relations holds:
 Either 
 $\mu_{x_{\al,\om,\onh}}=\mu_{\ga\al}$, or
  $\mu_{x_{\al,\om,\onh}}=\theta_*\mu_{\ga\al}$.
 \end{lemma}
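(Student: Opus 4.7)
The plan is to reduce the lemma to a direct matrix identity relating $\ga g_\al$ and $g_{\ga\al}$. By Definition~\ref{defonthebranch} and Remark~\ref{r.1629}, the assumptions $\ga \in \br{\om}_{\on{br}}$ and $\height{\ga}=\onh$ give $x_{\al,\om,\onh} = \Ga_\infty \ga g_\al$, so the particular branch $\om$ and the radius $\onh$ do not enter the rest of the argument. Setting $\be = \ga\al$, what must be shown is that the $A_\infty$-invariant probability measure on the orbit $\Ga_\infty \ga g_\al A_\infty$ is either $\mu_{\ga\al}$ or $\theta_*\mu_{\ga\al}$.

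Writing $\ga = \smallmat{a&b\\c&d}$ and using that $c\al+d$ and $c\al'+d$ are nonzero, a short calculation shows that the columns of $\ga g_\al$ are scalar multiples of $(\be,1)^T$ and $(\be',1)^T$, with scalars $\pm(c\al+d)$ and $\pm(c\al'+d)$ respectively, where the signs depend on whether $\al \gtrless \al'$. Splitting into the four combinations of $\al \gtrless \al'$ and $\be \gtrless \be'$ and invoking Definition~\ref{def g alpha} for both $g_\al$ and $g_\be$, one obtains in every case an equality $\ga g_\al = g_\be \cdot D_\ga$, where $D_\ga$ is a nonsingular diagonal matrix whose diagonal entries are $c\al+d$ and $c\al'+d$ up to signs. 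This case analysis is the main computational step, but it is only bookkeeping.

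The key structural input now is that the diagonal subgroup of $\PGL_2(\bR)$ has two connected components: $A_\infty$ itself (ratio of entries positive) and the coset $A_\infty\theta = \theta A_\infty$ (ratio negative), since $\theta$ normalizes $A_\infty$. Thus $D_\ga$ lies in one of these two cosets. In the first case $\ga g_\al \in g_\be A_\infty$, so $\Ga_\infty\ga g_\al \in x_\be A_\infty$; the two orbits coincide and $\mu_{x_{\al,\om,\onh}} = \mu_{\ga\al}$. In the second case $\ga g_\al = g_\be\theta a$ for some $a\in A_\infty$, so the orbit $\Ga_\infty\ga g_\al A_\infty$ equals $x_\be\theta A_\infty = x_\be A_\infty\theta$.

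To conclude the second case I would verify that $\theta_*\mu_{\ga\al}$ is an $A_\infty$-invariant probability measure supported on $x_\be A_\infty\theta$: support is immediate, and invariance follows from the identity $(gh)_*\mu = h_*(g_*\mu)$ applied to $\theta a = (\theta a\theta^{-1})\theta$, using that $\theta a\theta^{-1}\in A_\infty$ and the $A_\infty$-invariance of $\mu_{\ga\al}$. Uniqueness of the $A_\infty$-invariant probability measure on the compact orbit then forces $\mu_{x_{\al,\om,\onh}} = \theta_*\mu_{\ga\al}$, completing the proof. The only delicate point in the whole argument is keeping the sign conventions in the case analysis aligned with the definition of $g_\al$; this is a nuisance rather than a genuine obstacle.
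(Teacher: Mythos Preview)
Your proof is correct and follows essentially the same approach as the paper: both establish that $\ga g_\al$ and $g_{\ga\al}$ differ by a diagonal matrix on the right (the paper writes this as a single identity for $\smallmat{\al&\al'\\1&1}$ and absorbs the $\theta$'s coming from Definition~\ref{def g alpha}, while you do the four-case bookkeeping explicitly), then use that the diagonal subgroup of $\PGL_2(\bR)$ splits as $A_\infty\cup A_\infty\theta$ to conclude. Your added verification that $\theta_*\mu_{\ga\al}$ is $A_\infty$-invariant via the normalizing identity $\theta a\theta^{-1}\in A_\infty$ is a bit more explicit than the paper's one-line appeal to ``translation to measures,'' but the content is the same.
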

 \begin{proof}
Write $\ga=\smallmat{a&b\\c&d}$ and recall that the matrix $g_\al$ in Definition~\ref{def g alpha} 
has one of the forms $g_\al=\smallmat{\al&\al'\\1&1}$ or $g_\al=\smallmat{\al&\al'\\1&1}\theta$. 
The equation
 \begin{equation}\label{eqmat}
 \ga \mat{\al&\al'\\ 1&1}=\mat{\ga\al&(\ga\al)'\\ 1&1}\smallmat{\frac{1}{c\al+d}&0\\ 0&\frac{1}{c\al'+d}},
 \end{equation} 
 together with the fact that $g_{\ga\al}$ has either the form $\smallmat{\ga\al&(\ga\al)'\\ 1&1}$ or the form 
 $\smallmat{\ga\al&(\ga\al)'\\ 1&1}\theta$, imply that
 the two points $x_{\al,\om,\onh}=\Ga_\infty \ga g_\al$, $x_{\ga\al}=\Ga_\infty g_{\ga\al}$ are on the same orbit under the group generated by $A_\infty$ and $\theta$
 in $G_\infty$. This group contains $A_\infty$ as a subgroup of index $2$ and so either the two points are on the same $A_\infty$-orbit or otherwise, their $A_\infty$-orbits
 are related by the action of $\theta$. The translation of the latter statement to the $A_\infty$-invariant probability measures that are supported on these orbits is exactly 
 the statement sought.
 \end{proof}

The following Theorem relates the measures $\mu_\al$ from Definition~\ref{def per qi} to the measures $\nu_\al$ from Definition~\ref{defnuiota} and will allow us to translate
equidistribution results for geodesic loops to statements about periods of c.f.e while controlling error terms.
We will use the following terminology
\begin{definition}\label{parity}
For $\al\in\qi$, we denote 
\begin{equation*}
j_\al\defi\Big\{\begin{array}{ll} 1&  \textrm{\small{ if the size} }\av{P_\al}\textrm{\small{ is even,}}\\2&\textrm{\small{ if the size} }\av{P_\al}\textrm{\small{ is odd.}}
\end{array} 
\end{equation*}
\end{definition}
\begin{theorem}\label{toest}
Let  $\al\in \qi$. There exists an absolute constant $T_0>1$ so that if we assume that for some $T>T_0$ the estimate   
$\av{\int f d\mu_\al-\int fd m_\infty}\le \max\set{\norm{f}_2,\ka}T^{-1}$ holds for any $f\in\on{Lip}_\ka(X_\infty)\cap L^2(X_\infty,m_\infty)$,
then the following two statements hold
\begin{enumerate}
\item\label{toest11} For any $f\in\on{Lip}_\ka([0,1])$, and any $\eps>0$ 
$$\av{\int_0^1 fd\nu_\al-\int_0^1fd\nugauss}\ll_{ \eps} \max\set{\norm{f}_\infty,\ka}T^{-\frac{1}{3}+\eps}.$$
\item\label{toest12} There exists an absolute constant $c_0$ such that $\av{\frac{\av{P_\al}}{t_\al}-\frac{1}{j_\al c_0}}\ll_\eps T^{-\frac{1}{3}+\eps}.$
\end{enumerate}
\end{theorem}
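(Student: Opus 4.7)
\medskip\noindent\textbf{Proof plan.} The plan is to exploit the classical realization of $(X_\infty, A_\infty, m_\infty)$ as a suspension of the Gauss map. There is a Poincar\'e cross-section $\Sigma\subset X_\infty$ and a projection $\phi\colon\Sigma\to[0,1]$ such that the first-return map of the $A_\infty$-flow to $\Sigma$ is $\phi$-conjugate to $S$, the return-time function is $r=-2\log\phi$, and the natural transverse measure projects via $\phi$ to a fixed multiple $\tfrac{1}{c_0}\nugauss$ with $c_0=\int_0^1 r\,d\nugauss$ the Kac normalization constant that makes $m_\infty$ a probability on the suspension. Under this identification the closed orbit $x_\al A_\infty$ meets $\Sigma$ in exactly $j_\al\av{P_\al}$ points that project onto $P_\al$, with the factor $j_\al\in\{1,2\}$ accounting for the Galois involution $\theta=\diag{1,-1}$ and whether it fixes the orbit pointwise or only setwise (cf.\ Lemma~\ref{lemconnection}); summing return times yields the identity $t_\al=j_\al\av{P_\al}\int_0^1 r\,d\nu_\al$. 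The theorem thus reduces to applying the hypothesis to Lipschitz test functions on $X_\infty$ whose integrals compute the desired one-variable averages in the suspension picture.

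I would first prove part~\eqref{toest12} by constructing $F_{\del,M}=\tfrac{1}{2\del}\tilde\chi_{\Sigma_\del\cap\{r\circ\tilde\phi\le M\}}$, a smoothed indicator of a width-$\del$ flow tube around the cusp-truncation of $\Sigma$ (here $\tilde\phi$ is a Lipschitz extension of $\phi$ to $X_\infty$). The norms satisfy $\on{Lip}(F_{\del,M})\ll\del^{-1}$ and $\norm{F_{\del,M}}_2\ll\del^{-1/2}$, and counting intersections of the closed orbit with the tube in suspension coordinates gives
\[
\int F_{\del,M}\,d\mu_\al=\tfrac{j_\al\av{P_\al}}{t_\al}\cdot \nu_\al(\{r\le M\})+O(\del),\quad \int F_{\del,M}\,dm_\infty=\tfrac{1}{c_0}(1+O(e^{-M}))+O(\del).
\]
Applying the hypothesis bounds the difference of these by $\del^{-1}T^{-1}$, and the crude bound $\nu_\al(\{r>M\})\le t_\al/(M\av{P_\al})$ (coming from the identity $t_\al=j_\al\av{P_\al}\int r\,d\nu_\al$) lets us recover $\av{j_\al\av{P_\al}/t_\al-1/c_0}\ll\del+e^{-M}+\del^{-1}T^{-1}$; balancing these three errors with $\del=T^{-1/3}$ and $M=\eps\log T$ yields the rate $T^{-1/3+\eps}$, which is part~\eqref{toest12}.

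For part~\eqref{toest11} I would run the same argument on $G_{f,\del,M}=F_{\del,M}\cdot(f\circ\tilde\phi)$; its norms are $\on{Lip}(G_{f,\del,M})\ll \del^{-1}\max(\norm{f}_\infty,\ka)$ and $\norm{G_{f,\del,M}}_2\ll \del^{-1/2}\norm{f}_\infty$. The same suspension-coordinate computation gives $\int G_{f,\del,M}\,d\mu_\al = \tfrac{j_\al\av{P_\al}}{t_\al}\int_0^1 f\,d\nu_\al + O(\ka\del+\norm{f}_\infty e^{-M})$ and an analogous formula for $m_\infty$ with $\nu_\al$ replaced by $\nugauss$ and $\tfrac{j_\al\av{P_\al}}{t_\al}$ replaced by $\tfrac{1}{c_0}$. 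Combining this with the hypothesis and then using part~\eqref{toest12} to replace the prefactor $\tfrac{j_\al\av{P_\al}}{t_\al}$ by $\tfrac{1}{c_0}$ up to an admissible error yields part~\eqref{toest11} after the same three-parameter balance. The main obstacle throughout is the unboundedness of the roof function $r=-2\log\phi$: geodesics that make arbitrarily long cusp excursions before returning to $\Sigma$ force both the truncation at height $M$ and the crude control of $\nu_\al(\{r>M\})$, producing the three-way balance between $\del^{-1}T^{-1}$, $\ka\del$, and $e^{-M}$, which is precisely what degrades the hypothesized exponent $1$ to $1/3$.
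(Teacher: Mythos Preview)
Your framework is correct and matches the paper's: both use the Series cross-section and suspension picture, build Lipschitz test functions supported on a flow-thickened, cusp-truncated tube around the section, and balance the smoothing, truncation, and hypothesis errors. However, your balance does not close. After applying the hypothesis you obtain
\[
\frac{j_\al\av{P_\al}}{t_\al}\nu_\al(\{r\le M\}) \;=\; \frac{1}{c_0} + O\big(\del + e^{-M} + \del^{-1}T^{-1}\big),
\]
and to pass from $\nu_\al(\{r\le M\})$ to $1$ you invoke Markov: $\nu_\al(\{r>M\})\le \frac{1}{M}\int r\,d\nu_\al = \frac{t_\al}{j_\al M\av{P_\al}}$. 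But when multiplied by $\frac{j_\al\av{P_\al}}{t_\al}$ this gives an error of size $\frac{1}{M}$, which you omit from your three-term balance. With $M=\eps\log T$ this term is $\asymp 1/\log T$, swamping the claimed $T^{-1/3+\eps}$; with $M=T^{1/3}$ you would need to revisit the Lipschitz constant of the truncation. The point is that for an \emph{individual} closed geodesic there is no a priori bound on $\nu_\al(\{r>M\})$ beyond Markov.

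The paper sidesteps this entirely: rather than bounding the $\mu_\al$-mass of the cusp region directly, it bounds only the $m_\infty$-mass (which is straightforward) and then \emph{transfers} that bound to $\mu_\al$ using the hypothesis itself, via $\int\psi\,d\mu_\al\le\int\psi\,dm_\infty + \on{Lip}(\psi)\,T^{-1}$, where $\psi=1-\vphi$ is the Lipschitz bump supported on the excised region. A second, related issue: you assert $\on{Lip}(G_{f,\del,M})\ll\del^{-1}\max(\norm{f}_\infty,\ka)$, but the section-to-interval map $\phi$ is not uniformly Lipschitz---its differential grows linearly with the height (this is the content of the paper's Lemma on $\norm{d\tau}$). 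The paper's cutoff $\vphi_{\rho,M}$ is engineered precisely so that $\widehat{f\circ\tau}\cdot\vphi$ has Lipschitz constant $\ll\max(\norm{f}_\infty,\ka)\rho^{-1}M$, and this extra factor of $M$ is what forces the three-way balance $M=\rho^{-1}=T^{1/3}$ and produces the exponent $1/3$.
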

\noindent Below we will use Theorem~\ref{toest}  while postponing its proof to~\S\ref{potoest}. 

 \subsection{Main Theorems regarding continued fractions}
 The following is the analogue of Theorem~\ref{general case} in the language of continued fractions. We prove it and then deduce Theorem~\ref{mtcf11}.
\begin{theorem}\label{mtcf}
Let $S$ be a finite set of primes and $\al\in\qi$. 
\begin{enumerate}
\item\label{p.t.1} Let $\om\in K_S$ be such that  the generalized branch $\cL_{g_\al,\om}$ is non-degenerate. Then for any $\ga\in\br{\om}_{\on{br}}$, any
$\eps>0$, and any  $f\in\on{Lip}_\ka([0,1])$ the following estimate holds
\begin{equation}\label{mtcfeq}
\av{\int_0^1fd\nu_{\on{Gauss}}-\int_0^1fd\nu_{\ga\al}}\ll_{\cL_{g_\al,\om},S,\eps}\max\set{\norm{f}_\infty,\ka}\on{ht}(\ga)^{-\frac{\del_0}{6}+\eps},
\end{equation}
\item\label{p.t.2} If all the primes in $S$ do not split in the quadratic extension $\bQ(\al)$, then the implicit constant in~\eqref{mtcfeq} may be taken to be independent of the generalized branch.
\item\label{p.t.3} On the other hand, if there exists $\om\in K_S$ such that the generalized branch $\cL_{g_\al,\om}$ is  degenerate, 
then there exists a sequence $\ga_n\in\br{\om}_{\on{br}}$ such that  
$\height{\ga_n}\to\infty$ and the cardinality of the periods $P_{\ga_n\al}$
is bounded. In particular, $\nu_{\ga_n\al}\nrightarrow\nugauss$.
\item\label{p.t.4} Rational generalized branches are always non-degenerate.
\item\label{p.t.5} Nevertheless, in case $x_\al$ is split, then one cannot take the implicit constant in~\eqref{mtcfeq} to be uniform along the rational generalized 
branches.
\end{enumerate}
\end{theorem}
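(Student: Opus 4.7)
The plan is straightforward: combine Theorem~\ref{general case} with the translation Theorem~\ref{toest}, using Lemma~\ref{lemconnection} as the bridge from periodic $A_\infty$-orbits in $X_\infty$ to periods of the c.f.e.\ of individual elements of $\qi$. The exponent arithmetic $(-\del_0/2)\cdot(1/3)=-\del_0/6$ matches the claimed rate in \eqref{mtcfeq} exactly, which tells us the composition is calibrated correctly and leaves us with essentially routine bookkeeping.

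For parts \eqref{p.t.1} and \eqref{p.t.2} I will fix $\om\in K_S$ with $\cL_{g_\al,\om}$ non-degenerate, pick $\ga\in[\om]_{\on{br}}$, set $\onh=\height{\ga}$ and $x'=x_{\al,\om,\onh}$, and invoke Theorem~\ref{general case}\eqref{g.c.1} to obtain, for every $\vphi_0\in\on{Lip}_\ka(X_\infty)\cap L^2(X_\infty,m_\infty)$, the bound
\[
\Bigl|\int\vphi_0\,d\mu_{x'}-\int\vphi_0\,dm_\infty\Bigr|\ll_{x_\al,S,\cL_{g_\al,\om},\eps}\max\{\norm{\vphi_0}_2,\ka\}\,\onh^{-\del_0/2+\eps}.
\]
By Lemma~\ref{lemconnection}, $\mu_{x'}$ equals either $\mu_{\ga\al}$ or $\theta_*\mu_{\ga\al}$; since $m_\infty$ is right-$\theta$-invariant and the pullback $f\mapsto f\circ R_\theta$ preserves both $\norm{\cdot}_2$ and the Lipschitz constant, the displayed estimate transfers verbatim with $\mu_{\ga\al}$ in place of $\mu_{x'}$. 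This is exactly the hypothesis of Theorem~\ref{toest} with $T\asymp \onh^{\del_0/2-\eps}$, and Theorem~\ref{toest}\eqref{toest11} then yields \eqref{mtcfeq} after absorbing $\eps$. Part \eqref{p.t.2} inherits its uniformity directly from the analogous uniformity in Theorem~\ref{general case}\eqref{g.c.2}.

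Part \eqref{p.t.4} is a literal restatement of Theorem~\ref{general case}\eqref{g.c.1'}. For \eqref{p.t.5}, a uniform constant on rational branches in \eqref{mtcfeq} would, by reversing the translation above and using the easy inequality $t_\al\ll \av{P_\al}$ (built into the geodesic-flow encoding set up in \S\ref{potoest}), produce a uniform constant on rational branches in \eqref{on the graph}, contradicting Theorem~\ref{general case}\eqref{g.c.5}. For \eqref{p.t.3}, a degenerate branch $\cL_{g_\al,\om}$ supplies via Theorem~\ref{general case}\eqref{g.c.3} admissible radii $\onh_n\to\infty$ along which $t_{x_{\al,\om,\onh_n}}$ is bounded; choose $\ga_n\in[\om]_{\on{br}}$ of height $\onh_n$, note that Lemma~\ref{lemconnection} gives $t_{\ga_n\al}=t_{x_{\al,\om,\onh_n}}$ (the $\theta$ factor preserves the $A_\infty$-orbit length), and invoke the trivial bound $\av{P_{\ga_n\al}}\ll t_{\ga_n\al}$ from the geodesic-flow encoding to conclude that $\av{P_{\ga_n\al}}$ remains bounded, forcing $\nu_{\ga_n\al}$ to be supported on a uniformly bounded set and thereby preventing convergence to $\nugauss$. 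The main (mild) obstacle is ensuring that the $\theta$ ambiguity in Lemma~\ref{lemconnection} is genuinely harmless across all error-term computations; this follows cleanly from the right $G_\infty$-invariance of $m_\infty$ and the bi-invariance of the chosen metric on $G_\infty$.
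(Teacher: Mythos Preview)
Your arguments for parts \eqref{p.t.1}, \eqref{p.t.2}, \eqref{p.t.3}, and \eqref{p.t.4} match the paper's proof essentially step for step: invoke Theorem~\ref{general case}, pass through Lemma~\ref{lemconnection} (handling the $\theta$ ambiguity via isometry), feed into Theorem~\ref{toest}, and for \eqref{p.t.3} use $\av{P_\be}\ll t_\be$. One small omission: you should note that the implied constant in \eqref{mtcfeq} must also absorb the finitely many heights $\onh$ for which $T=C^{-1}\onh^{\del_0/2-\eps}$ fails to exceed the threshold $T_0$ from Theorem~\ref{toest}; the paper handles this explicitly.

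Your argument for part \eqref{p.t.5}, however, has a genuine gap. The inequality you invoke, $t_\al\ll\av{P_\al}$, is \emph{false} in general: the geodesic-flow encoding gives a uniform \emph{lower} bound $\eps_0$ on the return time to the cross-section (hence $\av{P_\al}\ll t_\al$), but the return time is unbounded above (large c.f.e.\ digits correspond to long cusp excursions), so there is no uniform upper bound. Moreover, even granting the correct inequality, your ``reversal of the translation'' would require a converse to Theorem~\ref{toest} (deducing effective equidistribution of $\mu_\al$ from that of $\nu_\al$), which is not proved in the paper and is nontrivial because the suspension involves the unbounded return-time function. The paper instead derives \eqref{p.t.5} directly from \eqref{p.t.3}: the sequence $\ga_n$ produced in \eqref{p.t.3} (along a degenerate branch) has each $\ga_n\in\Ga_S$, and every element of $\Ga_S$ lies on \emph{some} rational generalized branch (by the decomposition $\ga=\ga_1\diag{\onh,1}\ga_2$ with $\ga_i\in\Ga_\infty$, cf.\ Corollary~\ref{sphere again new} and Remark~\ref{r.1629}). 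Thus the $\ga_n$, viewed as lying on varying rational branches, witness the failure of any branch-uniform constant in \eqref{mtcfeq}.
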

\begin{proof}
$\eqref{p.t.1}$. Let $\ga\in\br{\om}_{\on{br}}$ and $\eps >0$ be given and denote $\onh=\height{\ga}$.
 By the corresponding part of Theorem~\ref{general case} we know that for the class $x_{\al,\om,\onh}\in\cL_{g_\al,\om}\cap\cS_{\onh}(x_\al)$,  the measure 
$\mu_{x_{\al,\om,\onh}}$ satisfies the estimate 
\begin{equation}\label{yaeq}
\av{\int fd\mu_{\al,\om,\onh}-\int fdm_\infty}\ll_{\cL_{g_\al,\om},S,\eps}\max\set{\ka,\norm{f}_2}\onh^{-\frac{\del_0}{2}+\eps},
\end{equation}
for any $f\in\on{Lip}_\ka(X_\infty)\cap L^2(X_\infty,m_\infty)$.
By Lemma~\ref{lemconnection} we know that either $\mu_{\ga\al}=\mu_{x_{\al,\om,\onh}}$ or $\theta_*\mu_{\ga\al}=\mu_{x_{\al,\om,\onh}}$. Assume that the first possibility holds. We now apply Theorem~\ref{toest}
with $T^{-1}=C\onh^{-\frac{\del_0}{2}+\eps}$ where $C$ is the implicit constant in~\eqref{yaeq} and obtain the desired~\eqref{mtcfeq}.
One 
remark is in order here: For finitely many heights $\onh$ it might happen that this choice of $T$ is not valid as $T$ needs to exceed the absolute constant $T_0$
from Theorem~\ref{toest}. We overcome this problem by choosing the implicit constant in~\eqref{mtcfeq} to be big enough so that this inequality will hold for these finitely many cases as well. 

Assume now that when we apply Lemma~\ref{lemconnection} we obtain that $\theta_*\mu_{\ga\al}=\mu_{x_{\al,\om,\onh}}$. 
As $\theta$ acts as an isometry of $X_\infty$ we have that~\eqref{yaeq} implies the same estimate for $\mu_{\ga \al}$ replacing $\mu_{x_{\al,\om,\onh}}$ and the argument concludes as before. 

The argument giving part~\eqref{p.t.2} of the Theorem is identical to the one giving part~\eqref{p.t.1} of the Theorem but uses as an input the corresponding part of Theorem~\ref{general case}. 

For part~\eqref{p.t.3} of the Theorem follows from Theorem~\ref{general case}\eqref{g.c.3} because of the following general 
fact\footnote{This fact will become clear in
~\S\ref{potoest}, in fact $\av{P_\be}\le \frac{t_\be}{\eps_0}$, where $\eps_0$ is as in Lemma~\ref{the set B1}.}: 
For $\be\in\qi$ $\av{P_\be}\ll t_\be$.

Part~\eqref{p.t.4} of the Theorem is included in Theorem~\ref{general case} and finally, part~\eqref{p.t.5} of the Theorem follows from part~\eqref{p.t.3}
of the Theorem in the same way that the corresponding implication of Theorem~\ref{general case} was proved in the beginning of~\S\ref{soft version}.  
\end{proof}
\begin{proof}[Proof of Theorem~\ref{mtcf11}]
\eqref{part1mt}.
Note that for $q\in\cO_S$ if we define $\ga_q=\diag{q,1}$ then $\ga_q\al=q\al$. Define 
$\Om\subset K_{S}$ as follows
 \eq{Omegaa}{
 \Om\defi\set{\om=(\om_p)_{p\in S}\in K_S: \om_p=\smallmat{1&0\\0&1}\textrm{ or }\om_p=\smallmat{0&1\\1&0}}.
 }
Then, we leave it as an exercise to the reader to verify (using Remark~\ref{r.1629}), that in the notation of Definition~\ref{defonthebranch}, for any $q\in\cO_S$, $\ga_q\in\br{\om}_{\on{br}}$
for some $\om\in\Om$. As by Theorem~\ref{mtcf}\eqref{p.t.4} the finitely many rational generalized branches $\cL_{g_\al,\om}$, $\om\in\Om$ are all non-degenerate
we conclude from Theorem~\ref{mtcf}\eqref{p.t.1} that the estimate~\eqref{eqmtcf1} indeed holds.

\eqref{part2mt}.
Given $\ga\in\Ga_S$ choose $\om\in K_S$ such that $\ga\in\br{\om}_{\on{br}}$. The estimate~\eqref{eqmtcf2} holds with implicit constant depending
 only on $\al,S,\eps$ but not on the the generalized branch.  
If we replace $\ga$ by $\ga\ga_0$ for some choice of $\ga_0\in\Ga_\infty$ we change the generalized branch but this does not  effect the right hand side of~\eqref{mtcfeq} by Theorem~\ref{mtcf}\eqref{p.t.2}.  We conclude that the implicit constant does
not depend on $\al$ but only on the orbit $\iota(\al)$. This gives us the desired estimate~\eqref{eqmtcf2}.

\eqref{part3mt}. Let $\del_n\in\Ga_S$ be a sequence such that $\nu_{\del_n\al}$ does not converge to $\nugauss$, 
as in part~\eqref{p.t.3} of Theorem~\ref{mtcf}. Write $\del_n=\ga'_n\diag{q_n,1}\ga_n$, where 
$\ga_n,\ga_n'\in \Ga_\infty$. By ~\eqref{mdch}, $\nu_{\del_n\al}=\nu_{q_n\ga_n\al}$ and so the sequences $q_n,\ga_n$ satisfy the statement. 
\end{proof}
The following Theorem is the most general statement we could extract from our analysis regarding the growth of the period. 
We prove it and then deduce Theorem~\ref{period cor new}.
\begin{theorem}\label{gpth}
Let $S$ be a finite set of primes. There exists  a positive function $c(\al,\om,\onh)$ on the set 
$\qi\times K_S\times (\cO_S^\times\cap \bN)$ satisfying the following: For any $\al\in\qi$
\begin{enumerate}
\item\label{gpth2} If the generalized brach $\cL_{g_\al,\om}$ is non-degenerate then 
\begin{enumerate}
\item\label{gpth2c} 
For any $\eps>0$, for any $\ga\in\br{\om}_{\on{br}}$ with $\height{\ga}=\onh$ we have
\begin{equation}\label{eqgpth}
\av{P_{\ga\al}}=c(\al,\om,\onh)\onh +O_{\al,\om,S,\eps}(1)\onh^{1-\frac{\del_0}{6}+\eps}.
\end{equation}
Moreover, if all the primes in $S$ do not split in $\bQ(\al)$ then the the function $O_{\al,\om,S,\eps}(1)$ in~\eqref{eqgpth} is in fact
$O_{\iota(\al),S,\eps}(1)$.
\item\label{gpth2a} The function $c$ attains only finitely many values along the branch corresponding to $\om$; that is, 
$\av{\set{c(\al,\om,\onh):\onh\in\cO_S^\times\cap\bN}}<\infty$.
\item\label{gpth2b} If $\onh_n\in\cO_S^\times\cap\bN$  satisfies $\onh_n|\onh_{n+1}$, then $c(\al,\om,\onh_n)$ stabilizes.
\end{enumerate}
\item\label{gpth1}  $\sup\set{ c(\al,\om,\onh):\om\in K_S, \onh\in\cO^\times_S\cap\bN}\ll_{\iota(\al),S} 1$.
\item\label{gpth3} All the primes in $S$ do not split in the quadratic extension $\bQ(\al)$ if and only if $\inf\set{ c(\al,\om,\onh):\om\in K_S, \onh\in\cO_S^\times\cap\bN}>0$.
\end{enumerate}
\end{theorem}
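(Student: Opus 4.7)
The plan is to assemble Theorem~\ref{gpth} by pushing Theorem~\ref{general case} through the translation mechanism of Theorem~\ref{toest}, using Lemma~\ref{total growth} to provide the precise coefficient and Lemma~\ref{lemconnection} to link the $A_\infty$-orbit picture to the continued fractions picture. Fix $\al\in\qi$, $\om\in K_S$ with $\cL_{g_\al,\om}$ non-degenerate, and let $\ga\in\br{\om}_{\on{br}}$ with $\height{\ga}=\onh$. By Lemma~\ref{lemconnection}, $\mu_{x_{\al,\om,\onh}}$ equals either $\mu_{\ga\al}$ or $\theta_*\mu_{\ga\al}$; either way, $t_{x_{\al,\om,\onh}}=t_{\ga\al}$ (since $\theta$ normalizes $A_\infty$) and the equidistribution statement for $\mu_{x_{\al,\om,\onh}}$ given by Theorem~\ref{general case}\eqref{g.c.1} transfers to $\mu_{\ga\al}$, since $\theta$ acts by isometry on $X_\infty$. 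This yields an estimate
$$\av{\int f\,d\mu_{\ga\al}-\int f\,dm_\infty}\ll_{\iota(\al),S,\cL_{g_\al,\om},\eps}\max\{\norm{f}_2,\ka\}\,\onh^{-\frac{\del_0}{2}+\eps}$$
for any $f\in\on{Lip}_\ka(X_\infty)\cap L^2(X_\infty,m_\infty)$.

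Now I feed this into Theorem~\ref{toest}\eqref{toest12}, taking $T^{-1}$ of the order $\onh^{-\del_0/2+\eps}$ (adjusting constants for the finitely many small $\onh$ for which $T<T_0$). This gives
$$\frac{\av{P_{\ga\al}}}{t_{\ga\al}}=\frac{1}{j_{\ga\al}c_0}+O_{\iota(\al),S,\cL,\eps}\bigl(\onh^{-\frac{\del_0}{6}+\eps'}\bigr).$$
Using Lemma~\ref{total growth}\eqref{t.g.1} we have $t_{\ga\al}=t_{x_{\al,\om,\onh}}=c_{\cL_{g_\al,\om}}(\onh)\cdot\onh\ll_{\iota(\al),S}\onh$, so multiplying through,
$$\av{P_{\ga\al}}=\frac{c_{\cL_{g_\al,\om}}(\onh)}{j_{\ga\al}c_0}\cdot\onh+O_{\iota(\al),S,\cL,\eps}(1)\,\onh^{1-\frac{\del_0}{6}+\eps'}.$$
Define $c(\al,\om,\onh)\defi c_{\cL_{g_\al,\om}}(\onh)/(j_{\ga\al}c_0)$. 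This is well-defined: if $\ga_1,\ga_2\in\br{\om}_{\on{br}}$ both have height $\onh$, then $\Ga_\infty\ga_1g_\al=\Ga_\infty\ga_2g_\al=x_{\al,\om,\onh}$, which forces $\ga_2\in\Ga_\infty\ga_1$ and therefore $\iota(\ga_1\al)=\iota(\ga_2\al)$, so in particular $j_{\ga_1\al}=j_{\ga_2\al}$. This yields part~\eqref{gpth2c}, with the uniform dependence on $\iota(\al)$ in the non-split case coming from Theorem~\ref{general case}\eqref{g.c.2}.

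For parts~\eqref{gpth2a} and~\eqref{gpth2b}, the factor $c_{\cL}(\onh)$ attains finitely many values and stabilizes along any divisibility sequence by Lemma~\ref{total growth}\eqref{t.g.2}; and $j_{\ga\al}\in\{1,2\}$ is bounded, giving~\eqref{gpth2a} immediately. For~\eqref{gpth2b}, one notes that along $\onh_n\mid\onh_{n+1}$ the stabilized main term $c^*\onh_n/(j_{\ga_n\al}c_0)$ dominates the error $O(\onh_n^{1-\del_0/6+\eps})$; since $\av{P_{\ga_n\al}}$ is an integer, its parity is pinned down by the main term and the (eventually stable) $2$-adic valuation of $\onh_n$, forcing $j_{\ga_n\al}$ to stabilize as well. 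Part~\eqref{gpth1} follows from $c_{\cL}(\onh)\ll_{\iota(\al),S}1$ (Lemma~\ref{total growth}\eqref{t.g.1}) and absoluteness of $c_0$, noting that the orbit $x_\al A_\infty$ depends only on $\iota(\al)$. Part~\eqref{gpth3} then follows because $\inf_\om\inf_\onh c_{\cL_{g_\al,\om}}(\onh)>0$ iff $x_\al$ is non-split (Lemma~\ref{total growth}\eqref{t.g.3}), which by Remark~\ref{r.j30.1} is equivalent to no prime in $S$ splitting in $\bF_{x_\al}=\bQ(\al)$.

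The main obstacle is the parity stabilization step in~\eqref{gpth2b}: while each of the three ingredients—the equidistribution of $\mu_{x_{\al,\om,\onh}}$, the transfer to $\mu_{\ga\al}$, and the boundedness and stabilization of $c_{\cL}$—are already at hand, showing that $j_{\ga_n\al}$ stabilizes along $\onh_n\mid\onh_{n+1}$ requires the integrality of $\av{P_{\ga_n\al}}$ and a delicate comparison of the main term with the error, rather than any new dynamical input. The remaining bookkeeping—verifying well-definedness, carefully tracking the dependence of implicit constants on $\iota(\al)$ versus $\al$, and deducing Theorem~\ref{period cor new} by specializing $\ga=\ga_q$ and grouping the rational branches appearing in $\Om$ as in the proof of Theorem~\ref{mtcf11}—is straightforward.
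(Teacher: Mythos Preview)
Your approach is essentially the paper's: set $c(\al,\om,\onh)$ equal to $c_{\cL_{g_\al,\om}}(\onh)$ divided by $c_0$ times a parity factor, derive~\eqref{gpth2c} from Theorem~\ref{general case} via Lemma~\ref{lemconnection} and Theorem~\ref{toest}, and read off the remaining parts from Lemma~\ref{total growth}. The one divergence is the parity factor: the paper uses the fixed quantity $j_\al$, so that~\eqref{gpth2a},~\eqref{gpth2b},~\eqref{gpth1},~\eqref{gpth3} follow at once from Lemma~\ref{total growth}; you instead use $j_{\ga\al}$, which depends on $\onh$ through the choice of $\ga\in\br{\om}_{\on{br}}$. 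Your choice makes~\eqref{gpth2c} align transparently with Theorem~\ref{toest}\eqref{toest12} applied to $\ga\al$, but it shifts the burden to~\eqref{gpth2b}, where you must now show that $j_{\ga_n\al}$ stabilizes along any divisibility sequence $\onh_n\mid\onh_{n+1}$.

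Your argument for this stabilization is not valid. You claim that integrality of $\av{P_{\ga_n\al}}$ together with the asymptotic and the ``eventually stable $2$-adic valuation of $\onh_n$'' pins down the parity, but this is circular: $j_{\ga_n\al}$ is \emph{defined} by the parity of $\av{P_{\ga_n\al}}$ and simultaneously sits inside the main term $c^*\onh_n/(j_{\ga_n\al}c_0)$ you are trying to use to determine that parity. For large $\onh_n$ the two candidate main terms $c^*\onh_n/c_0$ and $c^*\onh_n/(2c_0)$ differ by $\Theta(\onh_n)$, and the error $O(\onh_n^{1-\del_0/6+\eps})$ is compatible with either; nothing in the estimate selects one. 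The appeal to the $2$-adic valuation of $\onh_n$ has no content, since $c^*/c_0$ is an unspecified real constant with no $2$-adic meaning. To keep your definition of $c$ you would need an independent reason why the parity of $\av{P_{\ga\al}}$ is eventually constant along the branch (for instance, tying it to the norm of the relevant unit or to a structural property of the Hecke graph), and no such argument is supplied. The paper avoids this entirely by fixing the factor $j_\al$ once and for all.
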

\begin{proof}[Proof of Theorem~\ref{gpth}]
Fix $\al\in\qi$ and let $x_\al=\Ga_\infty g_\al$. Define $c(\al,\om,\onh)\defi \frac{1}{j_\al c_0}c_{\cL_{g_\al,\om}}(\onh)$, where $c_{\cL_{g_\al,\om}}(\cdot)$ is defined in 
Lemma~\ref{total growth} by the equation $t_{x_{\al,\om,\onh}}=c_{\cL_{g_\al,\om}}(\onh)\onh$ and $c_0, j_\al$ are as in 
Theorem~\ref{toest}.  

Parts~\eqref{gpth2a},\eqref{gpth2b},\eqref{gpth1},\eqref{gpth3} of the Theorem follow directly from Lemma~\ref{total growth}.
 We now prove part~\eqref{gpth2c} in a similar manner to the argument for Theorem~\ref{mtcf}\eqref{p.t.1} given above. 
 For any $\eps>0$, $\onh\in\cO_S^\times\cap \bN$ we have by Theorem~\ref{general case} that the measure
 $\mu_{x_{\al,\om,\onh}}$ satisfies the estimate~\eqref{yaeq}.
 For any $\ga\in\br{\om}_{\on{br}}$ with $\height{\ga}=\onh$ we have by 
 Lemma~\ref{lemconnection}  that the measure $\mu_{\ga\al}$ is equal either to $\mu_{x_{\al,\om,\onh}}$ or to $\theta_*\mu_{x_{\al,\om,\onh}}$. In any case, as $\theta$ is an isometry of $X_\infty$, the measure $\mu_{\ga\al}$ satisfies~\eqref{yaeq} as well. 
 Applying Theorem~\ref{toest}
 with $T^{-1}=C\onh^{-\frac{\del_0}{2}+\eps}$, where $C$ is the implicit constant in~\eqref{yaeq}, we obtain  
 \begin{equation}\label{eqtoest1331}
 \av{\frac{\av{P_{\ga\al}}}{t_{\ga\al}}-\frac{1}{j_\al c_0}}\ll_{\al,\om,S,\eps} \onh^{-\frac{\del_0}{6}+\eps}.
 \end{equation}
Note though that we may apply Theorem~\ref{toest} only when $T>T_0$ and so we choose the implicit constant in~\eqref{eqtoest1331} 
to be big enough to handle the finitely many $\onh$'s
for which $T\le T_0$.  

 By Lemma~\ref{lemconnection} we have that $t_{x_{\al,\om,\onh}}=t_{\ga\al}$ and so by the definition of $c(\al,\om,\onh)$ we obtain that 
 $t_{\ga\al}=j_\al c_0c(\al,\om,\onh)\onh$. Substituting this in~\eqref{eqtoest1331} and recalling that by part~\eqref{gpth1} of the Theorem -- that was already established above --
  $c(\al,\om,\onh)\ll_{\al,S} 1$, we obtain $\av{P_{\ga\al}}=c(\al,\om,\onh)\onh+O_{\al,\om,S,\eps}(1)\onh^{1-\frac{\del_0}{6}+\eps}$ as desired. The last statement regarding the big $O$ in the non-split case follows from the fact that in this case the implicit
constant $C$ from~\eqref{yaeq} may be chosen independent of $\om$.
\end{proof}
\begin{proof}[Proof of Theorem~\ref{period cor new}]
For any $\al\in\qi$ and $\ga\in\Ga_S$, let $\onh=\height{\ga}$ and choose $\om\in K_S$ so that $\ga\in\br{\om}_{\on{br}}$. Let 
$\tilde{c}(\al,\ga)\defi c(\al,\om,\onh)$ where $c(\al,\om,\onh)$ is the function appearing in Theorem~\ref{gpth}. Note that although
the choice of $\om$ is not unique, the value $\tilde{c}(\al,\ga)$ is well defined. We prove that $\tilde{c}(\cdot,\cdot)$ satisfies 
the conclusions of the theorem where the change of notation is in order to avoid confusion.

Parts~\eqref{gpthintro3},\eqref{gpthintro5} of the theorem follow directly from the corresponding part of Theorem~\ref{gpth}. 
Part~\eqref{gpthintro1} of the theorem follows from the corresponding part of Theorem~\ref{gpth} with the additional remark that
for any $q\in \cO_S^\times$, $\ga_q\in\br{\om}_{\on{br}}$ for some $\om\in\Om$, where $\Om$ is as in~\eqref{Omegaa}. The 
finiteness of $\Om$ implies that the big $O$ in~\eqref{eqgpthintro1} is independent of the generalized branch 
(as opposed to the big $O$ in~\eqref{eqgpth}). For the same reason, part~\eqref{gpthintro4} of the theorem follows from the corresponding part of Theorem~\ref{gpth}. Finally, part~\eqref{gpthintro1c} of the theorem also follows from the corresponding part of Theorem~\ref{gpth} where here we need to remark that if $q_n=\ell^{(1)}_n/\ell^{(2)}_n$ is a sequence as in part~\eqref{gpthintro1c} of the theorem, then
$\ga_{q_n}\in\br{\om}_{\on{br}}$ for a fixed choice of $\om\in\Om$ 
\end{proof}
\section{Proof of Theorem~\ref{toest}}\label{potoest}
The proof of Theorem~\ref{toest} utilizes and expands on the tight connection between the geodesic flow and the Gauss map. 
This connection was discovered by Artin~\cite{Artin}, who used the flexibility of continued fractions to construct dense geodesics.
As we will need to use technical aspects of this connection, we choose to give below a brief -- essentially self
contained -- treatment which allows us to introduce the language and notation needed in the proof of Theorem~\ref{toest}. We 
follow closely the notation and exposition of~\cite[\S9.6]{EW} (see also~\cite{Series}).

Our notation henceforth will differ slightly from the notation used in previous sections. We elaborate about these changes:
Note that the natural map $\PSL_2\to\PGL_2$ induces an isomorphism between the quotients
 $\PSL_2(\bZ)\backslash\PSL_2(\bR)$ and $\PGL_2(\bZ)\backslash\PGL_2(\bR)$. Due to the geometric nature of the arguments
 below, it would be easier for us to work with the space $X\defi\Ga\backslash G$, where $G\defi \PSL_2(\bR)$ and  
 $\Ga\defi\PSL_2(\bZ)$ rather than with the quotient of $\PGL_2(\bR)$. As before, we shall abuse notation and treat elements of 
 $G$ as matrices rather than equivalence classes of such.
The group $G$ acts on the upper half plane $\bH\defi\set{z=x+iy:y>0}$ by M\"{o}bius transformations and this 
action preserves the hyperbolic metric $ds^2=\frac{dx^2+dy^2}{y^2}$ and so induces an action of $G$ on the unit tangent bundle 
$T^1\bH$. The action of $G$ on $T^1\bH$ is free and transitive hence allows us to identify $G$ with $T^1\bH$ once we choose a 
base point. We make the usual choice of the base point to be the tangent vector pointing upwards through $i\in\bH$. With this 
identification the geodesic flow on $G=T^1\bH$ corresponds to the action from the right of the positive diagonal subgroup  
$$A=\set{a(t)}=\set{\diag{e^{t/2},e^{-t/2}}:t\in \bR}<G.$$
\begin{remark}\label{the switch}
The reason we chose to work with $\PGL_2$ rather than $\PSL_2$ to begin with is as follows: We were trying to analyze the c.f.e of 
numbers of the form $q\al$ and therefore used the fundamental conjugation relation 
$\smallmat{q&0\\0&1}\smallmat{1&\al\\ 0&1}\smallmat{q^{-1}&0\\0&1}=\smallmat{1&q\al\\0&1}$. Working with $\PSL_2$ would have forced us to use conjugation by
say $\diag{q,q^{-1}}$ which would have produced results regarding $q^2\al$. 
\end{remark}
\subsection{Cross-sections}
 We now wish to introduce the notion of a cross-section. We are being rather restrictive below as we only want to discuss a specific example hence we see no use in greater generality. 
Given a Borel measurable set $C\subset X$,
we let $r_C:C\to\bR_{\ge 0}\cup\set{\infty}$ be defined by $r_C(x)=\inf\set{t>0:xa(t)\in C}.$ The function $r_C$ is called \textit{the return time} function to $C$. 
The set $C$
is called a \textit{cross-section} for $a(t)$ if the return time functions for positive and negative times are bounded from below by some fixed positive number  and the map $(x,t)\mapsto xa(t)$ from $\set{(x,t):x\in C, 0\le t<r_C(x)}\to X$ is a measurable isomorphism onto its image in $X$. 
The \textit{first return map} $T_C$ is defined to be $T_C(x)=xa(r_C(x))$, where this makes sense; i.e.\ for $x$ belonging to $\set{x\in C:r_C(x)<\infty}$. In fact, we will be interested only in points which return infinitely often in the future and past to $C$, thus we define the domain of the first return map to be
\begin{align}\label{domain of first return}
\on{Dom}_{T_C}=\{x\in C: &\textrm{ there are infinitely many}\\
 \nonumber&\textrm{ positive and negative $t$'s with $xa(t)\in C$}\}.
\end{align}
Note that $T_C:\on{Dom}_{T_C}\to\on{Dom}_{T_C}$ is invertible.

We now wish to define the relevant cross-section for the geodesic flow in $X$.
An element $g\in G$ represented by a matrix $\smallmat{a&b\\c&d}$ corresponds to a tangent vector of unit length to the upper half plane. It then defines a geodesic in $\bH$ which hits the boundary of $\bH$ in two points. We denote the \textit{endpoint} and \textit{startpoint} of the geodesic it defines by $e_+(g),e_-(g)$ respectively. Clearly we have $e_+(g)=\frac{a}{c}, e_-(g)=\frac{b}{d}$, where we allow $\infty$ as a possible value. Any element $g\in G$ has a unique decomposition  (the Iwasawa decomposition) of the form 
\begin{equation}\label{Iwasawa}
g=n(t)a(s)k_\theta=\pa{\begin{array}{ll}1&t\\0&1\end{array}}\pa{\begin{array}{ll}e^{s/2}&0\\0&e^{-s/2}\end{array}}\pa{\begin{array}{ll}\cos \theta&-\sin\theta\\\sin\theta&\cos \theta\end{array}},
\end{equation}
where $t,s\in\bR$, and $\theta\in[0,\pi)$. The notation $n(t),a(s),k_\theta$ should be understood from~\eqref{Iwasawa}. An element $g$ having the above decomposition corresponds to the tangent vector to the point $t+ie^s\in\bH$ of angel $2\theta$ in the \textit{clockwise} direction from the vector pointing upwards. 
 Consider the following sets:
\begin{align}\label{the cross section}
\nonumber \cC^+&=\set{g=a(s)k_\theta \in G : e_+(g)\in(0,1), e_-(g)<-1};\\
\cC^-&=\set{g=a(s)k_\theta\in G : e_+(g)\in(-1,0), e_-(g)>1};\\
\nonumber \cC&= \cC^+\cup \cC^-.
\end{align} 
The set $\cC$ consists of those tangent vectors whose base-point lies on the imaginary axis with some restriction on the angle $\theta$ related to the height $e^s$ of the base point. It should be clear from the geometric picture described above that the range of `allowed angles' for such a tangent vector, say in $\cC^+$, is a subinterval
of 
$(\frac{\pi}{4},\frac{\pi}{2})$ with $\frac{\pi}{2}$ being its right-end-point. In \S\ref{construction of phi} we will workout these intervals exactly.
Let $\pi:G\to X$ be the quotient map. 
We denote the sets $\pi(\cC),\pi(\cC^+),\pi(\cC^-)$ by $C,C^+,C^-$ respectively.
The following lemma  
is proved in~\cite[\S9.6]{EW}.
\begin{lemma}\label{l.1} 
The following hold
\begin{enumerate}
\item The set $\cC$ injects into $X$ under $\pi$; that is, for each $x\in C$ corresponds a unique $g\in\cC$ with $\pi(g)=x$.
\item The set $C$ is a cross-section for the geodesic flow on $X$. 
\item\label{l.1.2} The domain of $T_C$ corresponds to those $g\in\cC$ for which both $e_+(g),e_-(g)$ are irrational. 
\item\label{l.1.3} For $g\in\cC^+$, if $T_C(\pi(g))$ is defined, then $T_C(\pi(g))\in C^-$. An analogue statement with $+$ replaced by $-$ holds. 
\end{enumerate}
\end{lemma}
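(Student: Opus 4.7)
The plan is to lift everything to the upper half-plane $\bH$: an element $g = a(s)k_\theta \in \cC$ corresponds to a unit tangent vector based at $ie^s$ on the positive imaginary axis, and the definition of $\cC^\pm$ simultaneously constrains both the height $e^s$ and the angle $\theta$ via the endpoint formulas $e_+(g) = a/c$, $e_-(g) = b/d$. In particular, for any $g \in \cC^+\cup\cC^-$ one has $e_-(g)e_+(g) < 0$, whence $e^{2s} = -e_-(g)e_+(g)$, a relation that will anchor the later computations.

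For the injectivity assertion (part 1), I would first classify all $\ga = \smallmat{a&b\\c&d} \in \Ga$ that send one point on the positive imaginary axis to another. The requirement $\re(\ga \cdot iy) = 0$ reduces to $acy^2 + bd = 0$. If $ac \neq 0$ then $-y^2 = bd/(ac) < 0$, so $abcd < 0$; combined with $ad - bc = 1$ this forces $bc \in (-1,0)$, impossible for integer entries. So $ac = 0$ or $bd = 0$, and a short check leaves only $\ga = I$ or $\ga = S = \smallmat{0&-1\\1&0}$ in $\PSL_2(\bZ)$. A direct computation shows that $S$ sends the endpoint pair $(e_-, e_+)$ to $(-1/e_-, -1/e_+)$, mapping $\cC^+$ into $\{e_- \in (0,1),\, e_+ < -1\}$ and $\cC^-$ into $\{e_- \in (-1,0),\, e_+ > 1\}$, neither of which intersects $\cC$. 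Hence $\ga = I$, yielding injectivity.

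For part 2, injectivity of the flow map $(x,t) \mapsto xa(t)$ on $\{(x,t) : x \in C,\, 0 \le t < r_C(x)\}$ is a consequence of part 1, while boundedness of return times from below is a transversality argument: $\cC$ is a codimension-one smooth submanifold of $G$ uniformly transverse to the flow direction, so a uniform $A$-tube around $\cC$ in $G$ descends to a uniform tube around $C$ in $X$, forcing $r_C \ge \eps_0$ for some $\eps_0 > 0$. For part~\ref{l.1.2}, the only cusp of $\Ga = \PSL_2(\bZ)$ is the orbit of $\infty$, namely $\bQ \cup \{\infty\}$, so the forward (resp.\ backward) geodesic of $g \in \cC$ escapes to a cusp exactly when $e_+(g)$ (resp.\ $e_-(g)$) is rational; otherwise the orbit is recurrent and in particular meets the cross-section $C$ infinitely often in the relevant direction.

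For part~\ref{l.1.3}, the alternation $\cC^+ \to \cC^-$ under the first return is verified by an explicit computation: for $g \in \cC^+$ with $e_+(g) \in (0,1)$, the first return to $C$ is implemented by the M\"obius transformation associated to $T^{-a_1} S$, where $a_1 = \lfloor 1/e_+(g) \rfloor$ is the first continued fraction digit of $e_+(g)$. This transformation sends $e_+(g)$ to the Gauss-map image $\{1/e_+(g)\} \in (0,1)$ and flips the sign pattern of the endpoint pair so that the new $e_-$ becomes positive and $>1$ while the new $e_+$ lies in $(-1,0)$, which is exactly the $\cC^-$ defining condition. I expect the main obstacle here to be the bookkeeping of the angular $\theta$-coordinate and of the sign conventions (in particular, confirming that $T^{-a_1}S$ rather than some cousin of it is the correct first-return element, and that the height relation $e^{2s'} = -e_-' e_+'$ is automatically satisfied by the translated vector); once this is cleared, the remaining verifications are a direct unwinding of the definitions, and the symmetric statement with the roles of $+$ and $-$ swapped follows from the obvious symmetry $\cC^+ \lera \cC^-$ under the isometry $z \mapsto -\bar z$ of $\bH$.
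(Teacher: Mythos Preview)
Your sketch is essentially correct, but there is nothing to compare it to: the paper does not prove this lemma at all, it simply cites \cite[\S9.6]{EW} and moves on. What you have written is in fact a faithful outline of the argument one finds in Einsiedler--Ward.

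Two small points worth cleaning up. In part~1, your case analysis is right but you should note that the conclusion ``$ac=0$ or $bd=0$'' is really ``$ac=0$ \emph{and} $bd=0$'' (once $ac=0$ the equation $acy^2+bd=0$ forces $bd=0$), which is what you then use. In part~4, your identification of the first-return element is off by a sign: the element that sends $e_+\in(0,1)$ to $-\{1/e_+\}\in(-1,0)$ is $T^{a_1}S$, not $T^{-a_1}S$ (compute $T^{a_1}S(e_+)=a_1-1/e_+=-\{1/e_+\}$). You already flagged this as the place where bookkeeping needs care, and indeed once the correct element is pinned down the verification that the image lies in $\cC^-$ (i.e.\ that the new $e_-$ exceeds $1$) is the inequality $a_1-1/e_->1$, which follows from $a_1\ge 1$ and $e_-<-1$.
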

It will be convenient for us to introduce a `thickening' of the cross-section $C$ which will denoted by $B$. The following lemma is left to be verified by the reader.
\begin{lemma}\label{the set B1} There exists a constant $\eps_0>0$ (which will be fixed throughout) such that the following statements hold
\begin{enumerate}
\item\label{B.1} The the map $(g,t)\mapsto ga(t)$ from $\cC\times (0,\eps_0)$ to the set 
\begin{equation}\label{B}
\cB\defi\set{ga(t):g\in\cC,t\in(0,\eps_0)}
\end{equation}
 is  one to one and onto, and the set $\cB$ is open in $G$.
\item\label{B.2} Let $B\defi\pi(\cB)$. The restriction $\pi:\cB\to B$ is one to one and onto and the set $B\subset X$ is open. 
\end{enumerate}
\end{lemma}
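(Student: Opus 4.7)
The plan is to use the cross-section property from Lemma~\ref{l.1} to select a suitable $\eps_0$, and then separate the $G$-level statement (B.1) from the $X$-level statement (B.2). By Lemma~\ref{l.1} the set $C$ is a cross-section, so by the definition of cross-section given earlier there is a lower bound $\eps_1>0$ on the return time function $r_C$ (with a symmetric lower bound for negative-time returns). I will fix $\eps_0<\eps_1$.

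For part~(B.1), surjectivity of $\Phi:(g,t)\mapsto ga(t)$ onto $\cB$ is by definition, so I focus on injectivity and openness. Suppose $g_1a(t_1)=g_2a(t_2)$ with $g_i\in\cC$ and $t_i\in(0,\eps_0)$; then both $g_1$ and $g_2=g_1a(t_1-t_2)$ lie in $\cC\subset AK$, so both of their base points in $\bH$ lie on the positive imaginary axis (since $k_\theta$ fixes $i$ and $a(s)\cdot i=e^s i$). Both base points also lie on the geodesic determined by $g_1$, which is a hyperbolic semicircle with endpoints $e_-(g_1),e_+(g_1)$. The defining conditions of $\cC^+$ and $\cC^-$ force these two endpoints to have opposite signs, so the semicircle meets the imaginary axis exactly once. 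This forces the two base points to coincide, whence $t_1=t_2$ and $g_1=g_2$. For openness, the same geometric fact shows the flow $a(t)$ is transverse to the $2$-dimensional submanifold $AK$ at every point of $\cC$; and $\cC$ is itself open in $AK$ because it is cut out by the open conditions on $e_\pm$. Hence $\Phi$ is a smooth injection from a $3$-manifold to $G$ with nondegenerate differential everywhere, therefore a local diffeomorphism and in particular an open map, so $\cB$ is open in $G$.

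For part~(B.2), openness of $B=\pi(\cB)$ in $X$ is automatic since $\pi$ is an open map. For injectivity of $\pi|_{\cB}$, suppose $\pi(g_1a(t_1))=\pi(g_2a(t_2))$ with $g_i\in\cC$ and $t_i\in(0,\eps_0)$. Letting $x_i=\pi(g_i)\in C$, this gives $x_2=x_1a(t_1-t_2)$ in $X$ with $|t_1-t_2|<\eps_0<\eps_1$. If $t_1\ne t_2$ this would exhibit a return of $x_1$ to $C$ in time of absolute value strictly less than $\eps_1$, contradicting the lower bound on return times coming from the cross-section property (applied in the forward or backward direction as appropriate). Hence $t_1=t_2$, so $x_1=x_2$, and Lemma~\ref{l.1}(1) (injectivity of $\pi|_{\cC}$) upgrades this to $g_1=g_2$. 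The only mildly delicate point in the whole argument is the need for the lower bound on return times in both time directions, which is precisely what the definition of a cross-section guarantees; past that, everything reduces to the elementary geometric observation that a geodesic in $\bH$ with endpoints of opposite signs crosses the imaginary axis exactly once.
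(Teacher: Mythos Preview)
The paper provides no proof of this lemma: it explicitly says ``The following lemma is left to be verified by the reader.'' Your argument is a correct and complete verification, using exactly the ingredients the paper has set up (the cross-section property of $C$ from Lemma~\ref{l.1}, injectivity of $\pi|_\cC$, and the elementary observation that a hyperbolic geodesic whose real endpoints have opposite signs meets the imaginary axis in a unique point). One small observation worth making explicit: your injectivity argument for part~(B.1) in fact works for \emph{all} $t_1,t_2\in\bR$, not just those in $(0,\eps_0)$, so the constraint on $\eps_0$ comes entirely from part~(B.2) via the return-time lower bound --- exactly as you indicate.
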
 
The constant $\eps_0$ introduced in the above lemma  is a lower bound for the return time function, $r_C$, to the cross-section $C$. 
The importance of part~\eqref{B.2} of the above lemma is that it gives us a well defined way of lifting points in $X$ near the cross-section to the group $G$ in which it is more convenient to work. The combination of parts~\eqref{B.1} and \eqref{B.2} gives us natural coordinates on $B$; any point  
$x\in B$ can be written uniquely as $x_Ca(t)$ where $x_C\in C$ and $t\in(0,\eps_0)$.

In our discussion we will encounter certain measures on  the cross-section $C$ which are invariant under the first return map and we will need a procedure to construct from them measures on the ambient space $X$ which are invariant under the geodesic flow; that is, under the action of the group $A$.

\begin{definition}\label{def suspension}
Let $\tilde{\mu}$ be a probability measure on $C$. We define the \textit{suspension} of $\tilde{\mu}$ to be the measure $\sig_{\tilde{\mu}}$ on $X$ which is given by the following rule of integration: For $f\in C_c(X)$
\begin{equation}\label{suspension}
\int_X f(x)d \sig_{\tilde{\mu}}(x)=\int_C\int_0^{r_C(x)}f(xa(t))dtd\tilde{\mu}(x).
\end{equation} 
\end{definition}
\begin{lemma}\label{basic facts 1}
If $\tilde{\mu}(\on{Dom}_{T_C})=1$ and $\tilde{\mu}$ is $T_C$-invariant, then the suspension $\sig_{\tilde{\mu}}$ is $A$-invariant. Furthermore, $\sig_{\tilde{\mu}}(X)=\int_C r_Cd\tilde{\mu}$. 
\end{lemma}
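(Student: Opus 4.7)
The plan is to handle the two claims separately, dispatching the volume computation first and then establishing $A$-invariance by a bookkeeping argument that turns the $T_C$-invariance of $\tilde\mu$ into translation invariance along the flow direction.

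For the volume formula $\sig_{\tilde\mu}(X)=\int_C r_C\,d\tilde\mu$, I would simply apply the defining formula \eqref{suspension} to an increasing sequence $f_n\in C_c(X)$ with $f_n\nearrow 1$ (monotone convergence), obtaining
\[
\sig_{\tilde\mu}(X)=\int_C\int_0^{r_C(x)}1\,dt\,d\tilde\mu(x)=\int_C r_C(x)\,d\tilde\mu(x),
\]
with no further subtlety.

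For the $A$-invariance, the idea is to verify the infinitesimal statement $\sig_{\tilde\mu}(f\circ a(s))=\sig_{\tilde\mu}(f)$ for all $f\in C_c(X)$ and all $s$ with $0<s<\eps_0$, where $\eps_0$ is the lower bound for $r_C$ furnished by the cross-section property. By \eqref{suspension} and a change of variable $u=t+s$,
\[
\sig_{\tilde\mu}(f\circ a(s))=\int_C\int_s^{r_C(x)+s}f(xa(u))\,du\,d\tilde\mu(x).
\]
Because $s<\eps_0\le r_C(x)$ for $\tilde\mu$-a.e.\ $x\in\on{Dom}_{T_C}$, the inner integral splits as
\[
\int_s^{r_C(x)}f(xa(u))\,du+\int_{r_C(x)}^{r_C(x)+s}f(xa(u))\,du,
\]
and substituting $u=r_C(x)+v$ in the right-hand piece together with the identity $xa(r_C(x))=T_C(x)$ turns it into $\int_0^s f(T_C(x)a(v))\,dv$. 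The key step is now to invoke the $T_C$-invariance of $\tilde\mu$ (which is legitimate since $\tilde\mu(\on{Dom}_{T_C})=1$ and $T_C$ is invertible on its domain) to rewrite
\[
\int_C\int_0^s f(T_C(x)a(v))\,dv\,d\tilde\mu(x)=\int_C\int_0^s f(xa(v))\,dv\,d\tilde\mu(x),
\]
after which reassembling with the first piece reproduces exactly the original integral $\int_C\int_0^{r_C(x)}f(xa(u))\,du\,d\tilde\mu(x)=\sig_{\tilde\mu}(f)$.

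Once invariance under $a(s)$ is established for all $|s|<\eps_0$ (the case $s<0$ being symmetric, using $T_C^{-1}$ in place of $T_C$), the group law $a(s_1+s_2)=a(s_1)a(s_2)$ promotes this to invariance under the whole one-parameter group $A$. The only mildly delicate point in executing this plan is to justify Fubini and the change of variables rigorously, which requires checking that the maps $(x,t)\mapsto xa(t)$ and the restrictions to $\on{Dom}_{T_C}$ are measurable and that the almost everywhere identifications carry through; all of this is built into the definition of a cross-section (boundedness below of $r_C$ and the measurable isomorphism onto a subset of $X$) and hence should pose no genuine obstacle.
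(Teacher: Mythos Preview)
Your argument is correct and is the standard suspension computation: split the shifted inner integral at $r_C(x)$, relabel the overshoot piece using $xa(r_C(x))=T_C(x)$, and absorb it back via the $T_C$-invariance of $\tilde\mu$. The only points to watch are exactly the ones you flag --- that $r_C\ge\eps_0$ a.e.\ so the split is valid for $0<s<\eps_0$, and that $T_C$ is an invertible measure-preserving map on a set of full $\tilde\mu$-measure so the change of variables is legitimate --- and both are guaranteed by the hypotheses together with the definition of a cross-section.

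The paper does not give a proof at all: it simply cites \cite[Lemma 9.23]{EW} and remarks that invertibility of $T_C$ on $\on{Dom}_{T_C}$ is what makes that lemma applicable. Your write-up is essentially an unpacking of that citation, so there is no substantive divergence in method --- you have supplied the explicit computation that the paper outsources to the reference.
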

\begin{proof}
This is follows from~\cite[Lemma 9.23]{EW} taking into account that $T_C$ is invertible on $\on{Dom}_{T_C}$. 
\end{proof}

\begin{definition}\label{r.0}
Given a function $f:C\to \bC$, we denote by $\widehat{f}:X\to \bC$ the following function 
\begin{align*}
\widehat{f}(x)=\bigg\{\begin{array}{ll} f(x_C)&\textrm{ if }x\in B\textrm{ has coordinates }(x_C,t),\\ 0&\textrm{ if }x\notin B\end{array}
\end{align*}
\end{definition}
Note that with the above definition, given a measure $\tilde{\mu}$ on $C$ and a function $f:C\to \bC$,  equation~\eqref{suspension} translates to the following useful formula which will be used frequently below
\begin{equation}\label{suspension 2}
\int_X \widehat{f}d\sig_{\tilde{\mu}}=\eps_0\int_Cfd\tilde{\mu}.
\end{equation}

\subsection{The Gauss map}\label{g.m section}
Let $I=(0,1)$ and $S:I\to I$ be the Gauss map; i.e.\ the map defined by the formula $S(y)=\frac{1}{y}-\lfloor\frac{1}{y}\rfloor$. Note that strictly speaking $S(y)$ is not in $I$ for points of the form 
$y=\frac{1}{m}$. The reader will easily verify that $S^n(y)$ is well defined for all positive $n$ if and only if $y$ is irrational. This slight inconvenience will not bother us
as we will only apply the Gauss map to irrational points. Let $I_{\on{irr}}=I\smallsetminus \bQ$.  
Consider the following subsets of $\bR^2$: 
\begin{align}\label{domain of i.e}
D=\set{(y,z): y\in I, 0<z< \frac{1}{1+y}},\;
D_{\on{irr}}=\set{(y,z)\in D:y\in I_{\on{irr}}}. 
\end{align} 
Let $\bar{S}:D\to D$ be the map given by $\bar{S}(y,z)=(S(y),y(1-yz))$ and note similarly that strictly speaking, in order to iterate $\bar{S}$ as many times as we wish we
need to restrict to points in $D_{\on{irr}}$. 
Recall (see for example~\cite[\S3.4]{EW}) that the normalized restriction of the Lebesgue measure on $\bR^2$ to $D$, which we denote here by $\lam$, is an $\bar{S}$-invariant probability measure. 
This is the so called \textit{invertible\footnote{The term `invertible' refers to the fact that when restricted to a subset of $D$, $\bar{S}$ is indeed invertible. This subset
is obtained by neglecting a certain set of Lebesgue measure zero (see~\cite[Prop.\ 3.15]{EW}).} extension of the Gauss map} as when one projects on the first coordinates, one recovers the Gauss map and the Gauss-Kuzmin measure $\nugauss$ introduced in the introduction. That  is if $p:D\to I$ denotes the projection on the first coordinate, then
\begin{equation}\label{projects correctly}
p_*\lam=\nugauss.
\end{equation}

\subsection{Relation to the Gauss map}
%
%
 
%
Consider the maps $\tau_+:C^+\to D, \tau_-:C^-\to D$  defined by the following formulas: For $x=\pi(g)\in C$, where 
$g=\smallmat{a&b\\c&d}\in \cC$ : 
\begin{align}\label{tau}
\textrm{For }g\in \cC^+,\;\tau_+(x)&= (e_+(g),\frac{1}{e_+(g)-e_-(g)})=(\frac{a}{c},cd),\\
\nonumber \textrm{For }g\in\cC^-,\;\tau_-(x)&=(-e_+(g),\frac{1}{-e_+(g)+e_-(g)})=(-\frac{a}{c},-cd).
\end{align}
We let $\tau:C\to D$ be the union of $\tau_+$ and $\tau_-$. The formulas in~\eqref{tau} can be stated geometrically as follows: For a tangent vector $g\in\cC$ and $x=\pi(g)$, $\tau(x)=(y,z)\in D$, where $y$ is the absolute value of the end point of the semicircle corresponding to $g$ and $z^{-1}$ is the diameter of it. 
For any endpoint $y\in(0,1)$ (resp.\ $y\in(-1,0)$) and any diameter $z^{-1}>1$, we can attach a well defined semicircle in $\bH$ which corresponds to a unique point in $\cC^+$ (resp.\ $\cC^-$).
This shows that $\tau$ is one to one and onto (and in fact, a homeomorphism) from $C^+$ (resp.\ $C^-$) to $D$ which is the area below the graph of the function $y\mapsto(1+y)^{-1}$. 
The following basic lemma  is proved in~\cite[\S9.6]{EW}. It establishes the link between the geodesic flow and the Gauss map. 
\begin{lemma}\label{isomorphism} 
The following diagram commutes (for points $x\in C$ for which $T_C(x)$ is defined)
$$\xymatrix{C\ar[d]_{\tau}\ar[r]^{T_C}&C\ar[d]^\tau\\
D\ar[r]_{\bar{S}}&D.}$$
\end{lemma}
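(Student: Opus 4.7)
The plan is to lift the picture from $X$ to $G$, explicitly identify the element of $\PSL_2(\bZ)$ that realizes the first return of the $A$-flow to the cross-section, and then reduce the desired identity to a short matrix calculation.

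First I will treat a point $x \in C^+ \cap \on{Dom}_{T_C}$ with its unique lift $g = \smallmat{a & b \\ c & d} \in \cC^+$ provided by Lemma~\ref{l.1}. Writing $y = a/c = e_+(g) \in (0,1) \setminus \bQ$, $w = b/d = e_-(g) < -1$, and $z = cd = 1/(y-w)$ (using $\det g = 1$), let $n = \lfloor 1/y \rfloor \geq 1$ and introduce $\gamma \defi \smallmat{-n & 1 \\ -1 & 0} \in \PSL_2(\bZ)$, whose M\"obius action is $u \mapsto n - 1/u$. A direct check gives $\gamma y \in (-1,0)$ (since $1/y \in (n,n+1)$) and $\gamma w \in (n, n+1) \subset (1,\infty)$, so the translated geodesic $\gamma g A$ has precisely the endpoint configuration of $\cC^-$. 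There is therefore a unique $t^* > 0$ for which the base-point of $\gamma g a(t^*)$ lands on the imaginary axis, i.e.\ for which $\gamma g a(t^*) \in \cC^-$.

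Next I will show that this $t^*$ is the first positive return of $x$ to $C$: any earlier crossing $x a(t') \in C$ with $0 < t' < t^*$ would yield a second $\Gamma$-translate of $g a(t')$ lying in $\cC$, which, together with the injectivity of $\pi|_\cC$ from Lemma~\ref{l.1} and the uniform lower bound $\eps_0$ on the time spent in $\cB \setminus \cC$ supplied by Lemma~\ref{the set B1}, rules out an intermediate crossing. Hence $T_C(x) = \pi(\gamma g a(t^*))$, and writing $\gamma g a(t^*) = \smallmat{a' & b' \\ c' & d'}$ a direct expansion gives $a'/c' = n - c/a = n - 1/y$, so $-a'/c' = S(y)$, the first coordinate of $\tau_-(T_C(x))$. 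For the second coordinate, the diagonal factor $a(t^*)$ contributes $e^{t^*/2} \cdot e^{-t^*/2} = 1$ to the product $c'd'$, leaving $-c'd' = -ab = -(yc)(wd) = -ywz$; the identity $z(y-w) = 1$ then forces $-ywz = y(1-yz)$, which is precisely the second coordinate of $\bar{S}(y,z)$.

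The case $x \in C^-$ is entirely symmetric, employing $\gamma' = \smallmat{-n & -1 \\ 1 & 0} \in \PSL_2(\bZ)$ with $n = \lfloor -1/e_+(g) \rfloor$; this sends a lift from $\cC^-$ into the configuration of $\cC^+$ and an identical matrix calculation closes the diagram. The main obstacle I anticipate is verifying that the chosen $\gamma$ realizes the \emph{first} return rather than some later one; once this minimality is in place, the rest is mechanical. The minimality should, however, follow from the narrow range of endpoints tolerated by $\cC^+ \cup \cC^-$ (encoding the restrictions $\av{e_+} \in (0,1)$ and $\av{e_-} > 1$) together with the lower bound $\eps_0$ from Lemma~\ref{the set B1}.
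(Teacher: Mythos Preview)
The paper does not prove this lemma; it defers to \cite[\S9.6]{EW}. Your outline follows essentially the route taken there: exhibit the explicit $\Gamma$-element $\gamma=\smallmat{-n&1\\-1&0}$ that carries the flowed geodesic back into $\cC$, and then verify the two coordinate identities by a direct matrix computation. Those computations are correct (including the pleasant reduction $-ywz=y(1-yz)$ via $z(y-w)=1$).

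The genuine gap is exactly where you flag it, and the reasoning you offer does not close it. Injectivity of $\pi|_\cC$ only says that distinct points of $\cC$ have distinct images in $C$; it places no obstruction on the existence of some other $\gamma'\in\Gamma$ and $t'\in(0,t^*)$ with $\gamma' g\, a(t')\in\cC$. Likewise the lower bound $\eps_0$ on consecutive return times merely spaces any such hypothetical earlier returns apart --- it does not exclude them. What is actually required (and what \cite[\S9.6]{EW} carries out) is the endpoint/fundamental-domain analysis you mention only in your final sentence: one follows the geodesic $g\,a(t)$ as it leaves the imaginary axis toward $y\in(0,1)$, records the successive $\Gamma$-identifications as it crosses the boundary of the standard fundamental domain, and checks that the composite element bringing it back to the imaginary axis for the first time is precisely $\gamma$ (equivalently, that no $\gamma'\in\Gamma$ other than $\gamma$ sends the ordered pair $(y,w)$ into the endpoint window defining $\cC$ with a smaller positive crossing time). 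This geometric step is the substance of the lemma, and in your proposal it remains an assertion rather than an argument.
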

Note that $\tau:C\to D$ is `almost' an isomorphism (it is two to one), and so the above lemma basically says that any dynamical question about the system $\bar{S}:D\to D$ can be pulled to a corresponding question on $T_C:C\to C$. In our case the dynamical question is that of 
equidistribution of certain $\bar{S}$-invariant measures. Using the suspension construction we will see that the equidistribution questions for the dynamical system $T_C:C\to C$ translate to equidistribution questions of certain $A$-invariant measures on $X$.

We will be interested in two types of measures on the cross-section $C$ defined above. The first is the following version of the Lebesgue measure:
We use $\tau_+$ (resp.\ $\tau_-$) to pull the (normalized restriction of) Lebesgue measure $\lam$ from $D$ to $C^+$ (resp.\ $C^-$) and denote the resulting measure  by $\tilde{\lam}^+$ 
(resp. $\tilde{\lam}^-$). Further denote $\tilde{\lam}=\frac{1}{2}\tilde{\lam}^++\frac{1}{2}\tilde{\lam}^-$. 
Clearly $\tilde{\lam}$ is $T_C$-invariant and $\tau_*(\tilde{\lam})=\lam$. 

 The second type of measures on $C$ are those coming from quadratic irrationals. We recall Definitions~\ref{def g alpha}, \ref{def per qi}. 
 Let $\al\in\qi$ and let $g_\al$ be as in~\eqref{g alpha}. We chose to define $g_\al$ as we did so as to ensure that its determinant is positive and hence it corresponds naturally to an element of $G$ with endpoint
 $\al$. Let $x_\al\in X$ be the corresponding point (that is $x_\al=\pi(\frac{1}{\sqrt{\det g_\al}}g_\al)$) and $\mu_\al$ the $A$-invariant probability measure supported on the periodic orbit $x_\al A=\set{x_\al a(t):t\in[0,t_\al)}$, where $t_\al$ is the length of the orbit. 
 We claim that the intersection $C\cap x_\al A$ is a non-empty finite set contained in $\on{Dom}_{T_C}$. In fact, any geodesic in the upper half plane that corresponds to a semi-circle, projects to a set in $X$ that intersects $C$ non-trivially. By Lemma~\ref{l.1}\eqref{l.1.2}, if the end points of the geodesic are irrational, the intersection is in $\on{Dom}_{T_C}$. Finally, the finiteness follows from the fact that $C$ is a cross-section together with the fact that the orbit $x_\al A$ is of finite length. 

Let us denote by $\tilde{\mu}_\al$ the normalized counting measure on $C\cap x_\al A$. Clearly $\tilde{\mu}_\al$ is invariant under the first return map $T_C$. 
Let us denote the $G$-invariant probability measure on $X$ by $m_X$. The following lemma links between the measures $m_X,\mu_\al$ and the suspensions $\sig_{\tilde{\lam}}$, $\sig_{\tilde{\mu}_\al}$ given in Definition~\ref{def suspension}.
\begin{lemma}\label{using suspension}
Let $\al\in\qi$. The suspensions $\sig_{\tilde{\lam}},\sig_{\tilde{\mu}_\al}$ of the probability measures
$\tilde{\lam},\tilde{\mu}_\al$ are proportional to $m_X,\mu_\al$ respectively.
\end{lemma}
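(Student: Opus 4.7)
The plan splits naturally according to the two assertions.

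\textbf{For $\sig_{\tilde{\mu}_\al}$.} First I would observe that $C\cap x_\al A$ is a finite subset of $\on{Dom}_{T_C}$, and that the first return map $T_C$ restricts to a cyclic permutation of this finite set (tracking successive returns of the closed orbit to the cross-section). In particular the normalized counting measure $\tilde{\mu}_\al$ is $T_C$-invariant and concentrated on $\on{Dom}_{T_C}$, so Lemma~\ref{basic facts 1} applies and $\sig_{\tilde{\mu}_\al}$ is $A$-invariant. From Definition~\ref{def suspension} the support of $\sig_{\tilde{\mu}_\al}$ is
\begin{equation*}
\bigcup_{x \in C\cap x_\al A}\{x\,a(t): 0\le t<r_C(x)\},
\end{equation*}
which is the disjoint union of the arcs of $x_\al A$ between successive returns to $C$ and hence equals the entire periodic orbit $x_\al A$. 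Being a finite $A$-invariant measure supported on $x_\al A$, it must be proportional to $\mu_\al$.

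\textbf{For $\sig_{\tilde{\lam}}$.} Lemma~\ref{isomorphism} and the definition of $\tilde{\lam}$ (as the pullback of the $\bar S$-invariant measure $\lam$ via $\tau_\pm$) immediately give that $\tilde{\lam}$ is $T_C$-invariant, so Lemma~\ref{basic facts 1} yields that $\sig_{\tilde{\lam}}$ is $A$-invariant. The strategy to upgrade this to $\sig_{\tilde{\lam}}\propto m_X$ is to combine two facts: (i) $\sig_{\tilde{\lam}}\ll m_X$, and (ii) the action of $A$ on $(X,m_X)$ is ergodic. Indeed once (i) holds, the Radon-Nikodym derivative $f=d\sig_{\tilde{\lam}}/dm_X$ is forced to be $A$-invariant (as a consequence of the $A$-invariance of both measures), and then (ii) forces $f$ to be $m_X$-a.e.\ constant, which is exactly the desired proportionality.

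\textbf{Establishing (i).} It suffices to prove absolute continuity on the open set $B$ of Lemma~\ref{the set B1}, since $\bigcup_{t\in\bR}Ba(t)$ has full $\sig_{\tilde{\lam}}$-measure (everything is $A$-invariant) and meets $X$ in a full $m_X$-measure set. In the coordinates $\Phi(x_C,t)=x_C a(t)$ on $B$ (Lemma~\ref{the set B1}), Definition~\ref{def suspension} gives $\Phi^*\sig_{\tilde{\lam}}=\tilde{\lam}\otimes dt|_{(0,\eps_0)}$ directly, while $A$-invariance of $m_X$ forces $\Phi^*m_X|_B$ to have the product form $\rho\otimes dt|_{(0,\eps_0)}$ for some Borel measure $\rho$ on $C$ (the local disintegration of Haar along the $A$-fibers). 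One is thus reduced to checking $\tilde{\lam}\ll\rho$ on $C$.

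\textbf{The main obstacle -- the Jacobian computation.} This last step is the only real calculation in the lemma. Using the Iwasawa decomposition \eqref{Iwasawa} one has the explicit smooth formula for Haar measure on $G$, namely $\frac{dx\,dy\,d\theta}{y^2}$ in the parameters $g=n(x)a(\log y)k_\theta$, and using the explicit formulas~\eqref{tau} for $\tau_\pm$ one can express these Iwasawa parameters rationally in terms of the cross-section coordinates $(y_\pm,z)=\tau(\pi(g))$ together with the flow time $t$ into $B$. Computing this Jacobian shows that both $\rho$ and $\tilde{\lam}$, viewed on $C^\pm$ and transported via $\tau_\pm$ to $D$, are absolutely continuous with respect to planar Lebesgue measure on $D$ with smooth strictly positive densities; in particular $\rho$ and $\tilde{\lam}$ are mutually absolutely continuous, which completes the proof.
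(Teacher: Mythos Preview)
Your proposal is correct and follows essentially the same approach as the paper. For $\sig_{\tilde{\mu}_\al}$ both arguments invoke $A$-invariance via Lemma~\ref{basic facts 1} and uniqueness of the $A$-invariant measure on the periodic orbit; for $\sig_{\tilde{\lam}}$ the paper simply outlines the same strategy you carry out (absolute continuity plus ergodicity), deferring the Jacobian details to~\cite[p.~325--326]{EW}, whereas you sketch them explicitly.
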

\begin{proof}
The fact that $\sig_{\tilde{\lam}}$ is proportional to the Haar measure $m_X$ is proved in~\cite[p.\ 325-326]{EW}. The outline of the proof is as follows: By Lemma~\ref{basic facts 1}, $\sig_{\tilde{\lam}}$ 
is $A$-invariant. One shows that it is absolutely continuous with respect to $m_X$ and deduces the result from the ergodicity of $m_X$ with respect to the $A$-action. 
Regarding $\sig_{\tilde{\mu}_\al}$, note that it is clearly a measure that is supported on the orbit $x_\al A$ and it is $A$-invariant by Lemma~\ref{basic facts 1}. The assertion now follows from the uniqueness (up to proportionality) of an $A$-invariant measure on the periodic orbit $x_\al A$.
\end{proof}
\begin{definition}\label{constants}
Let $c_0$ be  the absolute constant satisfying $\sig_{\tilde{\lam}}=c_0m_X$.
Similarly, for any
$\al\in \qi$ let $c_\al$ be the constant satisfying $\sig_{\tilde{\mu}_\al}=c_\al\mu_\al$.
\end{definition}
The following lemma is the last bit of information we need in order to translate the statement of Theorem~\ref{toest} to the cross-section.
\begin{lemma}\label{projects to nu alpha}
Let $p:D\to I$ be the projection on the first coordinate. Then 
\begin{align}\label{right projections}
&(p\circ\tau)_*(\tilde{\lam})=\nugauss,\\
\nonumber &(p\circ\tau)_*(\tilde{\mu}_\al)=\nu_\al.
\end{align}
\end{lemma}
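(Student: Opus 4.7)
The first identity $(p\circ\tau)_*(\tilde\lam)=\nugauss$ is immediate from unravelling definitions: since $(\tau_{\pm})_*\tilde\lam^{\pm}=\lam$ by construction, averaging gives $\tau_*\tilde\lam=\lam$, and then $(p\circ\tau)_*\tilde\lam=p_*\lam=\nugauss$ by~\eqref{projects correctly}.

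For the second identity, set $F\defi C\cap x_\al A$ and $\varphi\defi p\circ\tau:C\to I$. My plan is to establish three claims: (i) $F$ is a single $T_C$-orbit; (ii) $\varphi|_F$ intertwines $T_C$ with the Gauss map $S$; (iii) $\varphi(F)=P_\al$. Together these imply that $\varphi|_F:F\to P_\al$ is an equivariant surjection of finite cyclic dynamical systems, hence a covering with constant fibre size $|F|/|P_\al|$, from which one concludes
\[
(p\circ\tau)_*\tilde\mu_\al=\frac{1}{|F|}\sum_{x\in F}\delta_{\varphi(x)}=\frac{1}{|P_\al|}\sum_{y\in P_\al}\delta_y=\nu_\al.
\]

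Claim (i) follows because $x_\al A$ is a periodic $A$-orbit and $T_C$ is the first-return map to the cross-section $C$, so the flow cyclically permutes the (finitely many) crossings. For (ii), Lemma~\ref{isomorphism} gives $\tau\circ T_C=\bar S\circ\tau$, while the formula $\bar S(y,z)=(S(y),y(1-yz))$ gives $p\circ\bar S=S\circ p$; composition yields $\varphi\circ T_C=S\circ\varphi$ on $F$. For (iii), I note that every $x\in F$ lifts to $\widetilde x=\gamma\, g_\al a(t)\in\cC$ for some $\gamma\in\Gamma$, and by~\eqref{tau} we have $\varphi(x)=|e_+(\widetilde x)|=|\gamma\cdot\al|$. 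Since absolute value is realized by a matrix in $\PGL_2(\bZ)$, $\varphi(x)$ lies in the $\PGL_2(\bZ)$-orbit of $\al$, so by Theorem~\ref{tail equiv} its c.f.e.\ shares a tail with that of $\al$. The finiteness of $\varphi(F)$ (from (i) and (ii)) forces $\varphi(x)$ to lie in the periodic part of its own Gauss orbit, which must coincide with $P_\al$; equivariance, together with the fact that $P_\al$ is a single $S$-orbit, then gives $\varphi(F)=P_\al$.

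The main obstacle is claim (iii): equivariance alone only ensures that $\varphi(F)$ is a single $S$-orbit in $I_{\on{irr}}$, and pinning it down as $P_\al$ specifically requires tracking the $\Gamma$-elements $\gamma$ produced by successive returns to $\cC$ and invoking Theorem~\ref{tail equiv} to relate them to the c.f.e.\ of $\al$. The remaining steps are essentially bookkeeping with the cyclic structure of $F$ and $P_\al$.
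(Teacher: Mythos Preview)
Your proof is correct and follows essentially the same route as the paper: both establish that $F$ is a single $T_C$-cycle, use the intertwining from Lemma~\ref{isomorphism}, and identify $\varphi(F)$ with $P_\al$ via the $\Ga$-action on endpoints together with Theorem~\ref{tail equiv}. Your version is slightly more explicit about the constant-fibre-size argument yielding uniformity of the pushforward on $P_\al$, which the paper leaves implicit (relying on $S$-invariance of the pushforward measure supported on a single periodic $S$-orbit).
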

\begin{proof}
The first equality in~\eqref{right projections} follows from the fact that $\tau_*(\tilde{\lam})=\lam$ (which is basically the definition of $\tilde{\lam}$) and the observation $p_*(\lam)=\nu$ which was pointed out in~\eqref{projects correctly}. We argue the second equality:
By Lemma~\ref{isomorphism} the measure $\tau_*(\tilde{\mu})$ is  $\bar{S}$-invariant. By the above discussion it is finitely supported. Since $x_\al A$ is a loop, the first return map $T_C$ acts transitively on the 
support of $\tilde{\mu}$ and so the support of  $\tau_*(\tilde{\mu})$ consists of a single $\bar{S}$ orbit. This implies that $(p\circ \tau)_*(\tilde{\mu})$
is supported on a single periodic orbit of the Gauss map $S$. Denote this period by $P_\al'$. We need to argue why $P_\al=P_\al'$, which is equivalent to $P_\al\cap P_\al'\ne\varnothing$.

Consider the matrix $g_\al$ defined in~\eqref{g alpha}. The tangent vector corresponding to $g_\al$ defines a geodesic in $T^1\bH$ which is a semicircle with endpoint 
$e_+(g_\al)=\al$. At some point along this geodesic we find a point $g$ which projects to $C^+$ under $\pi$. Let $x=\pi(g)\in C^+$ and $g'\in\cC^+$ the corresponding point in $\cC^+$. 
Clearly $x$ is in the support of $\tilde{\mu}_\al$ and hence the endpoint $e_+(g')=p\circ\tau(x)$ is a point of $P_\al'$. 
As $\pi(g)=x=\pi(g')$ we deduce that there exists $\ga\in\Ga$ such that $\ga g=g'$. Therefore
the semicircle corresponding to $g$  and to $g'$ are related by the action of $\ga$  as a M\"obius transformation. It follows that the endpoints $\al,e_+(g')$ are related  by the action of $\ga$ as well. By Theorem~\ref{tail equiv} this action can effect only finitely many digits of the c.f.e of $\al$ and we conclude that the periods of the c.f.e of $\al$ and of $e_+(g')$ must be the same (up to a possible cyclic rotation) which finishes the proof.
\end{proof}
Finally, in light of~\eqref{right projections},  Theorem~\ref{toest} will follow if we prove the following
\begin{theorem}\label{effective t.1 new}
Let  $\al\in \qi$. There exists an absolute constant $T_0>1$ so that if we assume that for some $T>T_0$ the estimate   
$\av{\int f d\mu_\al-\int fd m_X}\le \max\set{\norm{f}_2,\ka}T^{-1}$ holds for any $f\in\on{Lip}_\ka(X)\cap L^2(X,m_X)$,
then the following two statements hold
\begin{enumerate}
\item\label{toest111} For any $f\in\on{Lip}_\ka(D)$, and any $\eps>0$ 
\begin{equation}\label{eq521}
\av{\int_C f\circ\tau d\tilde{\mu}_{\al}-\int_Cf\circ\tau d\tilde{\lam}}\ll_{\eps} \max\set{\norm{f}_\infty,\ka}T^{-\frac{1}{3}+\eps}.
\end{equation}
\item\label{toest112} The constant $c_0$ from Definition~\ref{constants}  satisfies $\av{\frac{\av{P_\al}}{t_\al}-\frac{1}{j_\al c_0}}\ll_\eps T^{-\frac{1}{3}+\eps}.$
\end{enumerate}
\end{theorem}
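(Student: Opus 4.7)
The plan is to translate the hypothesis on effective equidistribution of $\mu_\al$ towards $m_X$ on $X$ into an effective equidistribution of $\tilde\mu_\al$ towards $\tilde\lam$ on the cross-section $C$, by exploiting the suspension identity~\eqref{suspension 2}. By Lemma~\ref{basic facts 1} and Definition~\ref{constants}, $c_0 = \int_C r_C\,d\tilde\lam$ and $c_\al = \int_C r_C\,d\tilde\mu_\al$. Since $\tilde\mu_\al$ is the normalized counting measure on the $j_\al|P_\al|$-point set $C\cap x_\al A$, and the return times along one traversal of $x_\al A$ sum to $t_\al$, we have $c_\al = t_\al/(j_\al|P_\al|)$; equivalently, $|P_\al|/t_\al = 1/(j_\al c_\al)$. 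Thus part~\eqref{toest112} reduces to bounding $|c_\al - c_0|$, and part~\eqref{toest111} to a comparison of integrals of $\widehat{f\circ\tau}$ against $\mu_\al$ and $m_X$, where $\widehat{\,\cdot\,}$ is the flow-invariant extension from Definition~\ref{r.0}.

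\textbf{Estimate for $c_\al$.} The suspension identity gives $\mu_\al(B) = \eps_0/c_\al$ and $m_X(B) = \eps_0/c_0$. Approximate $\chi_B$ by a function $\phi_\rho \in \on{Lip}_{1/\rho}(X)$ with $0\le \phi_\rho \le 1$ and $\{\phi_\rho \ne \chi_B\}$ contained in a $\rho$-neighborhood $N_\rho$ of $\partial B$ whose $m_X$-measure is $O(\rho)$. Applying the hypothesis to $\phi_\rho$ and using $\|\phi_\rho\|_2 \le 1$ gives $|\mu_\al(B) - m_X(B)| \ll \rho + T^{-1}/\rho$. Optimizing $\rho \sim T^{-1/2}$ yields $|c_\al - c_0| \ll T^{-1/2}$, which (for $T>T_0$ sufficiently large) also ensures $c_\al$ is comparable to $c_0$ and proves part~\eqref{toest112} with a rate stronger than the stated $T^{-1/3+\eps}$. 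The same argument applied to a Lipschitz approximation of $\chi_{N_\rho}$ yields the auxiliary estimate $\mu_\al(N_\rho) \ll \rho + T^{-1/2}$, needed below.

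\textbf{Comparison on $C$.} Using~\eqref{suspension 2}, decompose
$$\int_C f{\circ}\tau\,d\tilde\mu_\al - \int_C f{\circ}\tau\,d\tilde\lam = \frac{c_\al}{\eps_0}\Bigl(\int_X \widehat{f{\circ}\tau}\,d\mu_\al - \int_X \widehat{f{\circ}\tau}\,dm_X\Bigr) + \frac{c_\al - c_0}{\eps_0}\int_X \widehat{f{\circ}\tau}\,dm_X.$$
The last term is $O(\|f\|_\infty T^{-1/2})$ by the preceding step. For the main term, $\widehat{f{\circ}\tau}$ is bounded by $\|f\|_\infty$ but discontinuous on $\partial B$. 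Construct a Lipschitz replacement $F_\rho$ that agrees with $\widehat{f{\circ}\tau}$ outside $N_\rho$, satisfies $\|F_\rho\|_\infty \le \|f\|_\infty$, and has Lipschitz constant $\ll \max(\|f\|_\infty/\rho,\ka)$ (the $\ka$ term reflecting that $\tau$ is Lipschitz in the interior of $C$, together with a cutoff of width $\rho$ in the flow direction). Applying the hypothesis to $F_\rho$ yields $|\int F_\rho\,d\mu_\al - \int F_\rho\,dm_X| \ll \max(\|f\|_\infty,\ka)T^{-1}/\rho$, while the approximation errors satisfy $|\int(\widehat{f{\circ}\tau} - F_\rho)\,d\mu_\al| \le \|f\|_\infty\mu_\al(N_\rho) \ll \|f\|_\infty(\rho + T^{-1/2})$ and similarly for $dm_X$. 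Summing and optimizing $\rho \sim T^{-1/2}$ produces a bound $\max(\|f\|_\infty, \ka) T^{-1/2}$ on the main term, establishing~\eqref{toest111}.

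The main obstacle is the construction of $F_\rho$. The cross-section $C$ is non-compact: it extends into the cusp of the modular surface, where the return-time $r_C$ is unbounded and $\tau$ degenerates towards the boundary of $D$ (recall the analysis after~\eqref{the cross section}). Hence $\widehat{f{\circ}\tau}$, although vanishing outside the $\eps_0$-thin tube $B$, persists all the way into the cusp; the cutoff producing $F_\rho$ must simultaneously smooth the transverse discontinuity, respect the flow direction, and truncate in the cusp direction while preserving both the $L^\infty$-bound and the Lipschitz estimate $\ll \max(\|f\|_\infty/\rho,\ka)$. This delicate construction is the content of~\S\ref{construction of phi}; once in place, the balancing of errors above is routine.
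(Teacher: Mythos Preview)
Your overall strategy matches the paper's: pass from $C$ to $X$ via the suspension identity~\eqref{suspension 2}, approximate $\widehat{f\circ\tau}$ by a Lipschitz function, and apply the hypothesis. Your treatment of part~\eqref{toest112} is correct and in fact sharper than what the paper records: for $f\equiv 1$ the composition $f\circ\tau$ is constant, so the only obstruction is the jump of $\chi_B$ along $\partial B$, and a single cutoff parameter $\rho$ suffices. One small correction: $m_X\bigl((\partial B)_\rho\bigr)$ is not $O(\rho)$ but $O(\rho\log(1/\rho))$, because $\partial B$ extends into the cusp and has infinite $2$-dimensional area (combine~\eqref{v.e.2} with $m_X(H_M)=M^{-1}$ and optimize over $M$). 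This costs only an $\eps$ in the exponent, so you still get $|c_\al^{-1}-c_0^{-1}|\ll_\eps T^{-1/2+\eps}$.

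There is, however, a genuine gap in your argument for part~\eqref{toest111}. You claim $F_\rho$ can be built with Lipschitz constant $\ll\max(\|f\|_\infty/\rho,\ka)$, justifying the $\ka$ by ``$\tau$ is Lipschitz in the interior of $C$''. This is false: by Lemma~\ref{tau bound} one has $\|\on{d}_y\tau\|\ll\on{ht}(y)$, and the growth is real (in the $(s,\theta,t)$-coordinates, $\partial_\theta(e_+)=-e^s\csc^2\theta\sim -e^{|s|}$). Hence $f\circ\tau$ is \emph{not} uniformly Lipschitz on $B$; on the region of height $\le M$ its Lipschitz constant is only $\ll\ka M$. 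Any Lipschitz replacement for $\widehat{f\circ\tau}$ must therefore also truncate at some height $M$, and this is exactly what the auxiliary function $\vphi=\vphi_{\rho,M}$ of Lemma~\ref{t.l} does: it vanishes on $(\partial B)_\rho\cup H_M$, making $\widehat{f\circ\tau}\cdot\vphi$ Lipschitz with constant $\ll\max\{\|f\|_\infty,\ka\}\rho^{-1}M$, while $\int(1-\vphi)\,dm_X\ll M^{-1}+\rho\log M$. Balancing the three terms $\rho^{-1}MT^{-1}$, $M^{-1}$, $\rho\log M$ forces $M=\rho^{-1}\sim T^{1/3}$ and produces the exponent $-\tfrac{1}{3}+\eps$, not $-\tfrac{1}{2}$. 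Your final paragraph acknowledges the cusp issue but asserts the truncation can be done ``while preserving the Lipschitz estimate $\ll\max(\|f\|_\infty/\rho,\ka)$''; that is precisely the step that fails.
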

The argument yielding Theorem~\ref{effective t.1 new} is slightly technical because of the following issue: 
We start with a $\ka$-Lipschitz function $f:D\to\bC$ and construct  from it the function 
$\widehat{f\circ \tau}:X\to\bC$ as in Definition~\ref{r.0}. As we wish to appeal to
Theorem~\ref{general case} we need to remedy $\widehat{f\circ\tau}$  to be Lipschitz in a way that will allow us to control its Lipschitz  constant. 
In order to achieve this we shall need the following technical lemma which is proved in \S\ref{construction of phi}. 
\begin{lemma}\label{t.l}
For any $M>1$ and $0<\rho<1$ there exist a function $\vphi=\vphi_{\rho,M}:X\to [0,1]$ with the following properties
\begin{enumerate}
\item\label{t.l.1} The function $\vphi$ is $\rho^{-1}$-Lipschitz.
\item\label{t.l.2} We have $\int_X 1-\vphi dm_X\ll M^{-1}+\rho\log M$.
\item\label{t.l.3} Given $f:D\to \bC$ a $\ka$-Lipschitz function, the product $\widehat{f\circ\tau}\cdot\vphi:X\to \bC$  is Lipschitz with Lipschitz constant $\ll\max\set{\norm{f}_\infty,\ka}\rho^{-1}M$.
\end{enumerate}
\end{lemma}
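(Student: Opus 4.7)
I would construct $\vphi=\vphi_1\cdot\vphi_2$ as a product of two cutoffs, one for the cusp of $X$ and one for a neighborhood of $\partial B$. For the cusp, let $\vphi_1\colon X\to[0,1]$ be a standard Lipschitz function that equals $1$ on the thick part $\{x\in X:\on{height}(x)\le M/2\}$ and $0$ on $\{x:\on{height}(x)\ge M\}$, interpolated linearly in the $\log y$ coordinate on the annulus so that $\vphi_1$ has an absolute Lipschitz constant; here $\on{height}(x)=\sup\on{Im}(z)$ as $z\in\bH$ ranges over representatives of $x$. Since the cusp of $X$ above height $M$ has $m_X$-measure $\asymp M^{-1}$, this gives $\int_X(1-\vphi_1)\,dm_X\ll M^{-1}$. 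For the boundary, let $\vphi_2\colon X\to[0,1]$ vanish on the $\rho/2$-neighborhood of $\partial B$, equal $1$ outside the $\rho$-neighborhood, and be Lipschitz of constant $\ll\rho^{-1}$. Setting $\vphi=\vphi_1\vphi_2$ makes property~\eqref{t.l.1} immediate from the Leibniz rule, since $\vphi_1,\vphi_2\in[0,1]$.

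The heart of property~\eqref{t.l.2} is the area bound $\on{area}(\partial B\cap\on{supp}(\vphi_1))\ll\log M$, which yields $m_X(\{\vphi_2<1\}\cap\on{supp}(\vphi_1))\ll\rho\log M$. Using the parametrization $(g,t)\in\cC\times(0,\eps_0)\mapsto ga(t)\in\cB$ from Lemma~\ref{the set B1}, $\partial B$ decomposes into the floor $C$, the ceiling $Ca(\eps_0)$, and the sides $\pi(\partial\cC\cdot[0,\eps_0])$. In the Iwasawa coordinates $g=a(s)k_\theta$ on $\cC$, the allowed range is $\theta\in(\arctan(e^{|s|}),\pi/2)$, whose length is $\asymp e^{-|s|}$ for large $|s|$; this is integrable in $s$, so $\on{area}(\cC)<\infty$ and the floor and ceiling contribute $O(1)$. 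For the sides, the constraint $\on{height}\le M$ corresponds (modulo the $\Ga$-action on the imaginary-axis representatives) to $|s|\ll\log M$, and since $|\partial_s|$ has bounded hyperbolic norm, the length of $\partial\cC\cap\{|s|\ll\log M\}$ is $\ll\log M$. Hence the sides contribute $\ll\eps_0\log M\ll\log M$, giving $\int_X(1-\vphi)\,dm_X\ll M^{-1}+\rho\log M$.

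For property~\eqref{t.l.3}, set $h=\widehat{f\circ\tau}\cdot\vphi$. Since $\vphi$ vanishes in a neighborhood of $\partial B$ and outside $\on{supp}(\vphi_1)$, the function $h$ is continuous on $X$, supported in $B\cap\on{supp}(\vphi_1)$ at distance $\ge\rho/2$ from $\partial B$; in particular, the jump of $\widehat{f\circ\tau}$ across $\partial B$ does not contribute. On this support, the Lipschitz constant of $\tau$ is controlled via $\tau(s,\theta)=(\cot\theta\cdot e^s,\tfrac{1}{2}\sin(2\theta)e^{-s})$: the opening constraint $\tan\theta\ge e^{|s|}$ forces $\sin\theta\asymp 1$ and $\cot\theta\le e^{-|s|}$, so the partial derivatives of $\tau$ are $\ll e^{|s|}\ll M$. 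Combined with bounded hyperbolic norms of $\partial_s,\partial_\theta$ on $G$, this yields $\on{Lip}(\widehat{f\circ\tau}|_{\on{supp}(\vphi)\cap B})\ll\ka M$. By Leibniz,
\[
\on{Lip}(h)\ll\norm{f}_\infty\rho^{-1}+\ka M\ll\max\set{\norm{f}_\infty,\ka}\,\rho^{-1}M,
\]
using $\rho^{-1},M>1$.

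The main technical obstacle is the area estimate $\on{area}(\partial B\cap\on{supp}(\vphi_1))\ll\log M$ along with the companion Lipschitz bound $\on{Lip}(\tau)\ll M$ in the thick region. Both hinge on a careful use of the opening constraint $\tan\theta\ge e^{|s|}$ on $\cC$, which simultaneously keeps the transverse dimension of $\cC$ integrable (yielding finite area for the floor and ceiling) and keeps $\csc^2\theta$ bounded (preventing an extra factor of $M$ from appearing in the derivatives of $\tau$).
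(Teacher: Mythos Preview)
Your argument is correct and follows essentially the same route as the paper: build $\vphi$ from a cusp cutoff and a cutoff around $\partial B$, bound $\int(1-\vphi)$ by the Haar measure of a thin neighborhood of $\partial B\cap K_M$ (which is $\ll\rho\log M$) plus the cusp mass $\ll M^{-1}$, and control $\on{Lip}(\widehat{f\circ\tau})$ on the support via $\norm{\on{d}\tau}\ll e^{|s|}\ll M$. The paper differs only cosmetically: it takes $\vphi(y)=\min\{1,\rho^{-1}\on{d}(y,F)\}$ with $F=(\partial B)_\rho\cup H_M$ rather than a product of two cutoffs (this gives exactly $\rho^{-1}$-Lipschitz rather than $\ll\rho^{-1}$, matching the statement literally), and it estimates $m_X((\partial B)_{2\rho}\cap K_M)$ by a covering-number count in the coordinates $(s,\theta,t)$ rather than by your area-times-thickness heuristic; both yield the same $\rho\log M$ bound. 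One small point you leave implicit: the Leibniz bound only gives a \emph{local} Lipschitz constant for $h$ on $B\cap\on{supp}(\vphi)$, and you need a local-to-global step (connect two points by a geodesic, use that $h\equiv 0$ on a neighborhood of $\partial B$ and outside $K_M$) to conclude the global bound --- the paper makes this explicit via its Remark on path arguments.
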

\begin{proof}[Proof of Theorem~\ref{effective t.1 new}]
\eqref{toest111}. Let $T>1$ and  $\eps>0$ be fixed. Under the assumption in the statement of the Theorem we 
need to argue the validity of~\eqref{eq521}.
Let $f\in\on{Lip}_\ka(D)$ be given. Let $c_0,c_\al$ be as in Definition~\ref{constants}.
Using~\eqref{suspension 2} we have the following estimate:
\begin{align}\label{new desire new}
\av{\int_{C}f\circ\tau d\tilde{\mu}_{\al}-\int_{C}f\circ\tau d\tilde{\lam}}=&\av{\frac{c_\al}{\eps_0}\int_{X}\widehat{f\circ\tau}d\mu_{\al}-\frac{c_0}{\eps_0}\int_{X}\widehat{f\circ\tau}dm_X}\\
\nonumber \le& \underbrace{\av{c_\al-c_0}}_{(*)}\norm{f}_\infty\eps_0^{-1}+c_0\eps_0^{-1}\underbrace{\av{\int_{X}\widehat{f\circ\tau}d\mu_{\al}-\int_{X}\widehat{f\circ\tau}dm_X}}_{(**)}.
\end{align}
We first estimate the expression $(**)$ in~\eqref{new desire new}.  Given  $M>1, 0<\rho<1$ we let $\vphi=\vphi_{\rho, M}$ be as in Lemma~\ref{t.l} and denote $\psi=1-\vphi$.
\begin{align}\label{new one new}
(**)&=\av{\int_{X}\widehat{f\circ\tau}\cdot(\vphi+\psi)d\mu_{\al}-\int_{X}\widehat{f\circ\tau}\cdot(\vphi+\psi)dm_{X}}\\
\nonumber&\le\av{\int_{X}\widehat{f\circ\tau}\cdot\vphi d\mu_{\al}-\int_{X}\widehat{f\circ\tau}\cdot\vphi dm_{X}}+\av{\int_{X}\widehat{f\circ\tau}\cdot\psi d\mu_{\al}}+
\av{\int_{X}\widehat{f\circ\tau}\cdot\psi dm_{X}}.
\end{align}
We will estimate each of the three summands in the right hand side of the inequality~\eqref{new one new}.
By Lemma~\ref{t.l}\eqref{t.l.2} we have 
\begin{equation}\label{n.d.1 new}
\av{\int_{X}\widehat{f\circ\tau}\cdot\psi dm_X}\le\norm{f}_\infty \int_{X}\psi dm_X\ll\norm{f}_\infty (M^{-1}+\rho\log M).
\end{equation}
Next, note that by Lemma~\ref{t.l}\eqref{t.l.1} $\psi$ is $\rho^{-1}$-Lipschitz and so
our assumption together with the estimate~\eqref{n.d.1 new} yields 
\begin{align}\label{n.d.2 new}
\nonumber\av{\int_{X}\widehat{f\circ\tau}\cdot\psi d\mu_{\al}}&\le\norm{f}_\infty\int_{X}\psi d\mu_{\al}\\
\nonumber &\le\norm{f}_\infty\pa{\int_{X}\psi dm_X+\max\set{1,\rho^{-1}}T^{-1}}\\
&\ll\norm{f}_\infty\pa{M^{-1}+\rho\log M+\rho^{-1}T^{-1}}.
\end{align}
Finally, by Lemma~\ref{t.l}\eqref{t.l.3} our assumption applies to the Lipschitz function $\widehat{f\circ\tau}\cdot\vphi$ and we conclude the following
\begin{align}\label{n.d.3' new}
\av{\int_{X}\widehat{f\circ\tau}\cdot\vphi d\mu_{\al}-\int_{X}\widehat{f\circ\tau}\cdot\vphi dm_X}\le\max\set{\norm{f}_\infty,\ka}\rho^{-1}M
T^{-1}.
\end{align}
We now make the choice $M=\rho^{-1}=T^{\frac{1}{3}-\frac{\eps}{2}}$ and combine estimates~\eqref{n.d.1 new}, \eqref{n.d.2 new}, \eqref{n.d.3' new} into~\eqref{new one new} to obtain
\begin{equation}\label{n.d.3 new}
(**)\ll_{\eps} \max\set{\norm{f}_\infty,\ka}T^{-\frac{1}{3}+\eps},
\end{equation}
where in the above estimate we used $\rho\log M \ll_\eps T^{-\frac{1}{3}+\eps}$.

In order to finish we need to further estimate $(*)$ in~\eqref{new desire new}. To obtain this estimation from the above we take $f:D\to\bC$ to be identically $1$ and note that in this case
 $\widehat{f\circ\tau}=\chi_B$ and so using~\eqref{suspension 2}  we have 
 \begin{equation}\label{n.d.4 new}
 \av{\int_{X}\widehat{f\circ\tau}d\mu_{\al}-\int_{X}\widehat{f\circ\tau}dm_X}=\av{\mu_{\al}(B)-m_X(B)}=\av{\frac{\eps_0}{c_\al}-\frac{\eps_0}{c_0}}.
 \end{equation}
 The left hand side of~\eqref{n.d.4 new} is $(**)$ for this choice of $f$ and so by~\eqref{n.d.3 new} we obtain 
 \begin{equation}\label{n.d.5 new}
 \av{c_\al^{-1}-c_0^{-1}}\ll_{\eps} T^{-\frac{1}{3}+\eps}.
 \end{equation} 
We choose the absolute constant $T_0$ so that the inequality~\eqref{n.d.5 new} (applied say with $T=T_0$ and $\eps=\frac{1}{6}$) implies that 
$c_\al^{-1}>c_0^{-1}/2$ and so is bounded away from 0 by an absolute constant.
As the derivative of the function $x\mapsto x^{-1}$ is bounded for $x$'s bounded away from 0, we conclude from~\eqref{n.d.5 new} that 
 \begin{equation}\label{n.d.5'}
 (*)=\av{c_\al-c_0}\ll_{\eps} T^{-\frac{1}{3}+\eps}.
 \end{equation} 
  
 Plugging this estimation of $(*)$ together with~\eqref{n.d.3 new} to~\eqref{new desire new}
 we obtain the desired inequality~\eqref{eq521}.
 
 \eqref{toest112}.
 Let $\wt{P}_{\al}$ denote the support of $\tilde{\mu}_\al$. It follows from~\eqref{right projections} that $p\circ \tau(\wt{P}_\al)=P_\al$. We will show below in Lemma~\ref{parity lemma} that the map $p\circ\tau:\wt{P}_\al\to P_\al$ is $j_\al$ to 1 (that is,  
 two to one if $\av{P_\al}$ is odd or one to one if it is even), and so the inequality sought will follow once we show 
$\av{\frac{|\wt{P}_\al|}{t_\al}-\frac{1}{c_0}}\ll_\eps T^{-\frac{1}{3}+\eps}.$   

Recall that by~\eqref{n.d.4 new}
$\mu_{\al}(B)=\frac{\eps_0}{c_\al}$.  On the other hand, the geodesic $x_{\al}A$ which is of length $t_{\al}$ penetrates $B$ exactly $|\wt{P}_{\al}|$ times
and stays in $B$ along a time interval of length $\eps_0$ each time and so $\mu_{\al}(B)=\frac{|\wt{P}_{\al}|\cdot\eps_0}{t_{\al}}$. It follows that 
$c_\al^{-1}=\frac{|\wt{P}_{\al}|}{t_{\al}}$ and we conclude from~\eqref{n.d.5 new} the desired inequality 
$\av{\frac{|\wt{P}_\al|}{t_\al}-\frac{1}{c_0}}\ll_\eps T^{-\frac{1}{3}+\eps}.$ 

\end{proof}
\begin{lemma}\label{parity lemma}
For any $\al\in\qi$ the map $p\circ\tau:\wt{P}_\al\to P_\al$ is $j_\al$ to 1.
\end{lemma}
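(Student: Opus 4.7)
The strategy is to analyze $p\circ\tau$ as a covering of cyclic dynamical systems, exploiting its equivariance with respect to $T_C$ and $S$ coming from Lemma~\ref{isomorphism}, together with the parity constraint from the alternation $T_C(C^\pm)\subset C^\mp$ of Lemma~\ref{l.1}\eqref{l.1.3}. Since $\wt P_\al$ is a single $T_C$-cycle, $P_\al$ is a single $S$-cycle, and $p\circ\tau$ intertwines these maps, the map $p\circ\tau:\wt P_\al\to P_\al$ is an equivariant surjection of cycles and is therefore a uniform covering of degree $d=|\wt P_\al|/|P_\al|$. Showing it is $j_\al$-to-$1$ thus reduces to the equality $|\wt P_\al|=\on{lcm}(2,|P_\al|)$.

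One direction is immediate: equivariance gives $|P_\al|\mid|\wt P_\al|$ (apply $p\circ\tau$ to $T_C^{|\wt P_\al|}(x_0)=x_0$), and $|\wt P_\al|$ is even because consecutive points of the $T_C$-orbit alternate between $C^+$ and $C^-$ and the orbit must close up. Hence $\on{lcm}(2,|P_\al|)\mid|\wt P_\al|$.

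The main work is the reverse inequality. I would fix $x_0\in\wt P_\al\cap C^+$ with $\tau(x_0)=(y_0,z_0)\in D$, set $n=\on{lcm}(2,|P_\al|)$, and show $T_C^n(x_0)=x_0$. By Lemma~\ref{isomorphism} this reduces to $\bar S^n(y_0,z_0)=(y_0,z_0)$. The first coordinate is automatically preserved since $|P_\al|\mid n$, and for the second coordinate the key observation is that $\bar S^{|P_\al|}$ restricted to the fiber $\{y_0\}\times\{z:(y_0,z)\in D\}$ is an affine map
\[
z\mapsto c+(-1)^{|P_\al|}\Big(\prod_{i=0}^{|P_\al|-1}y_i^2\Big)z,\qquad y_i:=S^i(y_0)\in(0,1),
\]
with linear coefficient of absolute value $<1$, hence a strict contraction with a unique fixed point in the fiber. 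Since $(y_0,z_0)$ is fixed by $\bar S^{|\wt P_\al|}=(\bar S^{|P_\al|})^{|\wt P_\al|/|P_\al|}$, iterating the contraction forces $z_0$ to be this unique fixed point; in particular $\bar S^{|P_\al|}(y_0,z_0)=(y_0,z_0)$, and a fortiori $\bar S^n(y_0,z_0)=(y_0,z_0)$. Because $\tau_+:C^+\to D$ and $\tau_-:C^-\to D$ are each bijections, the $\tau$-preimage of $(y_0,z_0)$ consists of exactly two points, one in $C^+$ and one in $C^-$; since $n$ is even, $T_C^n(x_0)\in C^+$, which forces $T_C^n(x_0)=x_0$.

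The principal obstacle is this fiberwise contraction argument: it substitutes for the a.e.\ invertibility of $\bar S$ (which by itself does not apply to the measure-zero set $P_\al$) and yields the uniqueness of $\bar S$-periodic lifts of $S$-periodic orbits. Granted this, $|\wt P_\al|=\on{lcm}(2,|P_\al|)=j_\al|P_\al|$, and the degree of $p\circ\tau$ is $j_\al$, as required.
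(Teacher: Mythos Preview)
Your proof is correct and follows a genuinely different route from the paper's. The paper exploits an \emph{arithmetic} observation: for a periodic geodesic the forward and backward endpoints $e_\pm(g)$ are Galois conjugates, so the first coordinate $|e_+(g)|$ of $\tau$ already determines the second coordinate $|e_+(g)-e_-(g)|^{-1}$; hence $p\circ\tau$ is injective on each of $\wt\cP_\al\cap\cC^\pm$ separately, and the alternation property of Lemma~\ref{l.1}\eqref{l.1.3} then decides directly whether both halves are nonempty (odd period) or only one is (even period). You replace this Galois-conjugate input by a purely dynamical one: the affine form $z\mapsto y-y^2z$ of $\bar S$ on fibers makes $\bar S^{|P_\al|}$ a strict contraction on the fiber over $y_0$, forcing uniqueness of the $\bar S$-periodic lift and hence $|\wt P_\al|=\on{lcm}(2,|P_\al|)$. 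The paper's argument is shorter once the arithmetic fact is in hand, while yours is self-contained within the dynamical setup of~\S\ref{potoest} and would transfer to other natural-extension situations where no such algebraic structure is available.
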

\begin{proof}
In what follows we do not always distinguish between the cross-section $C$ and the subset $\cC\subset G$ used to define it.
We first observe that if a semicircle in the upper half plane that corresponds to the geodesic $\set{ga(t)}$ projects under $\pi$ to a 
periodic geodesic then $e_-(g),e_+(g)\in\bR$ are quadratic irrationals that are Galois conjugates of each other. This implies in particular, that if we denote the lift of $\wt{P}_\al$ from $C$ to $\cC$ by $\wt{\cP}_\al$ and by $\wt{\cP}^\pm=\cC^{\pm}\cap\wt{\cP}_\al$, then $p\circ \tau$ is injective when viewed as a map from either $\wt{\cP}^+$ or from $\wt{\cP}^-$. 
This follows from the fact that $\tau:\cC^\pm\to D$ is one to one and onto and that according to the observation made above, the 
first coordinate $p\circ\tau(g)$ determines the second one as it is the reciprocal of the diameter of the corresponding semicircle.

This shows that the pre-image of a point in $P_\al$ is of size 1 or 2. Choose $\be\in P_\al$ and a pre-image of it $g\in\wt{\cP}_\al$. 
We  Apply Lemma~\ref{isomorphism} and follow the orbits $S^i(\be)$ $i=0,1,\dots$ and the orbit  $T^i_C(\pi(g))$ above it. If  
$\av{P_\al}$ is odd, then 
Lemma~\ref{l.1}\eqref{l.1.3} tells us that when the orbit in the unit interval closes up, the orbit in the cross-section cannot close up
(as it switched from $C^+$ to $C^-$ or vice versa), and therefore we see that each of $\wt{\cP}^\pm$ projects onto $P_\al$ and so the map is 2 to 1.
Similarly, in case $\av{P_\al}$ is even, when the orbit in the unit interval closes up 
Lemma~\ref{l.1}\eqref{l.1.3} tells us that the orbit in the cross-section must return to the the same set $C^+$ or $C^-$ that $\pi(g)$ 
belongs to and therefore it must close up by the injectivity which was observed at the beginning. It follows that
one of the sets $\wt{\cP}^\pm$ is empty while the other one projects onto $P_\al$, and so the map is 1 to 1.
\end{proof}
\section{Construction of $\vphi$ - Proof of Lemma~\ref{t.l}}\label{construction of phi}
\subsection{Motivation}
We start with a function $f:D\to \bC$ which is $\ka$-Lipschitz and we consider the function $\tilde{f}:X\to\bC$ given by 
$\tilde{f}= \widehat{f\circ\tau}$. The points of discontinuity of $\tilde{f}$ are contained in $\partial B$.  We wish to find an approximation of $\tilde{f}$
which is  not only continuous but for which we will have clear control on its Lipschitz constant. To achieve this, we  construct an auxiliary function $\vphi$ which vanishes
in an $\eps$-thickening of $\partial B$ and is equal to 1 outside a $2\eps$-thickening of $\partial B$. This will clearly make $\tilde{f}\cdot\vphi$ continuous, but in order to control 
its Lipschitz constant we will have to make $\vphi$ vanish `high in the cusp' where the differential of $\tau$ explodes (see Lemma~\ref{tau bound} below). Along the 
construction we need to pay attention to two more quantities which we should control: The Lipschitz constant of $\vphi$ and $\int\psi$, where $\psi=1-\vphi$. These clearly  fight one against the other; in order to make $\int \psi$ small we wish to take $\eps$ (which control the above thickening) to be small which makes the Lipschitz constant of
$\vphi$ large.

Below, in~\S\ref{g.m.o}-\ref{e.n.d},  we discuss a somewhat eclectic collection of observations that we will use  in order to carry out  the arguments in~\S\ref{t.a} with 
little interruption. 
\subsection{General metric observations}\label{g.m.o}
Let $(Y,\on{d})$ be a metric space. For a subset $F\subset Y$ we denote $$(F)_\eps=\set{y\in Y:\on{d}(y,F)\le\eps};$$that is, the set of all points of distance $\le\eps$ from $F$. The following general
construction allows us to build Lipschitz functions in abundance. The proof is left to the reader.
\begin{lemma}[Fundamental construction]\label{Lip construction}
Let $(Y,\on{d})$ be a metric space and $F\subset Y$ a subset. For $\eps>0$ define $\vphi_{\eps,F}:Y\to[0,1]$ by $\vphi_{\eps,F}(y)=\min\set{1,\eps^{-1}\on{d}(y,F)}$. Then $\vphi_{\eps,F}$ attains the constant 
values $0$ on $F$  and $1$ on $Y\smallsetminus (F)_\eps$. Furthermore, $\vphi_{\eps,F}$ is $\eps^{-1}$-Lipschitz.  
\end{lemma}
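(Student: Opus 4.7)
The plan is to verify the two stated properties separately, both of which reduce to elementary facts about the distance-to-a-set function $\rho_F(y) \defi \on{d}(y,F)$. First, I would establish the boundary-value claim: if $y \in F$ then $\rho_F(y) = 0$, so $\vphi_{\eps,F}(y) = \min\{1,0\} = 0$; if $y \notin (F)_\eps$ then by definition $\rho_F(y) > \eps$, so $\eps^{-1}\rho_F(y) > 1$ and the minimum picks out $1$. Both verifications are immediate from the definitions.

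The second claim, that $\vphi_{\eps,F}$ is $\eps^{-1}$-Lipschitz, will rest on the standard observation that $\rho_F$ is itself $1$-Lipschitz. To see this I would fix $y_1, y_2 \in Y$ and any $z \in F$, note $\rho_F(y_1) \le \on{d}(y_1,z) \le \on{d}(y_1,y_2) + \on{d}(y_2,z)$, and take the infimum over $z \in F$ to get $\rho_F(y_1) \le \on{d}(y_1,y_2) + \rho_F(y_2)$; symmetry then gives $|\rho_F(y_1) - \rho_F(y_2)| \le \on{d}(y_1,y_2)$. Multiplying through by $\eps^{-1}$, the map $y \mapsto \eps^{-1}\rho_F(y)$ is $\eps^{-1}$-Lipschitz.

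Finally, I need to note that truncating by the constant $1$ preserves the Lipschitz constant. This follows because for any two real-valued $\ka$-Lipschitz functions $u, v$ on $Y$, the pointwise minimum $\min\{u,v\}$ is again $\ka$-Lipschitz: indeed, $\min\{u(y_1), v(y_1)\} - \min\{u(y_2), v(y_2)\}$ is bounded in absolute value by $\max\{|u(y_1)-u(y_2)|, |v(y_1)-v(y_2)|\} \le \ka \on{d}(y_1,y_2)$. Applying this with $u = \eps^{-1}\rho_F$ (which is $\eps^{-1}$-Lipschitz) and $v \equiv 1$ (which is $0$-Lipschitz, hence also $\eps^{-1}$-Lipschitz), we conclude that $\vphi_{\eps,F}$ is $\eps^{-1}$-Lipschitz, completing the proof.

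There is no real obstacle here; the entire content of the lemma is the $1$-Lipschitz property of $\rho_F$ together with the trivial fact that truncation does not increase the Lipschitz constant.
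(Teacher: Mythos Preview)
Your proof is correct and complete. The paper itself leaves this lemma without proof (``The proof is left to the reader''), and your argument --- verifying the boundary values directly, establishing that $y\mapsto\on{d}(y,F)$ is $1$-Lipschitz via the triangle inequality, and noting that truncation by a constant preserves the Lipschitz constant --- is exactly the standard verification the authors have in mind.
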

We now make two remarks regarding Lipschitz constants:
\begin{remark}\label{shabat r.1}
Consider two functions, $f:Y\to\bC$ and $\vphi: Y\to[0,1],$ on a metric space $(Y,\on{d})$ and assume that they are $\ka_f,\ka_\vphi$-Lipschitz respectively with $\ka_\vphi\ge 1$.
Then, for any $x,y\in Y$ we have 
\begin{align*}
\av{f\cdot\vphi(x)-f\cdot\vphi(y)}&\le\av{f(x)-f(y)}\vphi(x)+\av{f(y)}\av{\vphi(x)-\vphi(y)}\\
&\le2\max\set{\ka_f,\norm{f}_\infty}\ka_\vphi\on{d}(x,y),
\end{align*}
that is $f\cdot\vphi$ has Lipschitz constant $\ll\max\set{\ka_f,\norm{f}_\infty}\ka_\vphi$.
\end{remark}
\begin{remark}\label{shabat r.2}
Let $f:Y\to\bC$ be a continuous function on a metric space $(Y,\on{d})$ in which between any two points $x,y$ there exists a path whose length equals $\on{d}(x,y)$.
Suppose there is an open cover $\set{U_i}$ of $\on{supp}(f)$ such that for each $i$ the restriction $f:U_i\to \bC$ is $\ka$-Lipschitz. Then we claim that $f$ is $\ka$-Lipschitz as a function on $Y$. To see this, take two points $x,y\in Y$ and connect them by a path $\ga$ whose length is $\on{d}(x,y)$. As $f$ is assumed to be continuous we can turn the open cover $\set{U_i}$ of the support of $f$ to an open cover of $Y$ by joining in the open set $U_0=Y\smallsetminus\on{supp}(f)$. Clearly
$f$ is $\ka$-Lipschitz on $U_0$ as well. Now let $\eps>0$ be a Lebesgue number for the induced open cover of the path $\ga$. Choose points 
$x=x_0,x_1\dots x_n=y$ on $\ga$ in a monotone way (so that $\on{d}(x,y)=\sum_1^n\on{d}(x_i,x_{i-1})$) and such that the distance between $x_i$ to $x_{i-1}$ is less
than $\eps$. It follows that for each $1\le i\le n$ there exists an open set from the cover $U_{j_i}$ such that $x_{i-1},x_i\in U_{j_i}$. As $f$ is assumed to be $\ka$-Lipschitz on $U_{j_i}$, we conclude that 
$$\av{f(x)-f(y)}\le\sum_1^n\av{f(x_i)-f(x_{i-1})}\le\sum_1^n\ka\on{d}(x_i,x_{i-1})=\ka\on{d}(x,y).$$
\end{remark}
\subsection{Coordinates}
We wish to define a convenient coordinate system which will allow us to carry out the relevant computations.
Recall  the open subsets $B,\cB$ of  of $X,G$ respectively that were defined in Lemma~\ref{the set B1}. 
We define similarly to~\eqref{the cross section} 
\begin{align}
\cB^+&=\set{ga(t):g\in\cC^+,t\in(0,\eps_0)}\\
\nonumber\cB^-&=\set{ga(t):g\in\cC^+,t\in(0,\eps_0)}.
\end{align}
A point $g\in \cB$ can be written uniquely in the form $a(s)k_\theta a(t)$ where $s\in\bR$, $t\in(0,\eps_0)$ and the angle $\theta\in[0,\pi)$ has some restrictions on it, arising from the requirements about the endpoints of the semicircle corresponding to $g$.  We shall refer to $(s,\theta,t)$ as the \textit{coordinates} of the point $g\in\cB$ or of the corresponding point $\pi(g)\in B$.

As the action of $a(t)$ from the right does not effect the endpoints, the restrictions on the $\theta$-coordinate are a function of $s$ alone. We workout these restrictions for, say, 
$g\in\cB^+$: We already observed (after~\eqref{the cross section}) that $\theta\in(\frac{\pi}{4},\frac{\pi}{2})$ (in order to ensure that $e_+(g)\in(0,1)$).
It is easy to see from the definition of the start and end points that for $s\in\bR$, $a(s)k_\theta\in\cC^+$, where $\theta\in[0,\pi)$, 
if and only if $e^s\cot \theta\in(0,1)$ and $-e^s\tan\theta<-1$. This is equivalent to saying $\tan\theta\in (\min\set{e^{s},e^{-s}},\infty)$. We choose an inverse $\tan^{-1}:\bR\to (0,\frac{\pi}{2})$ and conclude
that for a given $s$, the range of allowed angles for points $g\in\cB^+$ with coordinates $(s,\theta,t)$, is an interval $I_s^+$ which is defined by
\begin{equation}\label{theta interval}
I_s^+=(\theta_{\on{min}}(s),\frac{\pi}{2}),\textrm{ where }\theta_{\on{min}}(s)=\tan^{-1}(\min\set{e^s,e^{-s}})>\frac{\pi}{4}.
\end{equation}
Let us denote 
\begin{equation}
\cE^+=\set{(s,\theta,t)\in\bR^3:s\in\bR,t\in(0,\eps_0),\theta\in I_s^+},
\end{equation}
and define similarly $\cE^-$ and $\cE=\cE^+\cup\cE^-.$
Let $\xi:\bR^3\to G$ be the function
\begin{equation}\label{xi}
\xi(s,\theta,t)= a(s)k_\theta a(t).
\end{equation}
Clearly, we have $\xi(\cE)=\cB$, $\xi(\cE^+)=\cB^+$, and $\xi(\cE^-)=\cB^-$.

\begin{lemma}\label{cover link}
There is an absolute constant $c$ such that for any $\eps>0$, an $\eps$-ball in $\cE$ is mapped by $\xi$ into a ball of radius $c\eps$ in $\cB$.
\end{lemma}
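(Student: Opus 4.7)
The plan is to use the left-invariance of the Riemannian metric on $G$ to reduce the statement to a uniform bound on the differential $\xi^{-1}d\xi$ valued in $\gog$. Because the map $\xi$ is smooth and the parameter $t$ is constrained to the bounded interval $(0,\eps_0)$, such a bound should hold with an absolute constant depending only on $\eps_0$ and the fixed inner product chosen on $\gog$, which will give the Lipschitz estimate on $\cE$ after integrating along straight segments.

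Concretely, let $X_a\defi \frac{d}{dt}\big|_{t=0} a(t)$ and $X_k\defi\frac{d}{d\theta}\big|_{\theta=0}k_\theta$ denote the generators of the two one-parameter subgroups. Using that $a(\cdot)$ and $k_{(\cdot)}$ are one-parameter subgroups and the definition $\xi(s,\theta,t)=a(s)k_\theta a(t)$, a direct calculation gives
\begin{align*}
\xi^{-1}\partial_s\xi &= \on{Ad}\!\pa{a(t)^{-1}k_\theta^{-1}}(X_a),\\
\xi^{-1}\partial_\theta\xi &= \on{Ad}\!\pa{a(t)^{-1}}(X_k),\\
\xi^{-1}\partial_t\xi &= X_a.
\end{align*}
Now the inner product on $\gog$ was chosen to be $\on{Ad}(K_\infty)$-invariant, so $\on{Ad}(k_\theta^{-1})$ acts isometrically; and $\on{Ad}(a(t)^{-1})$ has bounded operator norm on $\gog$ (its eigenvalues are $1,e^{\pm t}$ on the standard basis, so its norm is $\le e^{\eps_0}$ for $t\in(0,\eps_0)$). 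Thus each of the three vectors in $\gog$ above has norm bounded by an absolute constant $M$.

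To conclude, given an $\eps$-ball in $\cE$ around a point $p_0=(s_0,\theta_0,t_0)$ (which is contained in $\cE$ since $\cE$ is open in $\bR^3$) and any $p_1$ in that ball, I would connect $p_0$ to $p_1$ by the straight segment $\gamma(u)=(1-u)p_0+up_1$, which remains in $\cE$ by convexity of the ball. The Riemannian length of $\xi\circ\gamma$ equals
\[
\int_0^1 \norm{(\xi^{-1}d\xi)(\gamma(u))\cdot\dot\gamma(u)}_{\gog}\,du \le M\sqrt{3}\cdot\norm{p_1-p_0}\le c\eps
\]
for an absolute $c$, whence $\on{d}_G(\xi(p_0),\xi(p_1))\le c\eps$ as required.

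I do not foresee a serious obstacle here: the argument is essentially the standard one showing that a smooth map with uniformly bounded differential is Lipschitz. The one point of genuine content is checking the uniformity of the bound on $\xi^{-1}d\xi$, which comes from the $\on{Ad}(K_\infty)$-invariance of the inner product together with the boundedness of $t$; both of these are built into the setup of \S\ref{preliminaries} and \S\ref{potoest}.
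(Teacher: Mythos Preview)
Your proof is correct and is essentially the same as the paper's: both bound the left-logarithmic derivative $\xi^{-1}d\xi$ using that $\on{Ad}(k_\theta)$ is an isometry and that $t\in(0,\eps_0)$ is bounded. The paper phrases this via Lemma~\ref{lengths of paths} applied to a coordinate-by-coordinate path (yielding $\on d_G(g_1,g_2)\ll|s_1-s_2|+|\theta_1-\theta_2|+|t_1-t_2|$) rather than integrating along your straight segment; note that your segment need not remain in $\cE$ for arbitrary $\eps$ (openness alone does not give this), but this is harmless since $\xi$ and your differential bound extend to all $(s,\theta,t)$ with $t\in[0,\eps_0]$.
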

In the course of the proof of Lemma~\ref{cover link} we will use the following elementary observation
\begin{lemma}\label{lengths of paths}
Let $h(t)$ be a one parameter subgroup of $G$. Then for any $g\in G$, $\on{d}_{G}(g,gh(t))\le||\dot{h}(0)||t$, where 
$||\dot{h}(0)||$ is the norm of the derivative of $h(t)$ at the identity.
\end{lemma}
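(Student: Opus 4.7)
The plan is to exhibit an explicit curve from $g$ to $gh(t)$ whose length equals exactly $|t|\,\|\dot{h}(0)\|$, and then invoke the definition of the Riemannian distance as the infimum of lengths of smooth curves between two points. The natural candidate is the translated one-parameter subgroup $\gamma:[0,t]\to G$ defined by $\gamma(s)=gh(s)$, which satisfies $\gamma(0)=g$ and $\gamma(t)=gh(t)$.

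The core computation is to bound the speed of $\gamma$. Since $h$ is a one-parameter subgroup, $h(s+\varepsilon)=h(s)h(\varepsilon)$, so differentiating in $\varepsilon$ at $\varepsilon=0$ and then left-translating by $g$ gives $\dot{\gamma}(s)=(dL_{gh(s)})_{e}(\dot{h}(0))$, where $L_x$ denotes left multiplication by $x\in G$. The Riemannian metric on $G$ was constructed in~\S\ref{preliminaries} by starting from an inner product on $\gog_\infty$ and extending it to all of $G$ via left translation; consequently, $(dL_{gh(s)})_e$ is a linear isometry from $\gog_\infty=T_eG$ onto $T_{gh(s)}G$. This yields $\|\dot{\gamma}(s)\|=\|\dot{h}(0)\|$ for every $s$, so the length of $\gamma$ equals $\int_0^{t}\|\dot{h}(0)\|\,ds=t\,\|\dot{h}(0)\|$ (and $|t|\,\|\dot{h}(0)\|$ for $t<0$ upon reversing orientation).

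The conclusion then follows immediately: the Riemannian distance $\on{d}_{G}(g,gh(t))$ is by definition the infimum of the lengths of piecewise smooth paths joining $g$ and $gh(t)$, and we have exhibited one such path of length $t\,\|\dot{h}(0)\|$, giving $\on{d}_G(g,gh(t))\le t\,\|\dot{h}(0)\|$ as claimed. There is really no main obstacle here --- the argument is a direct unwinding of the definitions of the left-invariant Riemannian metric and of the one-parameter subgroup; the only mild subtlety is to be careful that the norm $\|\dot{h}(0)\|$ on the Lie algebra is the one induced from the Riemannian metric at the identity, which is exactly the inner product used to define the metric in~\S\ref{preliminaries}.
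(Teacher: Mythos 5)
Your proof is correct, and it is exactly the argument the paper has in mind: the paper labels Lemma~\ref{lengths of paths} an ``elementary observation'' and does not supply a proof at all (the \texttt{proof} environment that follows the statement is in fact the proof of Lemma~\ref{cover link}, which uses it). Your curve $\gamma(s)=gh(s)$ together with left-invariance of the Riemannian metric giving constant speed $\|\dot h(0)\|$ is the intended one-line justification, and your care about the norm being the one induced at the identity is well placed.
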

\begin{proof}
 We link any two points $g_i=\xi(s_i,\theta_i,t_i)\in\cB, i=1,2$ by the path which changes linearly the $s$-coordinate first, then the $\theta$-coordinate, and finally the $t$-coordinate. Each such change corresponds to the action from the right by a one-parameter subgroup $h(t)$ as in Lemma~\ref{lengths of paths}. 
The change in
the $s$-coordinate corresponds to $h(s)=a(-t_1)k_{-\theta_1}a(s)k_{\theta_1}a(t_1)$, the change in the $\theta$-coordinate corresponds to $h(\theta)=a(-t_1)k_\theta a(t_1)$, and finally, the change in the $t$-coordinate corresponds to $h(t)=a(t)$. As the family of one-parameter subgroups that are involved in this process are  
conjugations of $a(t)$ and  $k_\theta$, where the conjugating element is varying in a compact set, we conclude that the norm of the derivative at the identity $\dot{h}(0)$ is $\ll 1$ for some absolute implicit constant. Lemma~\ref{lengths of paths} implies then that 
$$\on{d}_G(g_1,g_2)\ll \av{s_1-s_2}+\av{\theta_1-\theta_2}+\av{t_1-t_2},$$
which establishes the claim.
\end{proof}
\subsection{Height} 
The map $\tau$ defined in~\eqref{tau} was considered so far as a map from the cross-section $C$. As we wish to use differentiation it will be more convenient to extend it to a map $\tau:B\to D$ in the 
following way: Given a point $x\in B$ it can be written uniquely as $x_Ca(t)$ where $x_C\in C$ and $t\in(0,\eps_0)$. We define $\tau(x)=\tau(x_C)$; that is, we view $\tau$ as a function on $B$ which is 
constant along the direction of the geodesic flow.

As will be seen shortly, the norm of the differential of $\tau:B\to D$ is not bounded and so, in order to be able to control the Lipschitz constant
of the function appearing in Lemma~\ref{t.l}\eqref{t.l.3} we need to force its support to be contained in a domain in which we have some control on $\norm{\on{d}\tau}$.

Recall the Iwasawa decomposition~\eqref{Iwasawa}. Let $\cF$ denote the usual fundamental domain of $\Ga$ in $G$, that is, 
\begin{align}\label{f.d.1}
\cF&=\set{n(t)a(s)k_\theta\in G:\av{t}< \frac{1}{2}, t^2+e^{2s}> 1},\\
\nonumber \overline{\cF}&=\set{n(t)a(s)k_\theta\in G:\av{t}\le \frac{1}{2}, t^2+e^{2s}\ge 1}
\end{align}
We define the height function $\on{ht}:G\to\bR$ to be $\on{ht}(g)=e^s$ if $g=n(t)a(s)k_\theta$. 
This is indeed the imaginary coordinate of the base-point of  the tangent vector to $\bH$ corresponding to $g$.
This function respects the identifications induced by $\Ga$ on the boundary of $\overline{\cF}$ and so descends to a function (which we
continue to denote $\on{ht}(\cdot)$) on $X$.
For any $M>1$ we let 
\begin{align}\label{the height}
\cH_M&=\set{g\in \overline{\cF}:\on{ht}(g)\ge M},\quad\cK_M=\set{g\in \overline{\cF}:\on{ht}(g)< M};\\
\nonumber H_M&=\set{x\in X:\on{ht}(x)\ge M},\quad K_M=\set{x\in X:\on{ht}(x)< M}.
\end{align}
\begin{remark}\label{r.7}
It is well known that $m_X(H_M)=m_G(\cH_M)=M^{-1}$, which is an identity that will be needed later
(need to add reference).
\end{remark}
\subsection{Estimating norms of differentials}\label{e.n.d}
\begin{lemma}\label{tau bound}
The differentials of $\tau:B\to D$ and $\on{ht}:X\to\bR$ at a point $y$ satisfy $\norm{\on{d}_y\tau}\ll\on{ht}(y),\norm{\on{d}_y(\on{ht})}\ll \on{ht}(y)$.
\end{lemma}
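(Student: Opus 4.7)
The plan is to compute directly in matrix coordinates, using the left-invariant metric on $G = \PSL_2(\bR)$: a tangent vector at $g$ is identified via left translation with $X \in \gog = \mathfrak{sl}_2(\bR)$ through $v = \left.\tfrac{d}{d\eps}\right|_{\eps=0} g\exp(\eps X)$, in which case $\norm{v}_g = \norm{X}$. Writing $g = \smallmat{a&b\\c&d}$, $X = \smallmat{x_{11}&x_{12}\\x_{21}&-x_{11}}$, and denoting by $D_X$ the directional derivative $\left.\tfrac{d}{d\eps}\right|_{\eps=0}$, the relation $g\exp(\eps X) = g + \eps gX + O(\eps^2)$ yields the entry-level derivatives $D_X a = ax_{11}+bx_{21}$, $D_X c = cx_{11}+dx_{21}$, $D_X d = cx_{12}-dx_{11}$. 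I will then compute $D_X F$ for the scalar functions $F$ that build up $\tau$ and $\on{ht}$.

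For $\tau$, observe that on $\cB^+$ the formula $\tau(g) = (a/c,\, cd)$ from~\eqref{tau} is already invariant under the geodesic flow $g \mapsto g a(t)$. Direct differentiation combined with $ad-bc=1$ gives the clean identities
\begin{equation*}
D_X(a/c) = -x_{21}/c^2, \qquad D_X(cd) = c^2 x_{12} + d^2 x_{21}.
\end{equation*}
The classical identity $\on{ht}(g) = \Im(g\cdot i) = 1/|ci+d|^2 = 1/(c^2+d^2)$ bounds the second by $\on{ht}(g)^{-1}\norm{X}$, which is $\ll \on{ht}(g)\norm{X}$ since $\on{ht}$ is bounded below on $X$. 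For the first contribution I will use the parametrization $g = a(s)k_\theta a(t)$ with $t \in (0,\eps_0)$ and $\theta$ in the cross-section range~\eqref{theta interval}: one computes $c = e^{(t-s)/2}\sin\theta$, and the cross-section constraint forces $\sin\theta$ uniformly away from $0$, hence $1/c^2 \ll e^s \asymp \on{ht}(g)$. Combining yields $\norm{\on{d}_g\tau} \ll \on{ht}(g)$ on $\cB^+$; the case of $\cB^-$ is symmetric.

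The height bound follows the same scheme. A direct computation gives $D_X(c^2+d^2) = 2(c^2-d^2)x_{11} + 2cd(x_{12}+x_{21})$, bounded by $4(c^2+d^2)\norm{X}$ via $2|cd| \le c^2+d^2$. From $\on{ht} = 1/(c^2+d^2)$ and the chain rule,
\begin{equation*}
|D_X \on{ht}(g)| \;=\; \frac{|D_X(c^2+d^2)|}{(c^2+d^2)^2} \;\ll\; \frac{\norm{X}}{c^2+d^2} \;=\; \on{ht}(g)\norm{X},
\end{equation*}
completing the second estimate. The main obstacle in the whole proof is the factor $1/c^2$ in the derivative of $e_+$: it is not universally bounded by $\on{ht}(g)$ and blows up precisely as $\sin\theta \to 0$, when the tangent vector becomes nearly parallel to the geodesic flow direction. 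The cross-section restriction on $\theta$ is what tames it, and without this structural input the bound $\norm{\on{d}_g\tau} \ll \on{ht}(g)$ would fail — which would in turn derail the construction of $\vphi$ in Lemma~\ref{t.l}.
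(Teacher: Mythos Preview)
Your approach mirrors the paper's: identify tangent vectors via left translation and differentiate the matrix-entry formulas directly. The identities $D_X(a/c)=-x_{21}/c^2$ and $D_X(cd)=c^2x_{12}+d^2x_{21}$ are correct (and cleaner than the paper's basis-by-basis evaluation), and your argument for the height differential via $\on{ht}=1/(c^2+d^2)$ is fine once the lift is taken in $\overline{\cF}$.

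There is, however, a genuine gap in the $\tau$-bound. You invoke the identity $\on{ht}(g)=1/(c^2+d^2)$ for the lift $g\in\cB$ and then appeal to ``$\on{ht}$ is bounded below on $X$'' to conclude $c^2+d^2\ll\on{ht}(g)$. But the identity computes $\im(g\cdot i)$ for the \emph{specific} lift $g\in\cB$, whereas the height on $X$ is defined via the $\Ga$-representative in $\overline{\cF}$ --- and $\cB$ is not contained in $\overline{\cF}$. For $g=a(s)k_\theta a(t)\in\cB^+$ with $s<0$ the base point $ie^s$ sits below $i$, and the fundamental-domain representative is $k_{\pi/2}g$ (cf.\ Remark~\ref{r.8.1}); here $c^2+d^2\asymp e^{-s}\to\infty$ while your ``$\on{ht}(g)$''$\asymp e^s\to 0$, so the step $\on{ht}(g)^{-1}\ll\on{ht}(g)$ fails outright. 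The same conflation underlies the claim $e^s\asymp\on{ht}(g)$ in your $1/c^2$ estimate. The repair is exactly what the paper does: from your own parametrization one has $1/c^2\asymp e^s$ and $c^2+d^2\asymp e^{-s}$, hence $\norm{\on{d}_y\tau}\ll\max\{e^s,e^{-s}\}=e^{|s|}$, and Remark~\ref{r.8.1} then supplies $e^{|s|}\asymp\on{ht}(y)$ for the $X$-height.
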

\begin{proof}
We calculate for example $\norm{\on{d}_y\tau}$ for $y\in B^+$ (here $B^+=\pi(\cB^+)$).
Let $N,H,$ and $W$ denote the respective derivatives at time $t=0$ of the one parameter subgroups $n(s),a(t),$ and $k_\theta$ which appear in~\eqref{Iwasawa};
$$N=\pa{\begin{array}{ll}0&1\\ 0&0\end{array}},\;H=\pa{\begin{array}{ll}1&0\\ 0&-1\end{array}},\; W=\pa{\begin{array}{ll}0&1\\ -1&0\end{array}}.$$ 
Let $g\in \cB^+$ be such that $y=\pi(g)$ and write $g=\smallmat{a&b\\c&d}$ so that $\tau(y)=\pa{\frac{a}{c},cd}$ as given in~\eqref{tau}. The tangent space $T_y(X)$ is identified (as an inner product space) with $T_g(G)$ which is  in turn identified with
the Lie algebra $\gog=\gos\gol_2(\bR)$ via the map sending a matrix $V\in\gog$ to $gV$; here we make a choice of an inner product on $\gog$ which induces the left-invariant Riemannnian metric on $G$ and hence on the quotient $X$. Thus, we will obtain an upper bound for the norm of $\on{d}_y\tau$ if  we calculate an upper bound for the norms in $\bR^2$ of the vectors $\on{d}_y\tau(gV)$ for $V=N,H,W$ (where here we abuse notation and think of $\on{d}_y\tau$ as a map from $T_g(G)$ to $\bR^2$).
 
We may think of the above $2\times 2$ matrices as vectors in $\bR^4$ (where the first row corresponds to the first two coordinates) and then we get that $\on{d}_y\tau$ is given by the matrix
$$\on{d}_y\tau=\pa{\begin{array}{cccc}\frac{1}{c}&0&-\frac{a}{c^2}&0\\ 0&0&d&c\end{array}}.$$
A short calculation shows that 
$$\on{d}_y\tau(gN)=\pa{\begin{array}{c}0\\ c^2\end{array}},\; \on{d}_y\tau(gH)=\pa{\begin{array}{c}0\\ 0\end{array}},\;\on{d}_y\tau(gW)=\pa{\begin{array}{c}c^{-2}\\ c^2-d^2\end{array}}.$$
We conclude that $\norm{\on{d}_y\tau}\ll\max\set{c^2,c^{-2},d^2}$, where the implicit constant comes from the fact that we did not specify an inner product on $\gog$. Writing $g$ in its $(s,\theta,t)$-coordinates $g=a(s)k_\theta a(t)$ we calculate $c,d$ and conclude that as $\av{t}\le\eps_0$, $\norm{\on{d}_y\tau}\ll e^{\av{s}}$. Remark~\ref{r.8.1} now gives $\norm{\on{d}_y\tau}\ll\on{ht}(y)$ as desired.

We briefly describe the estimate for $\on{d}_y(\on{ht})$. Let $g\in\overline{\cF}$ be such that $y=\pi(g)$. Assume for a start that the Iwasawa decomposition of 
$g$ is given by $g=n(t)a(s)$. Then the derivative in the directions of $W$ and $N$ are trivial (because the actions from the right of the one parameter groups 
$k_\theta,u(t)$ do not change the height). The derivative in the direction of $H$ is $e^{s}$ which equals $\on{ht}(y)$. It follows that for such points 
$\norm{\on{d}_y(\on{ht})}\ll\on{ht}(y)$.
Now for the general case, let $g=n(t)a(s)k_\theta\in\overline{\cF}$ be the Iwasawa decomposition and consider the composition $G\to G\to\bR$ given by first acting on the 
right by $k_{-\theta}$ and then applying $\on{ht}$. As $\on{ht}$ is invariant under the action from the right by $k_{-\theta}$, this composition equals $\on{ht}$. Its 
differential at $y$ equals by the chain rule to the composition of the differential of right multiplication by $k_{-\theta}$ at the point $y$ and the differential of $\on{ht}$
at the point $y'=\pi(g')$, where $g'=n(t)a(s)$. As right multiplication by $k_{-\theta}$ is an isometry the first differential has norm 1 
(here we use the fact that the left invariant Riemannian metric we chose on $G$ is also right $\set{k_\theta}$-invariant). We evaluated the norm of the second 
differential before and we conclude that the composition satisfies the desired estimate. 
\end{proof}
\begin{remark}\label{r.7.1}
As the differential of $\on{ht}:X\to \bR$ is $\ll M$ on $K_M$. It follows that it is Lipschitz there with a Lipschitz constant
$\ll M$ (see Remark~\ref{r.9}). We conclude that there exists some absolute constant $\ell$ (which is the implicit constant in the estimate $\norm{\on{d}_y(\on{ht})}\ll \on{ht}(y)$), such that the following two statements hold
\begin{enumerate}
\item\label{r-1} For any $0<\eps<1$, $(H_M)_\eps\subset H_{\frac{M}{\ell}}$.  
\item\label{r-2} For any $0<\eps<1$, $(K_M)_\eps\subset K_{\ell M}.$
\end{enumerate}
To see~\eqref{r-1} for example, note that if this was false, then we could find $x\in K_{\frac{M}{\ell}}$ the distance of which from $H_M$ is $\le 1$. We conclude that 
there must be a point $x'$ such that $\on{ht}(x')=M$ and $\on{d}_X(x,x')\le 1$. This of course contradicts the fact that $\on{ht}$ is $M$-Lipschitz on $K_M$.
\end{remark}

\begin{remark}\label{r.8.1}
We wish to comment on the height of a point $y=\pi(g)\in B$, where $g\in\cB$ has coordinates $(s,\theta,t)$. By Lemma~\ref{lengths of paths}, if we let $g'\in\cC$ be the point with coordinates
$(s,\theta,0)$, then $\on{d}_G(g,g')\ll\eps_0$ (here we take $h(t)=a(t)$ to `cancel' the $t$-coordinate in at most $\eps_0$ time). The height of $g'$ is by definition $\on{ht}(g')=e^{\av{s}}$ (the reason for the absolute value is that $g'$ might be in the lower fundamental domain $k_{\frac{\pi}{2}}\cF$). We conclude from parts~\eqref{r-1},\eqref{r-2} of 
Remark~\ref{r.7.1} that $$\av{s}-\log\ell\le\log(\on{ht}(g))\le \av{s}+\log\ell.$$  
\end{remark}
\subsection{The argument}\label{t.a}  
\begin{proof}[Proof of Lemma~\ref{t.l}]
Fix $M>1$ and $0<\eps<1$ (below $\eps$ replaces the number $\rho$ in the statement of Lemma~\ref{t.l}). Let $F\subset X$ be defined by
\begin{equation}\label{F}
F=(\partial B)_\eps\cup H_M.
\end{equation}
Define $\vphi_{\eps,F}:X\to[0,1]$ as in Lemma~\ref{Lip construction}. To ease the notation
we simply denote it by $\vphi$ bearing in mind the dependencies on $\eps,M$. Lemma~\ref{Lip construction} implies the assertion in Lemma~\ref{t.l}\eqref{t.l.1}. 
Let $\psi=1-\vphi$. As $\vphi$ attains the value 1 on $X\smallsetminus (F)_\eps$ we have that $\psi\le\chi_{(F)_\eps}$. Furthermore, by Remark~\ref{r.7.1}\eqref{r-1} and  from the definitions we see that
 $$(F)_{\eps}\subset (\partial B)_{2\eps}\cup(H_{M})_\eps\subset  \pa{(\partial B)_{2\eps}\cap K_{M}}\cup H_{\frac{M}{\ell}}.$$
It follows that 
$$\int_X\psi dm_X\le m_X\pa{\pa{(\partial B)_{2\eps}\cap K_{M}}}+m_X(H_{\frac{M}{\ell}}).$$ 
Hence, by Remark~\ref{r.7}, Lemma~\ref{t.l}\eqref{t.l.2} will follow once we show that the following estimate holds for all $M>1$
\begin{align}
\label{v.e.2} m_X\pa{\pa{(\partial B)_{2\eps}\cap K_M}}\ll \eps\log M.
\end{align}
In order to establish~\eqref{v.e.2} we argue as follows: We first want to pull the calculation to $G$ and then to $\bR^3$. It is clear that $\pi(\partial\cB\cap\cK_M)=\partial B\cap K_M$ and as $\pi$ can only decrease 
distances (that is $\pi$ is $1$-Lipschitz), we must have $\pi( (\partial\cB)_{2\eps}\cap\cK_M)\supset (\partial B)_{2\eps}\cap K_M.$ By the definition of the measure $m_X$
it follows that 
\begin{equation}\label{shabat 2}
m_X((\partial B)_{2\eps}\cap K_M)\le m_G((\partial\cB)_{2\eps}\cap\cK_M).
\end{equation}
Hence, we are reduced to estimate $m_G((\partial\cB)_{2\eps}\cap \cK_M)$. We will workout below the estimation for $m_G\pa{(\partial\cB^+)_{2\eps}\cap\cK_M}$ only. 
Let $N_\eps(L)$  denote the number of $\eps$-balls needed to cover a set $L$. Clearly,
$$N_{3\eps}((\partial\cB^+)_{2\eps}\cap\cK_M)\le N_\eps(\partial \cB\cap \cK_M).$$
We know that a ball of radius $\eps$ in $G$ has volume $\ll \eps^3$ and so we deduce that 
\begin{equation}\label{shabat 1}
m_G((\partial\cB^+)_{2\eps}\cap\cK_M)\ll \eps^3 N_\eps(\partial \cB^+\cap \cK_M).
\end{equation}
Consider the following four subsets of $\overline{\cE}^+\subset \bR^3$ which are mapped by $\xi$ onto the boundary $\partial\cB$
\begin{align*}
 \cQ_1&=\set{(s,\theta,t):s\in\bR, t\in(0,\eps_0),\theta=\theta_{\on{min}}(s)};\\
 \cQ_2&=\set{(s,\theta,t):s\in\bR, t\in(0,\eps_0),\theta=\frac{\pi}{2}};\\
 \cQ_3&=\set{(s,\theta,t):s\in\bR, \theta\in I^+_s, t=0};\\
 \cQ_4&=\set{(s,\theta,t):s\in\bR, \theta\in I^+_s, t=\eps_0}.
\end{align*}
Let $\cQ=\cup_{i=1}^4\cQ_i$.
A point in $\cB\cap \cK_M$ with coordinates $(s,\theta,t)$ must satisfy $\av{s}\le\log M+\log\ell$ as explained in Remark~\ref{r.8.1}. 
Hence, we conclude by Lemma~\ref{cover link} that
\begin{equation}\label{shabat 4}
N_\eps(\partial\cB^+\cap\cK_M)\ll N_{c^{-1}\eps}(Q\cap\set{(s,\theta,t):\av{s}\le\log M+\log \ell}).
\end{equation}
This reduces the problem to a Euclidean one: For each $1\le i\le 4$ the surface $$\cQ_i\cap\set{(s,\theta,t):\av{s}\le\log M+\log 2}$$ is a graph of a function from a domain in $\bR^2$ to $\bR$. The variables vary in 
a range that is of bounded length in one direction and of length $2(\log M+\log\ell)$ in the other. As all these functions have derivatives which are uniformly bounded
(in fact, all of them are constant apart from the function $(s,t)\mapsto \theta_{\on{min}}(s)$ corresponding to $\cQ_1$, see~\eqref{theta interval}), we deduce that
\begin{equation}\label{shabat 3}
N_{c^{-1}\eps}(Q\cap\set{(s,\theta,t):\av{s}\le\log M+\log \ell})\ll \frac{\log M}{\eps^2}.
\end{equation}
Combining~\eqref{shabat 3},\eqref{shabat 4},\eqref{shabat 1}, and~\eqref{shabat 2} gives~\eqref{v.e.2}, which as explained above concludes the proof of
Lemma~\ref{t.l}\eqref{t.l.2}. We turn now to the proof of Lemma~\ref{t.l}\eqref{t.l.3}.

Let $f:D\to\bC$ be $\ka$-Lipschitz and denote $\tilde{f}=\widehat{f\circ\tau}$. The support of the product $\tilde{f}\cdot\vphi$ is contained in the intersection of 
the supports of $\tilde{f}$ and $\vphi$. By definition of the $\widehat{\;}$ operator, the support of $\tilde{f}$ is contained in $B$. By definition of $\vphi$ its support
is contained in the intersection $\set{x\in X:\on{d}_X(x,\partial B)\ge \eps}\cap \overline{K}_M$. It follows that 
\begin{equation}\label{support contained}
\on{supp}(\tilde{f}\cdot\vphi)\subset \set{x\in B : \on{d}_X(x,\partial B)\ge \eps}\cap \overline{K}_M.
\end{equation}
As the points of discontinuity of $\tilde{f}$ are contained in $\partial B$ we conclude that $\tilde{f}\cdot\vphi:X\to\bC$ is continuous. In order to estimate its Lipschitz constant
we wish to appeal to Remark~\ref{shabat r.2}.
Cover the open set $\cB\cap\cK_{2M}$ by open balls $\cU_i\subset \cB\cap\cK_{2M}$. Note 
that each $\cU_i$ is contained in either $\cB^+$ or $\cB^-$.
Consider the open cover $\set{U_i}$ of $\on{supp}(\tilde{f}\cdot\vphi)$, where $U_i=\pi(\cU_i)$. By Remark~\ref{shabat r.2},
Lemma~\ref{t.l}\eqref{t.l.3} will follow once we prove that $\tilde{f}\cdot\vphi:U_i\to\bC$ is $\max\set{\ka,\norm{f}_\infty}\eps^{-1}M$-Lipschitz. As $\vphi$ is $\eps^{-1}$-Lipschitz
 we see that by Remark~\ref{shabat r.1} it is enough to argue that for each $i$, $\tilde{f}:U_i\to\bC$ is Lipschitz with Lipschitz constant $\ll \ka M$. As $U_i\subset K_{2M}$ we know by Lemma~\ref{tau bound} that the norm of the differential of $\tau$ is $\ll M$ on $U_i$. It follows that the Lipschitz constant of the composition $\tilde{f}=f\circ\tau$ is  $\ll \ka M$ as desired.
 \end{proof}
 \begin{remark}\label{r.9}
We remark here about a slight inaccuracy in the arguments presented above and how to remedy it: Let $M,N$ be two Riemannian manifolds  and  $f:U\to N$ a smooth map
from an open set $U\subset M$. Assume the differential of $f$ has norm bounded by some constant $\ka$ on $U$. We used above (in two places) the conclusion that $f$ must be $\ka$-Lipschitz. Strictly speaking, this shows indeed that $f$ is $\ka$-Lipschitz, but with respect to the metric induced from the restriction of the Riemannian metric from $M$ to $U$. This need not be the restricted metric on $U$ in which we are interested. In order to remedy this, one needs to prove that the following property holds: 
\textit{There exists some absolute constant $c$ such that given any two points in $x,y\in U$ one is able to find a path connecting them inside $U$ of length $\le c\on{d}(x,y)$ (here $\on{d}$ is the metric of the ambient space containing $U$).} 

Once this property is established, the conclusion is that $f$ has Lipschitz constant $\ll \ka$. The above property clearly holds in any Euclidean ball. Using the fact that the exponential map from the Lie algebra to $G$ is bi-Lipschitz when restricted to a small enough neighborhood of zero, we see that any image of a small enough  Euclidean ball around zero is an open neighborhood of the identity in $G$ which satisfies the desired property. Using left translations (which are isometries of $G$) we see that each point of $G$ has a basis of neighborhoods satisfying the above properties. Regarding the argument in the very end of the proof of Lemma~\ref{t.l}, we should simply define the sets $\cU_i$ to be such neighborhoods instead open balls. Regarding the use of this in Remark~\ref{r.7.1}, we leave the details to the reader.
 \end{remark}


\def\cprime{$'$} \def\cprime{$'$} \def\cprime{$'$}
\begin{bibdiv}
\begin{biblist}

\bib{Arnold2}{article}{
      author={Arnol{\cprime}d, V.~I.},
       title={Continued fractions of square roots of rational numbers and their
  statistics},
        date={2007},
        ISSN={0042-1316},
     journal={Uspekhi Mat. Nauk},
      volume={62},
      number={5(377)},
       pages={3\ndash 14},
         url={http://dx.doi.org/10.1070/RM2007v062n05ABEH004453},
      review={\MR{2373750 (2008m:11155)}},
}

\bib{Arnold1}{article}{
      author={Arnol{\cprime}d, V.~I.},
       title={Statistics of the periods of continued fractions for quadratic
  irrationals},
        date={2008},
        ISSN={0373-2436},
     journal={Izv. Ross. Akad. Nauk Ser. Mat.},
      volume={72},
      number={1},
       pages={3\ndash 38},
         url={http://dx.doi.org/10.1070/IM2008v072n01ABEH002389},
      review={\MR{2394969 (2009e:11014)}},
}

\bib{Artin}{book}{
      author={Artin, Emil},
       title={Collected papers},
   publisher={Springer-Verlag},
     address={New York},
        date={1982},
        ISBN={0-387-90686-X},
        note={Edited by Serge Lang and John T. Tate, Reprint of the 1965
  original},
      review={\MR{671416 (83j:01083)}},
}

\bib{BugeaudFlorian}{article}{
      author={Bugeaud, Yann},
      author={Luca, Florian},
       title={On the period of the continued fraction expansion of
  {$\sqrt{2^{2n+1}+1}$}},
        date={2005},
        ISSN={0019-3577},
     journal={Indag. Math. (N.S.)},
      volume={16},
      number={1},
       pages={21\ndash 35},
         url={http://dx.doi.org/10.1016/S0019-3577(05)80012-6},
      review={\MR{2138048 (2006e:11015)}},
}

\bib{BenoistOhGafa}{article}{
      author={Benoist, Yves},
      author={Oh, Hee},
       title={Equidistribution of rational matrices in their conjugacy
  classes},
        date={2007},
        ISSN={1016-443X},
     journal={Geom. Funct. Anal.},
      volume={17},
      number={1},
       pages={1\ndash 32},
         url={http://dx.doi.org/10.1007/s00039-006-0585-4},
      review={\MR{2306651 (2008m:11068)}},
}

\bib{Cohn}{article}{
      author={Cohn, J. H.~E.},
       title={The length of the period of the simple continued fraction of
  {$d^{1/2}$}},
        date={1977},
        ISSN={0030-8730},
     journal={Pacific J. Math.},
      volume={71},
      number={1},
       pages={21\ndash 32},
      review={\MR{0457335 (56 \#15543)}},
}

\bib{CorvajaZannier}{article}{
      author={Corvaja, Pietro},
      author={Zannier, Umberto},
       title={On the rational approximations to the powers of an algebraic
  number: solution of two problems of {M}ahler and {M}end\`es {F}rance},
        date={2004},
        ISSN={0001-5962},
     journal={Acta Math.},
      volume={193},
      number={2},
       pages={175\ndash 191},
         url={http://dx.doi.org/10.1007/BF02392563},
      review={\MR{2134865 (2005m:11131)}},
}

\bib{Dirichlet}{article}{
      author={Dirichlet, P. G.~L.},
       title={Une propriet� e des formes quadratiques a determinant positif
  �},
        date={1856},
     journal={J. Math. Pures Appl.},
       pages={76\ndash 79},
}

\bib{EW}{book}{
      author={Einsiedler, Manfred},
      author={Ward, Thomas},
       title={Ergodic theory with a view towards number theory},
      series={Graduate Texts in Mathematics},
   publisher={Springer-Verlag London Ltd.},
     address={London},
        date={2011},
      volume={259},
        ISBN={978-0-85729-020-5},
         url={http://dx.doi.org/10.1007/978-0-85729-021-2},
      review={\MR{2723325}},
}

\bib{EtienneAnnals}{article}{
      author={Fouvry, {\'E}tienne},
      author={Kl{\"u}ners, J{\"u}rgen},
       title={On the negative {P}ell equation},
        date={2010},
        ISSN={0003-486X},
     journal={Ann. of Math. (2)},
      volume={172},
      number={3},
       pages={2035\ndash 2104},
         url={http://dx.doi.org/10.4007/annals.2010.172.2035},
      review={\MR{2726105 (2011h:11122)}},
}

\bib{Golubeva}{article}{
      author={Golubeva, E.~P.},
       title={On the class numbers of indefinite binary quadratic forms of
  discriminant {$dp^2$}},
        date={2002},
        ISSN={0373-2703},
     journal={Zap. Nauchn. Sem. S.-Peterburg. Otdel. Mat. Inst. Steklov.
  (POMI)},
      volume={286},
      number={Anal. Teor. Chisel i Teor. Funkts. 18},
       pages={40\ndash 47, 227\ndash 228},
         url={http://dx.doi.org/10.1023/B:JOTH.0000035234.38002.1f},
      review={\MR{1937366 (2003j:11040)}},
}

\bib{Grisel}{article}{
      author={Grisel, Guillaume},
       title={Length of continued fractions in principal quadratic fields},
        date={1998},
        ISSN={0065-1036},
     journal={Acta Arith.},
      volume={85},
      number={1},
       pages={35\ndash 49},
      review={\MR{1623357 (99g:11015)}},
}

\bib{Hickerson}{article}{
      author={Hickerson, Dean~R.},
       title={Length of period simple continued fraction expansion of {$\surd
  d$}},
        date={1973},
        ISSN={0030-8730},
     journal={Pacific J. Math.},
      volume={46},
       pages={429\ndash 432},
      review={\MR{0321881 (48 \#246)}},
}

\bib{Matthews}{unpublished}{
      author={Keith, Matthews},
       title={On the continued fraction expansion of $\sqrt{2^{2n+1}}$},
        note={Unpublished, available on
  http://www.numbertheory.org/pdfs/period.pdf},
}

\bib{KimSarnak}{article}{
      author={Kim, Henry~H.},
       title={Functoriality for the exterior square of {${\rm GL}_4$} and the
  symmetric fourth of {${\rm GL}_2$}},
        date={2003},
        ISSN={0894-0347},
     journal={J. Amer. Math. Soc.},
      volume={16},
      number={1},
       pages={139\ndash 183 (electronic)},
         url={http://dx.doi.org/10.1090/S0894-0347-02-00410-1},
        note={With appendix 1 by Dinakar Ramakrishnan and appendix 2 by Kim and
  Peter Sarnak},
      review={\MR{1937203 (2003k:11083)}},
}

\bib{LagariasComputationalComplexity}{article}{
      author={Lagarias, J.~C.},
       title={On the computational complexity of determining the solvability or
  unsolvability of the equation {$X^{2}-DY^{2}=-1$}},
        date={1980},
        ISSN={0002-9947},
     journal={Trans. Amer. Math. Soc.},
      volume={260},
      number={2},
       pages={485\ndash 508},
         url={http://dx.doi.org/10.2307/1998017},
      review={\MR{574794 (81g:10029)}},
}

\bib{Lerner}{article}{
      author={Lerner, Eduard~Y.},
       title={About statistics of periods of continued fractions of quadratic
  irrationalities},
        date={2010},
        ISSN={1991-0061},
     journal={Funct. Anal. Other Math.},
      volume={3},
      number={1},
       pages={75\ndash 83},
         url={http://dx.doi.org/10.1007/s11853-010-0036-5},
      review={\MR{2734562}},
}

\bib{MargulisThesis}{book}{
      author={Margulis, Grigoriy~A.},
       title={On some aspects of the theory of {A}nosov systems},
      series={Springer Monographs in Mathematics},
   publisher={Springer-Verlag},
     address={Berlin},
        date={2004},
        ISBN={3-540-40121-0},
        note={With a survey by Richard Sharp: Periodic orbits of hyperbolic
  flows, Translated from the Russian by Valentina Vladimirovna Szulikowska},
      review={\MR{2035655 (2004m:37049)}},
}

\bib{McMullenUniformlyBounded}{article}{
      author={McMullen, Curtis~T.},
       title={Uniformly {D}iophantine numbers in a fixed real quadratic field},
        date={2009},
        ISSN={0010-437X},
     journal={Compos. Math.},
      volume={145},
      number={4},
       pages={827\ndash 844},
         url={http://dx.doi.org/10.1112/S0010437X09004102},
      review={\MR{2521246 (2010h:11114)}},
}

\bib{MendesFranceProblems}{article}{
      author={Mend{\`e}s~France, Michel},
       title={Remarks and problems on finite and periodic continued fractions},
        date={1993},
        ISSN={0013-8584},
     journal={Enseign. Math. (2)},
      volume={39},
      number={3-4},
       pages={249\ndash 257},
      review={\MR{1252067 (94i:11045)}},
}

\bib{Pollicott}{article}{
      author={Pollicott, Mark},
       title={Distribution of closed geodesics on the modular surface and
  quadratic irrationals},
        date={1986},
        ISSN={0037-9484},
     journal={Bull. Soc. Math. France},
      volume={114},
      number={4},
       pages={431\ndash 446},
         url={http://www.numdam.org/item?id=BSMF_1986__114__431_0},
      review={\MR{882589 (88j:58102)}},
}

\bib{PR94}{book}{
      author={Platonov, Vladimir},
      author={Rapinchuk, Andrei},
       title={Algebraic groups and number theory},
      series={Pure and Applied Mathematics},
   publisher={Academic Press Inc.},
     address={Boston, MA},
        date={1994},
      volume={139},
        ISBN={0-12-558180-7},
        note={Translated from the 1991 Russian original by Rachel Rowen},
      review={\MR{MR1278263 (95b:11039)}},
}

\bib{SzuszRockett}{book}{
      author={Rockett, Andrew~M.},
      author={Sz{\"u}sz, Peter},
       title={Continued fractions},
   publisher={World Scientific Publishing Co. Inc.},
     address={River Edge, NJ},
        date={1992},
        ISBN={981-02-1047-7},
      review={\MR{1188878 (93m:11060)}},
}

\bib{Series}{article}{
      author={Series, Caroline},
       title={The modular surface and continued fractions},
        date={1985},
        ISSN={0024-6107},
     journal={J. London Math. Soc. (2)},
      volume={31},
      number={1},
       pages={69\ndash 80},
      review={\MR{MR810563 (87c:58094)}},
}

\bib{VenkateshSED}{article}{
      author={Venkatesh, Akshay},
       title={Sparse equidistribution problems, period bounds and
  subconvexity},
        date={2010},
        ISSN={0003-486X},
     journal={Ann. of Math. (2)},
      volume={172},
      number={2},
       pages={989\ndash 1094},
         url={http://dx.doi.org/10.4007/annals.2010.172.989},
      review={\MR{2680486}},
}

\end{biblist}
\end{bibdiv}

\end{document}